\newcommand{\Q}{\mathbb{Q}}
\newcommand{\R}{\mathbb{R}}
\newcommand{\C}{\mathbb{C}}
\newcommand{\Z}{\mathbb{Z}}
\newcommand{\N}{\mathbb{N}}
\newcommand{\CP}{\C\mathrm{P}}
\newcommand{\smooth}{{\mathscr{C}^\infty}}
\newcommand{\e}{\varepsilon}
\newcommand{\n}{\nabla}
\newcommand{\Sp}{\mathrm{Sp}}
\DeclareMathOperator{\Id}{Id}
\DeclareMathOperator{\tr}{Tr}
\newtheorem{prop}{Proposition}[section]
\newtheorem{thm}[prop]{Theorem}
\newtheorem{lemme}[prop]{Lemma}
\newtheorem{conj}[prop]{Conjecture}
\theoremstyle{definition}
\newtheorem{defn}[prop]{Definition}
\theoremstyle{remark}
\newtheorem{rem}[prop]{Remark}
\numberwithin{equation}{section}
\title[BCOV invariant of Calabi-Yau pairs]{BCOV invariant and blow-up}
\date{\today}
\author{Yeping ZHANG}
\address{School of Mathematics,
Korea Institute for Advanced Study,
Hoegiro 85, Dongdaemun-gu,
Seoul 02455, Korea}
\email{ypzhang@kias.re.kr}
\begin{document}

\begin{abstract}
Bershadsky, Cecotti, Ooguri and Vafa constructed a real valued invariant for Calabi-Yau manifolds,
which is now called the BCOV invariant.
In this paper,
we extend the BCOV invariant to such pairs $(X,D)$,
where $X$ is a compact K{\"a}hler manifold
and $D$ is a pluricanonical divisor on $X$ with simple normal crossing support.
We also study the behavior of the extended BCOV invariant under blow-ups.
The results in this paper lead to a joint work with Fu proving that
birational Calabi-Yau manifolds have the same BCOV invariant. \\
Keywords: analytic torsion, Calabi-Yau manifolds, birational maps. \\
MSC classification: 58J52.
\end{abstract}

\maketitle

\tableofcontents

\section{Introduction}
\label{ch0}

In this paper,
we consider a real valued invariant for Calabi-Yau manifolds equipped with Ricci flat metrics,
which is now called the BCOV torsion.
The BCOV torsion was introduced by Bershadsky, Cecotti, Ooguri and Vafa \cite{bcov,bcov2} as the stringy genus one partition function of $N=2$ superconformal field theory.
Their work extended the mirror symmetry conjecture of Candelas, de la Ossa, Green and Parkes \cite{cdgp}.
Fang and Lu \cite{fl} used BCOV torsion to study the moduli space of Calabi-Yau manifolds.

The BCOV torsion is an invariant on the B-side.
Its mirror on the A-side is conjecturally the genus one Gromov-Witten invariant.
Though genus $\geqslant 2$ Gromov-Witten invariants have been intensively studied recently,
there is no rigorously defined genus $\geqslant 2$ invariant on the B-side.

The BCOV invariant is a real valued invariant for Calabi-Yau manifolds,
which could be viewed as a normalization of the BCOV torsion.
Fang, Lu and Yoshikawa \cite{fly} constructed the BCOV invariant for Calabi-Yau threefolds
and established the asymptotics of the BCOV invariant (of Calabi-Yau threefolds) for one-parameter normal crossings degenerations.
They also confirmed the genus one mirror symmetry conjecture of Bershadsky, Cecotti, Ooguri and Vafa \cite{bcov,bcov2} for quintic threefolds. 

Eriksson, Freixas i Montplet and Mourougane \cite{efm} constructed the BCOV invariant for Calabi-Yau manifolds of arbitrary dimension
and established the asymptotics of the BCOV invariant for one-parameter normal crossings degenerations.
In another paper \cite{efm2},
they confirmed the (B-side) genus one mirror symmetry conjecture of Bershadsky, Cecotti, Ooguri and Vafa \cite{bcov,bcov2} for Calabi-Yau hypersurfaces of arbitrary dimension, 
which is compatible with the results of Zinger \cite{zin08,zin09} on the A-side.

For a Calabi-Yau manifold $X$,
we denote by $\tau(X)$
the logarithm of the BCOV invariant of $X$ defined in \cite{efm}.

Yoshikawa \cite[Conjecture 2.1]{yo06} conjectured that
for a pair of birational projective Calabi-Yau threefolds $(X,X')$,
we have $\tau(X') = \tau(X)$.
Eriksson, Freixas i Montplet and Mourougane \cite[Conjecture B]{efm} conjectured the following higher dimensional analogue.

\begin{conj}
\label{intro-conj}
For a pair of birational projective Calabi-Yau manifolds $(X,X')$,
we have
\begin{equation}
\tau(X') = \tau(X) \;.
\end{equation}
\end{conj}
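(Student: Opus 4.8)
The plan is to reduce Conjecture~\ref{intro-conj} to a blow-up formula for the extended BCOV invariant $\tau(X,D)$ by means of the weak factorization theorem of Abramovich--Karu--Matsuki--W{\l}odarczyk. That theorem produces a smooth projective $W$ together with birational morphisms $\pi\colon W\to X$ and $\pi'\colon W\to X'$, each a composition of blow-ups along smooth centers. Since $X$ and $X'$ are birational Calabi--Yau manifolds, a nowhere-vanishing section of $K_{X'}$ pulls back through the birational map and extends to a nowhere-vanishing section of $K_X$, up to a scalar; hence the two relative canonical divisors $K_{W/X}$ and $K_{W/X'}$ coincide. Call this effective divisor $D_W$; it is supported on the exceptional loci, and after finitely many further blow-ups of $W$ (which enlarge $D_W$ but change nothing birationally) we may assume $D_W$ has simple normal crossing support. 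Thus $(W,D_W)$ is a Calabi--Yau pair in the sense of this paper, and $\tau(W,D_W)$ is defined.

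The heart of the argument is therefore an additivity statement for $\tau$ under a single blow-up: for $X$ compact K{\"a}hler, $D$ a pluricanonical divisor with SNC support, and $Z\subset X$ a connected smooth center in suitable position relative to $D$, with $p\colon \Bl_Z X\to X$, exceptional divisor $E$ and $c=\operatorname{codim}Z$, one wants
\begin{equation*}
\tau\bigl(\Bl_Z X,\; p^*D+(c-1)E\bigr) \;=\; \tau(X,D) \;+\; F\bigl(Z,\,D|_{Z}\bigr),
\end{equation*}
with $F$ an \emph{explicit} expression built from characteristic numbers of $Z$ and its normal bundle, from $\tau$ of the relevant projective bundles (computed as in \cite{efm}), and from lower-dimensional BCOV invariants of $Z$ and $D\cap Z$. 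Granting this, one iterates down each of the two towers $W\to\dots\to X$ and $W\to\dots\to X'$ — at each step the exceptional part of $D_W$ is precisely the $(c-1)E$ required by the formula, and at the bottom $\tau(X,0)=\tau(X)$ by the normalization built into the extended invariant — to obtain $\tau(W,D_W)=\tau(X)+\Sigma$ and $\tau(W,D_W)=\tau(X')+\Sigma'$. Conjecture~\ref{intro-conj} then becomes the equality $\Sigma=\Sigma'$.

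I would prove the blow-up formula analytically, in the spirit of \cite{fly,efm}: view $\tau(X,D)$ as the logarithm of a ratio of metrics on one-dimensional determinant lines attached to the Hodge cohomology of $X$ twisted by $\mathcal O(D)=\mathcal O(jK_X)$, establish the anomaly formula showing this quantity is independent of the background K{\"a}hler metric and of the smooth Hermitian metric on $\mathcal O(D)$, and then compare the two sides using (i) the blow-up exact sequences relating the Hodge cohomologies of $\Bl_ZX$, $X$, $Z$ and $E$, and (ii) the Bismut--Lebeau embedding formula for Quillen metrics for $E\hookrightarrow\Bl_ZX$ together with the behaviour of Ray--Singer/BCOV torsion under the projective-bundle submersion $E\to Z$. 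Matching the resulting Bott--Chern secondary classes produces $F$.

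The main obstacle is analytic: the K{\"a}hler metric and the pluricanonical volume form $|\eta|^{2/j}$ entering the definition of $\tau(X,D)$ necessarily degenerate along $D$ (Poincar\'e-type singularities on $X\setminus D$), so the Bismut--Lebeau machinery does not apply off the shelf. I expect to handle this by regularization: approximate the singular data by smooth metrics, control the divergence of the Quillen metric and of the BCOV torsion as the parameter degenerates — this is where the one-parameter normal crossings asymptotics of \cite{fly,efm} are used — and check that the divergent contributions near $D$ are governed by the same local model on both sides of the formula, hence cancel in the difference. A second, combinatorial obstacle is the final equality $\Sigma=\Sigma'$: the correction terms come from different factorizations, and one must show that their sums depend only on $(W,D_W)$ and the morphisms $\pi,\pi'$, not on the chosen towers, reducing $\Sigma=\Sigma'$ to the birational invariance of a package of characteristic-class integrals over Calabi--Yau manifolds, which can be verified directly.
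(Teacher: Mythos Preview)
The statement is a \emph{conjecture} in this paper, not a theorem; the paper does not prove it. What the paper does is announce the strategy and supply two of the three ingredients: the construction of the extended BCOV invariant $\tau(X,D)$ for pairs (Definition~\ref{def-bcov-inv} and the paragraph after Proposition~\ref{intro-prop-tau-z}), and the blow-up formula Theorem~\ref{introthm-tau-bl}. The third ingredient---assembling these with weak factorization to obtain $\tau(X)=\tau(X')$---is deferred to the joint paper with Fu~\cite{fz}. Your overall plan (weak factorization plus an additive blow-up formula for the extended invariant, iterated down two towers to a common roof) is therefore exactly the strategy the paper is setting up.

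Where your proposal diverges is in the analytic content. You imagine $\tau(X,D)$ as defined through a singular volume form $|\eta|^{2/j}$ and a K\"ahler metric with Poincar\'e-type degeneration along $D$, and you anticipate having to regularize and control divergences via the one-parameter asymptotics of~\cite{fly,efm}. The paper does none of this: the extended invariant is built entirely from \emph{smooth} K\"ahler metrics on $X$ and on each stratum $D_J$, as a weighted sum $\sum_J w_d^J\,\tau_{\mathrm{BCOV}}(D_J,\omega|_{D_J})$ plus explicit Bott--Chern-type correction integrals $a_J$, $b_{J,j}$ (see~\eqref{eq-def-tau-gamma-omega}). Metric-independence is then an exercise in the $\overline\partial\partial$-lemma (Theorem~\ref{thm-ind-omega}), with no singular analysis required. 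Consequently the blow-up formula Theorem~\ref{introthm-tau-bl} is proved not by regularization but by a deformation to the normal cone: one degenerates $X$ to a projective bundle over the center $Y$, proves continuity of $\tau_d(X_z',f_z^*\gamma_z)-\tau_d(X_z,\gamma_z)$ across $z=0$ using the localized blow-up/immersion theorems for Quillen metrics (Theorems~\ref{thm-im}, \ref{thm-bl}), and computes the limit via an adiabatic limit on the projective bundle (Theorems~\ref{thm-bcov-adiabatic}, \ref{thm-tau-bl}). The correction term is completely explicit, $\chi_d(E,D_E)\,\tau_d(\CP^1,\gamma_{1,m_0})-\chi_d(Y,D_Y)\,\tau_d(\CP^r,\gamma_{r,m_1,\dots,m_s})$, not an unspecified $F(Z,D|_Z)$.

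Your final combinatorial obstacle $\Sigma=\Sigma'$ is genuine and is precisely what~\cite{fz} has to address; the first line of~\eqref{introeq-thm-tau-bl}, namely $\chi_d(X',f^*\gamma)=\chi_d(X,\gamma)$, and the specific projective-space form of the correction terms are what make that cancellation tractable. So your high-level outline is right, but the analytic route you sketch (singular metrics, regularization, divergence matching) is not the one taken, and is in fact avoidable with the paper's stratified smooth-metric definition.
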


Let $X$ and $X'$ be projective Calabi-Yau threefolds defined over a field $L$.
Let $T$ be a finite set of embeddings $L\hookrightarrow\C$.
For $\sigma\in T$,
we denote by $X_\sigma$ (resp. $X_\sigma'$) the base change of $X$ (resp. $X'$) to $\C$ via the embedding $\sigma$.
We denote by $D^b(X_\sigma)$ (resp. $D^b(X_\sigma')$) the bounded derived category of coherent sheaves on $X_\sigma$ (resp. $X_\sigma'$).
Maillot and R{\"o}ssler \cite[Theorem 1.1]{ma-ro} showed that
if one of the following conditions holds,
\begin{itemize}
\item[a)] there exists $\sigma\in T$ such that $X_\sigma$ and $X_\sigma'$ are birational,
\item[b)] there exists $\sigma\in T$ such that $D^b(X_\sigma)$ and $D^b(X_\sigma')$ are equivalent,
\end{itemize}
then there exist a positive integer $n$ and a non-zero element $\alpha\in L$ such that
\begin{equation}
\tau(X_\sigma') - \tau(X_\sigma) = \frac{1}{n} \log \big|\sigma(\alpha)\big|
\hspace{4mm} \text{for } \sigma\in T \;.
\end{equation}
While a result of Bridgeland \cite[Theorem 1.1]{brid} showed that a) implies b),
Maillot and R{\"o}ssler gave separate proofs for a) and b).

Let $X$ be a Calabi-Yau threefold.
Let $Z \hookrightarrow X$ be a $(-1,-1)$-curve.
Let $X'$ be the Atiyah flop of $X$ along $Z$,
which is also a Calabi-Yau threefold.
We assume that both $X$ and $X'$ are compact and K{\"a}hler.
The author \cite[Corollary 0.5]{z} showed that
\begin{equation}
\label{introeq-z}
\tau(X') = \tau(X) \;.
\end{equation}
In other words,
Conjecture \ref{intro-conj} holds for $3$-dimensional Atiyah flops.
The proof of \eqref{introeq-z} consists of two key ingredients:
\begin{itemize}
\item[-] we extend the BCOV invariant from Calabi-Yau manifolds to certain `Calabi-Yau pairs',
more precisely, we consider manifolds equipped with smooth reduced canonical divisors;
\item[-] we study the behavior of the extended BCOV invariant under blow-ups.
\end{itemize}

To fully confirm Conjecture \ref{intro-conj} following the strategy above,
it is necessary to further extend the BCOV invariant as well as the blow-up formula.
This is exactly the purpose of this paper.
We will consider pairs consisting of a compact K{\"a}hler manifold
and a canonical $\Q$-divisor on the manifold with simple normal crossing support and without component of multiplicity $\leqslant -1$.
We will construct the BCOV invariant of such pairs and establish a blow-up formula for our BCOV invariant.

In the joint work with Fu \cite{fz},
we will use the results in this paper together with a factorization theorem of Abramovich, Karu, Matsuki and W{\l}odarczyk \cite[Theorem 0.3.1]{akmw}
to confirm Conjecture \ref{intro-conj} in full generality.

Let us now give more detail about the matter of this paper.

\vspace{2.5mm}

\noindent\textbf{BCOV torsion.}
We will use the notations in \eqref{eq-def-det} and \eqref{eq2-def-det}.
Let $X$ be an $n$-dimensional compact K{\"a}hler manifold.
Let $H^\bullet_\mathrm{dR}(X)$ be the de Rham cohomology of $X$.
Let $H^k_\mathrm{dR}(X) = \bigoplus_{p+q=k}H^{p,q}(X)$ be the Hodge decomposition.
Set
\begin{align}
\label{introeq-lambda}
\begin{split}
& \lambda_p(X) = \det H^{p,\bullet}(X) = \bigotimes_{q=0}^n \Big( \det H^{p,q}(X) \Big)^{(-1)^q} \hspace{4mm} \text{for } p=0,\cdots,n \;,\\
& \lambda_\mathrm{dR}(X) = \bigotimes_{k=1}^{2n} \Big(\det H^k_\mathrm{dR}(X)\Big)^{(-1)^kk} = \bigotimes_{p=1}^n \Big(\lambda_p(X)\otimes\overline{\lambda_p(X)}\Big)^{(-1)^pp} \;.
\end{split}
\end{align}

Let $H^\bullet_\mathrm{Sing}(X,\C)$ be the singular cohomology of $X$ with coefficients in $\C$.
We identify $H^k_\mathrm{dR}(X)$ with $H^k_\mathrm{Sing}(X,\C)$ (see \eqref{eq-sing-dr}).
For $k=0,\cdots,2n$,
let
\begin{equation}
\sigma_{k,1},\cdots,\sigma_{k,b_k}
\in \mathrm{Im}\big(H^k_\mathrm{Sing}(X,\Z) \rightarrow H^k_\mathrm{Sing}(X,\R)\big) \subseteq H^k_\mathrm{dR}(X)
\end{equation}
be a basis of the lattice.
Set
\begin{equation}
\sigma_X = \bigotimes_{k=1}^{2n} \big(\sigma_{k,1}\wedge\cdots\wedge\sigma_{k,b_k}\big)^{(-1)^kk} \in \lambda_\mathrm{dR}(X) \;,
\end{equation}
which is well-defined up to $\pm 1$.

Let $\omega$ be a K{\"a}hler form on $X$.
Let $\big\lVert\cdot\big\rVert_{\lambda_p(X),\omega}$
be the Quillen metric (see \textsection \ref{subsect-q}) on $\lambda_p(X)$ associated with $\omega$.
Let $\big\lVert\cdot\big\rVert_{\lambda_\mathrm{dR}(X),\omega}$ be the metric on $\lambda_\mathrm{dR}(X)$
induced by $\big\lVert\cdot\big\rVert_{\lambda_p(X),\omega}$ via \eqref{introeq-lambda}.
We define
\begin{equation}
\label{introeq-def-tau-BCOV}
\tau_\mathrm{BCOV}(X,\omega) = \log \big\lVert\sigma_X\big\rVert_{\lambda_\mathrm{dR}(X),\omega} \;.
\end{equation}

\vspace{2.5mm}

\noindent\textbf{BCOV invariant.}
For a compact complex manifold $X$ and a divisor $D$ on $X$,
we denote
\begin{equation}
D = \sum_{j=1}^l m_j D_j \;,
\end{equation}
where $m_j\in\Z\backslash\{0\}$,
$D_1,\cdots,D_l \subseteq X$ are mutually distinct and irreducible.
We call $D$ a divisor with simple normal crossing support
if $D_1,\cdots,D_l$ are smooth and transversally intersect.
Let $d$ be a non-zero integer.
We assume that $D$ is of simple normal crossing support and $m_j\neq -d$ for $j=1,\cdots,l$.
For $J\subseteq\big\{1,\cdots,l\big\}$,
we denote
\begin{equation}
\label{introeq-def-wJ-DJ}
w_d^J = \prod_{j\in J} \frac{-m_j}{m_j+d} \;,\hspace{4mm}
D_J = X \cap \bigcap_{j\in J} D_J \;.
\end{equation}
In particular,
we have $w_d^\emptyset = 1$ and $D_\emptyset = X$.

Now let $X$ be a compact K{\"a}hler manifold.
Let $K_X$ be the canonical line bundle over $X$.
Let $K_X^d$ be the $d$-th tensor power of $K_X$.
Let $\gamma\in\mathscr{M}(X,K_X^d)$ be an invertible element.
We denote
\begin{equation}
\mathrm{div}(\gamma) = D = \sum_{j=1}^l m_j D_j \;,
\end{equation}
where $m_j\in\Z\backslash\{0\}$,
$D_1,\cdots,D_l \subseteq X$ are mutually distinct and irreducible.

\begin{defn}
\label{introdef-cy-pair}
We call $(X,\gamma)$ a $d$-Calabi-Yau pair if
\begin{itemize}
\item[1)] $\mathrm{div}(\gamma)$ is of simple normal crossing support;
\item[2)] $m_j\neq -d$ for $j=1,\cdots,l$.
\end{itemize}
\end{defn}

Now we assume that $(X,\gamma)$ is a $d$-Calabi-Yau pair.
Let $w_d^J$ and $D_J$ be as in \eqref{introeq-def-wJ-DJ}.
Let $\omega$ be a K{\"a}hler form on $X$.
Recall that $\tau_\mathrm{BCOV}(\cdot,\cdot)$ was constructed in \eqref{introeq-def-tau-BCOV}.
The BCOV invariant of $(X,\gamma)$ is defined as
\begin{equation}
\tau_d(X,\gamma) = \sum_{J\subseteq\{1,\cdots,l\}} w_d^J \tau_\mathrm{BCOV}\big(D_J,\omega\big|_{D_J}\big)
+ \text{correction terms} \;,
\end{equation}
where the correction terms are Bott-Chern type integrations (see Definition \ref{def-bcov-inv} and \eqref{eq-def-tau-gamma-omega}).
We will construct $\tau_d(X,\gamma)$ and show that it is independent of $\omega$.

We can further extend our construction to canonical $\Q$-divisors.
We consider a pair $(X,D)$,
where $X$ is an $n$-dimensional compact K{\"a}hler manifold,
$D$ is a canonical $\Q$-divisor on $X$ such that
\begin{itemize}
\item[-] $D$ is of simple normal crossing support;
\item[-] each component of $D$ is of multiplicity $>-1$.
\end{itemize}

\begin{defn}
Let $d$ be a positive integer such that $dD$ is a divisor with integer coefficients.
Let $\gamma$ be a meromorphic section of $K_X^d$ such that $\mathrm{div}(\gamma) = dD$.
We define
\begin{equation}
\tau(X,D) = \tau_d(X,\gamma) + \frac{\chi_d(X,dD)}{12} \log
\Big( \big(2\pi\big)^{-n} \int_{X\backslash|D|} \big|\gamma\overline{\gamma}\big|^{1/d} \Big) \;,
\end{equation}
where $\chi_d(\cdot,\cdot)$ is defined in Definition \ref{def-chi-pair},
$|D|$ is defined in \eqref{eq-def-suppD},
$\big|\gamma\overline{\gamma}\big|^{1/d}$ is the unique positive volume form on $X\backslash|D|$
whose $d$-th tensor power equals $e^{i\theta}\gamma\overline{\gamma}$ for certain $\theta\in\R$.
By Proposition \ref{intro-prop-tau-r}, \ref{intro-prop-tau-z},
the BCOV invariant $\tau(X,D)$ is well-defined, 
i.e., independent of $d$ and $\gamma$. 
\end{defn}

Our BCOV invariant differs from the one defined in \cite{efm} by a topological invariant.
More precisely,
if $X$ is a Calabi-Yau manifold,
the logarithm of the BCOV invariant of $X$ defined in \cite{efm} is equal to
\begin{equation}
\label{introeq-tau-compare}
\tau(X,\emptyset) + \frac{\log(2\pi)}{2} \sum_{k=0}^{2n} (-1)^k k(n-k) b_k(X) \;,
\end{equation}
where $b_k(X)$ is the $k$-th Betti number of $X$.
The sum of Betti numbers in \eqref{introeq-tau-compare} comes from
our choice of the $L^2$-metric (see \eqref{eq-def-L2-metric})
and the identification between singular cohomology and de Rham cohomology (see \eqref{eq-sing-dr}).

\vspace{2.5mm}

\noindent\textbf{Curvature formula.}
Let $\pi: \mathscr{X} \rightarrow S$ be a holomorphic submersion.
We assume that $\pi$ is locally K{\"a}hler,
i.e., for any $s\in S$, there exists an open subset $x\in U \subseteq S$ such that
$\pi^{-1}(U)$ is K{\"a}hler.
For $s\in S$,
we denote $X_s = \pi^{-1}(s)$.
Let
\begin{equation}
\big(\gamma_s\in\mathscr{M}(X_s,K_{X_s}^d)\big)_{s\in S}
\end{equation}
be a holomorphic family.
We assume that $(X_s,\gamma_s)$ is a $d$-Calabi-Yau pair for any $s\in S$.
We assume that there exist $l\in\N$, $m_1,\cdots,m_l\in\Z\backslash\{0,-d\}$
and $\big(D_{j,s}\subseteq X_s\big)_{j\in\{1,\cdots,l\},\; s\in S}$ such that
\begin{equation}
\mathrm{div}(\gamma_s) = \sum_{j=1}^l m_j D_{j,s}
\hspace{4mm} \text{for } s\in S \;.
\end{equation}
For $J\subseteq\{1,\cdots,l\}$ and $s\in S$,
let $D_{J,s} \subseteq X_s$ be as in \eqref{introeq-def-wJ-DJ} with $X$ replaced by $X_s$ and $D_j$ replaced by $D_{j,s}$.
We assume that $\big(D_{J,s}\big)_{s\in S}$ is a smooth holomorphic family for each $J$.

Let $\tau_d(X,\gamma)$ be the function $s\mapsto \tau_d(X_s,\gamma_s)$ on $S$.
Let $w_d^J$ be as in \eqref{introeq-def-wJ-DJ}.
Let $H^\bullet(D_J)$ be the variation of Hodge structure associated with $\big(D_{J,s}\big)_{s\in S}$.
Let $\omega_{H^\bullet(D_J)}\in\Omega^{1,1}(S)$ be its Hodge form (see \cite[\textsection 1.2]{z}).

\begin{thm}
\label{thm-curvature}
The following identity holds,
\begin{equation}
\label{introeq-thm-curvature}
\frac{\overline{\partial}\partial}{2\pi i} \tau_d(X,\gamma)
= \sum_{J\subseteq\{1,\cdots,l\}} w_d^J \omega_{H^\bullet(D_J)} \;.
\end{equation}
\end{thm}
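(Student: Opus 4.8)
The plan is to differentiate, term by term, the expression
\[
\tau_d(X,\gamma)=\sum_{J\subseteq\{1,\cdots,l\}}w_d^J\,\tau_\mathrm{BCOV}\big(D_J,\omega\big|_{D_J}\big)+(\text{correction terms})
\]
defining the BCOV invariant (Definition \ref{def-bcov-inv}, \eqref{eq-def-tau-gamma-omega}), and to check that all resulting characteristic-form contributions cancel, leaving only the Hodge forms. Since \eqref{introeq-thm-curvature} is local on $S$ and $\pi$ is locally K\"ahler, we may replace $S$ by a small contractible $U$ over which $\pi^{-1}(U)$ carries a K\"ahler form $\omega$, and use $\omega\big|_{X_s}$ (hence $\omega\big|_{D_{J,s}}$) as the fiberwise metrics; this is legitimate because $\tau_d(X_s,\gamma_s)$ has already been shown to be independent of the fiberwise K\"ahler metric. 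Writing $\mathscr{D}_J=\bigcup_{s\in U}D_{J,s}$, each restricted map $\pi|_{\mathscr{D}_J}\colon\mathscr{D}_J\to U$ is again a locally K\"ahler submersion, so the family index theory of Bismut--Gillet--Soul\'e and Bismut--K\"ohler is available on all of them.

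\textbf{The BCOV torsion terms.} Fix $J$. By \eqref{introeq-def-tau-BCOV}, $\tau_\mathrm{BCOV}(D_J,\omega|_{D_J})=\log\|\sigma_{D_J}\|_{\lambda_\mathrm{dR}(D_J),\omega}$, the norm being induced by the Quillen metrics on the $\lambda_p(D_J)=\det R^{\bullet}(\pi|_{\mathscr{D}_J})_{*}\Omega^p_{\mathscr{D}_J/U}$. The period section $\sigma_{D_J}$ is a nowhere-vanishing \emph{flat} section of $\lambda_\mathrm{dR}(D_J)$ for the Gauss--Manin connection, hence a holomorphic trivialization once $\lambda_\mathrm{dR}(D_J)$ is viewed through the holomorphic structure coming from the Betti local system $\big(\bigoplus_k R^k(\pi|_{\mathscr{D}_J})_{*}\C\big)$. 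Decomposing each Quillen metric as the $L^2$ metric times the Ray--Singer analytic torsion, one obtains
\[
\frac{\overline{\partial}\partial}{2\pi i}\,\tau_\mathrm{BCOV}\big(D_J,\omega|_{D_J}\big)=\omega_{H^\bullet(D_J)}+\alpha_J(\omega)\;,
\]
where the first summand is the $L^2$-part, which is precisely (up to the sign/normalization convention of \cite[\textsection 1.2]{z}) the first Chern form of $\lambda_\mathrm{dR}(D_J)$ with its $L^2$ metric for the Betti holomorphic structure, i.e.\ the Hodge form; and $\alpha_J(\omega)$ is the torsion part, which by the curvature theorem for Quillen metrics equals the degree-two component of $\int_{\mathscr{D}_J/U}$ of the characteristic form built from $\omega|_{\mathscr{D}_J}$ and $T\mathscr{D}_J/U$ (the additive genus attached to $\sum_p(-1)^pp$, multiplied by $\mathrm{Td}$). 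This is the family version of the smooth-reduced-divisor computation of \cite{z}.

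\textbf{The correction terms.} By construction (see \eqref{eq-def-tau-gamma-omega}), these form a $\Q$-linear combination, with coefficients built from the $w_d^J$, of Bott--Chern secondary classes for metrics on $K_X$, on the $\mathcal{O}(D_j)$ and on the relative tangent bundles of the strata, including the Bott--Chern (Poincar\'e--Lelong) classes of the canonical sections cutting out the $D_j$. Applying $\frac{\overline{\partial}\partial}{2\pi i}$ and using the transgression and Poincar\'e--Lelong formulae fiberwise, together with differentiation under the integral sign (all data varying holomorphically over $U$), one rewrites it as
\[
\frac{\overline{\partial}\partial}{2\pi i}\,(\text{correction terms})=-\sum_{J\subseteq\{1,\cdots,l\}}w_d^J\,\alpha_J(\omega)\;.
\]
The cancellations here are governed by $\mathcal{O}\big(\mathrm{div}(\gamma)\big)\cong K_X^d$, by the adjunction isomorphisms $K_{D_J}\cong\big(K_X\otimes\mathcal{O}\big(\textstyle\sum_{j\in J}D_j\big)\big)\big|_{D_J}$, and by the combinatorial identities satisfied by $w_d^J=\prod_{j\in J}\frac{-m_j}{m_j+d}$ (which is where the hypothesis $m_j\neq-d$ enters).

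Adding the two computations over $J$ with weights $w_d^J$, the forms $\alpha_J(\omega)$ cancel and \eqref{introeq-thm-curvature} follows. The main obstacle is this last step: organizing the Bott--Chern secondary classes through the whole stratification $\{D_J\}$ and verifying the weighted cancellation, which is the family- and $\Q$-divisor refinement of the argument in \cite{z}; along the way one must check that the normalization of the $L^2$ metric (the $2\pi$ factors visible in \eqref{introeq-tau-compare}) and of the de Rham/singular identification, which shift $\tau_\mathrm{BCOV}$ only by constants, drop out under $\overline{\partial}\partial$. A minor additional point is that each $\omega_{H^\bullet(D_J)}$ is a genuine smooth $(1,1)$-form: this uses the hypothesis that every $\big(D_{J,s}\big)_{s\in S}$ is a smooth family, so that $H^\bullet(D_J)$ is an honest variation of Hodge structure.
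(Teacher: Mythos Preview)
Your proposal is correct and follows essentially the same approach as the paper: localize to an open $U\subseteq S$ carrying a relative K\"ahler form, differentiate each summand of \eqref{eq-def-tau-gamma-omega}, and observe that the characteristic-form contributions cancel while the Hodge forms survive. The paper's proof is organized slightly more efficiently: rather than re-deriving the cancellation, it simply observes that the identities \eqref{eq-ap-pf-thm-ind-omega} and \eqref{eq-bp-pf-thm-ind-omega} (for $\overline\partial\partial a_J$ and $\overline\partial\partial b_{J,j}$) computed in the proof of Theorem~\ref{thm-ind-omega} hold verbatim in the family setting, and that the only change is in the curvature of $\tau_\mathrm{BCOV}(D_J,\omega)$, where \cite[Theorem 1.6]{z} gives the explicit formula
\[
\frac{\overline\partial\partial}{2\pi i}\,\tau_\mathrm{BCOV}(D_J,\omega)=\omega_{H^\bullet(D_J)}+\frac{1}{12}\int_{D_J}c_{n-|J|}\big(TD_J,g^{TD_J}_\omega\big)c_1\big(TD_J,g^{TD_J}_\omega\big)
\]
(this is your $\alpha_J(\omega)$, written out). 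The weighted cancellation you flag as the ``main obstacle'' is then literally the computation \eqref{eq-p-pf-thm-ind-omega} already carried out to prove metric independence; nothing new is needed.
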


\vspace{2.5mm}

\noindent\textbf{Blow-up formula.}
Let $(X,\gamma)$ be a $d$-Calabi-Yau pair with $d>0$.
We denote
\begin{equation}
\mathrm{div}(\gamma) = D = \sum_{j=1}^l m_j D_j \;,
\end{equation}
where $m_j\in\Z\backslash\{-d,0\}$,
$D_1,\cdots,D_l \subseteq X$ are mutually distinct and irreducible.

Let $Y \subseteq X$ be a connected complex submanifold
such that $Y,D_1,\cdots,D_l$ transversally intersect (in the sense of Definition \ref{def-ti}).
We assume that $m_j > 0$ for $j$ satisfying $Y \subseteq D_j$.
Let $r$ be the codimension of $Y\subseteq X$.
Let $s$ be the number of $D_j$ containing $Y$.
Then we have $s\leqslant r$.
Without loss of generality,
we assume that
\begin{equation}
Y \subseteq D_j \hspace{2.5mm} \text{for } j=1,\cdots,s \;;\hspace{4mm}
Y \nsubseteq D_j \hspace{2.5mm} \text{for } j=s+1,\cdots,l \;.
\end{equation}

Let $f: X' \rightarrow X$ be the blow-up along $Y$.
Let $D_j' \subseteq X'$ be the strict transformation of $D_j \subseteq X$.
Set $E = f^{-1}(Y)$.
We denote $D'=\mathrm{div}(f^*\gamma)$.
We denote
\begin{equation}
m_0 = m_1 + \cdots + m_s + rd -d \;.
\end{equation}
We have
\begin{equation}
D' = m_0 E + \sum_{j=1}^l m_j D_j' \;.
\end{equation}
Hence $(X',f^*\gamma)$ is a $d$-Calabi-Yau pair.

Set
\begin{align}
\begin{split}
D_Y = \sum_{j=s+1}^l m_j (D_j \cap Y) \;,\hspace{4mm}
D_E = \sum_{j=1}^l m_j (D_j' \cap E) \;.
\end{split}
\end{align}
Then $D_Y$ (resp. $D_E$) is a divisor on $Y$ (resp. $E$) with simple normal crossing support.

We identify $\CP^r$ with $\C^r \cup \CP^{r-1}$.
Let $(z_1,\cdots,z_r)\in\C^r$ be the coordinates.
Let $\gamma_{r,m_1,\cdots,m_s}\in\mathscr{M}(\CP^r,K_{\CP^r}^d)$ be such that
\begin{equation}
\label{introeq-gammaZ}
\gamma_{r,m_1,\cdots,m_s} \big|_{\C^r} = \big( dz_1\wedge\cdots\wedge dz_r \big)^d \prod_{j=1}^s z_j^{m_j} \;.
\end{equation}
Then $(\CP^r,\gamma_{r,m_1,\cdots,m_s})$ is a $d$-Calabi-Yau pair.

\begin{thm}
\label{introthm-tau-bl}
The following identities hold,
\begin{align}
\label{introeq-thm-tau-bl}
\begin{split}
\chi_d(X',f^*\gamma) - \chi_d(X,\gamma) & = 0 \;,\\
\tau_d(X',f^*\gamma) - \tau_d(X,\gamma) & = \chi_d(E,D_E) \tau_d\big(\CP^1,\gamma_{1,m_0}\big) \\
& \hspace{15mm} - \chi_d(Y,D_Y) \tau_d\big(\CP^r,\gamma_{r,m_1,\cdots,m_s}\big) \;,
\end{split}
\end{align}
where $\chi_d(\cdot,\cdot)$ is given by Definition \ref{def-chi-pair}.
\end{thm}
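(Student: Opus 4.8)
The plan has three stages: dispose of the topological identity by a weighted stratum count; use the curvature formula to strip away all metric and moduli dependence, reducing the analytic identity to a single universal constant attached to the combinatorial type of the blow-up; and finally pin that constant down by a model computation.

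\emph{The $\chi_d$ identity.} Here I would use that $\chi_d$ is an additive, $w_d^J$-weighted characteristic number over the strata $D_J$, and that such an invariant of a log-smooth pair is unchanged under a blow-up with transverse centre — morally because it is a birational invariant of the log pair. Concretely: under $f\colon X'\to X$, each $D_J$ with $J\cap\{1,\dots,s\}=\emptyset$ is replaced by its strict transform, a blow-up of $D_J$ along $Y\cap D_J$ that meets $E$ in a projective bundle over $D_J\cap Y$; each $D_J$ with $J\cap\{1,\dots,s\}\neq\emptyset$ contains $Y$ and its strict transform is likewise a blow-up of $D_J$ along $Y$; and there is the new divisor $E$ with its traces on the $D_j'$. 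Using the behaviour of the characteristic number on $\mathbb{P}^k$-bundles and the arithmetic relation $m_0+d=m_1+\dots+m_s+rd$ — which makes the weight $w_d^{\{0\}\cup J'}=\tfrac{-m_0}{m_0+d}w_d^{J'}$ of the $E$-strata match the change of weights produced by the strict transforms of $D_1,\dots,D_s$ — one checks stratum by stratum that all corrections cancel, giving the first line of \eqref{introeq-thm-tau-bl}.

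\emph{Reduction via the curvature formula.} Let $S$ parametrize the $d$-Calabi-Yau pairs $(X,\gamma)$ together with a transverse centre $Y$ of a fixed combinatorial type. Since blow-up commutes with base change, over $S$ one gets simultaneously the families $(X,\gamma)$ and $(X',f^*\gamma)$, with all strata $D_J$, $D_{J'}'$, $D_E$, $D_Y$ varying in smooth families. By Theorem \ref{thm-curvature}, $\frac{\overline\partial\partial}{2\pi i}\big(\tau_d(X',f^*\gamma)-\tau_d(X,\gamma)\big)$ equals $\sum_{J'}w_d^{J'}\omega_{H^\bullet(D_{J'}')}-\sum_J w_d^J\omega_{H^\bullet(D_J)}$. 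I would then invoke the Lefschetz decomposition for a blow-up, $H^\bullet(\Bl_Y X)\cong H^\bullet(X)\oplus\bigoplus_{i=1}^{r-1}H^{\bullet-2i}(Y)(-i)$, which is a morphism of variations of Hodge structure; since the Hodge form is additive under direct sums and insensitive to Tate twists, applying this stratum by stratum — again using the relation for $m_0$ — makes the right-hand side vanish. As $\chi_d(E,D_E)$, $\chi_d(Y,D_Y)$ and the numbers $\tau_d(\CP^1,\gamma_{1,m_0})$, $\tau_d(\CP^r,\gamma_{r,m_1,\dots,m_s})$ are locally constant on $S$, the difference of the two sides of the second line of \eqref{introeq-thm-tau-bl} is locally constant on the connected space $S$, so it is enough to verify it for one convenient, say projective, representative.

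\emph{The model computation.} For the chosen representative I would degenerate to the normal cone: take $\mathscr{X}=\Bl_{Y\times\{0\}}(X\times\CP^1)\to\CP^1$, with generic fibre $X$ and central fibre $X'\cup_P W$ where $W=\mathbb{P}(N_{Y/X}\oplus\mathscr{O}_Y)$ and $P=\mathbb{P}(N_{Y/X})$, carrying the pluricanonical section induced by $f^*\gamma$. Feeding this into the asymptotics of $\tau_d$ along a one-parameter (reducible, semistable) degeneration yields $\tau_d(X,\gamma)=\tau_d(X',f^*\gamma)+\tau_d(W,\cdot)-\tau_d(P,\cdot)+(\text{explicit topological term})$. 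A projective bundle formula for $\tau_d$ — itself provable by the same curvature-plus-metric-independence argument, with no correction for the twisting of $N_{Y/X}$ — then rewrites $\tau_d(W,\cdot)$ and $\tau_d(P,\cdot)$ in terms of $\tau_d$ of the pair induced on $Y$ and of $\tau_d(\CP^r,\gamma_{r,m_1,\dots,m_s})$, $\tau_d(\CP^{r-1},\cdot)$; an analogous relation on $X'$ turns the $E$-contribution into a $\tau_d(\CP^1,\gamma_{1,m_0})$ term. Summing up, the $Y$-pair contributions cancel, the $\CP^{r-1}$ terms recombine with the topological factors into $\chi_d(E,D_E)$ and $\chi_d(Y,D_Y)$, and the second line of \eqref{introeq-thm-tau-bl} drops out.

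\emph{Main obstacle.} The hard part is the analytic input of the last stage: controlling $\tau_d$ of a pair across the degeneration to the normal cone, whose central fibre is reducible with double locus $P$, and keeping precise track of every secondary (Bott--Chern) contribution — or, if one argues directly on $X'$, establishing and applying a Bismut--Lebeau-type immersion/blow-up formula for the Quillen metrics on the determinant lines $\lambda_p(X')$ versus $\lambda_p(X)$, and then matching the resulting secondary integrals against the weights $w_d^J$ and the model invariants $\tau_d(\CP^\bullet,\cdot)$. A secondary subtlety is the absence of a twisting term in the projective bundle formula for $\tau_d$, which must be extracted from the curvature formula.
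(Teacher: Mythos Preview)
Your Stage~2 reduction does not work as stated. From $\frac{\overline\partial\partial}{2\pi i}\big(\tau_d(X',f^*\gamma)-\tau_d(X,\gamma)\big)=0$ you conclude only that the difference is pluriharmonic on $S$, not that it is locally constant: any $\mathrm{Re}(h)$ with $h$ holomorphic is a counterexample. The paper's own metric-independence argument (Theorem~\ref{thm-ind-omega}) closes exactly this gap by working over a \emph{compact} base $\CP^1$, but you invoke no compactness for your moduli space $S$, nor do you explain why $S$ should carry a universal family to which Theorem~\ref{thm-curvature} applies, nor why it is connected. So Stage~2 reduces nothing.

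More seriously, your Stage~3 rests on a gluing formula $\tau_d(X,\gamma)=\tau_d(X',f^*\gamma)+\tau_d(W,\cdot)-\tau_d(P,\cdot)+(\text{topological})$ along a semistable degeneration with reducible central fibre $X'\cup_P W$. No such formula is available in the paper, and establishing one would amount to the whole theorem. The paper avoids this entirely, and in fact takes precisely the alternative route you mention in your ``Main obstacle'' paragraph but set aside. The key point is \emph{locality of the blow-up difference}: Theorems~\ref{thm-bl-bcovt} and~\ref{thm2-bl-bcovt}, proved via Bismut--Lebeau and Bismut's blow-up formula for Quillen metrics, show that $\tau_{\mathrm{BCOV}}(X',\omega')-\tau_{\mathrm{BCOV}}(X,\omega)$ depends only on the data restricted to a neighbourhood of $Y$ and varies continuously in families. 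In the deformation to the normal cone $\mathscr{X}\to\C$, one therefore never looks at the reducible fibre; instead one takes $X_0=\mathbb{P}(N_Y\oplus\mathbb{1})$ (one component only), blows up the section $\mathscr{Y}\simeq Y\times\C$ to get $X_z'\to X_z$, and shows directly that $z\mapsto\tau_d(X_z',f_z^*\gamma_z)-\tau_d(X_z,\gamma_z)$ is continuous at $z=0$ because every term in \eqref{eq-def-tau-gamma-omega} either lives in the tube $U_z$ or has its difference localised there. At $z=0$ both $X_0$ and $X_0'$ are projective bundles, and the projective bundle formula (Theorem~\ref{thm-tau-bl}, proved by an adiabatic limit using Berthomieu--Bismut) gives the right-hand side of \eqref{introeq-thm-tau-bl}. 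The $\chi_d$ identity is not checked combinatorially: it falls out because the scaling $\gamma_z=z^{-m-rd}\gamma$ introduces a $\log|z|$ divergence via Proposition~\ref{intro-prop-tau-z} whose coefficient must vanish for the limit to exist.
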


The proof of Theorem \ref{introthm-tau-bl} is based on
\begin{itemize}
\item[-] the deformation to the normal cone introduced by Baum, Fulton and MacPherson \cite[\textsection 1.5]{bfm};
\item[-] the immersion formula for Quillen metrics due to Bismut and Lebeau \cite{ble};
\item[-] the submersion formula for Quillen metrics due to Berthomieu and Bismut \cite{bb};
\item[-] the blow-up formula for Quillen metrics due to Bismut \cite{b97};
\item[-] the relation between the holomorphic torsion and the de Rham torsion established by Bismut \cite{b04}.
\end{itemize}

\vspace{2.5mm}

\noindent\textbf{Notations.}
For a complex vector space $V$,
we denote
\begin{equation}
\label{eq-def-det}
\det V = \Lambda^{\dim V} V \;,
\end{equation}
which is a complex line.
For a complex line $\lambda$,
we denote by $\lambda^{-1}$ the dual of $\lambda$.
For a graded complex vector space $V^\bullet = \bigoplus_{k=0}^m V^k$,
we denote
\begin{equation}
\label{eq2-def-det}
\det V^\bullet = \bigotimes_{k=0}^m \big(\det V^k\big)^{(-1)^k} \;.
\end{equation}

For a complex manifold $X$ and a divisor $D = m_1 D_1 + \cdots + m_l D_l$ on $X$,
where $m_1,\cdots,m_l\in\Z\backslash\{0\}$,
$D_1,\cdots,D_l$ are mutually distinct and irreducible,
we denote
\begin{equation}
\label{eq-def-suppD}
|D| = D_1 \cup \cdots \cup D_l \subseteq X \;.
\end{equation}

For a complex manifold $X$,
we denote by $\Omega^{p,q}(X)$ the vector space of $(p,q)$-forms on $X$.
We denote by $\mathscr{O}_X$ the analytic coherent sheaf of holomorphic functions on $X$.
We denote by $\Omega^p_X$ the analytic coherent sheaf of holomorphic $p$-forms on $X$.
For a complex vector bundle $E$ over $X$,
we denote by $\Omega^{p,q}(X,E)$ the vector space of $(p,q)$-forms on $X$ with values in $E$.
We denote by $\mathscr{M}(X,E)$ the vector space of meromorphic sections of $E$.
We denote by $\mathscr{O}_X(E)$ the analytic coherent sheaf of holomorphic sections of $E$.
For an analytic coherent sheaf $\mathscr{F}$ on $X$,
we denote by $H^q(X,\mathscr{F})$ the $q$-th cohomology of $\mathscr{F}$.
We denote $H^q(X,E) = H^q(X,\mathscr{O}_X(E))$.
We denote $H^{p,q}(X) = H^q(X,\Omega^p_X)$.
We denote by $H^k_\mathrm{dR}(X)$ the $k$-th de Rham cohomology of $X$ with coefficients in $\C$.
If $X$ is a compact K{\"a}hler manifold,
we identify $H^{p,q}(X)$ with a vector subspace of $H^{p+q}_\mathrm{dR}(X)$ via the Hodge theorem.

\vspace{2.5mm}

\noindent\textbf{Acknowledgments.}
The author is grateful to Professor Ken-Ichi Yoshikawa
who was the author's postdoctoral advisor and a member of the author's dissertation committee.
Prof. Yoshikawa drew the author's attention to the BCOV invariant
and gave many helpful suggestions.
The author is grateful to Professor Kenji Matsuki
who kindly explained their result \cite{akmw} to the author.
The author is grateful to
Professor Xianzhe Dai, Professor Gerard Freixas i Montplet and Professor Vincent Maillot
for their interest in this work.
The author is grateful to Professor Yang Cao for many helpful discussions.

This work was supported by JSPS KAKENHI Grant JP17F17804.
This work was supported by KIAS individual Grant MG077401 at Korea Institute for Advanced Study.

\section{Preliminaries}
\label{sect-pre}

\subsection{Divisor with simple normal crossing support}
\label{subsect-snc}

For $I\subseteq\big\{1,\cdots,n\big\}$,
we denote
\begin{equation}
\C^n_I = \big\{ (z_1,\cdots,z_n)\in \C^n \;:\; z_i = 0 \hspace{2.5mm} \text{for } i\in I \big\} \subseteq \C^n \;.
\end{equation}

Let $X$ be an $n$-dimensional complex manifold.

\begin{defn}
\label{def-ti}
For closed complex submanifolds $Y_1,\cdots,Y_l \subseteq X$,
we say that $Y_1,\cdots,Y_l$ transversally intersect if
for any $x\in X$,
there exists a holomorphic local chart $\C^n \supseteq U \xrightarrow{\varphi} X$
such that
\begin{itemize}
\item[-] $0\in U$ and $\varphi(0) = x$;
\item[-] for each $k$,
either $\varphi^{-1}(Y_k) = \emptyset$
or $\varphi^{-1}(Y_k) = U \cap \C^n_{I_k}$ for certain $I_k\subseteq\big\{1,\cdots,n\big\}$.
\end{itemize}
\end{defn}

Let $D$ be a divisor on $X$.
We denote
\begin{equation}
D = \sum_{j=1}^l m_j D_j \;,
\end{equation}
where $m_j\in\Z\backslash\{0\}$,
$D_1,\cdots,D_l \subseteq X$ are mutually distinct and irreducible.

\begin{defn}
\label{def-snc}
We call $D$ a divisor with simple normal crossing support
if $D_1,\cdots,D_l$ are smooth and transversally intersect.
\end{defn}

For $J\subseteq\big\{1,\cdots,l\big\}$,
let $w_d^J$ and $D_J$ be as in \eqref{introeq-def-wJ-DJ},
let $\chi(D_J)$ be the topological Euler characteristic of $D_J$.

\begin{defn}
\label{def-chi-pair}
If $D$ is a divisor with simple normal crossing support,
we define
\begin{equation}
\label{introeq-def-chi-pair}
\chi_d(X,D) = \sum_{J\subseteq\{1,\cdots,l\}} w_d^J \chi(D_J) \;.
\end{equation}
Moreover,
if there is a meromorphic section $\gamma$ of a holomorphic line bundle over $X$
such that $\mathrm{div}(\gamma) = D$,
we define
\begin{equation}
\label{introeq2-def-chi-pair}
\chi_d(X,\gamma) = \chi_d(X,D) \;.
\end{equation}
\end{defn}

Now we assume that $D$ is a divisor with simple normal crossing support.
Let $L$ be a holomorphic line bundle over $X$ together with $\gamma\in\mathscr{M}(X,L)$ such that $\mathrm{div}(\gamma) = D$.
Let $\gamma^{-1}\in\mathscr{M}(X,L^{-1})$ be the inverse of $\gamma$.

We denote by $\big(T^*X\oplus\overline{T^*X}\big)^{\otimes k}$
the $k$-th tensor power of $T^*X\oplus\overline{T^*X}$.
We denote
\begin{equation}
E^\pm_k = \big(T^*X\oplus\overline{T^*X}\big)^{\otimes k} \otimes L^{\pm 1} \;.
\end{equation}
In particular,
we have $E_0^{\pm} = L^{\pm 1}$.
Let $\n^{E^\pm_k}$ be a connection on $E^\pm_k$.

Let $L_j$ be the normal line bundle of $D_j \hookrightarrow X$.

\begin{defn}
\label{def-res}
We define $\mathrm{Res}_{D_j}(\gamma)\in\mathscr{M}(D_j,L \otimes L_j^{-m_j})$ as follows,
\begin{equation}
\mathrm{Res}_{D_j}(\gamma) = \left\{
\begin{array}{ll}
\frac{1}{m_j!}\Big(\n^{E^+_{m_j-1}} \cdots \n^{E^+_0} \gamma\Big) \Big|_{D_j}
& \text{if } m_j > 0 \;,\\
\frac{1}{|m_j|!} \bigg(\Big(\n^{E^-_{|m_j|-1}} \cdots \n^{E^-_0} \gamma^{-1}\Big) \Big|_{D_j} \bigg)^{-1}
& \text{if } m_j < 0 \;.
\end{array} \right.
\end{equation}
Here $\mathrm{Res}_{D_j}(\gamma)$ is independent of $\big(\n^{E^\pm_k}\big)_{k\in\N}$.
\end{defn}

For $j=1,\cdots,l$,
we have
\begin{equation}
\label{eq-res-div}
\mathrm{div}\big(\mathrm{Res}_{D_j}(\gamma)\big) = \sum_{k\in\{1,\cdots,l\}\backslash\{j\}} m_k \big( D_j \cap D_k \big) \;.
\end{equation}
For distinct $j,k=1,\cdots,l$,
we have
\begin{align}
\label{eq-res-commute}
\begin{split}
& \mathrm{Res}_{D_j \cap D_k}\big(\mathrm{Res}_{D_j}(\gamma)\big)
= \mathrm{Res}_{D_j \cap D_k}\big(\mathrm{Res}_{D_k}(\gamma)\big) \\
& \hspace{35mm} \in \mathscr{M}\big(D_j \cap D_k,L \otimes L_j^{-m_j} \otimes L_k^{-m_k}\big) \;.
\end{split}
\end{align}

\subsection{Some characteristic classes}
\label{subsect-char}

For an $(m\times m)$-matrix $A$,
we define
\begin{equation}
\label{eq-def-char}
\mathrm{ch}(A) = \tr\big[e^A\big] \;,\hspace{4mm}
\mathrm{Td}(A) = \det\Big(\frac{A}{\Id-e^{-A}}\Big) \;,\hspace{4mm}
c(A) = \det\big(\Id+A\big) \;.
\end{equation}
We have
\begin{equation}
c(tA) = 1 + \sum_{k=1}^m t^kc_k(A) \;,
\end{equation}
where $c_k(A)$ is the $k$-th elementary symmetric polynomial of the eigenvalues of $A$.

Let $V$ be an $m$-dimensional complex vector space.
Let $R\in\mathrm{End}(V)$.
Let $V^*$ be the dual of $V$.
Let $R^*\in\mathrm{End}(V^*)$ be the dual of $R$.
For $r=1,\cdots,m$,
we construct $R_r\in\mathrm{End}(\Lambda^rV^*)$ by induction,
\begin{equation}
R_1 = -R^* \;,\hspace{4mm}
R_r = R_1 \wedge \Id_{\Lambda^{r-1}V^*} + \Id_{V^*} \wedge R_{r-1} \;.
\end{equation}
We will use the convention $\Lambda^0V^* = \C$ and $R_0 = 0$.

Let $\lambda_1,\cdots,\lambda_m$ be the eigenvalues of $R$.
For $p\in\N$ and $F$ a polynomial of $\lambda_1,\cdots,\lambda_m$,
we denote by $\big\{F\big\}^{[p]}$ the component of $F$ of degree $p$.

\begin{prop}
\label{prop-total-class}
The following identities hold,
\begin{align}
\label{eq-prop-total-class}
\begin{split}
\mathrm{Td}(R) \Big( \sum_{r=0}^m (-1)^r \mathrm{ch}(R_r) \Big)
& = c_m(R) \;,\\
\bigg\{ \mathrm{Td}(R) \Big( \sum_{r=1}^m (-1)^r r \mathrm{ch}(R_r) \Big) \bigg\}^{[\leqslant m]}
& = -c_{m-1}(R) + \frac{m}{2}c_m(R) \;,\\
\bigg\{ \mathrm{Td}(R) \Big( \sum_{r=2}^m (-1)^r r(r-1) \mathrm{ch}(R_r) \Big) \bigg\}^{[m]}
& = \frac{1}{6}(c_1c_{m-1})(R) + \frac{m(3m-5)}{12}c_m(R) \;.
\end{split}
\end{align}
\end{prop}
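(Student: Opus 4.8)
\textbf{Proof proposal for Proposition \ref{prop-total-class}.}

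The plan is to reduce all three identities to purely symbolic manipulations with the eigenvalues $\lambda_1,\dots,\lambda_m$ of $R$, exploiting the fact that $\Lambda^\bullet V^*$ is the exterior algebra built from $V^*$, whose Chern roots are $-\lambda_1,\dots,-\lambda_m$. The key elementary observation is the splitting principle identity
\begin{equation}
\sum_{r=0}^m (-1)^r \mathrm{ch}(R_r) = \prod_{i=1}^m \big(1 - e^{-\lambda_i}\big) \;,
\end{equation}
which follows because $R_r$ is constructed exactly so that $\bigoplus_r (-1)^r \Lambda^r V^*$ has Chern character equal to the product of the factors $(1-e^{-\lambda_i})$ coming from each line with root $-\lambda_i$; this is the standard Koszul/$\lambda$-ring computation and can be checked by induction on $m$ using the recursion $R_r = R_1\wedge\Id + \Id\wedge R_{r-1}$. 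Granting this, the first identity is immediate: $\mathrm{Td}(R) = \prod_i \lambda_i/(1-e^{-\lambda_i})$, so the product telescopes to $\prod_i \lambda_i = c_m(R)$.

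For the second and third identities I would introduce the generating-function trick: consider the one-variable family $\Phi(t) = \mathrm{Td}(R)\sum_{r=0}^m (-1)^r t^r \mathrm{ch}(R_r)$. Since the $R_r$ assemble into an exterior algebra with a formal grading variable $t$, one gets the closed form
\begin{equation}
\Phi(t) = \mathrm{Td}(R) \prod_{i=1}^m \big(1 - t e^{-\lambda_i}\big) \;,
\end{equation}
again provable by the same induction. Then $\sum_r (-1)^r r\,\mathrm{ch}(R_r)$ is $t\,\partial_t$ applied to the bracket at $t=1$, and $\sum_r (-1)^r r(r-1)\,\mathrm{ch}(R_r)$ is $(t\partial_t)^2 - t\partial_t$ at $t=1$, i.e. $t^2\partial_t^2$ at $t=1$. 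Differentiating $\prod_i(1 - te^{-\lambda_i})$ and setting $t=1$ produces expressions of the form $\prod_i(1-e^{-\lambda_i})$ times sums like $\sum_i e^{-\lambda_i}/(1-e^{-\lambda_i})$; multiplying by $\mathrm{Td}(R) = \prod_i \lambda_i/(1-e^{-\lambda_i})$ again collapses the problematic denominators, leaving $\prod_i \lambda_i$ times symmetric functions of the $\lambda_i$ that are manifestly power series with no poles. From there one only needs to extract the stated low-degree components: for the second identity the degree-$\leqslant m$ part, for the third the degree-$m$ part.

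The genuinely computational step — and the main obstacle — is the final bookkeeping: after the differentiations one has, schematically, $\prod_i \lambda_i \cdot \big(\text{rational function in the } \lambda_i \text{ regular at } 0\big)$, and one must Taylor-expand the regular factor to the order that contributes to degree $m$ (for the third identity, degree $m$ overall means expanding the cofactor to degree $0$, but the cross terms from $t^2\partial_t^2$ mix a degree-$(-1)$ piece of one sum with a degree-$1$ piece of another, so one needs the expansions $\lambda/(1-e^{-\lambda}) = 1 + \lambda/2 + \lambda^2/12 + \cdots$ and $e^{-\lambda}/(1-e^{-\lambda}) = 1/\lambda - 1/2 + \lambda/12 - \cdots$ through a couple of orders). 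Collecting terms, one rewrites everything in the elementary symmetric polynomials $c_k(R)$ using $\sum_i \lambda_i = c_1$, $\sum_{i}\lambda_i^{-1}\prod_j\lambda_j = c_{m-1}$, $\sum_i \lambda_i^{-2}\prod_j\lambda_j$ expressed via $c_{m-1},c_{m-2},c_1$ through Newton-type identities, and so on. This is where the constants $\tfrac{m}{2}$, $\tfrac{1}{6}$, $\tfrac{m(3m-5)}{12}$ emerge, and it is purely a matter of careful algebra; I would organize it by treating the "diagonal" contributions (single index $i$) and "off-diagonal" contributions (pairs $i\neq j$) separately, since only the diagonal part can lower the total degree enough to interact with $c_m$, while the off-diagonal part feeds the $c_1 c_{m-1}$ term. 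No analysis or geometry enters — it is entirely a splitting-principle plus generating-function identity — so once the generating function $\Phi(t) = \mathrm{Td}(R)\prod_i(1-te^{-\lambda_i})$ is established, the rest is a finite, if slightly tedious, symmetric-function computation.
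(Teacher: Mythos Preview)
Your proposal is correct and follows essentially the same route as the paper: the paper introduces exactly the generating function $\sum_{r}(-1)^r t^r \mathrm{ch}(R_r) = \prod_j(1-te^{-\lambda_j})$, evaluates at $t=1$ for the first identity, and differentiates once and twice at $t=1$ for the second and third, using the expansion $e^{-\lambda}/(1-e^{-\lambda}) = \lambda^{-1} - \tfrac{1}{2} + \tfrac{1}{12}\lambda + \cdots$ to extract the required degree components. Your remarks about diagonal versus off-diagonal bookkeeping are more detailed than what the paper writes out, but the underlying computation is the same.
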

\begin{proof}
We have
\begin{equation}
\label{eq1-pf-prop-total-class}
\mathrm{Td}(R) = \prod_{j=1}^m \frac{\lambda_j}{1-e^{-\lambda_j}} \;,\hspace{4mm}
\sum_{r=0}^m (-1)^r t^r \mathrm{ch}(R_r) = \prod_{j=1}^m \big(1-te^{-\lambda_j}\big) \;.
\end{equation}

Taking $t=1$ in \eqref{eq1-pf-prop-total-class},
we obtain the first identity in \eqref{eq-prop-total-class}.

Taking the derivative of the second identity in \eqref{eq1-pf-prop-total-class} at $t=1$,
we get
\begin{equation}
\label{eq3-pf-prop-total-class}
\sum_{r=0}^m (-1)^r r \mathrm{ch}(R_r) =
- \Big(\sum_{j=1}^m \frac{e^{-\lambda_j}}{1-e^{-\lambda_j}}\Big)
\prod_{j=1}^m \big(1-e^{-\lambda_j}\big) \;.
\end{equation}
From the first identity in \eqref{eq1-pf-prop-total-class}, \eqref{eq3-pf-prop-total-class}
and the identity
\begin{equation}
\label{eq4-pf-prop-total-class}
\frac{e^{-\lambda_j}}{1-e^{-\lambda_j}} =
\lambda_j^{-1} - \frac{1}{2} + \frac{1}{12}\lambda_j + \cdots \;,
\end{equation}
we obtain the second identity in \eqref{eq-prop-total-class}.

Taking the second derivative of the second identity in \eqref{eq1-pf-prop-total-class} at $t=1$,
we get
\begin{align}
\label{eq5-pf-prop-total-class}
\begin{split}
& \sum_{r=0}^m (-1)^r r(r-1) \mathrm{ch}(R_r) \\
& = \bigg( \Big(\sum_{j=1}^m \frac{e^{-\lambda_j}}{1-e^{-\lambda_j}}\Big)^2
- \sum_{j=1}^m \Big(\frac{e^{-\lambda_j}}{1-e^{-\lambda_j}}\Big)^2 \bigg)
\prod_{j=1}^m \big(1-e^{-\lambda_j}\big) \;.
\end{split}
\end{align}
From the first identity in \eqref{eq1-pf-prop-total-class}, \eqref{eq4-pf-prop-total-class}
and \eqref{eq5-pf-prop-total-class},
we obtain the third identity in \eqref{eq-prop-total-class}.
This completes the proof.
\end{proof}

For an $(m\times m)$-matrix $A$,
we define
\begin{equation}
\label{eq-def-tdprim}
\mathrm{Td}'(A) = \frac{\partial}{\partial t} \mathrm{Td}(A+t\Id) \Big|_{t=0} \;.
\end{equation}

\begin{prop}
\label{prop2-total-class}
We have
\begin{align}
\label{eq-prop2-total-class}
\begin{split}
\bigg\{ \mathrm{Td}'(R) \Big( \sum_{r=0}^m (-1)^r \mathrm{ch}(R_r) \Big) \bigg\}^{[m]}
& = \frac{m}{2}c_m(R) \;,\\
\bigg\{ \mathrm{Td}'(R) \Big( \sum_{r=0}^m (-1)^r r \mathrm{ch}(R_r) \Big) \bigg\}^{[m]}
& = \frac{1}{12}(c_1c_{m-1})(R) + \frac{m^2}{4}c_m(R) \;.
\end{split}
\end{align}
\end{prop}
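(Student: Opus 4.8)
The plan is to follow the strategy of the proof of Proposition~\ref{prop-total-class}, reducing both identities in \eqref{eq-prop2-total-class} to the generating-function formulas \eqref{eq1-pf-prop-total-class} together with a companion expansion of $\mathrm{Td}'(R)$.

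First I would compute $\mathrm{Td}'(R)$. Since $R+t\Id$ has eigenvalues $\lambda_1+t,\dots,\lambda_m+t$, the first identity in \eqref{eq1-pf-prop-total-class} gives $\mathrm{Td}(R+t\Id)=\prod_{j=1}^m\frac{\lambda_j+t}{1-e^{-(\lambda_j+t)}}$, so by \eqref{eq-def-tdprim} and a logarithmic derivative at $t=0$,
\[
\mathrm{Td}'(R)=\mathrm{Td}(R)\sum_{j=1}^m\Big(\frac{1}{\lambda_j}-\frac{e^{-\lambda_j}}{1-e^{-\lambda_j}}\Big)\;.
\]
Using \eqref{eq4-pf-prop-total-class} and $c_1(R)=\sum_{j=1}^m\lambda_j$, this can be written as $\mathrm{Td}'(R)=\mathrm{Td}(R)\,g(R)$, where $g(R)=\frac{m}{2}-\frac{1}{12}c_1(R)+(\text{terms of degree}\geqslant 2)$; in particular $\{g(R)\}^{[0]}=\frac{m}{2}$ and $\{g(R)\}^{[1]}=-\frac{1}{12}c_1(R)$.

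For the first identity, I would multiply by $\sum_{r=0}^m(-1)^r\mathrm{ch}(R_r)$ and invoke the first line of \eqref{eq-prop-total-class}, obtaining $\mathrm{Td}'(R)\big(\sum_{r=0}^m(-1)^r\mathrm{ch}(R_r)\big)=g(R)\,c_m(R)$. Since $c_m(R)$ is homogeneous of degree $m$ and $g(R)$ has only nonnegative degrees, the degree-$m$ component of this product equals $\{g(R)\}^{[0]}\,c_m(R)=\frac{m}{2}c_m(R)$, which is the first claim.

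For the second identity, I would multiply by $\sum_{r=0}^m(-1)^r r\,\mathrm{ch}(R_r)=\sum_{r=1}^m(-1)^r r\,\mathrm{ch}(R_r)$ and invoke the second line of \eqref{eq-prop-total-class}: it states that $\mathrm{Td}(R)\big(\sum_r(-1)^r r\,\mathrm{ch}(R_r)\big)$ agrees in all degrees $\leqslant m$ with $-c_{m-1}(R)+\frac{m}{2}c_m(R)$, so its components of degree $\leqslant m-2$ vanish, its degree-$(m-1)$ component is $-c_{m-1}(R)$, and its degree-$m$ component is $\frac{m}{2}c_m(R)$. Multiplying by $g(R)$ and extracting the degree-$m$ part, only $\{g(R)\}^{[0]}$ and $\{g(R)\}^{[1]}$ can contribute, giving
\[
\{g(R)\}^{[0]}\cdot\tfrac{m}{2}c_m(R)+\{g(R)\}^{[1]}\cdot\big(-c_{m-1}(R)\big)=\tfrac{m^2}{4}c_m(R)+\tfrac{1}{12}(c_1c_{m-1})(R)\;,
\]
as desired. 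I do not anticipate a genuine obstacle here; the one point requiring attention is that \eqref{eq-prop-total-class} pins down enough of the low-degree behaviour that multiplying by the nonnegatively graded factor $g(R)=\mathrm{Td}'(R)/\mathrm{Td}(R)$ introduces no uncontrolled contributions in degree $m$.
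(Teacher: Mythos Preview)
Your proposal is correct and follows essentially the same approach as the paper: both compute $\big\{\mathrm{Td}'(R)/\mathrm{Td}(R)\big\}^{[\leqslant 1]}=\frac{m}{2}-\frac{1}{12}c_1(R)$ and then combine this with the identities of Proposition~\ref{prop-total-class}. The only cosmetic difference is that the paper obtains this low-degree expansion by differentiating the Todd polynomial in $c_1,c_2$ (via $c_1'(R)=m$, $c_2'(R)=(m-1)c_1(R)$), whereas you extract it directly from the eigenvalue expansion \eqref{eq4-pf-prop-total-class}.
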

\begin{proof}
Let $c_k'$ be as in \eqref{eq-def-tdprim} with $\mathrm{Td}$ replaced by $c_k$.
We have
\begin{equation}
\label{eq1-pf-prop2-total-class}
c_1'(R) = m \;,\hspace{4mm}
c_2'(R) = (m-1)c_1(R) \;.
\end{equation}
On the other hand,
we have
\begin{equation}
\label{eq2-pf-prop2-total-class}
\Big\{\mathrm{Td}(R)\Big\}^{[\leqslant 2]}
= 1 + \frac{1}{2}c_1(R) + \frac{1}{12}\big(c_1^2(R)+c_2(R)\big) \;.
\end{equation}
By \eqref{eq1-pf-prop2-total-class} and \eqref{eq2-pf-prop2-total-class},
we have
\begin{equation}
\label{eq3-pf-prop2-total-class}
\bigg\{\frac{\mathrm{Td}'(R)}{\mathrm{Td}(R)}\bigg\}^{[\leqslant 1]}
= \frac{m}{2} - \frac{1}{12}c_1(R) \;.
\end{equation}
From \eqref{eq-prop-total-class} and \eqref{eq3-pf-prop2-total-class},
we obtain \eqref{eq-prop2-total-class}.
This completes the proof.
\end{proof}

\subsection{Chern form and Bott-Chern form}
\label{subsect-bc}

Let $S$ be a complex manifold.
We denote
\begin{equation}
\label{eq-def-QQ}
Q^S = \bigoplus_{p=0}^{\dim S} \Omega^{p,p}(S) \;,\hspace{4mm}
Q^{S,0} = \bigoplus_{p=1}^{\dim S} \Big(\partial\,\Omega^{p-1,p}(S)+\overline{\partial}\,\Omega^{p,p-1}(S)\Big) \subseteq Q^S \;.
\end{equation}
Let $E$ be a holomorphic vector bundle over $S$.
Let $g^E$ be a Hermitian metric on $E$.
Let $R^E \in \Omega^{1,1}(S,\mathrm{End}(E))$
be the Chern curvature of $(E,g^E)$.
Recall that $c(\cdot)$ was defined in \eqref{eq-def-char}.
The total Chern form of $(E,g^E)$ is defined by
\begin{equation}
c\big(E,g^E\big) = c\Big(-\frac{R^E}{2\pi i}\Big) \in Q^S \;.
\end{equation}
The total Chern class of $E$ is defined by
\begin{equation}
c(E) = \big[c\big(E,g^E\big)\big]\in H^\mathrm{even}_\mathrm{dR}(S) \;,
\end{equation}
which is independent of $g^E$.

Let $E'\subseteq E$ be a holomorphic subbundle.
Let $E''=E/E'$.
We have a short exact sequence of holomorphic vector bundles over $S$,
\begin{equation}
0 \rightarrow E' \xrightarrow{\alpha} E \xrightarrow{\beta} E'' \rightarrow 0 \;,
\end{equation}
where $\alpha$ (resp. $\beta$) is the canonical embedding (resp. projection).
We have
\begin{equation}
c(E) = c(E')c(E'') \;.
\end{equation}
Let $g^{E'}$ be a Hermitian metric on $E'$.
Let $g^{E''}$ be a Hermitian metric on $E''$.
The Bott-Chern form \cite[Section 1f)]{bgs1}
\begin{equation}
\widetilde{c}\big(g^{E'},g^E,g^{E''}\big)\in Q^S/Q^{S,0}
\end{equation}
is such that
\begin{equation}
\label{eq-def-BC}
\frac{\overline{\partial}\partial}{2\pi i} \widetilde{c}\big(g^{E'},g^E,g^{E''}\big)
= c\big(E,g^E\big) - c\big(E',g^{E'}\big)c\big(E'',g^{E''}\big) \;.
\end{equation}
Let $\alpha^*g^E$ be the Hermitian metric on $E'$ induced by $g^E$ via the embedding $\alpha: E'\rightarrow E$.
Let $\beta_*g^E$ be the quotient Hermitian metric on $E''$ induced by $g^E$ via the surjection $\beta: E\rightarrow E''$.
We denote
\begin{equation}
\label{eq-def-BCc-ses}
\widetilde{c}\big(E',E,g^E\big) = \widetilde{c}\big(\alpha^*g^E,g^E,\beta_*g^E\big) \;.
\end{equation}

Let $\beta^*g^{E''}$ be the Hermitian pseudometric on $E$
induced by $g^{E''}$ via the surjection $\beta: E\rightarrow E''$.
For $\e>0$,
set
\begin{equation}
\label{eq-gEe}
g^E_\e = g^E + \frac{1}{\e} \beta^*g^{E''} \;.
\end{equation}

We equip $Q^S \subseteq \Omega^{\bullet,\bullet}(S)$ with the compact-open topology.
We equip $Q^S/Q^{S,0}$ with the quotient topology.

\begin{prop}
\label{prop-adiabatic-bc}
As $\e\rightarrow 0$,
\begin{equation}
\label{eq-prop-adiabatic-bc}
c\big(E,g^E_\e\big) \rightarrow c\big(E',\alpha^*g^E\big)c\big(E'',g^{E''}\big) \;,\hspace{4mm}
\widetilde{c}\big(E',E,g^E_\e\big) \rightarrow 0 \;.
\end{equation}
\end{prop}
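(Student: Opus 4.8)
The plan is to reduce both assertions to one explicit curvature computation carried out in the $g^E$-orthogonal $\smooth$-splitting $E = E'\oplus F$, with $F = (E')^\perp$, which we identify with $E''$ via $\beta$; then $F$ inherits the holomorphic structure of $E''$, the restriction $g^E|_F$ coincides with the quotient metric $\beta_*g^E$, and the pseudometric $\beta^*g^{E''}$ becomes a genuine metric $h$ on $F$ (the transport of $g^{E''}$). Writing $g^F := g^E|_F$, the family $g^E_\e = \alpha^*g^E\oplus\big(g^F+\e^{-1}h\big)$ is block diagonal for every $\e$; in particular the $g^E_\e$-orthogonal complement of $E'$ is $F$ for all $\e$, $\alpha^*g^E_\e = \alpha^*g^E$, and $\beta_*g^E_\e = \beta_*g^E+\e^{-1}g^{E''}$. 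Since $E'$ is a holomorphic subbundle and the complex structure of $E$ is unchanged, $\overline{\partial}^E = \big(\overline{\partial}^{E'}\oplus\overline{\partial}^F\big)+\gamma$ for a fixed $\gamma\in\Omega^{0,1}(S,\mathrm{Hom}(F,E'))$.

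First I would treat the Chern form. The Chern connection $\n^{E,\e}$ of $(E,g^E_\e)$ has $(0,1)$-part $\overline{\partial}^E$ and is $g^E_\e$-unitary; written as a $2\times2$ matrix in the splitting $E=E'\oplus F$, it equals $\n^{E'}$ on the $E'$-block, the Chern connection $\n^{F,\e}$ of $\big(F,\,g^F+\e^{-1}h\big)$ on the $F$-block, $\gamma$ in the $\mathrm{Hom}(F,E')$-entry, and $-\gamma^{*_\e}$ in the $\mathrm{Hom}(E',F)$-entry, where $\gamma^{*_\e}$ is the adjoint of $\gamma$ for $\big(\alpha^*g^E,\,g^F+\e^{-1}h\big)$. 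Because $g^F+\e^{-1}h = \e^{-1}(h+\e g^F)$ and the Chern connection is unchanged when the metric is multiplied by a positive constant, $\n^{F,\e}$ and $\gamma^{*_\e}$ depend smoothly on $\e\in[0,\infty)$: one has $\gamma^{*_\e} = O(\e)$, while $\n^{F,\e}$ tends to the Chern connection of $(F,h)$, which corresponds to that of $(E'',g^{E''})$. Hence $\n^{E,\e}$, its curvature $R^{E,\e}$, and $c(E,g^E_\e) = c\big(-R^{E,\e}/2\pi i\big)$ extend smoothly to $\e=0$, where the connection is block upper triangular; the value at $\e=0$ is thus $c(E',\alpha^*g^E)\,c(E'',g^{E''})$. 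This is the first assertion, the convergence being in $Q^S$, and in fact uniform on compact sets together with all derivatives.

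For the Bott-Chern form — which by \eqref{eq-def-BCc-ses} equals $\widetilde{c}\big(\alpha^*g^E_\e,g^E_\e,\beta_*g^E_\e\big)$, the Bott-Chern form of the exact sequence with its induced metrics — I would use the description of $\widetilde{c}$ via an auxiliary $\CP^1$-family (the deformation to the normal cone of $E'\subseteq E$; see \cite{bgs1}). There is a holomorphic vector bundle $\mathbb{E}$ on $S\times\CP^1$, fitting in $0\to p^*E'\to\mathbb{E}\to p^*E''\to 0$ with $p\colon S\times\CP^1\to S$ the projection and extension class $\gamma$ twisted by the affine coordinate $v$, restricting to $E$ over one point of $\CP^1$ and to $E'\oplus E''$ over another, such that for any Hermitian metric $g^{\mathbb{E}}$ with boundary values $g^E_\e$ and $\alpha^*g^E\oplus\beta_*g^E_\e$ at those two points,
\begin{equation*}
\widetilde{c}\big(E',E,g^E_\e\big) \equiv -\int_{\CP^1} c\big(\mathbb{E},g^{\mathbb{E}}\big)\,\log|v|^2 \pmod{Q^{S,0}} \;,
\end{equation*}
independently of the choice of such $g^{\mathbb{E}}$. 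I would apply this with $g^{\mathbb{E}} = p^*g^E_\e$, which has these boundary values and, in the $\smooth$-splitting $\mathbb{E}\cong p^*E'\oplus p^*F$, has exactly the block structure treated in the previous paragraph. Running that computation over $S\times\CP^1$ shows that $c\big(\mathbb{E},p^*g^E_\e\big)$ extends smoothly to $\e=0$, uniformly on compact sets, with value $p^*\big(c(E',\alpha^*g^E)\,c(E'',g^{E''})\big)$, a form pulled back from $S$. Such a form has no component of positive degree along the $\CP^1$-fibre, so its fibre integral against $\log|v|^2$ vanishes; together with a dominated-convergence estimate near $v=0$ and $v=\infty$ (the integrands are bounded in $\e$ on compact sets and the $\log$- and $|v|^{-4}$-type singularities are integrable), this gives $\widetilde{c}\big(E',E,g^E_\e\big)\to0$ in $Q^S$, hence in $Q^S/Q^{S,0}$.

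The curvature computation and the first assertion are routine once the splitting is fixed; the main obstacle is the Bott-Chern part. One must pin down the exact normalization of $\widetilde{c}$ for a short exact sequence and verify that the $\CP^1$-integral above does not depend on the interior metric on $\mathbb{E}$; alternatively one may bypass $\CP^1$ and work on $S$ with the one-parameter deformation $\overline{\partial}_v = \big(\overline{\partial}^{E'}\oplus\overline{\partial}^F\big)+v\gamma$ of the holomorphic structure, the point being that the same block-triangular limit makes the Chern form of the deformed bundle independent of $v$ as $\e\to0$, so that the transgression integrand defining $\widetilde{c}$ tends to $0$. One must also justify interchanging $\lim_{\e\to0}$ with the fibre integration, which follows from the uniform smoothness of the family in $\e$ down to $\e=0$.
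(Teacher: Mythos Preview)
Your argument is correct, but it takes a different route from the paper's. You work in the $g^E$-orthogonal $\smooth$-splitting $E=E'\oplus F$ and track the block structure of the Chern connection as $\e\to 0$; for the Bott-Chern form you feed the same computation into the $\CP^1$-integral representation of $\widetilde{c}$. The paper instead makes a single observation that handles both statements at once: the \emph{rescaled} metric $|z|^2 g^E+\beta^*g^{E''}=|z|^2\,g^E_{|z|^2}$ extends smoothly across $z=0$ as a Hermitian metric on a holomorphic bundle $\mathcal{E}$ over $S\times\C$ (the deformation to the normal cone of $E'\subseteq E$), with $(\mathcal{E}_z,g^{\mathcal{E}_z})\cong(E,|z|^2 g^E_\e)$ for $\e=|z|^2>0$ and $(\mathcal{E}_0,g^{\mathcal{E}_0})=(E'\oplus E'',\,\alpha^*g^E\oplus g^{E''})$. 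Since $c$ and $\widetilde{c}(E',E,\cdot)$ are scale-invariant, both limits follow immediately from continuity in $z$ \cite[Theorem 1.29 ii)]{bgs1} together with the vanishing of $\widetilde{c}$ on the holomorphically and metrically split sequence at $z=0$ \cite[Theorem 1.29 iii)]{bgs1}. No explicit curvature matrix, no $\CP^1$-fibre integral, no dominated convergence.

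What your approach buys is transparency: one sees concretely why the limiting connection is block upper triangular and why the determinant therefore factors. What the paper's packaging buys is economy and portability --- the same two-line argument (continuity plus split vanishing) transplants verbatim to the variants in Propositions \ref{prop2-adiabatic-bc}, \ref{prop3-adiabatic-bc} and \ref{prop4-adiabatic-bc}, whereas your method would require redoing the block analysis for each filtration step. The technical points you flag yourself (independence of the interior metric on $\mathbb{E}$, smoothness of $p^*g^E_\e$ across the second pole of $\CP^1$, interchange of $\lim_{\e\to 0}$ with the fibre integral) are all genuine and each needs a line of justification; the paper's route sidesteps them entirely by never writing the integral formula.
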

\begin{proof}
We will follow the proof of \cite[Theorem 1.29]{bgs1}.

Let $\mathrm{pr}: S \times \C \rightarrow S$ be the canonical projection.
Let
\begin{equation}
\label{eqe2-pf-prop-adiabatic-bc}
\widetilde{\alpha}: \mathrm{pr}^* E' \rightarrow \mathrm{pr}^* E
\end{equation}
be the pull-back of $\alpha: E' \rightarrow E$.
Let $(s,z)\in S\times\C$ be coordinates.
Let $\sigma\in H^0(S\times\C,\C)$ be the holomorphic function $\sigma(s,z) = z$.
Let
\begin{equation}
\label{eqe3-pf-prop-adiabatic-bc}
\widetilde{\sigma}: \mathrm{pr}^* E' \rightarrow \mathrm{pr}^* E'
\end{equation}
be the multiplication by $\sigma$.
Set
\begin{equation}
\label{eqe4-pf-prop-adiabatic-bc}
\mathcal{E}' = \mathrm{pr}^* E' \;,\hspace{4mm}
\mathcal{E} = \mathrm{Coker} \Big( \widetilde{\alpha} \oplus \widetilde{\sigma}:
\mathrm{pr}^* E' \rightarrow \mathrm{pr}^* E \oplus \mathrm{pr}^* E' \Big) \;.
\end{equation}
We get a short exact sequence of holomorphic vector bundles over $S\times\C$,
\begin{equation}
\label{eqe5-pf-prop-adiabatic-bc}
0 \rightarrow \mathcal{E}' \rightarrow \mathcal{E} \rightarrow \mathcal{E}'' \rightarrow 0 \;,
\end{equation}
where $\mathcal{E}' \rightarrow \mathcal{E}$ is induced by the embedding
$0 \oplus \Id_{\mathrm{pr}^* E'} : \mathrm{pr}^* E' \hookrightarrow \mathrm{pr}^* E \oplus \mathrm{pr}^* E'$,
and $\mathcal{E} \rightarrow \mathcal{E}'' := \mathrm{Coker}\big(\mathcal{E}'\rightarrow\mathcal{E}\big)$ is the canonical projection.
For $z\in\C$,
let
\begin{equation}
\label{eqi1-pf-prop-adiabatic-bc}
0 \rightarrow \mathcal{E}_z' \rightarrow \mathcal{E}_z \rightarrow \mathcal{E}_z'' \rightarrow 0
\end{equation}
be the restriction of \eqref{eqe5-pf-prop-adiabatic-bc} to $S\times\{z\}$.
For $z\neq 0$,
let
\begin{equation}
\label{eqi2-pf-prop-adiabatic-bc}
\phi_z: E \rightarrow \mathcal{E}_z
= \mathrm{Coker} \Big( \alpha \oplus z \Id_{E'}: E' \rightarrow E \oplus E' \Big)
\end{equation}
be the isomorphism induced by the embedding $\Id_E \oplus 0: E \hookrightarrow E \oplus E'$.
We get a commutative diagram
\begin{equation}
\label{eqi3-pf-prop-adiabatic-bc}
\xymatrix{
0 \ar[r] & E' \ar[r]\ar[d] & E \ar[r]\ar[d] & E'' \ar[r]\ar[d] & 0 \\
0 \ar[r] &  \mathcal{E}_z' \ar[r] & \mathcal{E}_z \ar[r] & \mathcal{E}_z'' \ar[r] & 0 \;,}
\end{equation}
where the vertical maps are induced by $\phi_z$.
Let
\begin{equation}
\label{eqi6-pf-prop-adiabatic-bc}
\phi_0: E' \oplus E'' \rightarrow \mathcal{E}_0
= \mathrm{Coker} \Big( \alpha \oplus 0 : E' \rightarrow E \oplus E' \Big) = E'' \oplus E'
\end{equation}
be the obvious isomorphism.
We get a commutative diagram
\begin{equation}
\label{eqi7-pf-prop-adiabatic-bc}
\xymatrix{
0 \ar[r] & E' \ar[r]\ar[d] & E' \oplus E'' \ar[r]\ar[d] & E'' \ar[r]\ar[d] & 0 \\
0 \ar[r] &  \mathcal{E}_0' \ar[r] & \mathcal{E}_0 \ar[r] & \mathcal{E}_0'' \ar[r] & 0 \;,}
\end{equation}
where the vertical maps are induced by $\phi_0$.

We can construct a Hermitian metric $g^\mathcal{E}$ on $\mathcal{E}$ such that
\begin{equation}
\label{eqm-pf-prop-adiabatic-bc}
\phi_z^* g^\mathcal{E} = |z|^2 g^E + \beta^* g^{E''} \hspace{2.5mm} \text{for } z \neq 0 \;,\hspace{4mm}
\phi_0^* g^\mathcal{E} = \alpha^*g^E \oplus g^{E''} \;.
\end{equation}
By \eqref{eqi3-pf-prop-adiabatic-bc} and \eqref{eqm-pf-prop-adiabatic-bc},
for $\e = |z|^2 > 0$,
we have
\begin{equation}
\label{eqa-pf-prop-adiabatic-bc}
c\big(\mathcal{E}_z,g^{\mathcal{E}_z}\big) = c\big(E,g^E_\e\big) \;,\hspace{4mm}
\widetilde{c}\big(\mathcal{E}_z',\mathcal{E}_z,g^{\mathcal{E}_z}\big) = \widetilde{c}\big(E',E,g^E_\e\big) \;.
\end{equation}
By \cite[Theorem 1.29 iii)]{bgs1}, \eqref{eqi7-pf-prop-adiabatic-bc} and \eqref{eqm-pf-prop-adiabatic-bc},
we have
\begin{equation}
\label{eqb-pf-prop-adiabatic-bc}
c\big(\mathcal{E}_0,g^{\mathcal{E}_0}\big) = c\big(E',\alpha^*g^E\big)c\big(E'',g^{E''}\big) \;,\hspace{4mm}
\widetilde{c}\big(\mathcal{E}_0',\mathcal{E}_0,g^{\mathcal{E}_0}\big) = 0 \;.
\end{equation}
On the other hand,
by \cite[Theorem 1.29 ii)]{bgs1},
we have
\begin{equation}
\label{eqc-pf-prop-adiabatic-bc}
\lim_{z\rightarrow 0} c\big(\mathcal{E}_z,g^{\mathcal{E}_z}\big)
= c\big(\mathcal{E}_0,g^{\mathcal{E}_0}\big) \;,\hspace{4mm}
\lim_{z\rightarrow 0} \widetilde{c}\big(\mathcal{E}_z',\mathcal{E}_z,g^{\mathcal{E}_z}\big)
= \widetilde{c}\big(\mathcal{E}_0',\mathcal{E}_0,g^{\mathcal{E}_0}\big) \;.
\end{equation}
From \eqref{eqa-pf-prop-adiabatic-bc}-\eqref{eqc-pf-prop-adiabatic-bc},
we obtain \eqref{eq-prop-adiabatic-bc}.
This completes the proof.
\end{proof}

Let $F \subseteq E$ be a holomorphic subbundle.
Set $F' = \alpha^{-1}(F) \subseteq E'$, $F'' = \beta(F) \subseteq E''$.

\begin{prop}
\label{prop2-adiabatic-bc}
If $F'=E'$,
as $\e\rightarrow 0$,
\begin{equation}
\label{eq1-prop2-adiabatic-bc}
\widetilde{c}\big(F,E,g^E_\e\big) \rightarrow c\big(E',\alpha^*g^E\big) \widetilde{c}\big(F'',E'',g^{E''}\big) \;.
\end{equation}
If $F''=E''$,
as $\e\rightarrow 0$,
\begin{equation}
\label{eq2-prop2-adiabatic-bc}
\widetilde{c}\big(F,E,g^E_\e\big) \rightarrow c\big(E'',g^{E''}\big)\widetilde{c}\big(F',E',\alpha^*g^E\big) \;.
\end{equation}
\end{prop}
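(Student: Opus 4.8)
The plan is to rerun the deformation argument in the proof of Proposition \ref{prop-adiabatic-bc} and to enrich the family $\mathcal{E}$ over $S\times\C$ constructed there with a holomorphic subbundle $\mathcal{F}\subseteq\mathcal{E}$ that degenerates in the desired way. I would keep all the notation of that proof, in particular $\widetilde{\alpha}$, $\widetilde{\sigma}$, the bundle $\mathcal{E}$ of \eqref{eqe4-pf-prop-adiabatic-bc}, the isomorphisms $\phi_z$ (for $z\neq 0$) and $\phi_0$, and the metric $g^{\mathcal{E}}$ of \eqref{eqm-pf-prop-adiabatic-bc}.

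First I would define $\mathcal{F}$ from the short exact sequence $0\to F'\to F\to F/F'\to 0$ in exactly the way $\mathcal{E}$ was defined from $0\to E'\to E\to E''\to 0$; concretely,
\[
\mathcal{F} = \mathrm{Coker}\big(\widetilde{\alpha}\oplus\widetilde{\sigma}:\mathrm{pr}^*F'\to\mathrm{pr}^*F\oplus\mathrm{pr}^*F'\big) \;,
\]
where $\widetilde{\alpha}\oplus\widetilde{\sigma}$ is restricted to $\mathrm{pr}^*F'$, its image lying in $\mathrm{pr}^*F\oplus\mathrm{pr}^*F'$ because, fibrewise, $\alpha(v)\in F$ exactly when $v\in F'$. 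Since $\widetilde{\alpha}\oplus\widetilde{\sigma}$ is fibrewise injective on $\mathrm{pr}^*F'$ for every $z\in\C$ ($\widetilde{\sigma}$ is multiplication by $z$, and at $z=0$ one uses the injectivity of $\alpha$), the cokernel has constant rank $\mathrm{rk}\,F$, so $\mathcal{F}$ is a holomorphic vector bundle over $S\times\C$; and the same fibrewise computation shows that the inclusions $F'\subseteq E'$, $F\subseteq E$ induce a fibrewise injective morphism $\mathcal{F}\to\mathcal{E}$, which exhibits $\mathcal{F}$ as a holomorphic subbundle of $\mathcal{E}$. Using the explicit form of $\phi_z$ in \eqref{eqi2-pf-prop-adiabatic-bc} one checks that, for $z\neq 0$, $\phi_z$ carries the pair $F\subseteq E$ onto $\mathcal{F}_z\subseteq\mathcal{E}_z$; arguing as for \eqref{eqa-pf-prop-adiabatic-bc} then gives, with $\e=|z|^2>0$,
\[
\widetilde{c}\big(\mathcal{F}_z,\mathcal{E}_z,g^{\mathcal{E}}\big) = \widetilde{c}\big(F,E,g^E_\e\big) \;.
\]

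Next I would analyse the central fibre. By the naturality of the construction, $\phi_0$ identifies $\mathcal{E}_0$ with $E'\oplus E''$, carrying $\mathcal{F}_0$ onto the subbundle $F'\oplus F''$ and $g^{\mathcal{E}}$ onto $\alpha^*g^E\oplus g^{E''}$. When $F'=E'$ (resp. $F''=E''$) the short exact sequence $0\to\mathcal{F}_0\to\mathcal{E}_0\to\mathcal{E}_0/\mathcal{F}_0\to 0$ is thus the direct sum of the holomorphically split sequence on the $E'$-factor (resp. $E''$-factor) with $0\to F''\to E''\to E''/F''\to 0$ (resp. $0\to F'\to E'\to E'/F'\to 0$), the metrics being the induced ones. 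By the vanishing of Bott-Chern forms on holomorphically split sequences and their multiplicativity under direct sums of sequences, both part of the formalism of $\widetilde{c}$ in \cite[Theorem 1.29]{bgs1}, this yields
\[
\widetilde{c}\big(\mathcal{F}_0,\mathcal{E}_0,g^{\mathcal{E}}\big) =
\begin{cases}
c\big(E',\alpha^*g^E\big)\,\widetilde{c}\big(F'',E'',g^{E''}\big) & \text{if } F'=E' \;,\\
c\big(E'',g^{E''}\big)\,\widetilde{c}\big(F',E',\alpha^*g^E\big) & \text{if } F''=E'' \;.
\end{cases}
\]

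Finally I would invoke the continuity of Bott-Chern forms in a holomorphic family, \cite[Theorem 1.29 ii)]{bgs1}, applied to $0\to\mathcal{F}\to\mathcal{E}\to\mathcal{E}/\mathcal{F}\to 0$ over $S\times\C$, to obtain $\widetilde{c}(\mathcal{F}_z,\mathcal{E}_z,g^{\mathcal{E}})\to\widetilde{c}(\mathcal{F}_0,\mathcal{E}_0,g^{\mathcal{E}})$ as $z\to 0$; since the left-hand side equals $\widetilde{c}(F,E,g^E_\e)$ with $\e=|z|^2$, this is exactly \eqref{eq1-prop2-adiabatic-bc} and \eqref{eq2-prop2-adiabatic-bc}. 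I expect the only genuinely delicate point to be the identification of $\mathcal{F}_0$, together with its induced metric, as the subbundle $F'\oplus F''$ of $\mathcal{E}_0=E'\oplus E''$: one must push the cokernel presentation of $\mathcal{F}$ through $\phi_0$ with care and then match the result against the direct-sum decomposition used to evaluate the Bott-Chern form. Everything else is a rerun of the proof of Proposition \ref{prop-adiabatic-bc} together with formal properties of $\widetilde{c}$.
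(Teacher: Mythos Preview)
Your proposal is correct and follows essentially the same route as the paper's proof: the paper also defines $\mathcal{F}=\mathrm{Coker}\big(\widetilde{\alpha}\oplus\widetilde{\sigma}\big|_{\mathrm{pr}^*F'}:\mathrm{pr}^*F'\to\mathrm{pr}^*F\oplus\mathrm{pr}^*F'\big)\subseteq\mathcal{E}$, identifies $\mathcal{F}_z$ with $\phi_z(F)$ for $z\neq 0$ and $\mathcal{F}_0$ with $F'\oplus F''\subseteq E'\oplus E''$, then invokes \cite[Theorem 1.29]{bgs1} both for the direct-sum computation at $z=0$ and for the continuity as $z\to 0$. Your write-up is in fact slightly more careful than the paper's in verifying that $\mathcal{F}$ is a holomorphic subbundle of $\mathcal{E}$ and in flagging the identification of $\mathcal{F}_0$ as the point requiring attention.
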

\begin{proof}
We will use the notations in the proof of Proposition \ref{prop-adiabatic-bc}.
Set
\begin{equation}
\label{eqf-pf-prop2-adiabatic-bc}
\mathcal{F} = \mathrm{Coker} \Big( \widetilde{\alpha} \oplus \widetilde{\sigma}\big|_{\mathrm{pr}^* F'} :
\mathrm{pr}^* F' \rightarrow \mathrm{pr}^* F \oplus \mathrm{pr}^* F' \Big) \subseteq \mathcal{E} \;.
\end{equation}
For $z\in\C$,
let $\mathcal{F}_z$ be the restriction of $\mathcal{F}$ to $S\times\{z\}$.

For $z\neq 0$,
we have $\phi_z(F) = \mathcal{F}_z \subseteq \mathcal{E}_z$.
By \eqref{eqm-pf-prop-adiabatic-bc},
for $\e = |z|^2 > 0$,
we have
\begin{equation}
\label{eqi2-pf-prop2-adiabatic-bc}
\widetilde{c}\big(\mathcal{F}_z,\mathcal{E}_z,g^{\mathcal{E}_z}\big) =
\widetilde{c}\big(F,E,g^E_\e\big) \;.
\end{equation}
We have $\phi_0(F) = F' \oplus F'' \subseteq  E' \oplus E'' = \mathcal{E}_0$.
By \eqref{eqm-pf-prop-adiabatic-bc},
we have
\begin{equation}
\label{eqi4-pf-prop2-adiabatic-bc}
\widetilde{c}\big(\mathcal{F}_0,\mathcal{E}_0,g^{\mathcal{E}_0}\big) =
\widetilde{c}\big(F'\oplus F'',E' \oplus E'',\alpha^*g^E \oplus g^{E''}\big) \;.
\end{equation}
By \cite[Theorem 1.29]{bgs1},
we have
\begin{align}
\label{eqi5-pf-prop2-adiabatic-bc}
\begin{split}
\widetilde{c}\big(F'\oplus F'',E' \oplus E'',\alpha^*g^E \oplus g^{E''}\big)
& = c\big(E',\alpha^*g^E\big) \widetilde{c}\big(F'',E'',g^{E''}\big)
\hspace{4mm}\text{if } F' = E' \;,\\
\widetilde{c}\big(F'\oplus F'',E' \oplus E'',\alpha^*g^E \oplus g^{E''}\big)
& = c\big(E'',g^{E''}\big)\widetilde{c}\big(F',E',\alpha^*g^E\big)
\hspace{4mm}\text{if } F'' = E'' \;.
\end{split}
\end{align}
On the other hand,
by \cite[Theorem 1.29 ii)]{bgs1},
we have
\begin{equation}
\label{eqc-pf-prop2-adiabatic-bc}
\lim_{z\rightarrow 0} \widetilde{c}\big(\mathcal{F}_z,\mathcal{E}_z,g^{\mathcal{E}_z}\big)
= \widetilde{c}\big(\mathcal{F}_0,\mathcal{E}_0,g^{\mathcal{E}_0}\big) \;.
\end{equation}
From \eqref{eqi2-pf-prop2-adiabatic-bc}-\eqref{eqc-pf-prop2-adiabatic-bc},
we obtain \eqref{eq1-prop2-adiabatic-bc} and \eqref{eq2-prop2-adiabatic-bc}.
This completes the proof.
\end{proof}

Recall that $\mathrm{Td}(\cdot)$ was defined in \eqref{eq-def-char}.
The Bott-Chern form \cite[Section 1f)]{bgs1}
\begin{equation}
\widetilde{\mathrm{Td}}\big(g^{E'},g^E,g^{E''}\big) \in Q^S/Q^{S,0}
\end{equation}
is such that
\begin{equation}
\frac{\overline{\partial}\partial}{2\pi i} \widetilde{\mathrm{Td}}\big(g^{E'},g^E,g^{E''}\big)
= \mathrm{Td}\big(E,g^E\big) - \mathrm{Td}\big(E',g^{E'}\big)\mathrm{Td}\big(E'',g^{E''}\big) \;.
\end{equation}

\begin{prop}
\label{prop3-adiabatic-bc}
Proposition \ref{prop-adiabatic-bc}, \ref{prop2-adiabatic-bc} hold with $c(\cdot)$ replaced by $\mathrm{Td}(\cdot)$.
\end{prop}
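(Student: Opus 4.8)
The plan is to reduce Proposition \ref{prop3-adiabatic-bc} to Propositions \ref{prop-adiabatic-bc} and \ref{prop2-adiabatic-bc} by observing that the proofs of those two statements used only two abstract inputs: the existence of the deformation-to-the-normal-cone family $(\mathcal{E},g^\mathcal{E})$ over $S\times\C$ with the metric behavior \eqref{eqm-pf-prop-adiabatic-bc}, and the continuity/functoriality properties of Bott-Chern forms recorded in \cite[Theorem 1.29]{bgs1}. Since \cite[Theorem 1.29]{bgs1} is stated uniformly for Bott-Chern forms attached to an arbitrary additive genus --- in particular for $\widetilde{\mathrm{Td}}$ as well as $\widetilde{c}$ --- the identical family and the identical limiting arguments go through verbatim.

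Concretely, I would proceed as follows. First I would reuse, without change, the construction of $\mathrm{pr}\colon S\times\C\to S$, the bundles $\mathcal{E}',\mathcal{E},\mathcal{E}''$ in \eqref{eqe4-pf-prop-adiabatic-bc}, the short exact sequence \eqref{eqe5-pf-prop-adiabatic-bc}, the isomorphisms $\phi_z$ ($z\ne 0$) and $\phi_0$ of \eqref{eqi2-pf-prop-adiabatic-bc} and \eqref{eqi6-pf-prop-adiabatic-bc}, the commutative diagrams \eqref{eqi3-pf-prop-adiabatic-bc} and \eqref{eqi7-pf-prop-adiabatic-bc}, the subbundle $\mathcal{F}\subseteq\mathcal{E}$ of \eqref{eqf-pf-prop2-adiabatic-bc}, and the metric $g^\mathcal{E}$ satisfying \eqref{eqm-pf-prop-adiabatic-bc}. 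Then, replacing $c(\cdot)$ by $\mathrm{Td}(\cdot)$ and $\widetilde{c}(\cdot)$ by $\widetilde{\mathrm{Td}}(\cdot)$ everywhere, the analogues of \eqref{eqa-pf-prop-adiabatic-bc} and \eqref{eqi2-pf-prop-adiabatic-bc} hold because $\mathrm{Td}(E,g^E)$ and $\widetilde{\mathrm{Td}}$ are functorial under the pullbacks induced by the $\phi_z$; the analogues of \eqref{eqb-pf-prop-adiabatic-bc}, \eqref{eqi4-pf-prop2-adiabatic-bc} and \eqref{eqi5-pf-prop2-adiabatic-bc} hold by the $\mathrm{Td}$-version of the multiplicativity and splitting statements in \cite[Theorem 1.29 iii)]{bgs1} (using that $\mathrm{Td}$ is multiplicative on direct sums and that $\widetilde{\mathrm{Td}}$ of a split sequence is expressed through $\widetilde{\mathrm{Td}}$ of the pieces by the Leibniz-type formula); and the analogues of the limit statements \eqref{eqc-pf-prop-adiabatic-bc} and \eqref{eqc-pf-prop2-adiabatic-bc} hold by \cite[Theorem 1.29 ii)]{bgs1}, which asserts continuity of Chern and Bott-Chern forms of a genus in the Hermitian data. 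Combining these exactly as in the two preceding proofs yields the claimed convergences with $c$ replaced by $\mathrm{Td}$.

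There is really no new obstacle here; the only point requiring mild care is bookkeeping with the multiplicative (rather than additive) nature of $\mathrm{Td}$, so that the limit in \eqref{eq-prop-adiabatic-bc} becomes $\mathrm{Td}\big(E',\alpha^*g^E\big)\mathrm{Td}\big(E'',g^{E''}\big)$ and the limits in \eqref{eq1-prop2-adiabatic-bc}, \eqref{eq2-prop2-adiabatic-bc} acquire the factors $\mathrm{Td}\big(E',\alpha^*g^E\big)$ and $\mathrm{Td}\big(E'',g^{E''}\big)$ in place of $c\big(E',\alpha^*g^E\big)$ and $c\big(E'',g^{E''}\big)$ respectively --- this is automatic from the $\mathrm{Td}$-version of \cite[Theorem 1.29]{bgs1}. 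I would therefore write the proof as a one-line reduction: "The proofs of Proposition \ref{prop-adiabatic-bc} and Proposition \ref{prop2-adiabatic-bc} apply verbatim with $c(\cdot)$ replaced by $\mathrm{Td}(\cdot)$ and $\widetilde{c}(\cdot)$ replaced by $\widetilde{\mathrm{Td}}(\cdot)$, using that \cite[Theorem 1.29]{bgs1} holds for the Todd genus." This is the cleanest route and avoids reproducing the deformation-to-the-normal-cone construction a third time.
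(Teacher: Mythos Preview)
Your proposal is correct and matches the paper's approach: the paper states Proposition \ref{prop3-adiabatic-bc} without proof, implicitly relying on the same observation you make, namely that the arguments for Propositions \ref{prop-adiabatic-bc} and \ref{prop2-adiabatic-bc} invoke only the deformation family \eqref{eqe5-pf-prop-adiabatic-bc}, \eqref{eqf-pf-prop2-adiabatic-bc} and the genus-independent properties of Bott--Chern forms from \cite[Theorem 1.29]{bgs1}, so everything carries over verbatim with $c$ replaced by $\mathrm{Td}$.
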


Recall that $\mathrm{ch}(\cdot)$ was defined in \eqref{eq-def-char}.
The Bott-Chern form \cite[Section 1f)]{bgs1}
\begin{equation}
\label{eq-def-BCch}
\widetilde{\mathrm{ch}}\big(g^{E'},g^E,g^{E''}\big) \in Q^S/Q^{S,0}
\end{equation}
is such that
\begin{equation}
\frac{\overline{\partial}\partial}{2\pi i} \widetilde{\mathrm{ch}}\big(g^{E'},g^E,g^{E''}\big)
=  \mathrm{ch}\big(E',g^{E'}\big) - \mathrm{ch}\big(E,g^E\big) + \mathrm{ch}\big(E'',g^{E''}\big) \;.
\end{equation}
For another Hermitian metric $\widehat{g}^E$ on $E$,
let
\begin{equation}
\widetilde{\mathrm{ch}}\big(\widehat{g}^E,g^E\big) \in Q^S/Q^{S,0}
\end{equation}
be the Bott-Chern form \cite[Section 1f)]{bgs1} such that
\begin{equation}
\frac{\overline{\partial}\partial}{2\pi i} \widetilde{\mathrm{ch}}\big(\widehat{g}^E,g^E\big)
=  \mathrm{ch}\big(E,\widehat{g}^E\big) - \mathrm{ch}\big(E,g^E\big) \;.
\end{equation}

The following proposition is a direct consequence of the construction of the Bott-Chern form \cite[Section 1f)]{bgs1}.

\begin{prop}
\label{prop-mult-bc}
For another Hermitian metric $\widehat{g}^E$ (resp. $\widehat{g}^{E'}$, $\widehat{g}^{E''}$)  on $E$ (resp. $E'$, $E''$),
we have
\begin{align}
\begin{split}
& \widetilde{\mathrm{ch}}\big(\widehat{g}^{E'},\widehat{g}^E,\widehat{g}^{E''}\big) \\
& = \widetilde{\mathrm{ch}}\big(g^{E'},g^E,g^{E''}\big)
+ \widetilde{\mathrm{ch}}\big(\widehat{g}^{E'},g^{E'}\big)
- \widetilde{\mathrm{ch}}\big(\widehat{g}^E,g^E\big)
+ \widetilde{\mathrm{ch}}\big(\widehat{g}^{E''},g^{E''}\big) \;.
\end{split}
\end{align}
For $a,b>0$,
we have
\begin{equation}
\widetilde{\mathrm{ch}}\big(a g^E,bg^E\big) = \mathrm{ch}\big(E,g^E\big) (\log b - \log a)  \;.
\end{equation}
For $\big(g^E_t\big)_{t>0}$ a family of Hermitian metrics on $E$ converging to $g^E$ as $t\rightarrow 0$,
we have
\begin{equation}
\widetilde{\mathrm{ch}}\big(g^E_t,g^E\big) \rightarrow 0 \hspace{4mm} \text{as } t \rightarrow 0 \;.
\end{equation}
\end{prop}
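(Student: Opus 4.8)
The plan is to deduce all three identities from the explicit construction of the Bott-Chern forms in \cite[Section 1f)]{bgs1}. Recall that there $\widetilde{\mathrm{ch}}(\widehat{g}^E,g^E)$ is realised as the Chern--Simons transgression of $\mathrm{ch}(E,\cdot)$ along a path of Hermitian metrics joining $g^E$ to $\widehat{g}^E$, equivalently as a fibre integral over $\CP^1$ of the Chern character form of a metric on $\mathrm{pr}^*E$ over $S\times\CP^1$ interpolating between $g^E$ and $\widehat{g}^E$; likewise $\widetilde{\mathrm{ch}}(g^{E'},g^E,g^{E''})$ is a fibre integral over $\CP^1$ of the Chern character form of a metric on a holomorphic bundle $\mathcal{E}$ over $S\times\CP^1$ --- depending only on the exact sequence, not on the metrics --- which restricts to $(E,g^E)$ over one point and to the orthogonally split model $(E'\oplus E'',g^{E'}\oplus g^{E''})$ over another, exactly the kind of construction carried out in the proof of Proposition \ref{prop-adiabatic-bc}. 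A cheap preliminary observation is that all three asserted identities already hold at the level of $\frac{\overline{\partial}\partial}{2\pi i}$; the real point is to upgrade this to an equality in $Q^S/Q^{S,0}$, for which one must use the construction itself, not merely the transgression equation.

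For the second identity I would feed the constant-conformal family $g_u=\big((1-u)b+ua\big)g^E$, $u\in[0,1]$, into the transgression formula. A constant rescaling of a Hermitian metric leaves the Chern connection, hence the curvature, unchanged, so the curvature factor is simply $\mathrm{ch}(E,g^E)$ while the variation factor is $\big(\log((1-u)b+ua)\big)'\,\Id_E$; the integral over $u$ then collapses to $(\log b-\log a)\,\mathrm{ch}(E,g^E)$, the overall sign being fixed by the normalisation in \cite[Section 1f)]{bgs1}. For the third identity I would invoke the continuous dependence of the Bott-Chern forms on the metrics --- part of \cite[Theorem 1.29]{bgs1}, and already used elsewhere in this section --- together with the vanishing $\widetilde{\mathrm{ch}}(g^E,g^E)=0$, which is immediate from the construction since a constant path of metrics contributes nothing to the transgression: as $g^E_t\to g^E$ one gets $\widetilde{\mathrm{ch}}(g^E_t,g^E)\to\widetilde{\mathrm{ch}}(g^E,g^E)=0$ in $Q^S/Q^{S,0}$.

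The first identity is where the real work lies, and it is the step I expect to be the main obstacle. The plan is to run the transgression construction in two parameters at once: a first $\CP^1$-factor degenerating the extension $E\rightsquigarrow E'\oplus E''$ as above, and a second $\CP^1$-factor carrying a path of triples of metrics from $(g^{E'},g^E,g^{E''})$ to $(\widehat{g}^{E'},\widehat{g}^E,\widehat{g}^{E''})$, yielding a single metrized holomorphic bundle on $S\times\CP^1\times\CP^1$. Its double fibre integral of the Chern character form is well defined, so $\frac{\overline{\partial}\partial}{2\pi i}$ of it may be evaluated by collapsing either $\CP^1$-factor first; equating the two evaluations produces exactly the four-term relation --- collapsing the first factor at the two ends of the metric path gives $\widetilde{\mathrm{ch}}(\widehat{g}^{E'},\widehat{g}^E,\widehat{g}^{E''})-\widetilde{\mathrm{ch}}(g^{E'},g^E,g^{E''})$, while collapsing the second factor at the two ends of the extension degeneration gives $\widetilde{\mathrm{ch}}(\widehat{g}^E,g^E)$ at the generic end and $\widetilde{\mathrm{ch}}(\widehat{g}^{E'},g^{E'})+\widetilde{\mathrm{ch}}(\widehat{g}^{E''},g^{E''})$ at the end where $\mathcal{E}$ splits. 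The delicate part is arranging the double-interpolating metric so that all of these restrictions come out as stated and keeping track of orientations so that the four contributions assemble with the correct signs; the rest is formal. Alternatively, since this relation is one of the structural compatibilities of Bott-Chern forms already recorded in \cite[Section 1f)]{bgs1}, one may simply cite it.
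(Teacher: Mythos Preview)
Your proposal is correct and follows the same approach as the paper: the paper gives no proof at all, merely stating that the proposition ``is a direct consequence of the construction of the Bott-Chern form \cite[Section 1f)]{bgs1},'' so your elaboration of how each identity falls out of that construction (the transgression formula for rescalings, continuity plus $\widetilde{\mathrm{ch}}(g^E,g^E)=0$, and the two-parameter/structural compatibility argument or direct citation for the first identity) is precisely an unpacking of what the paper leaves implicit.
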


Let $E^*$ be the dual of $E$.
For $p=0,\cdots,\dim E$ and $s=0,\cdots,p-1$,
set
\begin{align}
\label{eq-Ips}
\begin{split}
I^p_s = \Big\{ u \in \Lambda^pE^* \; & : \; u(v_1,\cdots,v_p) = 0 \\
& \hspace{5mm} \text{for any } v_1,\cdots,v_{s+1} \in E',\; v_{s+2},\cdots,v_p \in E \Big\} \;.
\end{split}
\end{align}
For convenience,
we denote $I^p_p = \Lambda^pE^*$ and $I^p_{-1} = 0$.
We get a filtration
\begin{equation}
\Lambda^pE^* = I^p_p \hookleftarrow I^p_{p-1} \hookleftarrow \cdots \hookleftarrow I^p_{-1} = 0 \;.
\end{equation}
For $r=0,\cdots,\dim E''$ and $s=0,\cdots,\dim E'$,
we denote $E_{r,s} = \Lambda^s{E'}^* \otimes \Lambda^r{E''}^*$.
We have a short exact sequence of holomorphic vector bundles over $S$,
\begin{equation}
0 \rightarrow I^{r+s}_{s-1} \rightarrow I^{r+s}_s \rightarrow E_{r,s} \rightarrow 0 \;.
\end{equation}

Recall that $g^E_\e$ was defined in \eqref{eq-gEe}.
Let $g^{\Lambda^pE^*}_\e$ be the Hermitian metric on $\Lambda^pE^*$ induced by $g^E_\e$.
Let $g^{I^{r+s}_s}_\e$ be the restriction of $g^{\Lambda^pE^*}_\e$ to $I^{r+s}_s$.
Let $g^{E_{r,s}}_\e$ be the quotient metric on $E_{r,s}$ induced by $g^{I^{r+s}_s}_\e$ via the surjection $I^{r+s}_s \rightarrow E_{r,s}$.

Similarly to Proposition \ref{prop-adiabatic-bc},
we have the following proposition.

\begin{prop}
\label{prop4-adiabatic-bc}
As $\e\rightarrow 0$,
\begin{equation}
\label{eq-prop4-adiabatic-bc}
\widetilde{\mathrm{ch}}\big(g^{I^{r+s}_{s-1}}_\e,g^{I^{r+s}_s}_\e,g^{E_{r,s}}_\e\big) \rightarrow 0 \;.
\end{equation}
\end{prop}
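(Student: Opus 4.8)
The plan is to run the argument of Proposition~\ref{prop-adiabatic-bc} one level up, on the filtration of $\Lambda^{r+s}E^*$. Namely, I would realize the family $\big(g^{I^{r+s}_{s-1}}_\e,g^{I^{r+s}_s}_\e,g^{E_{r,s}}_\e\big)_{\e>0}$ of metrized short exact sequences as the family of restrictions to the slices $S\times\{z\}$, with $\e=|z|^2$, of a single metrized short exact sequence of holomorphic vector bundles over $S\times\C$ whose restriction to $S\times\{0\}$ splits orthogonally, and then conclude by the continuity of Bott-Chern forms in the parameter $z$, exactly as in the proof of Proposition~\ref{prop-adiabatic-bc}. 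The only feature absent there, where the total Chern form is scale-invariant, is an overall rescaling of all three metrics by a power of~$\e$; I expect this to be the sole delicate point, and I expect it to disappear modulo $Q^{S,0}$.

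Concretely, I would reuse the bundles $\mathcal{E}'=\mathrm{pr}^*E'$, $\mathcal{E}=\mathrm{Coker}\big(\widetilde{\alpha}\oplus\widetilde{\sigma}\big)$ and $\mathcal{E}''$ over $S\times\C$, together with the Hermitian metric $g^{\mathcal{E}}$ of \eqref{eqm-pf-prop-adiabatic-bc}, from the proof of Proposition~\ref{prop-adiabatic-bc}. Using the subbundle $\mathcal{E}'\subseteq\mathcal{E}$ I define $\mathcal{I}^{r+s}_s\subseteq\Lambda^{r+s}\mathcal{E}^*$ and $\mathcal{I}^{r+s}_{s-1}$ by the recipe \eqref{eq-Ips}, I set $\mathcal{E}_{r,s}=\Lambda^s{\mathcal{E}'}^*\otimes\Lambda^r{\mathcal{E}''}^*$, and I obtain a short exact sequence $0\rightarrow\mathcal{I}^{r+s}_{s-1}\rightarrow\mathcal{I}^{r+s}_s\rightarrow\mathcal{E}_{r,s}\rightarrow 0$ of holomorphic vector bundles over $S\times\C$. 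I equip $\Lambda^{r+s}\mathcal{E}^*$ with the metric induced by $g^{\mathcal{E}}$, $\mathcal{I}^{r+s}_s$ and $\mathcal{I}^{r+s}_{s-1}$ with the restricted metrics, and $\mathcal{E}_{r,s}$ with the quotient metric, and I consider the Bott-Chern form $\widetilde{\mathrm{ch}}\big(g^{\mathcal{I}^{r+s}_{s-1}},g^{\mathcal{I}^{r+s}_s},g^{\mathcal{E}_{r,s}}\big)$ of this sequence over $S\times\C$.

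For $z\neq 0$, the isomorphism $\phi_z$ of \eqref{eqi2-pf-prop-adiabatic-bc} carries $E'$ onto $\mathcal{E}'_z$ (diagram \eqref{eqi3-pf-prop-adiabatic-bc}), hence, by the functoriality of \eqref{eq-Ips}, identifies the restriction of the above sequence to $S\times\{z\}$ with $0\rightarrow I^{r+s}_{s-1}\rightarrow I^{r+s}_s\rightarrow E_{r,s}\rightarrow 0$, carrying the metrics to those induced by $\phi_z^*g^{\mathcal{E}_z}=|z|^2g^E+\beta^*g^{E''}=\e g^E_\e$ (see \eqref{eqm-pf-prop-adiabatic-bc}, \eqref{eq-gEe}), which are $\e^{-(r+s)}$ times $g^{I^{r+s}_{s-1}}_\e$, $g^{I^{r+s}_s}_\e$, $g^{E_{r,s}}_\e$. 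By Proposition~\ref{prop-mult-bc}, this overall rescaling changes $\widetilde{\mathrm{ch}}$ by $(r+s)\log\e$ times $\mathrm{ch}\big(I^{r+s}_{s-1},g^{I^{r+s}_{s-1}}_\e\big)-\mathrm{ch}\big(I^{r+s}_s,g^{I^{r+s}_s}_\e\big)+\mathrm{ch}\big(E_{r,s},g^{E_{r,s}}_\e\big)$, which by the definition of the Bott-Chern form (cf. \eqref{eq-def-BCch}) is $\overline{\partial}\partial$-exact, hence lies in $Q^{S,0}$ and vanishes in $Q^S/Q^{S,0}$. Thus, modulo $Q^{S,0}$, the restriction of $\widetilde{\mathrm{ch}}\big(g^{\mathcal{I}^{r+s}_{s-1}},g^{\mathcal{I}^{r+s}_s},g^{\mathcal{E}_{r,s}}\big)$ to $S\times\{z\}$ equals $\widetilde{\mathrm{ch}}\big(g^{I^{r+s}_{s-1}}_\e,g^{I^{r+s}_s}_\e,g^{E_{r,s}}_\e\big)$ with $\e=|z|^2$.

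For $z=0$, the isomorphism $\phi_0$ of \eqref{eqi6-pf-prop-adiabatic-bc} identifies $\mathcal{E}_0$ with $E'\oplus E''$ as holomorphic bundles, carrying $\mathcal{E}'_0$ to $E'$, and identifies $\phi_0^*g^{\mathcal{E}_0}$ with the orthogonal sum $\alpha^*g^E\oplus g^{E''}$; hence $\Lambda^{r+s}\mathcal{E}_0^*$ splits holomorphically and orthogonally as $\bigoplus_a\Lambda^a{E'}^*\otimes\Lambda^{r+s-a}{E''}^*$, and under this splitting $\mathcal{I}^{r+s}_s|_0=\bigoplus_{a\leqslant s}\Lambda^a{E'}^*\otimes\Lambda^{r+s-a}{E''}^*$ and $\mathcal{I}^{r+s}_{s-1}|_0=\bigoplus_{a\leqslant s-1}\Lambda^a{E'}^*\otimes\Lambda^{r+s-a}{E''}^*$, so the restricted sequence is an orthogonal direct sum of holomorphic vector bundles and its Bott-Chern form vanishes by \cite[Theorem 1.29 iii)]{bgs1}. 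Since the restriction of $\widetilde{\mathrm{ch}}\big(g^{\mathcal{I}^{r+s}_{s-1}},g^{\mathcal{I}^{r+s}_s},g^{\mathcal{E}_{r,s}}\big)$ to $S\times\{z\}$ depends continuously (in $Q^S/Q^{S,0}$) on $z$ by \cite[Theorem 1.29 ii)]{bgs1}, letting $z\rightarrow 0$ yields $\widetilde{\mathrm{ch}}\big(g^{I^{r+s}_{s-1}}_\e,g^{I^{r+s}_s}_\e,g^{E_{r,s}}_\e\big)\rightarrow 0$ as $\e\rightarrow 0$, which is \eqref{eq-prop4-adiabatic-bc}. The delicate step is the rescaling bookkeeping of the previous paragraph; the verification that $\mathcal{I}^{r+s}_\bullet$ restricts to the asserted subbundles over every slice, including $z=0$, is routine from the functoriality of \eqref{eq-Ips} in the pair $\mathcal{E}'\subseteq\mathcal{E}$, and the vanishing at $z=0$ uses only that the limiting sequence splits.
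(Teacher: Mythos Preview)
Your argument is correct and follows essentially the same route as the paper: build the short exact sequence $0\rightarrow\mathcal{I}^{r+s}_{s-1}\rightarrow\mathcal{I}^{r+s}_s\rightarrow\mathcal{E}_{r,s}\rightarrow 0$ over $S\times\C$ from the pair $\mathcal{E}'\subseteq\mathcal{E}$, then repeat the proof of Proposition~\ref{prop-adiabatic-bc} verbatim (continuity in $z$ plus vanishing at $z=0$). The paper's proof says exactly this, in one sentence, and leaves the rescaling bookkeeping implicit; your explicit verification that the common factor $\e^{-(r+s)}$ alters $\widetilde{\mathrm{ch}}$ only by $(r+s)\log\e$ times a $\overline{\partial}\partial$-exact term, hence by an element of $Q^{S,0}$, is the correct way to fill that gap.
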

\begin{proof}
Let $0 \rightarrow \mathcal{E}' \rightarrow \mathcal{E} \rightarrow \mathcal{E}'' \rightarrow 0$ be as in \eqref{eqe5-pf-prop-adiabatic-bc}.
Let $\mathcal{I}^p_s \subseteq \Lambda^p\mathcal{E}^*$ be as in \eqref{eq-Ips} with $E$ replaced by $\mathcal{E}$ and $E'$ replaced by $\mathcal{E}'$.
We denote $\mathcal{E}_{r,s} = \Lambda^s{\mathcal{E}'}^* \otimes \Lambda^r{\mathcal{E}''}^*$.
We have a short exact sequence of holomorphic vector bundles over $S\times\C$,
\begin{equation}
\label{eqe-pf-prop4-adiabatic-bc}
0 \rightarrow \mathcal{I}^{r+s}_{s-1} \rightarrow \mathcal{I}^{r+s}_s \rightarrow \mathcal{E}_{r,s} \rightarrow 0 \;.
\end{equation}
Proceeding in the same way as in the proof of Proposition \ref{prop-adiabatic-bc} with \eqref{eqe5-pf-prop-adiabatic-bc} replaced by \eqref{eqe-pf-prop4-adiabatic-bc},
we obtain \eqref{eq-prop4-adiabatic-bc}.
This completes the proof.
\end{proof}

\subsection{Quillen metric}
\label{subsect-q}

Let $X$ be an $n$-dimensional compact K{\"a}hler manifold.
Let $E$ be a holomorphic vector bundle over $X$.
Let $\overline{\partial}^E$ be the Dolbeault operator on
\begin{equation}
\Omega^{0,\bullet}(X,E) =
\smooth\big(X,\Lambda^\bullet(\overline{T^*X})\otimes E\big) \;.
\end{equation}
For $q=0,\cdots,n$,
we have $H^q(X,E) = H^q\big(\Omega^{0,\bullet}(X,E),\overline{\partial}^E\big)$.
Set
\begin{equation}
\label{eq-lambda-E}
\lambda(E) = \det H^\bullet(X,E) := \bigotimes_{q=0}^n \Big(\det H^q(X,E)\Big)^{(-1)^q} \;.
\end{equation}

Let $g^{TX}$ be a K{\"a}hler metric on $TX$.
Let $g^E$ be a Hermitian metric on $E$.
Let $\big\langle\cdot,\cdot\big\rangle_{\Lambda^\bullet(\overline{T^*X})\otimes E}$
be the Hermitian product on $\Lambda^\bullet(\overline{T^*X})\otimes E$ induced by $g^{TX}$ and $g^E$.
Let $dv_X$ be the volume form on $X$ induced by $g^{TX}$.
For $s_1,s_2\in \Omega^{0,\bullet}(X,E)$,
set
\begin{equation}
\label{eq-def-L2-metric}
\big\langle s_1,s_2 \big\rangle = (2\pi)^{-n} \int_X
\big\langle s_1,s_2 \big\rangle_{\Lambda^\bullet(\overline{T^*X})\otimes E} dv_X \;,
\end{equation}
which we call the $L^2$-product.

Let $\overline{\partial}^{E,*}$ be the formal adjoint of $\overline{\partial}^E$
with respect to the Hermitian product \eqref{eq-def-L2-metric}.
The Dolbeault Laplacian on $\Omega^{0,\bullet}(X,E)$ is defined by
\begin{equation}
\label{def-Delta-dol}
\Delta^E = \overline{\partial}^E\overline{\partial}^{E,*} + \overline{\partial}^{E,*}\overline{\partial}^E \;.
\end{equation}
Let $\Delta^E_q$ be the restriction of $\Delta^E$ to $\Omega^{0,q}(X,E)$.

By the Hodge theorem,
we have
\begin{equation}
\mathrm{Ker}\big(\Delta^E_q\big) =
\Big\{s\in\Omega^{0,q}(X,E)\;:\;
\overline{\partial}^Es=0\;,\;\overline{\partial}^{E,*}s=0\Big\} \;.
\end{equation}
Moreover,
the following map is bijective,
\begin{align}
\label{eq-iso-hodge}
\begin{split}
\mathrm{Ker}\big(\Delta^E_q\big) & \rightarrow H^q(X,E) \\
s & \mapsto [s] \;.
\end{split}
\end{align}
Let $\big|\cdot\big|_{\lambda(E)}$ be the metric on $\lambda(E)$
induced by the metric \eqref{eq-def-L2-metric} via \eqref{eq-lambda-E} and \eqref{eq-iso-hodge}.

Let $\Sp(\Delta^E_q)$ be the spectrum of $\Delta^E_q$,
which is a multiset.
For $z\in\C$ with $\mathrm{Re}(z)>n$,
set
\begin{equation}
\label{eq-def-theta}
\theta(z) = \sum_{q=1}^n (-1)^{q+1}q \sum_{\lambda\in\Sp(\Delta^E_q),\lambda\neq 0}  \lambda^{-z} \;.
\end{equation}
By \cite{se},
the function $\theta(z)$ extends to a meromorphic function of $z\in\C$,
which is holomorphic at $z=0$.

The following definition is due to Quillen \cite{q}, Bismut, Gillet and Soul{\'e} \cite[Section 1d)]{bgs3}.

\begin{defn}
\label{def-q}
The Quillen metric on $\lambda(E)$ is defined by
\begin{equation}
\big\lVert\cdot\big\rVert_{\lambda(E)} =
\exp\Big(\frac{1}{2}\theta'(0)\Big)
\big|\cdot\big|_{\lambda(E)} \;.
\end{equation}
\end{defn}

\begin{rem}
\label{rem-q}
Denote $\chi(X,E) = \sum_{q=0}^n (-1)^q \dim H^q(X,E)$.
For $a>0$,
if we replace $g^E$ by $ag^E$,
then $\big\lVert\cdot\big\rVert_{\lambda(E)}$ is replaced by $a^{\chi(X,E)/2}\big\lVert\cdot\big\rVert_{\lambda(E)}$.
\end{rem}

\subsection{Analytic torsion form}
\label{subsect-tf}

Let $\pi: X \rightarrow Y$ be a holomorphic submersion between K{\"a}hler manifolds.
For $y\in Y$,
we denote $Z_y = \pi^{-1}(y)$.
We may omit the index $y$ as long as there is no confusion.
We assume that $Z$ is compact.

Let $E$ be a holomorphic vector bundle over $X$.
Let $R^\bullet\pi_*E$ be the derived direct image of $E$,
which is a graded analytic coherent sheaf on $Y$.
We assume that $R^\bullet\pi_*E$ is a graded holomorphic vector bundle.
Let $H^\bullet(Z,E)$ be the fiberwise cohomology.
More precisely,
its fiber at $y\in Y$ is given by $H^\bullet\big(Z_y,E\big|_{Z_y}\big)$.
We have a canonical identification $R^\bullet\pi_*E = H^\bullet(Z,E)$.
We have the Grothendieck-Riemann-Roch formula,
\begin{equation}
\label{eq-grr}
\mathrm{ch}(H^\bullet(Z,E)) = \int_Z \mathrm{Td}(TZ) \mathrm{ch}(E) \in H^\mathrm{even}_\mathrm{dR}(Y) \;.
\end{equation}

Let $\omega\in\Omega^{1,1}(X)$ be a K{\"a}hler form.
Let $g^{TZ}$ be the Hermitian metric on $TZ$ associated with $\omega$.
Let $g^E$ be a Hermitian metric on $E$.
Let $g^{H^\bullet(Z,E)}$ be the $L^2$-metric on $H^\bullet(Z,E)$ associated with $g^{TZ}$ and $g^E$.

We will use the notations in \eqref{eq-def-QQ}.
Let $\mathrm{ch}\big(H^\bullet(Z,E),g^{H^\bullet(Z,E)}\big) \in Q^Y$ be the Chern characteristic form of $\big(H^\bullet(Z,E),g^{H^\bullet(Z,E)}\big)$.
We introduce $\mathrm{Td}\big(TZ,g^{TZ}\big) \in Q^X$ and $\mathrm{ch}\big(E,g^E\big) \in Q^X$ in the same way.

Bismut and K{\"o}hler \cite[Definition 3.8]{bk} defined the analytic torsion forms.
The analytic torsion form associated with $\big(\pi: X \rightarrow Y, \omega, E, g^E\big)$ is a differential form on $Y$,
which we denote by $T\big(\omega,g^E\big)$.
Moreover,
we have
\begin{equation}
\label{eq-def-tf}
T\big(\omega,g^E\big) \in Q^Y \;.
\end{equation}
We sometimes view $T\big(\omega,g^E\big)$ as an element in $Q^Y/Q^{Y,0}$.
By \cite[Theorem 3.9]{bk},
we have
\begin{equation}
\label{eq-tf}
\frac{\overline{\partial}\partial}{2\pi i} T\big(\omega,g^E\big) =
\mathrm{ch}\big(H^\bullet(Z,E),g^{H^\bullet(Z,E)}\big) - \int_Z \mathrm{Td}\big(TZ,g^{TZ}\big) \mathrm{ch}\big(E,g^E\big) \;.
\end{equation}
The identity \eqref{eq-tf} is a refinement of the Grothendieck-Riemann-Roch formula \eqref{eq-grr}.

For $y\in Y$,
let $\theta_y(z)$ be as in \eqref{eq-def-theta} with $\big(X,g^{TX},E,g^E\big)$ replaced by $\big(Z_y,g^{TZ_y},E\big|_{Z_y},g^E\big|_{Z_y}\big)$.
Let $\theta'(0)$ be the function $y \mapsto \theta_y'(0)$ on $Y$.
By the construction of the analytic torsion forms,
we have
\begin{equation}
\label{eq-tf-deg0}
\big\{T\big(\omega,g^E\big)\big\}^{(0,0)} = \theta'(0) \in \smooth(Y)\;,
\end{equation}
where $\big\{\cdot\big\}^{(0,0)}$ means the component of degree $(0,0)$.

Let $F$ be a holomorphic vector bundle over $Y$.
Let $\pi^*F$ be its pull-back via $\pi$,
which is a holomorphic vector bundle over $X$.
Let $g^F$ be a Hermitian metric on $F$.
Let $g^{E\otimes\pi^*F}$ be the Hermitian metric on $E\otimes\pi^*F$ induced by $g^E$ and $g^F$.
Let
\begin{equation}
T\big(\omega,g^{E\otimes\pi^*F}\big) \in Q^Y
\end{equation}
be the analytic torsion form associated with $\big(\pi: X \rightarrow Y, \omega, E\otimes\pi^*F, g^{E\otimes\pi^*F}\big)$.

The following proposition is a direct consequence of the construction of the analytic torsion forms.

\begin{prop}
\label{prop-tf-mult}
The following identity holds in $Q^Y/Q^{Y,0}$,
\begin{equation}
T\big(\omega,g^{E\otimes\pi^*F}\big) =  \mathrm{ch}\big(F,g^F\big) T\big(\omega,g^E\big) \;.
\end{equation}
\end{prop}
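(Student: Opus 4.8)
The plan is to unwind the construction of the analytic torsion form recalled from \cite[Definition 3.8]{bk} and to check that passing from $E$ to $E\otimes\pi^*F$ simply multiplies each ingredient of that construction by the closed form $\mathrm{ch}\big(F,g^F\big)$. Recall that $T\big(\omega,g^E\big)$ is built from the Bismut superconnection $\mathbf{A}_t$ ($t>0$) on the infinite-rank bundle $\mathcal{E}\rightarrow Y$ with fibre $\mathcal{E}_y=\Omega^{0,\bullet}\big(Z_y,E\big|_{Z_y}\big)$: denoting by $N$ the vertical number operator and by $\phi$ the operator rescaling a form of degree $2k$ on $Y$ by $(2\pi i)^{-k}$, one forms $\phi\,\mathrm{Tr}_{\mathrm{s}}\big[N\exp\big(-\mathbf{A}_t^2\big)\big]\in Q^Y$, subtracts its $t\rightarrow\infty$ limit (which after the Hodge identification is expressed through $\big(H^\bullet(Z,E),g^{H^\bullet(Z,E)}\big)$), and applies a Mellin transform in $t$ together with the usual zeta-type regularization at $t=0$. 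First I would fix the Chern connection $\nabla^F$ on $\big(F,g^F\big)$ and take as superconnection for $E\otimes\pi^*F$ the tensor-product superconnection $\mathbf{A}_t'=\mathbf{A}_t\otimes\Id_F+\Id_\mathcal{E}\otimes\nabla^F$; this is legitimate precisely because $F$ is pulled back from $Y$, so $\pi^*\nabla^F$ has no vertical component and the fibrewise Dolbeault operator, its adjoint, the Clifford term and the $L^2$-metrics entering $\mathbf{A}_t$ are merely tensored with $\big(F,g^F\big)$.

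The heart of the argument is the identity $\big(\mathbf{A}_t'\big)^2=\mathbf{A}_t^2\otimes\Id_F+\Id_\mathcal{E}\otimes R^F$, where $R^F$ is the Chern curvature of $\big(F,g^F\big)$: the curvature of the tensor-product connection on $\mathcal{E}\otimes F$ is $R^\mathcal{E}\otimes\Id_F+\Id_\mathcal{E}\otimes R^F$, and the remaining cross terms between $\Id_\mathcal{E}\otimes\nabla^F$ and the vertical/Clifford part of $\mathbf{A}_t$ vanish because, $\nabla^F$ having no vertical component, the relevant operators act on complementary tensor factors and the sign produced by reordering forms on $Y$ forces a cancellation. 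Since $R^F$ is an even-degree form on $Y$ it commutes with $\mathbf{A}_t^2\otimes\Id_F$, hence $\exp\big(-(\mathbf{A}_t')^2\big)=\exp\big(-\mathbf{A}_t^2\big)\otimes\exp\big(-R^F\big)$, and taking the (super)trace together with $\phi$ gives
\[
\phi\,\mathrm{Tr}_{\mathrm{s}}\big[N\exp\big(-(\mathbf{A}_t')^2\big)\big]
= \mathrm{ch}\big(F,g^F\big)\,\phi\,\mathrm{Tr}_{\mathrm{s}}\big[N\exp\big(-\mathbf{A}_t^2\big)\big] \;.
\]
The same factor $\mathrm{ch}\big(F,g^F\big)$ is extracted from the $t\rightarrow\infty$ term, using the projection formula $H^\bullet\big(Z,E\otimes\pi^*F\big)=H^\bullet(Z,E)\otimes F$ together with $g^{H^\bullet(Z,E\otimes\pi^*F)}=g^{H^\bullet(Z,E)}\otimes g^F$. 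As $\mathrm{ch}\big(F,g^F\big)$ is closed and independent of $t$, it passes through the Mellin transform and the regularization at $t=0$, and one obtains $T\big(\omega,g^{E\otimes\pi^*F}\big)=\mathrm{ch}\big(F,g^F\big)T\big(\omega,g^E\big)$ in $Q^Y/Q^{Y,0}$, the quotient absorbing the dependence of the construction on the auxiliary choices (horizontal distribution, etc.) that enter $\mathbf{A}_t'$.

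I expect the only step needing genuine care to be the verification that $\Id_\mathcal{E}\otimes\nabla^F$ decouples from the vertical and Clifford parts of $\mathbf{A}_t$ inside $\big(\mathbf{A}_t'\big)^2$; this is exactly where the hypothesis that $F$ comes from the base is used, and everything else is the routine stability of a closed $t$-independent factor under the operations defining the torsion form, consistently with the assertion that the proposition is a direct consequence of the construction of the analytic torsion forms.
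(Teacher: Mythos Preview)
Your proposal is correct and is precisely the unwinding that the paper has in mind when it says the proposition ``is a direct consequence of the construction of the analytic torsion forms'': the paper gives no further argument, and your verification that tensoring by $\pi^*F$ replaces the Bismut superconnection by its tensor product with $\nabla^F$, so that $\exp\big(-(\mathbf{A}_t')^2\big)$ factors as $\exp\big(-\mathbf{A}_t^2\big)\otimes\exp\big(-R^F\big)$, is exactly what is needed. One small remark: with the same $\omega$ and the tensor-product metric $g^{E\otimes\pi^*F}=g^E\otimes\pi^*g^F$, the superconnection $\mathbf{A}_t'$ \emph{is} the Bismut superconnection for $E\otimes\pi^*F$, so no auxiliary choices are made and the identity actually holds in $Q^Y$ before passing to the quotient; your closing sentence about the quotient absorbing auxiliary choices is therefore unnecessary, though harmless for the statement as given.
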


For $p=0,\cdots,\dim Z$,
let $g^{\Lambda^p(T^*Z)}$ be the metric on $\Lambda^p(T^*Z)$ induced by $g^{TZ}$.
Let
\begin{equation}
T\big(\omega,g^{\Lambda^p(T^*Z)}\big) \in Q^Y
\end{equation}
be the analytic torsion form associated with $\big(\pi: X \rightarrow Y, \omega, \Lambda^p(T^*Z), g^{\Lambda^p(T^*Z)}\big)$.

The following theorem is due to Bismut \cite[Theorem 4.15]{b04}.

\begin{thm}
\label{thm-tf-vanishing}
The following identity holds in $Q^Y/Q^{Y,0}$,
\begin{equation}
\sum_{p=0}^{\dim Z} (-1)^p T\big(\omega,g^{\Lambda^p(T^*Z)}\big) = 0 \;.
\end{equation}
\end{thm}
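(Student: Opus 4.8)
The plan is to follow Bismut's argument \cite{b04}, organized around the K{\"a}hler identity $\Delta^{\Lambda^p(T^*Z)}_{\overline\partial}=\frac{1}{2}\Delta_{d_Z}$ on $\Omega^{0,\bullet}(Z,\Lambda^p(T^*Z))=\Omega^{p,\bullet}(Z)$, which is what links the Dolbeault complexes in the statement to the de Rham complex of the fibers. Since the identity is to hold in $Q^Y/Q^{Y,0}$, I would first split it into the component of bidegree $(0,0)$ — which by \eqref{eq-tf-deg0} is the fiberwise Ray--Singer quantity $\sum_p(-1)^p\theta_p'(0)$, with $\theta_p$ the function \eqref{eq-def-theta} for $E=\Lambda^p(T^*Z)$ — and the components of positive degree; these require genuinely different arguments. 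By \eqref{eq-tf} and the first identity in Proposition~\ref{prop-total-class}, $\frac{\overline\partial\partial}{2\pi i}$ of the sum is governed only by the Euler form $c_n(TZ,g^{TZ})$ — which integrates over the fiber to the constant $\chi(Z)$ — and by the Gauss--Manin cohomology bundle, but this pins the sum down only up to a $\overline\partial\partial$-closed form, whereas the claim is the stronger one that it lies in $Q^{Y,0}$.

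For the $(0,0)$-component I would argue pointwise on $Y$. Writing $\zeta_{p,q}(z)$ for the zeta function of the nonzero spectrum of $\Delta^{\Lambda^p(T^*Z)}_{\overline\partial}$ acting on $\Omega^{p,q}(Z)$, one has $\sum_p(-1)^p\theta_p(z)=-\sum_{p,q}(-1)^{p+q}q\,\zeta_{p,q}(z)$. Complex conjugation $\Omega^{p,q}(Z)\cong\overline{\Omega^{q,p}(Z)}$ intertwines the corresponding Laplacians, so $\zeta_{p,q}=\zeta_{q,p}$, and symmetrizing in $(p,q)$ gives $2\sum_p(-1)^p\theta_p(z)=-\sum_{p,q}(-1)^{p+q}(p+q)\,\zeta_{p,q}(z)$. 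The K{\"a}hler identity identifies $\Sp\Delta^{\Lambda^p(T^*Z)}_{\overline\partial}\big|_{\Omega^{p,q}(Z)}$ with $\frac{1}{2}\Sp\Delta_{d_Z}\big|_{\Omega^{p,q}(Z)}$, and since $\Delta_{d_Z}$ preserves the bidegree these pieces assemble, over $p+q=k$, into $\Sp\Delta_{d_Z}\big|_{\Omega^k(Z)}$; hence $\sum_p(-1)^p\theta_p(z)=-2^{z-1}\sum_k(-1)^k k\,\xi_k(z)$, with $\xi_k$ the de Rham zeta function in degree $k$. Poincar{\'e} duality ($*$ commutes with $\Delta_{d_Z}$ on the $2n$-dimensional fiber) gives $\xi_k=\xi_{2n-k}$, whence $\sum_k(-1)^k k\,\xi_k(z)=n\sum_k(-1)^k\xi_k(z)$; and the latter vanishes identically, because on each nonzero eigenspace of $\Delta_{d_Z}$ the de Rham complex is acyclic (contracting homotopy $d_Z^*/\mu$), so the alternating sum of the dimensions of its graded pieces vanishes. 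Thus $\sum_p(-1)^p\theta_p(z)\equiv 0$, and the $(0,0)$-component of the theorem follows at $z=0$.

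For the positive-degree components I would use the full argument of \cite{b04}. One views $\bigoplus_p(-1)^p\Lambda^p(T^*Z)$ with $\overline\partial$ as the associated graded, for the bidegree filtration, of the fiberwise de Rham complex $(\Omega^\bullet(Z),d_Z)$; on the K{\"a}hler fibration the fiberwise Hodge decomposition makes the $L^2$-metric on $H^k_\mathrm{dR}(Z)$ split orthogonally as $\bigoplus_{p+q=k}H^{p,q}(Z)$. One then shows that $\sum_p(-1)^pT(\omega,g^{\Lambda^p(T^*Z)})$ coincides, modulo $Q^{Y,0}$, with a de Rham analogue of the torsion form attached to $(\Omega^\bullet(Z),d_Z)$ carrying these Hodge metrics, the discrepancy being a finite sum of Bott--Chern terms $\widetilde{\mathrm{Td}}$, $\widetilde{\mathrm{ch}}$ for the exact sequences relating $\Lambda^\bullet(T^*Z)$ to its filtration; these are evaluated by the adiabatic-limit and multiplicativity properties of the Bott--Chern forms (Propositions~\ref{prop-adiabatic-bc}--\ref{prop4-adiabatic-bc}). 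Using Proposition~\ref{prop-total-class} — which makes $\mathrm{Td}(TZ,g^{TZ})\sum_p(-1)^p\mathrm{ch}(\Lambda^p(T^*Z),g^{\Lambda^p(T^*Z)})$ equal to $c_n(TZ,g^{TZ})$ — together with the flatness of the Gauss--Manin cohomology bundle, one checks that these correction terms cancel in $Q^Y/Q^{Y,0}$. Finally, the fibers being of even real dimension, the de Rham torsion form itself vanishes in $Q^Y/Q^{Y,0}$: the fiberwise cohomology carries a Poincar{\'e} duality pairing, which renders the de Rham torsion form self-dual and forces it to vanish — the family analogue of the triviality of Ray--Singer torsion for even-dimensional closed manifolds.

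The main obstacle is the comparison of the previous paragraph: proving, with the required uniform heat-kernel and superconnection estimates, that the alternating sum of the Dolbeault torsion forms agrees modulo Bott--Chern terms with the de Rham torsion form, and tracking those terms precisely enough to exhibit their cancellation. This is the analytic heart of \cite{b04} and rests on Bismut's local index methods for families (Getzler-type rescaling, Mehler formulas for the harmonic oscillator); by contrast, the $(0,0)$-component treated above is entirely elementary.
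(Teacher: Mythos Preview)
The paper does not prove this theorem at all: it states it and attributes it to Bismut \cite[Theorem 4.15]{b04}. Your proposal therefore goes further than the paper does, by outlining the structure of Bismut's argument rather than merely citing it. Your treatment of the $(0,0)$-component is correct and self-contained; the paper itself invokes exactly this vanishing, $\sum_p(-1)^p\theta_p'(0)=0$, in the proof of Proposition~\ref{prop-tau-top} (see \eqref{eq1-pf-prop-tau-top}), again with a bare reference to \cite[page 1304]{b04}. For the positive-degree components you correctly identify that the substance is the comparison between the alternating sum of Dolbeault torsion forms and a de Rham torsion form, and that this comparison is the analytic core of \cite{b04}; your sketch of how the Bott--Chern corrections arise and cancel is accurate in spirit, though of course the actual estimates are not supplied. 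In short: nothing is wrong, and your proposal is strictly more informative than the paper's own treatment, which is a one-line citation.
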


\subsection{Properties of the Quillen metric}

In this subsection,
we state several results describing the behavior of the Quillen metric under  submersion, resolution, immersion and blow-up.

\noindent\textbf{Submersion.}
Let $\pi: X \rightarrow Y$, $Z$, $E$ and $H^\bullet(Z,E)$ be as in \textsection \ref{subsect-tf}.
We assume that $X$ and $Y$ are compact.
We further assume that
\begin{equation}
\label{eq-H-sub}
H^q(X,E) = \bigoplus_{j+k = q} H^j\big(Y,H^k(Z,E)\big) \hspace{4mm} \text{for } q = 0,\cdots,\dim X \;.
\end{equation}
We denote
\begin{align}
\label{eq-det-sub}
\begin{split}
\det H^\bullet\big(Y,H^\bullet(Z,E)\big)
& = \bigotimes_{k=0}^{\dim Z} \Big(\det H^\bullet\big(Y,H^k(Z,E)\big)\Big)^{(-1)^k} \\
& = \bigotimes_{j=0}^{\dim Y} \bigotimes_{k=0}^{\dim Z} \Big(\det H^j\big(Y,H^k(Z,E)\big)\Big)^{(-1)^{j+k}} \;.
\end{split}
\end{align}
Let
\begin{equation}
\sigma \in \det H^\bullet(X,E) \otimes \Big(\det H^\bullet\big(Y,H^\bullet(Z,E)\big)\Big)^{-1}
\end{equation}
be the canonical section induced by \eqref{eq-H-sub}.

Let $\omega_X\in\Omega^{1,1}(X)$ and $\omega_Y\in\Omega^{1,1}(Y)$ be K{\"a}hler forms.
For $\e>0$,
set
\begin{equation}
\omega_\e = \omega_X + \frac{1}{\e}\pi^*\omega_Y \;.
\end{equation}
Let $g^E$ be a Hermitian metric on $E$.

Let $g^{TX}_\e$ be the metric on $TX$ associated with $\omega_\e$.
Let
\begin{equation}
\label{eq1-Q-sub}
\big\lVert\cdot\big\rVert_{\det H^\bullet(X,E),\e}
\end{equation}
be the Quillen metric on $\det H^\bullet(X,E)$ associated with $g^{TX}_\e$ and $g^E$.
Let $g^{TY}$ be the metric on $TY$ associated with $\omega_Y$.
Let $g^{TZ}$ be the metric on $TZ$ associated with $\omega_X\big|_Z$.
Let $g^{H^\bullet(Z,E)}$ be the $L^2$-metric on $H^\bullet(Z,E)$ associated with $g^{TZ}$ and $g^E$.
For $k=0,\cdots,\dim Z$,
let
\begin{equation}
\label{eq2-Q-sub}
\big\lVert\cdot\big\rVert_{\det H^\bullet(Y,H^k(Z,E))}
\end{equation}
be the Quillen metric on $\det H^\bullet\big(Y,H^k(Z,E)\big)$ associated with $g^{TY}$ and $g^{H^k(Z,E)}$.
Let
\begin{equation}
\label{eq3-Q-sub}
\big\lVert\cdot\big\rVert_{\det H^\bullet(Y,H^\bullet(Z,E))}
\end{equation}
be the metric on $\det H^\bullet\big(Y,H^\bullet(Z,E)\big)$ induced by the Quillen metrics \eqref{eq2-Q-sub} via \eqref{eq-det-sub}.
Let $\big\lVert\sigma\big\rVert_\e$ be the norm of $\sigma$ with respect to the metrics \eqref{eq1-Q-sub} and \eqref{eq3-Q-sub}.

We will use the notations in \eqref{eq-def-QQ}.
Let $\mathrm{Td}\big(TY,g^{TY}\big) \in Q^Y$ be the Todd form of $\big(TY,g^{TY}\big)$.
Let
\begin{equation}
T\big(\omega,g^E\big) \in Q^Y
\end{equation}
be the analytic torsion form (see \textsection \ref{subsect-tf}) associated with $\big(\pi: X \rightarrow Y, \omega_X, E, g^E\big)$.

Recall that $\mathrm{Td}'(\cdot)$ was defined by \eqref{eq-def-tdprim}.

The following theorem is due to Berthomieu and Bismut \cite[Theorem 3.2]{bb}.

\begin{thm}
\label{thm-sub}
As $\e\rightarrow 0$,
\begin{equation}
\log \big\lVert\sigma\big\rVert^2_\e + \int_Y \mathrm{Td}'(TY) \int_Z \mathrm{Td}(TZ)\mathrm{ch}(E) \log \e
\rightarrow \int_Y \mathrm{Td}\big(TY,g^{TY}\big)T\big(\omega,g^E\big) \;.
\end{equation}
\end{thm}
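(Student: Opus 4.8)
This is the submersion formula of Berthomieu and Bismut \cite[Theorem 3.2]{bb}; the plan below outlines the adiabatic-limit argument by which one would prove it. \textbf{Decomposition.} By Definition \ref{def-q}, for $g^{TX}_\e$ the square of the Quillen metric on $\det H^\bullet(X,E)$ equals $\exp\big(\theta_{X,\e}'(0)\big)$ times the square of the $L^2$-metric \eqref{eq-def-L2-metric}, where $\theta_{X,\e}$ is the function \eqref{eq-def-theta} for $\big(X,g^{TX}_\e,E\big)$; likewise, by \eqref{eq2-Q-sub}, the square of the metric \eqref{eq3-Q-sub} on $\det H^\bullet\big(Y,H^\bullet(Z,E)\big)$ equals $\exp\big(\sum_k(-1)^k\theta_{Y,k}'(0)\big)$ times the square of the corresponding $L^2$-metric, where $\theta_{Y,k}$ is \eqref{eq-def-theta} for $\big(Y,g^{TY},H^k(Z,E)\big)$. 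Since $\sigma\in\det H^\bullet(X,E)\otimes\big(\det H^\bullet(Y,H^\bullet(Z,E))\big)^{-1}$, this yields
\begin{equation}
\log\big\lVert\sigma\big\rVert^2_\e = \log\big|\sigma\big|^2_{L^2,\e} + \theta_{X,\e}'(0) - \sum_{k=0}^{\dim Z}(-1)^k\theta_{Y,k}'(0) \;,
\end{equation}
where $\big|\sigma\big|_{L^2,\e}$ is the norm of $\sigma$ for the $L^2$-metrics; I would study the two groups of terms separately.

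\textbf{The $L^2$ term.} As $\e\rightarrow0$ the Dolbeault Laplacian $\Delta^E_\e$ on $\big(X,g^{TX}_\e\big)$ develops a spectral gap: a fixed finite family of eigenvalues converging to those of the Dolbeault Laplacian on $Y$ with coefficients in the bundle $H^\bullet(Z,E)$ of fiberwise harmonic forms, together with a complementary part of size $O(\e^{-1})$. Hence, after rescaling the horizontal directions, $\mathrm{Ker}\big(\Delta^E_\e\big)$ converges to $\bigoplus_{j+k=\bullet}H^j\big(Y,H^k(Z,E)\big)$, which is exactly the decomposition \eqref{eq-H-sub} that defines $\sigma$. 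Tracking the factor $\e^{-1}$ in $\omega_\e=\omega_X+\e^{-1}\pi^*\omega_Y$ through the $L^2$-product \eqref{eq-def-L2-metric} — the total volume scales like $\e^{-\dim Y}$, while a Hodge class of Leray bidegree $(j,k)$ carries a compensating power of $\e$ — and combining with the scaling rule of Remark \ref{rem-q}, one shows $\log\big|\sigma\big|^2_{L^2,\e}=a\log\e+o(1)$ as $\e\rightarrow0$, for an explicit topological constant $a$.

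\textbf{The torsion term.} By the Mellin transform, $\theta_{X,\e}'(0)$ is, after meromorphic continuation, the value at $s=0$ of $\frac{d}{ds}\big[\Gamma(s)^{-1}\int_0^\infty t^{s-1}\mathrm{Tr}_s\big[N e^{-t\Delta^E_\e}\big]\,dt\big]$, where $N$ is the number operator on $\Lambda^\bullet(\overline{T^*X})$ and $\mathrm{Tr}_s$ the supertrace. The fibration $\pi$ lets one split this supertrace: integrating first over the fibers $Z$ and using the convergence of the rescaled fiberwise operator to the Bismut superconnection produces precisely the Bismut--K{\"o}hler analytic torsion form $T\big(\omega,g^E\big)$ of \textsection\ref{subsect-tf}; integrating the remaining base data over $Y$ against $\mathrm{Td}\big(TY,g^{TY}\big)$, via the local families index theorem for $\pi$, produces $\int_Y\mathrm{Td}\big(TY,g^{TY}\big)T\big(\omega,g^E\big)$, the degree-$0$ component being matched to $\theta'(0)$ by \eqref{eq-tf-deg0}. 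The $O(\e^{-1})$ part of the spectrum yields the base quantities $\sum_k(-1)^k\theta_{Y,k}'(0)$, cancelling the last group above, together with a divergent $b\log\e$ for a topological constant $b$; altogether
\begin{equation}
\theta_{X,\e}'(0) - \sum_{k=0}^{\dim Z}(-1)^k\theta_{Y,k}'(0) = b\log\e + \int_Y\mathrm{Td}\big(TY,g^{TY}\big)T\big(\omega,g^E\big) + o(1) \;.
\end{equation}

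\textbf{Conclusion and main obstacle.} It then remains to check $a+b=-\int_Y\mathrm{Td}'(TY)\int_Z\mathrm{Td}(TZ)\mathrm{ch}(E)$. This is forced by the constant term in the small-$t$ asymptotics of $\mathrm{Tr}_s\big[N e^{-t\Delta^E_\e}\big]$, which by the local index theorem is of $\mathrm{Td}'$-type; passing to the limit with the characteristic-class identities of \textsection\ref{subsect-char} (compare Proposition \ref{prop2-total-class}, relating $\mathrm{Td}'$ to the degree shift in the determinant line) and Grothendieck--Riemann--Roch \eqref{eq-grr} pins the coefficient down. Adding the correction $\int_Y\mathrm{Td}'(TY)\int_Z\mathrm{Td}(TZ)\mathrm{ch}(E)\log\e$ then removes the $\log\e$ term and gives the stated limit. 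The genuinely hard step is the torsion term: the uniform spectral and heat-kernel analysis of $\Delta^E_\e$ as $\e\rightarrow0$ — isolating the finitely many cohomological eigenvalues from the $O(\e^{-1})$ bulk and showing that the bulk contributes only the stated divergence plus the convergent term $\int_Y\mathrm{Td}(TY,g^{TY})T(\omega,g^E)$. This requires uniform parametrix constructions and a rescaling of the heat operator in the adiabatic limit, and constitutes the analytic heart of \cite[Theorem 3.2]{bb}; all remaining steps are bookkeeping with the scaling rule of Remark \ref{rem-q} and the characteristic-class identities of \textsection\ref{subsect-char}.
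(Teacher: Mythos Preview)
Your identification is correct: the paper does not prove this statement at all but simply cites it as \cite[Theorem 3.2]{bb}, so there is no ``paper's own proof'' to compare against. Your outline is a faithful sketch of the Berthomieu--Bismut adiabatic-limit argument, and the decomposition into $L^2$-part plus torsion part, with the divergent $\log\e$ coefficient pinned down by the local index theorem, is exactly the structure of their proof.
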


\noindent\textbf{Resolution.}
Let $X$ be a compact K{\"a}hler manifold.
Let
\begin{equation}
\label{eq-E-resol}
0 \rightarrow E^0 \rightarrow E^1 \rightarrow E^2 \rightarrow 0
\end{equation}
be a short exact sequence of holomorphic vector bundles over $X$.
Let
\begin{equation}
\sigma \in \bigotimes_{k=0}^2 \Big(\det H^\bullet(X,E^k)\Big)^{(-1)^{k+1}}
\end{equation}
be the canonical section induced by the long exact sequence induced by \eqref{eq-E-resol}.

Let $g^{TX}$ be a K{\"a}hler metric on $TX$.
For $k=0,1,2$,
let $g^{E^k}$ be a Hermitian metric on $E^k$.
Let
\begin{equation}
\label{eq-Q-resol}
\big\lVert\cdot\big\rVert_{\det H^\bullet(X,E^k)}
\end{equation}
be the Quillen metric on $\det H^\bullet(X,E^k)$ associated with $g^{TX}$ and $E^k$.
Let $\big\lVert\sigma\big\rVert$ be the norm of $\sigma$ with respect to the metrics \eqref{eq-Q-resol}.

We will use the notations in \eqref{eq-def-QQ}.
Let $\mathrm{Td}\big(TX,g^{TX}\big) \in Q^X$ be the Todd form of $\big(TX,g^{TX}\big)$.
Let $\mathrm{ch}\big(E^k,g^{E^k}\big) \in Q^X$ be the Chern characteristic form of $\big(E^k,g^{E^k}\big)$.
Let
\begin{equation}
\widetilde{\mathrm{ch}}\big(g^{E^\bullet}\big) \in Q^X/Q^{X,0}
\end{equation}
be the Bott-Chern form \cite[Section 1f)]{bgs1} such that
\begin{equation}
\frac{\overline{\partial}\partial}{2\pi i} \widetilde{\mathrm{ch}}\big(g^{E^\bullet}\big) =
\sum_{k=0}^2 (-1)^k \mathrm{ch}\big(E^k,g^{E^k}\big) \;.
\end{equation}

The following theorem is a special case of the immersion formula due to Bismut and Lebeau \cite[Theorem 0.1]{ble}.

\begin{thm}
\label{thm-resol}
The following identity holds,
\begin{equation}
\log \big\lVert\sigma\big\rVert^2 = \int_X \mathrm{Td}\big(TX,g^{TX}\big) \widetilde{\mathrm{ch}}\big(g^{E^\bullet}\big) \;.
\end{equation}
\end{thm}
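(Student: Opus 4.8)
The plan is to obtain this identity as the degenerate, codimension-zero instance of the immersion formula of Bismut and Lebeau \cite{ble}.

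First I would translate the data into the language of \cite{ble}. Regard $X$ as embedded in itself via $i=\mathrm{Id}_X$, a (trivial) closed K{\"a}hler immersion with vanishing normal bundle $N_{X/X}=0$. The two-term complex $\big[E^0\to E^1\big]$, placed in degrees $-1$ and $0$ with differential the inclusion from \eqref{eq-E-resol} and equipped with $g^{E^0}$, $g^{E^1}$, is a resolution of $E^2=i_*E^2$ by Hermitian holomorphic vector bundles on $X$. The isomorphism between $\det$ of the Dolbeault cohomology of this complex and $\det H^\bullet(X,E^2)$ that underlies the canonical section of \cite[Theorem 0.1]{ble} is, after reindexing degrees, precisely the isomorphism $\bigotimes_{k=0}^2\big(\det H^\bullet(X,E^k)\big)^{(-1)^{k+1}}\cong\C$ coming from the long exact sequence associated with \eqref{eq-E-resol}, i.e.\ it is the datum of $\sigma$ (up to an overall inversion, which I would fix by tracking signs once --- the norm on the left and the Bott-Chern form on the right change sign consistently).

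Next I would specialize the right-hand side of \cite[Theorem 0.1]{ble}. For a general immersion it is a sum of three contributions: $\int_X\mathrm{Td}\big(TX,g^{TX}\big)$ paired with the Bott-Chern \emph{current} $\widetilde{\mathrm{ch}}\big(E^\bullet,v,g^{E^\bullet}\big)$ of Bismut-Gillet-Soul{\'e}; a Bott-Chern-Todd term over $Y$ measuring the defect between $g^{TX}\big|_Y$ and $g^{TY}\oplus g^{N_{Y/X}}$; and $\int_Y\mathrm{Td}(TY)\,\mathrm{ch}(\xi)$ paired with the Gillet-Soul{\'e} $R$-genus of $N_{Y/X}$. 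When $Y=X$ and $i=\mathrm{Id}$, the last two vanish: $R$ is an additive characteristic class and so vanishes on the zero bundle $N_{X/X}=0$, and $g^{TX}\big|_Y=g^{TY}$. Moreover, since the resolution degenerates nowhere (the immersion has codimension zero), $\widetilde{\mathrm{ch}}\big(E^\bullet,v,g^{E^\bullet}\big)$ is a smooth form on $X$; by its transgression equation it differs from the Bott-Chern form $\widetilde{\mathrm{ch}}\big(g^{E^\bullet}\big)$ of the statement only by an element of $Q^{X,0}$, which integrates to zero against the closed form $\mathrm{Td}\big(TX,g^{TX}\big)$ on the compact $X$. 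Assembling these reductions gives the stated identity.

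I expect the only real difficulty to be bookkeeping rather than analysis: \cite[Theorem 0.1]{ble} is phrased for a positive-codimension immersion, so one must check that its proof applies verbatim when $Y=X$ --- the $R$-genus there arises precisely from the singularity of the Bott-Chern current along $Y$, which is absent in codimension zero --- and one must pin down the various $(2\pi i)$-normalizations, the direction of the canonical section, and the sign convention defining $\widetilde{\mathrm{ch}}\big(g^{E^\bullet}\big)$ so that the two sides match exactly. If one prefers to avoid invoking \cite{ble} in a degenerate case, the anomaly formula of Bismut-Gillet-Soul{\'e} reduces the claim to verifying it for one convenient choice of $g^{E^0},g^{E^1},g^{E^2}$; but since a general extension cannot be split orthogonally by a choice of Hermitian metrics, carrying this out cleanly still amounts to running the Bismut-Lebeau-type heat kernel argument over a base parametrizing the extension class, so citing \cite{ble} and checking the vanishings above remains the efficient route.
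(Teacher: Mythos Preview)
Your proposal is correct and matches the paper's approach: the paper simply states that the theorem is a special case of the immersion formula of Bismut and Lebeau \cite[Theorem 0.1]{ble}, without spelling out the specialization. You have filled in precisely the details of that specialization (identity immersion, vanishing normal bundle, hence vanishing $R$-genus and Todd defect, and the Bott-Chern current reducing to the smooth Bott-Chern form), so there is nothing to add.
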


\noindent\textbf{Immersion.}
Let $X$ be a compact K{\"a}hler manifold.
Let $Y \subseteq X$ be a complex submanifold of codimension $1$.
Let $i: Y \hookrightarrow X$ be the canonical embedding.
Let $F$ be a holomorphic vector bundle over $Y$.
Let $v: E_1 \rightarrow E_0$ be a map between holomorphic vector bundles over $X$ which,
together with a restriction map $r: E_0\big|_Y \rightarrow F$,
provides a resolution of $i_*\mathscr{O}_Y(F)$.
More precisely,
we have an exact sequence of analytic coherent sheaves on $X$,
\begin{equation}
\label{eq-F-resol}
0 \rightarrow \mathscr{O}_X(E_1) \xrightarrow{v} \mathscr{O}_X(E_0) \xrightarrow{r} i_*\mathscr{O}_Y(F) \rightarrow 0 \;.
\end{equation}
Let
\begin{equation}
\sigma \in \Big(\det H^\bullet(X,E_1)\Big)^{-1} \otimes \det H^\bullet(X,E_0) \otimes \Big(\det H^\bullet(Y,F)\Big)^{-1}
\end{equation}
be the canonical section induced by the long exact sequence induced by \eqref{eq-F-resol}.

Let $\omega\in\Omega^{1,1}(X)$ be a K{\"a}hler form.
For $k=0,1$,
let $g^{E_k}$ be a Hermitian metric on $E_k$.
Let $g^F$ be a Hermitian metric on $F$.
Assume that there is an open neighborhood $Y \subseteq U \subseteq X$ such that $v\big|_{X\backslash U}$ is isometric, i.e.,
\begin{equation}
g^{E_1}\big|_{X\backslash U} = v^*g^{E_0}\big|_{X\backslash U} \;.
\end{equation}

Let $g^{TX}$ be the metric on $TX$ associated with $\omega$.
For $k=0,1$,
let
\begin{equation}
\label{eq1-Q-im}
\big\lVert\cdot\big\rVert_{\det H^\bullet(X,E_k)}
\end{equation}
be the Quillen metric on $\det H^\bullet(X,E_k)$ associated with $g^{TX}$ and $g^{E_k}$.
Let $g^{TY}$ be the metric on $TY$ associated with $\omega\big|_Y$.
Let
\begin{equation}
\label{eq2-Q-im}
\big\lVert\cdot\big\rVert_{\det H^\bullet(Y,F)}
\end{equation}
be the Quillen metric on $\det H^\bullet(Y,F)$ associated with $g^{TY}$ and $g^F$.
Let $\big\lVert\sigma\big\rVert$ be the norm of $\sigma$ with respect to the metrics \eqref{eq1-Q-im} and \eqref{eq2-Q-im}.

The following theorem is a direct consequence of
the immersion formula due to Bismut and Lebeau \cite[Theorem 0.1]{ble} and the anomaly formula due to Bismut, Gillet and Soul{\'e} \cite[Theorem 1.23]{bgs3}.

\begin{thm}
\label{thm-im}
We have
\begin{equation}
\log \big\lVert\sigma\big\rVert^2 = \alpha\big(U,\omega\big|_U,v\big|_U,g^{E_\bullet}\big|_U,r,g^F\big)
\end{equation}
where $\alpha\big(U,\omega\big|_U,v\big|_U,r\big|_U,g^{E_\bullet},g^F\big)$ is a real number determined by
\begin{equation}
U \;,\hspace{2.5mm}
\omega\big|_U \;,\hspace{5mm}
v\big|_U: E_1\big|_U \rightarrow E_0\big|_U \;,\hspace{2.5mm}
g^{E_\bullet}\big|_U \;,\hspace{2.5mm}
r: E_0\big|_Y \rightarrow F \;,\hspace{2.5mm}
g^F \;.
\end{equation}
More precisely,
given
\begin{equation}
\widetilde{Y} \subseteq \widetilde{U} \subseteq \widetilde{X} \;,\hspace{2.5mm}
\widetilde{\omega} \;,\hspace{2.5mm}
\widetilde{v}: \widetilde{E}_1 \rightarrow \widetilde{E}_0 \;,\hspace{2.5mm}
\widetilde{r}: \widetilde{E}_0 \big|_{\widetilde{Y}} \rightarrow \widetilde{F} \;,\hspace{2.5mm}
g^{\widetilde{E}_\bullet} \;,\hspace{2.5mm}
g^{\widetilde{F}}
\end{equation}
satisfying the same properties that
\begin{equation}
Y \subseteq U \subseteq X \;,\hspace{2.5mm}
\omega \;,\hspace{2.5mm}
v: E_1 \rightarrow E_0 \;,\hspace{2.5mm}
r: E_0 \big|_Y \rightarrow F \;,\hspace{2.5mm}
g^{E_\bullet} \;,\hspace{2.5mm}
g^F
\end{equation}
satisfy,
if there is a biholomorphic map $U \rightarrow \widetilde{U}$ inducing an isomorphism between the restrictions of the data above to $U$ and $\widetilde{U}$,
then
\begin{equation}
\log \big\lVert\sigma\big\rVert^2 = \log \big\lVert\widetilde{\sigma}\big\rVert^2 \;,
\end{equation}
where
\begin{equation}
\widetilde{\sigma} \in
\Big(\det H^\bullet(\widetilde{X},\widetilde{E}_1)\Big)^{-1}
\otimes \det H^\bullet(\widetilde{X},\widetilde{E}_0)
\otimes \Big(\det H^\bullet(\widetilde{Y},\widetilde{F})\Big)^{-1}
\end{equation}
is the canonical section,
and $\big\lVert\widetilde{\sigma}\big\rVert$ is its norm with respect to the Quillen metrics.
\end{thm}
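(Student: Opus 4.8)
The plan is to derive the statement from the Bismut--Lebeau immersion formula \cite[Theorem 0.1]{ble} together with the Bismut--Gillet--Soul\'e anomaly formula \cite[Theorem 1.23]{bgs3}, and then to exploit the hypothesis that $v|_{X\setminus U}$ is isometric to see that every integral occurring in those formulas is concentrated on $U$, so that $\log\lVert\sigma\rVert^2$ depends only on the data over $U$.

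First I would reduce to the case where $g^{E_0},g^{E_1}$ satisfy Bismut's assumption (A) along $Y$, i.e.\ the compatibility, in a neighbourhood of $Y$, between $g^{E_\bullet}$, $g^F$ and the metric on the normal bundle $N_{Y/X}$ induced by $\omega$. Since (A) constrains the metrics only near $Y\subseteq U$, one can choose Hermitian metrics $\widehat{g}^{E_0},\widehat{g}^{E_1}$ that satisfy (A), that are manufactured solely from $\omega|_U$, $v|_U$, $g^{E_\bullet}|_U$, $r$ and $g^F$, and that coincide with $g^{E_0},g^{E_1}$ outside $U$. Writing $\widehat{\sigma}$ for the canonical section computed with these metrics and applying the anomaly formula (the metrics $g^{TX}$, $g^{TY}$ and $g^F$ being unchanged) gives
\[
\log\big\lVert\widehat{\sigma}\big\rVert^2 - \log\big\lVert\sigma\big\rVert^2
= \int_X \mathrm{Td}\big(TX,g^{TX}\big)\Big(\widetilde{\mathrm{ch}}\big(\widehat{g}^{E_0},g^{E_0}\big)-\widetilde{\mathrm{ch}}\big(\widehat{g}^{E_1},g^{E_1}\big)\Big) .
\]
As $\widehat{g}^{E_k}=g^{E_k}$ outside $U$, the Bott--Chern forms $\widetilde{\mathrm{ch}}(\widehat{g}^{E_k},g^{E_k})$ vanish there, so the right-hand side is an integral over $U$ determined by $(U,\omega|_U,v|_U,g^{E_\bullet}|_U,r,g^F)$. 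Hence we may assume from now on that $g^{E_0},g^{E_1}$ satisfy (A).

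With assumption (A) in force, the Bismut--Lebeau formula expresses $\log\lVert\sigma\rVert^2$ as the sum of $-\int_X\mathrm{Td}(TX,g^{TX})\,\widetilde{\mathrm{ch}}(E_\bullet,v,g^{E_\bullet})$, where $\widetilde{\mathrm{ch}}(E_\bullet,v,g^{E_\bullet})$ is the Bott--Chern current attached to the Hermitian complex $0\to E_1\xrightarrow{v}E_0\to 0$, and of an integral over $Y$ of characteristic forms assembled from $g^{TY}$, the metric on $N_{Y/X}$ induced by $\omega$, $g^F$ and the Gillet--Soul\'e $R$-genus. The $Y$-integral is visibly determined by the data over $U$, since $Y\subseteq U$. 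For the $X$-integral, note that on $X\setminus Y$ the current $\widetilde{\mathrm{ch}}(E_\bullet,v,g^{E_\bullet})$ coincides with the ordinary Bott--Chern transgression of $(E_\bullet,g^{E_\bullet})$; on $X\setminus U$ the complex $0\to E_1\xrightarrow{v}E_0\to 0$ is acyclic (it resolves $i_*\mathscr{O}_Y(F)$) and $v$ is isometric, so this transgression vanishes; hence the integrand $\mathrm{Td}(TX,g^{TX})\wedge\widetilde{\mathrm{ch}}(E_\bullet,v,g^{E_\bullet})$ vanishes on $X\setminus U$ and the $X$-integral reduces to an integral over $U$, again determined by the data over $U$. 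Therefore $\log\lVert\sigma\rVert^2$ equals a real number $\alpha$ depending only on $(U,\omega|_U,v|_U,g^{E_\bullet}|_U,r,g^F)$; given a second datum identified with the first over $U$ by a biholomorphism, one performs the same reduction to (A) using the transported modification and applies Bismut--Lebeau to both, obtaining the same expression term by term, whence $\log\lVert\sigma\rVert^2=\log\lVert\widetilde{\sigma}\rVert^2$.

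The main obstacle is making ``locality'' rigorous: one must verify that the passage to assumption (A) can be carried out with metrics that agree with the given ones outside $U$ and are built solely from the data over $U$ (so that the biholomorphism over $U$ transports the choice), and one must check that both the Bott--Chern current occurring in the Bismut--Lebeau formula and the Bott--Chern forms occurring in the anomaly formula genuinely vanish where $v$ is isometric, respectively where the two metrics agree. Once these vanishing statements are in place, the remainder is routine bookkeeping with characteristic and Bott--Chern forms.
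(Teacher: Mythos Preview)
Your proposal is correct and follows exactly the approach the paper indicates: the paper does not give a detailed proof but simply states that the theorem is ``a direct consequence of the immersion formula due to Bismut and Lebeau \cite[Theorem 0.1]{ble} and the anomaly formula due to Bismut, Gillet and Soul\'e \cite[Theorem 1.23]{bgs3}.'' Your sketch unpacks precisely this, first using the anomaly formula to reduce to assumption (A) via a modification supported in $U$, and then observing that every term in the Bismut--Lebeau formula is an integral whose integrand vanishes outside $U$ because $v$ is isometric there.
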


\begin{rem}
\label{rk-im}
The real number $\alpha\big(U,\omega\big|_U,v\big|_U,r\big|_U,g^{E_\bullet},g^F\big)$ depends continuously on the input data.
\end{rem}

\noindent\textbf{Blow-up.}
Let $X$ be a compact K{\"a}hler manifold.
Let $Y \subseteq X$ be a complex submanifold of codimension $r \geqslant 2$.
Let $f: X' \rightarrow X$ be the blow-up along $Y$.
Let $E$ be a holomorphic vector bundle over $X$.
Let $f^*E$ be the pull-back of $E$ via $f$,
which is a holomorphic vector bundle over $X'$.
Applying spectral sequence,
we get a canonical identification
\begin{equation}
\label{eq-bl}
H^\bullet(X',f^*E) = H^\bullet(X,E) \;.
\end{equation}
Let
\begin{equation}
\sigma \in \Big(\det H^\bullet(X,E)\Big)^{-1} \otimes \det H^\bullet(X',f^*E)
\end{equation}
be the canonical section induced by \eqref{eq-bl}.

Let $\omega\in\Omega^{1,1}(X)$ and $\omega'\in\Omega^{1,1}(X')$ be K{\"a}hler forms.
Assume that there are open neighborhoods $Y \subseteq U \subseteq X$ and $f^{-1}(Y) \subseteq U' \subseteq X'$ such that
\begin{equation}
f^{-1}(U) = U' \;,\hspace{5mm}
f^*\big( \omega\big|_{X\backslash U} \big) = \omega'\big|_{X'\backslash U'} \;.
\end{equation}
Let $g^E$ be a Hermitian metric on $E$.

Let $g^{TX}$ be the metric on $TX$ associated with $\omega$.
Let
\begin{equation}
\label{eq1-Q-bl}
\big\lVert\cdot\big\rVert_{\det H^\bullet(X,E)}
\end{equation}
be the Quillen metric on $\det H^\bullet(X,E)$ associated with $g^{TX}$ and $g^E$.
Let $g^{TX'}$ be the metric on $TX'$ associated with $\omega'$.
Let
\begin{equation}
\label{eq2-Q-bl}
\big\lVert\cdot\big\rVert_{\det H^\bullet(X',f^*E)}
\end{equation}
be the Quillen metric on $\det H^\bullet(X',f^*E)$ associated with $g^{TX'}$ and $f^*g^E$.
Let $\big\lVert\sigma\big\rVert$ be the norm of $\sigma$ with respect to the metrics \eqref{eq1-Q-bl} and \eqref{eq2-Q-bl}.

The following theorem is a direct consequence of the blow-up formula due to Bismut \cite[Theorem 8.10]{b97}.

\begin{thm}
\label{thm-bl}
We have
\begin{equation}
\log \big\lVert\sigma\big\rVert^2 = \alpha\big(U,\omega\big|_U,U',\omega'\big|_{U'},E\big|_U,g^E\big|_U\big) \;,
\end{equation}
where $\alpha\big(U,\omega\big|_U,U',\omega'\big|_{U'},E\big|_U,g^E\big|_U\big)$ is a real number determined by
\begin{equation}
U \;,\hspace{2.5mm}
\omega\big|_U \;,\hspace{2.5mm}
U' \;,\hspace{2.5mm}
\omega'\big|_{U'} \;,\hspace{2.5mm}
E\big|_U \;,\hspace{2.5mm}
g^E\big|_U \;.
\end{equation}
\end{thm}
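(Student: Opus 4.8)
The theorem is a \emph{locality} statement: $\log\lVert\sigma\rVert^2$ is controlled entirely by the germ along $Y$ of the geometric data, so that the number $\alpha$ is defined simply as that common value, and the asserted invariance under biholomorphisms matching the restricted data is then automatic. The plan is to derive this from Bismut's blow-up formula for Quillen metrics \cite[Theorem 8.10]{b97} together with the anomaly formula \cite[Theorem 1.23]{bgs3}, in exact parallel with the way Theorem \ref{thm-im} is deduced from the immersion formula \cite[Theorem 0.1]{ble}.

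\textbf{Step 1 (normalizing the data near $Y$).} First I would use the anomaly formula \cite[Theorem 1.23]{bgs3}, applied on $X$ and, separately, on $X'$, to replace $g^E$ by a Hermitian metric that agrees with $g^E$ outside a small neighbourhood $V$ of $Y$ with $\overline V\subseteq U$ and that is of a standard flat form along $Y$, and likewise to put $\omega$ and $\omega'$ into a product-type normal form near $Y$ and near the exceptional divisor $E=f^{-1}(Y)\subseteq U'$. The two anomaly corrections are integrals of Bott-Chern forms supported on $V$ (respectively $f^{-1}(V)$), hence are themselves functions of the data over $U$ only. After these reductions the hypotheses of \cite[Theorem 8.10]{b97} are satisfied, and the identification \eqref{eq-bl} of cohomologies together with the naturality of $\sigma$ is standard.

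\textbf{Step 2 (applying and inspecting Bismut's formula).} By \cite[Theorem 8.10]{b97} — itself resting on the immersion and submersion formulae for Quillen metrics \cite{ble,bb} — the real number $\log\lVert\sigma\rVert^2$ is given by an explicit expression which is a sum of two kinds of terms: contributions localized near the exceptional divisor $E$, built out of the metrics induced on $N_{Y/X}$, on $\mathscr{O}_{X'}(-E)=N_{E/X'}$ and on $E|_Y$, and out of the restrictions of $\omega$ and $\omega'$ to a neighbourhood of $E$; and a universal term, the fibrewise Ray-Singer torsion of the $\CP^{r-1}$-bundle $E\to Y$, depending only on $r$ and $\mathrm{rk}(E)$. (If a representative of the relevant Bott-Chern secondary form produces an integral over all of $X'$, one uses that this secondary form vanishes wherever $\omega'=f^*\omega$, i.e.\ off $U'$, so that the integral localizes in $U'$.) Thus $\log\lVert\sigma\rVert^2$ is a function of $U,\omega|_U,U',\omega'|_{U'},E|_U,g^E|_U$ alone, manifestly carried to the corresponding number for $\widetilde X$ by any biholomorphism $U\xrightarrow{\sim}\widetilde U$ matching the restricted data. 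Adding back the Step 1 corrections — also functions of the data over $U$ — defines the claimed function $\alpha$.

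\textbf{Main obstacle.} The content is in Step 2: one must unwind \cite[Theorem 8.10]{b97} into the shape described, which amounts to choosing explicit representatives of the Bott-Chern secondary classes (defined only modulo $\partial$- and $\overline\partial$-exact forms) and checking that, with those choices, every non-universal integral genuinely localizes near $Y$ and involves only the germ of $(\omega,\omega',E,g^E)$ along $Y$ through $N_{Y/X}$, $N_{E/X'}$ and $E|_Y$. This is the same bookkeeping that underlies the immersion statement Theorem \ref{thm-im}; everything else — the anomaly corrections of Step 1 and the cohomology identification \eqref{eq-bl} — is routine.
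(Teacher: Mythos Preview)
Your approach is correct and matches the paper's: the paper simply records Theorem~\ref{thm-bl} as ``a direct consequence of the blow-up formula due to Bismut \cite[Theorem 8.10]{b97}'' without further argument, and your Step~2 is exactly the unpacking of that citation into a locality statement. Your Step~1 (preliminary normalization via the anomaly formula) is a reasonable precaution but is not strictly required here---Bismut's formula in \cite{b97} already applies to arbitrary K{\"a}hler metrics and Hermitian metrics on $E$, and every term it produces (Bott--Chern currents, torsion of the $\CP^{r-1}$-fibration, integrals of secondary forms) is supported on or determined by a neighbourhood of the exceptional locus once one uses $\omega'|_{X'\setminus U'}=f^*(\omega|_{X\setminus U})$; this is presumably why the paper cites only \cite{b97} here, whereas for Theorem~\ref{thm-im} it cites both \cite{ble} and the anomaly formula \cite{bgs3}.
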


\begin{rem}
\label{rk-bl}
The real number $\alpha\big(U,\omega\big|_U,U',\omega'\big|_{U'},E\big|_U,g^E\big|_U\big)$ depends continuously on the input data.
\end{rem}

\subsection{Topological torsion and BCOV torsion}

Let $X$ be an $n$-dimensional compact K{\"a}hler manifold.
For $p=0,\cdots,n$,
set
\begin{equation}
\label{eq-def-lambda-p}
\lambda_p(X) = \det H^{p,\bullet}(X) := \bigotimes_{q=0}^n \Big(\det H^{p,q}(X)\Big)^{(-1)^q} \;.
\end{equation}
Set
\begin{equation}
\label{eq-def-eta}
\eta(X) = \det H^\bullet_\mathrm{dR}(X) := \bigotimes_{k=0}^{2n} \Big(\det H^k_\mathrm{dR}(X)\Big)^{(-1)^k} = \bigotimes_{p=0}^n \Big(\lambda_p(X)\Big)^{(-1)^p} \;.
\end{equation}
Set
\begin{align}
\label{eq-def-lambda}
\begin{split}
\lambda(X) &
= \bigotimes_{0\leqslant p,q\leqslant n} \Big( \det H^{p,q}(X) \Big)^{(-1)^{p+q}p}
= \bigotimes_{p=1}^n \Big(\lambda_p(X)\Big)^{(-1)^pp} \;,\\
\lambda_\mathrm{dR}(X) &
= \bigotimes_{k=1}^{2n} \Big(\det H^k_\mathrm{dR}(X)\Big)^{(-1)^kk}
=  \lambda(X) \otimes \overline{\lambda(X)} \;.
\end{split}
\end{align}
The identities in \eqref{eq-def-lambda} appeared in \cite{k14}.
They were applied to the theory of BCOV invariant by Eriksson, Freixas i Montplet and Mourougane \cite{efm}.

For $\mathbb{A} = \Z,\R,\C$,
we denote by $H^\bullet_\mathrm{Sing}(X,\mathbb{A})$
the singular cohomology of $X$ with coefficients in $\mathbb{A}$.
For $k=0,\cdots,2n$,
let
\begin{equation}
\sigma_{k,1},\cdots,\sigma_{k,b_k}
\in \mathrm{Im}\big(H^k_\mathrm{Sing}(X,\Z) \rightarrow H^k_\mathrm{Sing}(X,\R)\big)
\end{equation}
be a basis of the lattice.
We fix a square root of $i$.
In what follows,
the choice of square root will be irrelevant.
We identify $H^k_\mathrm{dR}(X)$ with $H^k_\mathrm{Sing}(X,\C)$ as follows,
\begin{align}
\label{eq-sing-dr}
\begin{split}
H^k_\mathrm{dR}(X) & \rightarrow H^k_\mathrm{Sing}(X,\C) \\
[\alpha] & \mapsto \Big[\mathfrak{a} \mapsto \big(2\pi i\big)^{-k/2} \int_\mathfrak{a}\alpha\Big]\;,
\end{split}
\end{align}
where $\alpha$ is a closed $k$-form on $X$
and $\mathfrak{a}$ is a $k$-chain in $X$.
Then $\sigma_{k,1},\cdots,\sigma_{k,b_k}$ form a basis of $H^k_\mathrm{dR}(X)$.
Set
\begin{align}
\label{eq-def-epsilon}
\begin{split}
& \sigma_k = \sigma_{k,1}\wedge\cdots\wedge\sigma_{k,b_k} \in \det H^k_\mathrm{dR}(X) \;,\\
& \epsilon_X = \bigotimes_{k=0}^{2n} \sigma_k^{(-1)^k} \in \eta(X) \;,\hspace{4mm}
\sigma_X = \bigotimes_{k=1}^{2n} \sigma_k^{(-1)^kk} \in \lambda_\mathrm{dR}(X) \;,
\end{split}
\end{align}
which are well-defined up to $\pm 1$.

Let $\omega$ be a K{\"a}hler form on $X$.
Let $\big\lVert\cdot\big\rVert_{\lambda_p(X),\omega}$
be the Quillen metric on $\lambda_p(X)$ associated with $\omega$.
Let $\big\lVert\cdot\big\rVert_{\eta(X)}$ be the metric on $\eta(X)$
induced by $\big\lVert\cdot\big\rVert_{\lambda_p(X),\omega}$ via \eqref{eq-def-eta}.
The same calculation as in \cite[Theorem 2.1]{z}
together with the first identity in Proposition \ref{prop-total-class}
shows that $\big\lVert\cdot\big\rVert_{\eta(X)}$ is independent of $\omega$.

\begin{defn}
\label{def-tau-top}
We define
\begin{equation}
\tau_\mathrm{top}(X) = \log \big\lVert\epsilon_X\big\rVert_{\eta(X)} \;.
\end{equation}
\end{defn}

Let $\big\lVert\cdot\big\rVert_{\lambda(X),\omega}$
be the metric on $\lambda(X)$ induced by $\big\lVert\cdot\big\rVert_{\lambda_p(X),\omega}$ via the first identity in \eqref{eq-def-lambda}.
Let $\big\lVert\cdot\big\rVert_{\lambda_\mathrm{dR}(X),\omega}$
be the metric on $\lambda_\mathrm{dR}(X)$ induced by $\big\lVert\cdot\big\rVert_{\lambda(X),\omega}$ via the second identity in \eqref{eq-def-lambda}.

\begin{defn}
\label{def-bcov-torsion}
We define
\begin{equation}
\tau_\mathrm{BCOV}(X,\omega) = \log \big\lVert\sigma_X\big\rVert_{\lambda_\mathrm{dR}(X),\omega} \;.
\end{equation}
\end{defn}

For $p=0,\cdots,n$,
let $g^{\Lambda^p(T^*X)}_\omega$ be the metric on $\Lambda^p(T^*X)$ induced by $\omega$.
Let $g^{\Omega^{p,q}(X)}_\omega$ be the $L^2$-metric on $\Omega^{p,q}(X)$.
More precisely,
$g^{\Omega^{p,q}(X)}_\omega$ is defined by \eqref{eq-def-L2-metric} with $(E,g^E)$ replaced by $(\Lambda^p(T^*X),g^{\Lambda^p(T^*X)}_\omega)$.
Let $g^{H^{p,q}(X)}_\omega$ be the $L^2$-metric on $H^{p,q}(X)$.
More precisely,
$g^{H^{p,q}(X)}_\omega$ is induced by $g^{\Omega^{p,q}(X)}_\omega$ via the Hodge theorem.
Let $\big|\cdot\big|_{\eta(X),\omega}$ be the metric on $\eta(X)$ induced by $\big(g^{H^{p,q}(X)}_\omega\big)_{0\leqslant p,q\leqslant n}$ via \eqref{eq-def-lambda-p} and \eqref{eq-def-eta}.

\begin{prop}
\label{prop-tau-top}
The following identity holds,
\begin{equation}
\label{eq-prop-tau-top}
\tau_\mathrm{top}(X) =
\log \big|\epsilon_X\big|_{\eta(X),\omega} = 0 \;.
\end{equation}
\end{prop}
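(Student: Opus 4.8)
The plan is to split the assertion into two parts: the equality $\tau_\mathrm{top}(X)=\log\big|\epsilon_X\big|_{\eta(X),\omega}$, which records a cancellation of analytic torsion in the alternating product defining $\eta(X)$, and the vanishing $\log\big|\epsilon_X\big|_{\eta(X),\omega}=0$, which is a Poincar\'e duality computation.

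For the first equality, recall from \eqref{eq-def-eta} that $\eta(X)=\bigotimes_{p=0}^n\lambda_p(X)^{(-1)^p}$, that $\big\lVert\cdot\big\rVert_{\eta(X)}$ is induced by the Quillen metrics $\big\lVert\cdot\big\rVert_{\lambda_p(X),\omega}$, and that $\big|\cdot\big|_{\eta(X),\omega}$ is induced by the corresponding $L^2$-metrics. Since $\lambda_p(X)=\det H^\bullet(X,\Lambda^p(T^*X))$ and the Dolbeault Laplacian on $\Omega^{0,\bullet}(X,\Lambda^p(T^*X))=\Omega^{p,\bullet}(X)$ is the operator entering \eqref{eq-def-theta}, Definition \ref{def-q} gives $\big\lVert\cdot\big\rVert_{\lambda_p(X),\omega}=\exp\!\big(\tfrac12\theta_p'(0)\big)\big|\cdot\big|_{\lambda_p(X),\omega}$, where $\theta_p$ is \eqref{eq-def-theta} with $E=\Lambda^p(T^*X)$. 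Hence $\tau_\mathrm{top}(X)=\tfrac12\sum_{p=0}^n(-1)^p\theta_p'(0)+\log\big|\epsilon_X\big|_{\eta(X),\omega}$, and it remains to prove $\sum_p(-1)^p\theta_p'(0)=0$. I would deduce this from Theorem \ref{thm-tf-vanishing} applied to the constant map $X\to\{\mathrm{pt}\}$: with the base a point, the torsion form $T\big(\omega,g^{\Lambda^p(T^*Z)}\big)$ reduces by \eqref{eq-tf-deg0} to the number $\theta_p'(0)$, and the theorem reads $\sum_p(-1)^p\theta_p'(0)=0$. (The identity $\sum_p(-1)^p\theta_p(z)\equiv0$ can also be seen by hand: Hodge symmetry $H^{p,q}\cong H^{q,p}$ and Serre duality $H^{p,q}\cong(H^{n-p,n-q})^*$ reduce it to $\sum_k(-1)^k$ of the spectral zeta functions of the de Rham Laplacian, which vanishes by McKean--Singer since each nonzero eigenspace forms an exact finite complex.)

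For the vanishing $\log\big|\epsilon_X\big|_{\eta(X),\omega}=0$, note that by \eqref{eq-def-lambda-p}, \eqref{eq-def-eta} and the $L^2$-orthogonality of the Hodge decomposition, $\big|\epsilon_X\big|^2_{\eta(X),\omega}=\prod_{k=0}^{2n}v_k^{(-1)^k}$, where $v_k$ is the squared covolume, for the $L^2$-metric, of the integral lattice of $H^k_\mathrm{dR}(X)$ (harmonic representatives). I would then use the Poincar\'e pairing $\big\langle[\alpha],[\gamma]\big\rangle=(2\pi i)^{-n}\int_X\alpha\wedge\gamma$ on $H^k_\mathrm{dR}(X)\times H^{2n-k}_\mathrm{dR}(X)$: by the normalization \eqref{eq-sing-dr} it agrees with the singular cup-product pairing, hence is perfect and unimodular on the integral lattices (and $b_{2n-k}=b_k$), so the integral bases may be chosen with $\big\langle\sigma_{k,i},\sigma_{2n-k,j}\big\rangle=\delta_{ij}$. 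The conjugate-linear map $\psi=\ast\circ\overline{(\cdot)}$ carries harmonic $k$-forms isomorphically onto harmonic $(2n-k)$-forms, satisfies $\psi^2=(-1)^{k(2n-k)}\Id$, and the pointwise identity $\big\langle\alpha,\beta\big\rangle\,dv_X=\alpha\wedge\ast\overline\beta$ together with \eqref{eq-def-L2-metric} gives $\big\langle\alpha,\beta\big\rangle_{L^2}=i^n\big\langle\alpha,\psi\beta\big\rangle$. Writing $B$, $C$ for the matrices of $\psi$ on harmonic $k$- and $(2n-k)$-forms in these bases, one reads off $v_k=|\det B|$ and $v_{2n-k}=|\det C|$, while $\psi^2=(-1)^{k(2n-k)}\Id$ forces $\overline BC=(-1)^{k(2n-k)}\Id$, whence $v_kv_{2n-k}=1$. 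Since $k$ and $2n-k$ enter the product with the same sign $(-1)^k=(-1)^{2n-k}$ and $v_n=1$, we get $\prod_k v_k^{(-1)^k}=1$, i.e.\ $\log\big|\epsilon_X\big|_{\eta(X),\omega}=0$. Combined with the first part this yields $\tau_\mathrm{top}(X)=\log\big|\epsilon_X\big|_{\eta(X),\omega}=0$.

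I expect the only real friction to be the bookkeeping of the factors $2\pi$ and $2\pi i$ coming from \eqref{eq-def-L2-metric} and \eqref{eq-sing-dr}: one must verify that with these normalizations the Poincar\'e pairing above is precisely the integral cup product (so that dual integral bases exist), and that the scalar relating $\big\langle\cdot,\cdot\big\rangle_{L^2}$ to $\big\langle\cdot,\psi(\cdot)\big\rangle$ has modulus one, so that it does not disturb the covolumes $v_k$. Everything else is robust, since the remaining sign ambiguities enter only through absolute values of determinants.
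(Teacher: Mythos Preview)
Your proof is correct and follows essentially the same two-step strategy as the paper: reduce the first equality to $\sum_{p}(-1)^p\theta_p'(0)=0$ and attribute it to Bismut \cite{b04}, then prove the second equality via the Poincar\'e duality relation $v_k\,v_{2n-k}=1$ between the lattice covolumes. The only difference is in presentation: where the paper simply cites \cite[p.~1304]{b04} and \cite[Remark 5.5(ii)]{efm} for these two facts, you derive the first from Theorem~\ref{thm-tf-vanishing} (already stated in the paper) specialized to $X\to\{\mathrm{pt}\}$, and you prove the second directly with the Hodge star argument; this makes your version more self-contained but not genuinely different in method.
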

\begin{proof}
Let $\Delta_p$ be as in \eqref{def-Delta-dol}
with $\big( \Omega^{0,\bullet}(X,E), \overline{\partial}^E, g^E \big)$
replaced by $\big( \Omega^{p,\bullet}(X), \overline{\partial}, g^{\Lambda^p(T^*X)}_\omega \big)$.
Let $\Delta_{p,q}$ be the restriction of $\Delta_p$ to $\Omega^{p,q}(X)$.
Let $\theta_p(z)$ be as in \eqref{eq-def-theta} with $\Delta^E_q$ replaced by $\Delta^E_{p,q}$.
By Definition \ref{def-q}, \ref{def-tau-top},
the first equality in \eqref{eq-prop-tau-top} is equivalent to
\begin{equation}
\label{eq1-pf-prop-tau-top}
\sum_{p=0}^n (-1)^p \theta_p'(0) = 0 \;,
\end{equation}
which was indicated in \cite[page 1304]{b04}.

Denote by $\mathrm{covol}\big(H^k_\mathrm{Sing}(X,\Z),\omega\big)$ the covolume of $\mathrm{Im}\big(H^k_\mathrm{Sing}(X,\Z) \rightarrow H^k_\mathrm{Sing}(X,\R)\big)$
with respect to the metric induced by $\bigoplus_{p+q=k} g^{H^{p,q}(X)}_\omega$ via \eqref{eq-sing-dr}.
We have
\begin{equation}
\label{eq21-pf-prop-tau-top}
\big|\epsilon_X\big|_{\eta(X),\omega} =
\prod_{k=0}^{2n} \Big( \mathrm{covol}\big(H^k_\mathrm{Sing}(X,\Z),\omega\big) \Big)^{(-1)^k} \;.
\end{equation}
On the other hand,
by \cite[Remark 5.5(ii)]{efm},
we have
\begin{equation}
\label{eq22-pf-prop-tau-top}
\mathrm{covol}\big(H^k_\mathrm{Sing}(X,\Z),\omega\big)
\mathrm{covol}\big(H^{2n-k}_\mathrm{Sing}(X,\Z),\omega\big) = 1 \;.
\end{equation}
Here we remark that,
due to the normalization in \eqref{eq-def-L2-metric} and \eqref{eq-sing-dr},
the covolume in the sense of \cite[Remark 5.5(ii)]{efm} equals
$(2\pi)^{(n-k)b_k/2}\mathrm{covol}\big(H^k_\mathrm{Sing}(X,\Z),\omega\big)$,
where $b_k$ is the $k$-th Betti number of $X$.
From \eqref{eq21-pf-prop-tau-top} and \eqref{eq22-pf-prop-tau-top},
we obtain $\big|\epsilon_X\big|_{\eta(X),\omega} = 1$,
which is equivalent to the second equality in \eqref{eq-prop-tau-top}.
This completes the proof.
\end{proof}

\section{Several properties of the BCOV torsion}

\subsection{K{\"a}hler metric on projective bundle}
\label{subsect-km}

Let $Y$ be an $m$-dimensional compact K{\"a}hler manifold.
Let $N$ be a holomorphic vector bundle over $Y$ of rank $n$.
Let $\mathbb{1}$ be the trivial line bundle over $Y$.
Set
\begin{equation}
X = \mathbb{P}(N\oplus\mathbb{1}) \;.
\end{equation}
Let $\pi: X \rightarrow Y$ be the canonical projection.
For $y\in Y$,
we denote $Z_y = \pi^{-1}(y)$,
which is isomorphic to $\CP^n$.
Let $\omega_{\CP^n}$ be the K{\"a}hler form on $\CP^n$ associated with the Fubini-Study metric.

\begin{lemme}
\label{lem-kahler-bundle}
There exists a K{\"a}hler form $\omega$ on $X$ such that for any $y\in Y$,
there exists an isomorphism $\phi_y: \CP^n \rightarrow Z_y$ such that $\phi_y^*\big(\omega\big|_{Z_y}\big) = \omega_{\CP^n}$.
\end{lemme}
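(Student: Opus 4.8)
The plan is to produce $\omega$ as a perturbation, by a large multiple of $\pi^{*}\omega_{Y}$, of the first Chern form of the relative hyperplane bundle on $X=\mathbb{P}(N\oplus\mathbb{1})$. First I would fix a K\"ahler form $\omega_{Y}$ on $Y$ and a Hermitian metric $g^{N}$ on $N$; together with the canonical metric on $\mathbb{1}$ this yields a Hermitian metric $g^{N\oplus\mathbb{1}}$ on $N\oplus\mathbb{1}$. Let $\mathscr{O}_{X}(-1)\subseteq\pi^{*}(N\oplus\mathbb{1})$ be the tautological subbundle and $\mathscr{O}_{X}(1)$ its dual; then $g^{N\oplus\mathbb{1}}$ induces Hermitian metrics $h$ on $\mathscr{O}_{X}(\pm1)$, and I write $\alpha=c_{1}\big(\mathscr{O}_{X}(1),h\big)\in\Omega^{1,1}(X)$ for the associated closed real first Chern form.

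Next I would record the two features of $\alpha$ that drive the construction. On the one hand, $\pi^{*}\omega_{Y}$ is a smooth semi-positive $(1,1)$-form whose radical at each point is exactly the vertical tangent space $\ker d\pi$. On the other hand, Chern forms commute with restriction, so for a fiber $Z_{y}=\mathbb{P}\big((N\oplus\mathbb{1})_{y}\big)$ one has $\alpha|_{Z_{y}}=c_{1}\big(\mathscr{O}_{Z_{y}}(1),h|_{Z_{y}}\big)$, which is — up to a universal positive constant depending only on one's normalization of the Fubini-Study form — the Fubini-Study form of the Hermitian vector space $\big((N\oplus\mathbb{1})_{y},g^{N\oplus\mathbb{1}}\big)$; in particular $\alpha$ is strictly positive on every vertical tangent space. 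Since $X$ is compact, a standard argument now applies: at each point split $TX$ into the vertical subspace and a smooth complement, bound the vertical-horizontal and horizontal-horizontal components of $\alpha$ uniformly on $X$, and observe that $C\pi^{*}\omega_{Y}$ dominates on the complement once $C$ is large; hence $\omega_{0}:=\alpha+C\pi^{*}\omega_{Y}$ is a positive, i.e. K\"ahler, form on $X$ for all sufficiently large $C>0$.

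Finally I would exhibit the fiberwise isomorphisms. Fix $y\in Y$ and choose a $\C$-linear isometry $u_{y}\colon\C^{n+1}\to(N\oplus\mathbb{1})_{y}$, where $\C^{n+1}$ carries the standard Hermitian inner product; such $u_{y}$ exists because any two Hermitian vector spaces of the same dimension are isometric. Let $\phi_{y}\colon\CP^{n}=\mathbb{P}(\C^{n+1})\to\mathbb{P}\big((N\oplus\mathbb{1})_{y}\big)=Z_{y}$ be the induced biholomorphism. Functoriality of the induced metric on the hyperplane bundle gives $\phi_{y}^{*}\big(h|_{Z_{y}}\big)=h_{\mathrm{std}}$, hence $\phi_{y}^{*}\big(\alpha|_{Z_{y}}\big)=c_{1}\big(\mathscr{O}_{\CP^{n}}(1),h_{\mathrm{std}}\big)=\lambda^{-1}\omega_{\CP^{n}}$ for a universal constant $\lambda>0$. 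Replacing $\omega_{0}$ by the still-K\"ahler form $\omega:=\lambda\,\omega_{0}$, and using that $\pi\circ\phi_{y}$ is constant, so that $\phi_{y}^{*}\big(\pi^{*}\omega_{Y}|_{Z_{y}}\big)=0$, we obtain $\phi_{y}^{*}\big(\omega|_{Z_{y}}\big)=\lambda\,\phi_{y}^{*}\big(\alpha|_{Z_{y}}\big)=\omega_{\CP^{n}}$, as required.

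I expect the \emph{main obstacle} to be the uniform-positivity step of the second paragraph: one must combine the strict positivity of $\alpha$ along the fibers with the mere semi-positivity of $\pi^{*}\omega_{Y}$, and extracting a threshold for $C$ valid simultaneously at every point of $X$ is exactly where the compactness of $X$ enters. The remaining normalization bookkeeping — pinning down the universal constant $\lambda$ relating $c_{1}\big(\mathscr{O}_{\CP^{n}}(1),h_{\mathrm{std}}\big)$ to the chosen $\omega_{\CP^{n}}$ — is routine.
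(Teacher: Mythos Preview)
Your argument is correct and is precisely the standard construction that the paper invokes by citing \cite[Proposition 3.18]{v}: build the fiberwise Fubini--Study form as $c_1(\mathscr{O}_X(1),h)$ from a Hermitian metric on $N\oplus\mathbb{1}$, then add a large multiple of $\pi^*\omega_Y$ to achieve global positivity via compactness. The paper gives no independent proof beyond this reference, so your write-up simply fills in what the cited source contains.
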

\begin{proof}
We refer the reader to the proof of \cite[Proposition 3.18]{v}.
\end{proof}

Let $s\in\{1,\cdots,n\}$.
We assume that there are holomorphic line bundles $L_1,\cdots,L_s$ over $Y$
together with a surjection between holomorphic vector bundles,
\begin{equation}
\label{eq-N-L}
N \rightarrow L_1 \oplus \cdots \oplus L_s \;.
\end{equation}
For $k=1,\cdots,s$,
let $N \rightarrow L_k$
be the composition of \eqref{eq-N-L} and the canonical projection $L_1 \oplus \cdots \oplus L_s \rightarrow L_k$.
Set
\begin{equation}
N_k = \mathrm{Ker}\big(N\rightarrow L_k\big) \subseteq N \;,\hspace{4mm}
X_k = \mathbb{P}(N_k\oplus\mathbb{1}) \subseteq X \;,\hspace{4mm}
X_0 = \mathbb{P}(N) \subseteq X \;.
\end{equation}

Let $[\xi_0:\cdots:\xi_n]$ be homogenous coordinates on $\CP^n$.
For $k=0,\cdots,n$,
we denote
$H_k = \big\{\xi_k = 0 \big\} \subseteq \CP^n$.

\begin{lemme}
\label{lem2-kahler-bundle}
There exists a K{\"a}hler form $\omega$ on $X$ such that for any $y\in Y$,
there exists an isomorphism $\phi_y: \CP^n \rightarrow Z_y$ such that
$\phi_y^*\big(\omega\big|_{Z_y}\big) = \omega_{\CP^n}$ and $\phi_y^{-1}\big(X_k \cap Z_y\big) = H_k$ for $k=0,\cdots,s$.
\end{lemme}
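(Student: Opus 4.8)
The plan is to follow the proof of Lemma \ref{lem-kahler-bundle} (which rests on \cite[Proposition 3.18]{v}): a Hermitian metric on $N\oplus\mathbb{1}$ induces a metric on the relative hyperplane bundle $\mathscr{O}_X(1)$, whose first Chern form restricts on each fibre $Z_y=\mathbb{P}(N_y\oplus\C)$ to the Fubini--Study form of the corresponding Hermitian inner product $h_y$ on $V_y:=N_y\oplus\C$; adding $C\pi^*\omega_Y$ for $C\gg 0$ (using that $Y$ is compact) produces a K{\"a}hler form $\omega$ on $X$ with $\omega|_{Z_y}=\omega_{\mathscr{O}_X(1)}|_{Z_y}$, so that any unitary identification $V_y\cong\C^{n+1}$ yields an isomorphism $\phi_y:\CP^n\to Z_y$ with $\phi_y^*(\omega|_{Z_y})=\omega_{\CP^n}$. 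The only extra content here, compared with Lemma \ref{lem-kahler-bundle}, is to arrange, by choosing the metric on $N$ suitably, that $\phi_y$ can be taken to carry the standard hyperplanes $H_0,\dots,H_s$ onto $X_0\cap Z_y,\dots,X_s\cap Z_y$.

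The main point is the choice of Hermitian metric on $N$. Set $M=\bigcap_{k=1}^s N_k\subseteq N$; this is a holomorphic subbundle of corank $s$ (kernel of a surjection of vector bundles), and \eqref{eq-N-L} identifies $N/M$ with $L_1\oplus\cdots\oplus L_s$, each $N\to L_k$ being the composite $N\to N/M\xrightarrow{\mathrm{pr}_k}L_k$. I would fix a smooth splitting of $N\to\bigoplus_k L_k$, hence a smooth isomorphism $M\oplus\bigoplus_k L_k\xrightarrow{\sim}N$, choose arbitrary smooth Hermitian metrics on $M$ and on each $L_k$, and let $h^N$ be the metric transported from the orthogonal direct sum. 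Then at every $y\in Y$ the line $\ell_k(y):=(N_k)_y^{\perp_{h^N}}\subseteq N_y$ is precisely the $k$-th summand $L_k|_y$ under this splitting, so that $\ell_1(y),\dots,\ell_s(y)$ are mutually orthogonal and orthogonal to $M_y$, and $(N_k)_y=M_y\oplus\bigoplus_{k'\ne k}\ell_{k'}(y)$.

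With $\omega$ built from $h^N\oplus h^{\mathbb{1}}$ as above, I would then, for each fixed $y$, pick an orthonormal basis $(e_0,e_1,\dots,e_n)$ of $(V_y,h_y)$ with $e_0$ spanning the trivial summand $\C\subseteq V_y$ and $e_k\in\ell_k(y)$ for $k=1,\dots,s$ — possible because $\C,\ell_1(y),\dots,\ell_s(y)$ are mutually orthogonal lines in $V_y$ and $s\le n$ — and let $\phi_y:\CP^n\to Z_y$ be induced by $(\xi_0,\dots,\xi_n)\mapsto\sum_j\xi_j e_j$. Since $\phi_y$ is a unitary identification, $\phi_y^*(\omega|_{Z_y})=\omega_{\CP^n}$ exactly as in Lemma \ref{lem-kahler-bundle}; and since $N_y=\mathrm{span}(e_1,\dots,e_n)$ and $(N_k)_y\oplus\C=\mathrm{span}(e_j:j\ne k)$ for $k=1,\dots,s$, one reads off $\phi_y^{-1}(X_0\cap Z_y)=\phi_y^{-1}(\mathbb{P}(N_y))=H_0$ and $\phi_y^{-1}(X_k\cap Z_y)=\phi_y^{-1}(\mathbb{P}((N_k)_y\oplus\C))=H_k$, which is the claim.

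I do not expect a serious obstacle. The only slightly delicate step is realizing that an arbitrary Hermitian metric on $N$ will fail to make the lines $\ell_k(y)$ mutually orthogonal, so the metric must be adapted to the splitting of \eqref{eq-N-L} as above; everything else is the routine fibrewise linear algebra of Fubini--Study metrics together with the compactness argument already used for Lemma \ref{lem-kahler-bundle}. Note also that no coherence of the $\phi_y$ in $y$ is required, since the statement only asserts the existence of $\phi_y$ for each individual $y$.
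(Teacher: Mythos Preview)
Your proposal is correct and essentially matches the paper's proof. The paper phrases the metric choice dually: it takes the injection $L_1^{-1}\oplus\cdots\oplus L_s^{-1}\hookrightarrow N^*$ obtained from \eqref{eq-N-L}, picks a Hermitian metric $g^{N^*}$ making the line subbundles $L_k^{-1}$ mutually orthogonal, and then dualizes to get $g^N$ --- which is exactly equivalent to your splitting-and-direct-sum construction, since under the metric identification the annihilator $L_k^{-1}=(N_k)^\circ$ corresponds to $(N_k)^\perp$. Both versions then proceed via \cite[Proposition~3.18]{v} as in Lemma~\ref{lem-kahler-bundle}.
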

\begin{proof}
Let $N^*$ be the dual of $N$.
We have $L_1^{-1}\oplus\cdots\oplus L_s^{-1} \hookrightarrow N^*$.
Let $g^{N^*}$ be a Hermitian metric on $N^*$ such that $L_1^{-1},\cdots,L_s^{-1}\subseteq N^*$ are mutually orthogonal.
Let $g^N$ be the dual metric on $N$.
Now,
proceeding in the same way as in the proof of \cite[Proposition 3.18]{v},
we obtain $\omega$ satisfying the desired properties.
This completes the proof.
\end{proof}

\subsection{Behavior under adiabatic limit}

We will use the notations in \textsection \ref{subsect-km}.
By Lemma \ref{lem-kahler-bundle},
there exists a K{\"a}hler form $\omega_X$ on $X$
such that for any $y\in Y$,
there exists an isomorphism $\phi_y: \CP^n\rightarrow Z_y$ such that
\begin{equation}
\label{eq-hyp-isometry}
\phi_y^*\big(\omega_X\big|_{Z_y}\big) = \omega_{\CP^n} \;.
\end{equation}
Let $\omega_{Z_y} = \omega_X\big|_{Z_y}$.
Note that $\big(Z_y,\omega_{Z_y}\big)_{y\in Y}$ are mutually isometric,
we will omit the index $y$ as long as there is no confusion.
Let $\omega_Y$ be a K{\"a}hler form on $Y$.
For $\e>0$,
set
\begin{equation}
\label{eq-omega-e}
\omega_\e = \omega_X + \frac{1}{\e} \pi^*\omega_Y \;.
\end{equation}

We denote
\begin{equation}
(c_1c_{m-1})(Y) = \int_Y c_1(TY)c_{m-1}(TY) \;.
\end{equation}
Let $\chi(\cdot)$ be the topological Euler characteristic.
Recall that $\tau_\mathrm{BCOV}(\cdot,\cdot)$ was defined in Definition \ref{def-bcov-torsion}.

\begin{thm}
\label{thm-bcov-adiabatic}
As $\e\rightarrow 0$,
\begin{align}
\label{eq-thm-bcov-adiabatic}
\begin{split}
& \tau_\mathrm{BCOV}(X,\omega_\e) - \frac{1}{12} \chi(Z) \Big( m \chi(Y) + (c_1c_{m-1})(Y) \Big) \log \e  \\
& \hspace{35mm} \rightarrow \chi(Z) \tau_\mathrm{BCOV}(Y,\omega_Y) + \chi(Y) \tau_\mathrm{BCOV}(Z,\omega_Z) \;.
\end{split}
\end{align}
\end{thm}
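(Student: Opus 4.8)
\emph{Overview and decomposition of $\lambda_\mathrm{dR}(X)$.} The plan is to derive \eqref{eq-thm-bcov-adiabatic} from the Berthomieu--Bismut submersion formula (Theorem \ref{thm-sub}), applied to the bundles $\Omega^p_X$ after peeling off the relative cotangent filtration, and then to collapse the resulting characteristic-class and torsion-form sums using Theorem \ref{thm-tf-vanishing} and the identities of \textsection\ref{subsect-char}. The exact sequence $0\to\pi^*\Omega^1_Y\to\Omega^1_X\to\Omega^1_{X/Y}\to0$ induces a decreasing filtration of $\Omega^p_X$ with graded pieces $\pi^*\Omega^j_Y\otimes\Omega^{p-j}_{X/Y}$, $0\le j\le p$. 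Since the fibres are $\CP^n$, one has $R^q\pi_*\Omega^{p-j}_{X/Y}=\mathscr{O}_Y$ for $q=p-j$ and $0$ otherwise, the generator being a power of the relative hyperplane class; so the projection formula and the Leray spectral sequence give canonical identifications $H^k\big(X,\pi^*\Omega^j_Y\otimes\Omega^{p-j}_{X/Y}\big)=H^{j,k-(p-j)}(Y)$, hence, through the filtration, a canonical isomorphism of complex lines $\lambda_p(X)\cong\bigotimes_{j=0}^p\lambda_j(Y)^{(-1)^{p-j}}$. Substituting this into \eqref{eq-def-lambda}, \eqref{eq-def-eta} and reorganising the exponents (with $\chi(Z)=n+1$) yields a canonical isomorphism
\begin{equation*}
\lambda_\mathrm{dR}(X)\;\cong\;\lambda_\mathrm{dR}(Y)^{\otimes\chi(Z)}\otimes\big(\eta(Y)\otimes\overline{\eta(Y)}\big)^{\otimes\, n\chi(Z)/2}
\end{equation*}
under which, by the Leray--Hirsch theorem with the integral relative hyperplane class as module generator, $\sigma_X$ corresponds to $\pm\,\sigma_Y^{\otimes\chi(Z)}\otimes\big(\epsilon_Y\otimes\overline{\epsilon_Y}\big)^{\otimes\, n\chi(Z)/2}$. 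After the metric comparison below, the $Y$-side contribution becomes $\chi(Z)\,\tau_\mathrm{BCOV}(Y,\omega_Y)+n\chi(Z)\,\tau_\mathrm{top}(Y)=\chi(Z)\,\tau_\mathrm{BCOV}(Y,\omega_Y)$ by Proposition \ref{prop-tau-top}.

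\emph{Metric comparison via Theorem \ref{thm-sub}.} I equip $\Omega^p_X$ with the metric induced by $\omega_\e$; since the horizontal covectors are scaled by $\sqrt\e$, on the $j$-th graded piece this is $\e^j$ times a metric independent of $\e$. Applying Theorem \ref{thm-resol} to each short exact sequence of the filtration, and using the adiabatic vanishing of the Bott--Chern forms of \textsection\ref{subsect-bc} (Propositions \ref{prop-adiabatic-bc}--\ref{prop4-adiabatic-bc}, applied to $0\to T_{X/Y}\to TX\to\pi^*TY\to0$), the $\omega_\e$-Quillen metric on $\lambda_p(X)$ reduces, modulo an $o(1)$ term and a $\log\e$ term supplied by the $\e^j$-rescalings via Remark \ref{rem-q}, to the product over $j$ of the Quillen metrics on $\det H^\bullet\big(X,\pi^*\Omega^j_Y\otimes\Omega^{p-j}_{X/Y}\big)$. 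To each of these I apply Theorem \ref{thm-sub} with $E=\pi^*\Omega^j_Y\otimes\Omega^{p-j}_{X/Y}$, noting $\Omega^{p-j}_{X/Y}\big|_{Z}=\Lambda^{p-j}(T^*Z)$ and using Proposition \ref{prop-tf-mult} to write the analytic torsion form as $\mathrm{ch}\big(\Omega^j_Y,g^{\Omega^j_Y}\big)\,T\big(\omega_X,g^{\Lambda^{p-j}(T^*Z)}\big)$ and $\int_Z\mathrm{Td}(TZ)\mathrm{ch}(E)=\mathrm{ch}\big(\Omega^j_Y\big)\int_Z\mathrm{Td}(TZ)\mathrm{ch}\big(\Lambda^{p-j}(T^*Z)\big)$; the fibrewise $L^2$-factors carried by $H^\bullet(Z,\cdot)$ are folded in by Remark \ref{rem-q}. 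Altogether, $\tau_\mathrm{BCOV}(X,\omega_\e)$ equals, modulo $o(1)$, the term $\chi(Z)\,\tau_\mathrm{BCOV}(Y,\omega_Y)$, plus a $\log\e$ term, plus a sum over $j$ and $r=p-j$, weighted by the $\lambda_\mathrm{dR}$-exponents $(-1)^{j+r}(j+r)$ (up to an overall factor $2$ from $\lambda_\mathrm{dR}=\lambda(X)\otimes\overline{\lambda(X)}$), of integrals $\int_Y\mathrm{Td}\big(TY,g^{TY}\big)\,\mathrm{ch}\big(\Omega^j_Y,g^{\Omega^j_Y}\big)\,T\big(\omega_X,g^{\Lambda^{r}(T^*Z)}\big)$.

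\emph{Collapsing the sums.} Writing $(-1)^{j+r}(j+r)=(-1)^j\big(j\,(-1)^r+(-1)^r r\big)$ and summing first over $r$, the combination $\sum_r(-1)^r T\big(\omega_X,g^{\Lambda^{r}(T^*Z)}\big)$ vanishes in $Q^Y/Q^{Y,0}$ by Theorem \ref{thm-tf-vanishing}, so the $j$-part drops out against the closed form $\mathrm{Td}(TY,g^{TY})\mathrm{ch}(\Omega^j_Y,g^{\Omega^j_Y})$ and only $\sum_r(-1)^r r\,T\big(\omega_X,g^{\Lambda^{r}(T^*Z)}\big)$ survives; its degree-$0$ component, together with the fibrewise $L^2$-factors, assembles into $\tau_\mathrm{BCOV}(Z,\omega_Z)$. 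Summing over $j$, the first identity of Proposition \ref{prop-total-class} gives $\sum_j(-1)^j\mathrm{Td}(TY,g^{TY})\mathrm{ch}(\Omega^j_Y,g^{\Omega^j_Y})=c_m(TY,g^{TY})$, which has top degree, so only the degree-$0$ part of the surviving torsion-form combination contributes after integration, and since $\int_Y c_m(TY)=\chi(Y)$ this produces $\chi(Y)\,\tau_\mathrm{BCOV}(Z,\omega_Z)$. For the $\log\e$ coefficient there are two sources, the $\e^j$-rescalings of the graded metrics and the explicit term $\int_Y\mathrm{Td}'(TY)\int_Z\mathrm{Td}(TZ)\mathrm{ch}(E)\log\e$ of Theorem \ref{thm-sub}; evaluating the relevant alternating sums over $j$ and $r$ by the second and third identities of Proposition \ref{prop-total-class} and the two identities of Proposition \ref{prop2-total-class} (applied to the curvatures of $TZ$ and $TY$), the terms in $c_{m-1}(TY)$ alone vanish for degree reasons while $(c_1c_{m-1})(TY)$ survives, and the two sources add up to $\tfrac1{12}\chi(Z)\big(m\chi(Y)+(c_1c_{m-1})(Y)\big)$. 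Combining the three paragraphs yields \eqref{eq-thm-bcov-adiabatic}.

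\emph{Main obstacle.} I expect the decisive difficulty to be the bookkeeping in the last two paragraphs: one must keep precise track of the two independent $\log\e$ contributions and check, via Propositions \ref{prop-total-class} and \ref{prop2-total-class}, that their sum is exactly the stated coefficient, and one must verify that the Leray--Hirsch integral identification of the first paragraph is compatible with the sign-ambiguous generators $\sigma_X$, $\sigma_Y$, $\sigma_Z$, so that the multiplicities of $\tau_\mathrm{BCOV}(Y,\omega_Y)$ and $\tau_\mathrm{BCOV}(Z,\omega_Z)$ come out as $\chi(Z)$ and $\chi(Y)$ with no spurious lattice-index factor. By contrast, the convergence assertions themselves are routine given Theorems \ref{thm-sub} and \ref{thm-resol} and the Bott--Chern propositions of \textsection\ref{subsect-bc}.
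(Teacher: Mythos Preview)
Your proposal is correct and follows essentially the same route as the paper's proof: the relative cotangent filtration on $\Omega^p_X$, Theorem~\ref{thm-resol} on each short exact sequence (the paper's sections $\alpha_{r,s}$), Theorem~\ref{thm-sub} on the graded pieces (the paper's $\beta_{r,s}$), the fibrewise $L^2$-normalisations via Remark~\ref{rem-q} (the paper's $\gamma_{r,s}$), and then Theorem~\ref{thm-tf-vanishing} together with Propositions~\ref{prop-total-class} and~\ref{prop2-total-class} to collapse the sums. The paper organises the bookkeeping by naming the three canonical sections $\alpha_{r,s},\beta_{r,s},\gamma_{r,s}$ explicitly and proving the lattice identity $\prod(\alpha_{r,s}\otimes\beta_{r,s}\otimes\gamma_{r,s})^{(-1)^{r+s}(r+s)}\otimes\overline{(\cdots)}=\pm\sigma_X\otimes\sigma_Y^{-\chi(Z)}\otimes\epsilon_Y^{-n\chi(Z)}$ as a separate step, which is exactly the Leray--Hirsch compatibility you flag as the main obstacle; your sketch compresses these into running text but the content is the same.
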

\begin{proof}
The proof consists of several steps.

Recall that $\eta(\cdot)$ was constructed in \eqref{eq-def-eta} and $\lambda_\mathrm{dR}(\cdot)$ was constructed in \eqref{eq-def-lambda}.

\noindent\textbf{Step 1.}
We construct two canonical sections of
\begin{equation}
\lambda_\mathrm{dR}(X) \otimes \Big(\lambda_\mathrm{dR}(Y)\Big)^{-\chi(Z)} \otimes \Big(\eta(Y)\Big)^{-n\chi(Z)} \;.
\end{equation}

For $p=0,\cdots,m+n$ and $s=0,\cdots,p-1$,
set
\begin{align}
\label{eq1a0-pf-thm-bcov-adiabatic}
\begin{split}
I^p_s = \Big\{ u \in \Lambda^p(T^*X) \; & : \; u(v_1,\cdots,v_p) = 0 \\
& \hspace{5mm} \text{for any } v_1,\cdots,v_{s+1} \in TZ,\; v_{s+2},\cdots,v_p \in TX \Big\} \;.
\end{split}
\end{align}
For convenience,
we denote $I^p_p = \Lambda^p(T^*X)$ and $I^p_{-1} = 0$.
We get a filtration
\begin{equation}
\label{eq1a1-pf-thm-bcov-adiabatic}
\Lambda^p(T^*X) = I^p_p \hookleftarrow I^p_{p-1} \hookleftarrow \cdots \hookleftarrow I^p_{-1} = 0 \;.
\end{equation}
For $r=0,\cdots,m$ and $s=0,\cdots,n$,
we denote
\begin{equation}
\label{eq1a2-pf-thm-bcov-adiabatic}
E_{r,s} = \Lambda^s(T^*Z) \otimes \pi^*\Lambda^r(T^*Y) \;.
\end{equation}
We have a short exact sequence of holomorphic vector bundles over $X$,
\begin{equation}
\label{eq1a3-pf-thm-bcov-adiabatic}
0 \rightarrow I^{r+s}_{s-1} \rightarrow I^{r+s}_s \rightarrow E_{r,s} \rightarrow 0 \;.
\end{equation}
Let
\begin{equation}
\label{eq1a-pf-thm-bcov-adiabatic}
\alpha_{r,s} \in
\Big(\det H^\bullet\big(X,I^{r+s}_{s-1}\big)\Big)^{-1}
\otimes \det H^\bullet\big(X,I^{r+s}_s\big)
\otimes \Big(\det H^\bullet\big(X,E_{r,s}\big)\Big)^{-1} \;.
\end{equation}
be the canonical section induced by the long exact sequence induced by \eqref{eq1a3-pf-thm-bcov-adiabatic}.

Let $H^{\bullet,\bullet}(Z)$ be the fiberwise cohomology.
Since $Z \simeq \CP^n$,
we have
\begin{equation}
\label{eq-vanish-HZ}
H^{p,p}(Z) = \C \hspace{2.5mm} \text{for } p = 0,\cdots, n \;,\hspace{4mm}
H^{p,q}(Z) = 0 \hspace{2.5mm} \text{for } p \neq q \;.
\end{equation}
Applying spectral sequence while using \eqref{eq1a2-pf-thm-bcov-adiabatic} and \eqref{eq-vanish-HZ},
we get
\begin{equation}
\label{eq1b1-pf-thm-bcov-adiabatic}
H^q\big(X,E_{r,s}\big) \simeq H^{r,q-s}\big(Y,H^{s,s}(Z)\big) := H^{q-s}\big(Y,\Lambda^r(T^*Y) \otimes H^{s,s}(Z)\big) \;.
\end{equation}
Let
\begin{equation}
\label{eq1b-pf-thm-bcov-adiabatic}
\beta_{r,s} \in \det H^\bullet\big(X,E_{r,s}\big) \otimes
\Big( \det H^{r,\bullet}\big(Y,H^{s,s}(Z)\big) \Big)^{-(-1)^s}
\end{equation}
be the canonical section induced by \eqref{eq1b1-pf-thm-bcov-adiabatic}.

We have a generator of lattice,
\begin{equation}
\label{eq1c1-pf-thm-bcov-adiabatic}
\delta_s \in H^{2s}_\mathrm{Sing}(\CP^n,\Z) \subseteq H^{2s}_\mathrm{Sing}(\CP^n,\R) \subseteq H^{2s}_\mathrm{Sing}(\CP^n,\C) \;.
\end{equation}
We identify $H^{2s}_\mathrm{Sing}(\CP^n,\C)$ with $H^{2s}_\mathrm{dR}(\CP^n) = H^{s,s}(\CP^n)$ (see \eqref{eq-sing-dr}).
We have an isomorphism
\begin{align}
\label{eq1c3-pf-thm-bcov-adiabatic}
\begin{split}
H^{r,\bullet}(Y) & \rightarrow H^{r,\bullet}\big(Y,H^{s,s}(Z)\big) = H^{r,\bullet}(Y) \otimes H^{s,s}(\CP^n)\\
u & \mapsto u \otimes \delta_s \;.
\end{split}
\end{align}
Let
\begin{equation}
\label{eq1c-pf-thm-bcov-adiabatic}
\gamma_{r,s} \in \Big( \det H^{r,\bullet}\big(Y,H^{s,s}(Z)\big) \Big)^{(-1)^s} \otimes
\Big( \det H^{r,\bullet}(Y) \Big)^{-(-1)^s}
\end{equation}
be the canonical section induced by \eqref{eq1c3-pf-thm-bcov-adiabatic}.
By \eqref{eq1a-pf-thm-bcov-adiabatic},
\eqref{eq1b-pf-thm-bcov-adiabatic},
\eqref{eq1c-pf-thm-bcov-adiabatic},
we have
\begin{align}
\label{eq1d-pf-thm-bcov-adiabatic}
\begin{split}
& \alpha_{r,s} \otimes \beta_{r,s} \otimes \gamma_{r,s} \\
& \in  \Big(\det H^\bullet\big(X,I^{r+s}_{s-1}\big)\Big)^{-1}
\otimes \det H^\bullet\big(X,I^{r+s}_s\big)
\otimes \Big( \det H^{r,\bullet}(Y) \Big)^{-(-1)^s} \;.
\end{split}
\end{align}

Recall that $\lambda(\cdot)$ was defined in \eqref{eq-def-lambda}.
By \eqref{eq-def-lambda} and \eqref{eq1a1-pf-thm-bcov-adiabatic},
we have
\begin{align}
\label{eq1s1-pf-thm-bcov-adiabatic}
\begin{split}
\lambda(X) & = \bigotimes_{p=1}^{m+n} \Big(\det H^\bullet\big(X,\Lambda^p(T^*X)\big)\Big)^{(-1)^pp} \\
& = \bigotimes_{r=0}^m \bigotimes_{s=0}^n
\bigg( \Big(\det H^\bullet\big(X,I^{r+s}_{s-1}\big)\Big)^{-1}
\otimes \det H^\bullet\big(X,I^{r+s}_s\big) \bigg)^{(-1)^{r+s}(r+s)} \;.
\end{split}
\end{align}
On the other hand,
by \eqref{eq-def-eta}, \eqref{eq-def-lambda}
and the identities
\begin{equation}
\label{eq-sum-bs}
n+1 = \chi(Z) \;,\hspace{4mm}
\sum_{s=0}^n s = \frac{n(n+1)}{2} = \frac{n}{2}\chi(Z) \;,
\end{equation}
we have
\begin{equation}
\label{eq1s2-pf-thm-bcov-adiabatic}
\bigotimes_{r=0}^m \bigotimes_{s=0}^n \Big(\det H^{r,\bullet}(Y) \Big)^{(-1)^r(r+s)} =
\Big(\lambda(Y)\Big)^{\chi(Z)} \otimes \Big(\eta(Y)\Big)^{n\chi(Z)/2} \;.
\end{equation}
By \eqref{eq1d-pf-thm-bcov-adiabatic},
\eqref{eq1s1-pf-thm-bcov-adiabatic} and
\eqref{eq1s2-pf-thm-bcov-adiabatic},
we have
\begin{equation}
\label{eq1s3-pf-thm-bcov-adiabatic}
\prod_{r=0}^m \prod_{s=0}^n \Big( \alpha_{r,s}\otimes\beta_{r,s}\otimes\gamma_{r,s} \Big)^{(-1)^{r+s}(r+s)}
\in \lambda(X) \otimes \Big(\lambda(Y)\Big)^{-\chi(Z)} \otimes \Big(\eta(Y)\Big)^{-n\chi(Z)/2} \;.
\end{equation}
By \eqref{eq-def-lambda} and \eqref{eq1s3-pf-thm-bcov-adiabatic},
we have
\begin{align}
\label{eq1s4-pf-thm-bcov-adiabatic}
\begin{split}
\prod_{r=0}^m \prod_{s=0}^n \Big( \alpha_{r,s}\otimes\beta_{r,s}\otimes\gamma_{r,s} \Big)^{(-1)^{r+s}(r+s)}
\otimes \overline{ \prod_{r=0}^m \prod_{s=0}^n \Big( \alpha_{r,s}\otimes\beta_{r,s}\otimes\gamma_{r,s} \Big)^{(-1)^{r+s}(r+s)} } & \\
\in \lambda_\mathrm{dR}(X) \otimes \Big(\lambda_\mathrm{dR}(Y)\Big)^{-\chi(Z)} \otimes \Big(\eta(Y)\Big)^{-n\chi(Z)} & \;,
\end{split}
\end{align}
where $\overline{\,\cdot\,}$ is the conjugation.

Let $\sigma_X \in \lambda_\mathrm{dR}(X)$, $\sigma_Y \in \lambda_\mathrm{dR}(Y)$ and $\epsilon_Y \in \eta(Y)$ be as in \eqref{eq-def-epsilon}.
Obviously,
we have
\begin{equation}
\sigma_X \otimes \sigma_Y^{-\chi(Z)} \otimes \epsilon_Y^{-n\chi(Z)}
\in \lambda_\mathrm{dR}(X) \otimes \Big(\lambda_\mathrm{dR}(Y)\Big)^{-\chi(Z)} \otimes \Big(\eta(Y)\Big)^{-n\chi(Z)} \;.
\end{equation}

\noindent\textbf{Step 2.}
We show that
\begin{align}
\label{eq1s-pf-thm-bcov-adiabatic}
\begin{split}
\prod_{r=0}^m \prod_{s=0}^n \Big( \alpha_{r,s}\otimes\beta_{r,s}\otimes\gamma_{r,s} \Big)^{(-1)^{r+s}(r+s)}
\otimes \overline{ \prod_{r=0}^m \prod_{s=0}^n \Big( \alpha_{r,s}\otimes\beta_{r,s}\otimes\gamma_{r,s} \Big)^{(-1)^{r+s}(r+s)} } & \\
= \pm \sigma_X \otimes \sigma_Y^{-\chi(Z)} \otimes \epsilon_Y^{-n\chi(Z)} & \;.
\end{split}
\end{align}

We have canonical identifications
\begin{align}
\label{eq1s5-pf-thm-bcov-adiabatic}
\begin{split}
& H^{2j}_\mathrm{Sing}(\CP^n,\Z) = \Z \hspace{4mm} \text{for } j = 0,\cdots,n \;,\\
& H^k_\mathrm{Sing}(X,\Z) = \bigoplus_{j=0}^n H^{k-2j}_\mathrm{Sing}(Y,\Z) \otimes H^{2j}_\mathrm{Sing}(\CP^n,\Z)
= \bigoplus_{j=0}^n H^{k-2j}_\mathrm{Sing}(Y,\Z) \;,
\end{split}
\end{align}
which induce isomorphisms of Hodge structures.
Complexifying \eqref{eq1s5-pf-thm-bcov-adiabatic} and applying Hodge decomposition,
we get
\begin{align}
\label{eq1s6-pf-thm-bcov-adiabatic}
\begin{split}
& H^{j,j}(\CP^n) = \C \hspace{4mm} \text{for } j = 0,\cdots,n \;,\\
& H^{p,q}(X) = \bigoplus_{j=0}^n H^{p-j,q-j}(Y) \otimes H^{j,j}(\CP^n)
= \bigoplus_{j=0}^n H^{p-j,q-j}(Y) \;.
\end{split}
\end{align}
We will use the identifications in \eqref{eq1s5-pf-thm-bcov-adiabatic} and \eqref{eq1s6-pf-thm-bcov-adiabatic} till the end of Step 2.

Recall that $I^{r+s}_s$ was defined in \eqref{eq1a0-pf-thm-bcov-adiabatic} and $E_{r,s}$ was defined in \eqref{eq1a2-pf-thm-bcov-adiabatic}.
We have
\begin{equation}
\label{eq1s7-pf-thm-bcov-adiabatic}
H^q\big(X,I^{r+s}_s\big) = \bigoplus_{j=0}^s H^{r+s-j,q-j}(Y) \;,\hspace{5mm}
H^q\big(X,E_{r,s}\big) = H^{r,q-s}(Y) \;.
\end{equation}
By \eqref{eq1s7-pf-thm-bcov-adiabatic},
we have
\begin{align}
\label{eq1s8-pf-thm-bcov-adiabatic}
\begin{split}
\Big(\det H^\bullet\big(X,I^{r+s}_{s-1}\big)\Big)^{-1}
\otimes \det H^\bullet\big(X,I^{r+s}_s\big)
\otimes \Big(\det H^\bullet\big(X,E_{r,s}\big)\Big)^{-1} = \C & \;,\\
\alpha_{r,s} = 1 & \;.
\end{split}
\end{align}
Similar argument shows that
\begin{align}
\begin{split}
\det H^\bullet\big(X,E_{r,s}\big) \otimes \Big( \det H^{r,\bullet}\big(Y,H^{s,s}(Z)\big) \Big)^{-(-1)^s} = \C & \;,\\
\beta_{r,s} = 1 & \;, \\
\Big( \det H^{r,\bullet}\big(Y,H^{s,s}(Z)\big) \Big)^{(-1)^s} \otimes \Big( \det H^{r,\bullet}(Y) \Big)^{-(-1)^s} = \C & \;,\\
\gamma_{r,s} = 1 \;.
\end{split}
\end{align}
Using \eqref{eq-def-lambda}, \eqref{eq-sing-dr} and \eqref{eq1s5-pf-thm-bcov-adiabatic},
we can show that
\begin{align}
\label{eq1s9-pf-thm-bcov-adiabatic}
\begin{split}
\lambda_\mathrm{dR}(X) \otimes \Big(\lambda_\mathrm{dR}(Y)\Big)^{-\chi(Z)} \otimes \Big(\eta(Y)\Big)^{-n\chi(Z)} = \C & \;,\\
\sigma_X \otimes \sigma_Y^{-\chi(Z)} \otimes \epsilon_Y^{-n\chi(Z)} = \pm 1 & \;.
\end{split}
\end{align}
From \eqref{eq1s8-pf-thm-bcov-adiabatic}-\eqref{eq1s9-pf-thm-bcov-adiabatic},
we obtain \eqref{eq1s-pf-thm-bcov-adiabatic}.

\noindent\textbf{Step 3.}
We introduce several Quillen metrics.

\begin{itemize}
\item[-] Let $g^{TX}_\e$ be the metric on $TX$ induced by $\omega_\e$.
\item[-] Let $g^{\Lambda^p(T^*X)}_\e$ be the metric on $\Lambda^p(T^*X)$ induced by $g^{TX}_\e$.
\item[-] Let $g^{I^p_s}_\e$ be the metric on $I^p_s$ induced by $g^{\Lambda^p(T^*X)}_\e$ via \eqref{eq1a1-pf-thm-bcov-adiabatic}.
\item[-] Let $g^{TY}$ be the metric on $TY$ induced by $\omega_Y$.
\item[-] Let $g^{\Lambda^r(T^*Y)}$ be the metric on $\Lambda^r(T^*Y)$ induced by $g^{TY}$.
\item[-] Let $g^{TZ}$ be the metric on $TZ$ induced by $\omega_Z = \omega_\e\big|_Z$.
\item[-] Let $g^{\Lambda^s(T^*Z)}$ be the metric on $\Lambda^s(T^*Z)$ induced by $g^{TZ}$.
\item[-] Let $g^{E_{r,s}}$ be the metric on $E_{r,s}$ induced by $g^{\Lambda^r(T^*Y)}$ and $g^{\Lambda^s(T^*Z)}$ via \eqref{eq1a2-pf-thm-bcov-adiabatic}.
\end{itemize}

Let
\begin{equation}
\label{eq3a1-pf-thm-bcov-adiabatic}
\big\lVert\cdot\big\rVert_{\det H^\bullet(X,I^p_s),\e}
\end{equation}
be the Quillen metric on $\det H^\bullet\big(X,I^p_s\big)$
associated with $g^{TX}_\e$ and $g^{I^p_s}$.
Let
\begin{equation}
\label{eq3a2-pf-thm-bcov-adiabatic}
\big\lVert\cdot\big\rVert_{\det H^\bullet(X,E_{r,s}),\e}
\end{equation}
be the Quillen metric on $\det H^\bullet\big(X,E_{r,s}\big)$
associated with $g^{TX}_\e$ and $g^{E_{r,s}}$.
Recall that $\alpha_{r,s}$ was defined by \eqref{eq1a-pf-thm-bcov-adiabatic}.
Let $\big\lVert\alpha_{r,s}\big\rVert_\e$ be the norm of $\alpha_{r,s}$ with respect to the metrics
\eqref{eq3a1-pf-thm-bcov-adiabatic} and \eqref{eq3a2-pf-thm-bcov-adiabatic}.

\begin{itemize}
\item[-] Let $g^{\Omega^{s,s}(Z)}$ be the $L^2$-metric on $\Omega^{s,s}(Z)$ induced by $g^{TZ}$ (see \eqref{eq-def-L2-metric}).
\item[-] Let $g^{H^{s,s}(Z)}$ be the metric on $H^{s,s}(Z)$ induced by $g^{\Omega^{s,s}(Z)}$ via the Hodge theorem.
\end{itemize}

Let
\begin{equation}
\label{eq3b1-pf-thm-bcov-adiabatic}
\big\lVert\cdot\big\rVert_{\det H^{r,\bullet}(Y,H^{s,s}(Z))}
\end{equation}
be the Quillen metric on $\det H^{r,\bullet}\big(Y,H^{s,s}(Z)\big) = \det H^\bullet\big(Y,\Lambda^r(T^*Y)\otimes H^{s,s}(Z)\big)$
associated with $g^{TY}$ and $g^{\Lambda^r(T^*Y)} \otimes g^{H^{s,s}(Z)}$.
Recall that $\beta_{r,s}$ was defined by \eqref{eq1b-pf-thm-bcov-adiabatic}.
Let $\big\lVert\beta_{r,s}\big\rVert_\e$ be the norm of $\beta_{r,s}$ with respect to the metrics
\eqref{eq3a2-pf-thm-bcov-adiabatic} and \eqref{eq3b1-pf-thm-bcov-adiabatic}.
Let
\begin{equation}
\label{eq3c1-pf-thm-bcov-adiabatic}
\big\lVert\cdot\big\rVert_{\det H^{r,\bullet}(Y)}
\end{equation}
be the Quillen metric on $\det H^{r,\bullet}(Y) = \det H^\bullet\big(Y,\Lambda^r(T^*Y)\big)$
associated with $g^{TY}$ and $g^{\Lambda^r(T^*Y)}$.
Recall that $\gamma_{r,s}$ was defined by \eqref{eq1c-pf-thm-bcov-adiabatic}.
Let $\big\lVert\gamma_{r,s}\big\rVert$ be the norm of $\gamma_{r,s}$ with respect to the metrics
\eqref{eq3b1-pf-thm-bcov-adiabatic} and \eqref{eq3c1-pf-thm-bcov-adiabatic}.

By \eqref{eq-def-lambda} and \eqref{eq1a1-pf-thm-bcov-adiabatic},
we have
\begin{equation}
\sigma_X \in \lambda_\mathrm{dR}(X)
= \bigotimes_{p=1}^{m+n} \Big(\det H^\bullet\big(X,I^p_p\big)\Big)^{(-1)^pp} \otimes \overline{\bigotimes_{p=1}^{m+n} \Big(\det H^\bullet\big(X,I^p_p\big)\Big)^{(-1)^pp}} \;.
\end{equation}
Let $\big\lVert\sigma_X\big\rVert_\e$ be the norm of $\sigma_X$ with respect to the metrics \eqref{eq3a1-pf-thm-bcov-adiabatic} with $s=p$.
By \eqref{eq-def-eta} and \eqref{eq-def-lambda},
we have
\begin{align}
\begin{split}
& \epsilon_Y \in \eta(Y) = \bigotimes_{r=0}^m \Big(\det H^{r,\bullet}(Y)\Big)^{(-1)^r} \;,\\
& \sigma_Y \in \lambda_\mathrm{dR}(Y)
= \bigotimes_{r=1}^m \Big(\det H^{r,\bullet}(Y)\Big)^{(-1)^rr} \otimes \overline{\bigotimes_{r=1}^m \Big(\det H^{r,\bullet}(Y)\Big)^{(-1)^rr}} \;.
\end{split}
\end{align}
Let $\big\lVert\epsilon_Y\big\rVert$ be the norm of $\epsilon_Y$ with respect to the metrics \eqref{eq3c1-pf-thm-bcov-adiabatic}.
Let $\big\lVert\sigma_Y\big\rVert$ be the norm of $\sigma_Y$ with respect to the metrics \eqref{eq3c1-pf-thm-bcov-adiabatic}.
By \eqref{eq1s-pf-thm-bcov-adiabatic},
we have
\begin{align}
\label{eq3s1-pf-thm-bcov-adiabatic}
\begin{split}
& \sum_{r=0}^m \sum_{s=0}^n (-1)^{r+s}(r+s)
\Big( \log\big\lVert\alpha_{r,s}\big\rVert^2_\e +
\log\big\lVert\beta_{r,s}\big\rVert^2_\e +
\log\big\lVert\gamma_{r,s}\big\rVert^2 \Big) \\
& = \log\big\lVert\sigma_X\big\rVert_\e
- \chi(Z) \log\big\lVert\sigma_Y\big\rVert
- n \chi(Z) \log\big\lVert\epsilon_Y\big\rVert \;.
\end{split}
\end{align}
On the other hand,
by Definition \ref{def-tau-top} and Proposition \ref{prop-tau-top},
we have
\begin{equation}
\label{eq3s2-pf-thm-bcov-adiabatic}
\log\big\lVert\epsilon_Y\big\rVert = 0 \;.
\end{equation}
By Definition  \ref{def-bcov-torsion}, \eqref{eq3s1-pf-thm-bcov-adiabatic} and \eqref{eq3s2-pf-thm-bcov-adiabatic},
we have
\begin{align}
\label{eq3s-pf-thm-bcov-adiabatic}
\begin{split}
\tau_\mathrm{BCOV}(X,\omega_\e)
& = \chi(Z) \tau_\mathrm{BCOV}(Y,\omega_Y) \\
+ & \sum_{r=0}^m \sum_{s=0}^n (-1)^{r+s}(r+s)
\Big( \log\big\lVert\alpha_{r,s}\big\rVert^2_\e +
\log\big\lVert\beta_{r,s}\big\rVert^2_\e +
\log\big\lVert\gamma_{r,s}\big\rVert^2 \Big) \;.
\end{split}
\end{align}

\noindent\textbf{Step 4.}
We estimate $\log\big\lVert\alpha_{r,s}\big\rVert^2_\e$.

Recall that $I^{r+s}_s$ was defined in \eqref{eq1a0-pf-thm-bcov-adiabatic},
$E_{r,s}$ was defined in \eqref{eq1a2-pf-thm-bcov-adiabatic},
$g^{I^{r+s}_s}_\e$ and $g^{E_{r,s}}$ were defined at the beginning of Step 3.
Let $g^{E_{r,s}}_\e$ be quotient metric on $E_{r,s}$ induced by $g^{I^{r+s}_s}_\e$ via the surjection $I^{r+s}_s \rightarrow E_{r,s}$ in \eqref{eq1a3-pf-thm-bcov-adiabatic}.
Note that $g^{I^{r+s}_s}_\e$ is induced by $\omega_\e$.
By \eqref{eq-omega-e},
as $\e \rightarrow 0$,
\begin{equation}
\label{eqa0-pf-thm-bcov-adiabatic}
\e^{-r} g^{E_{r,s}}_\e \rightarrow  g^{E_{r,s}} \;.
\end{equation}

We will use the notations in \eqref{eq-def-QQ}.
Let
\begin{equation}
\widetilde{T}_{r,s,\e} = \widetilde{\mathrm{ch}}\big( g^{I^{r+s}_{s-1}}_\e, g^{I^{r+s}_s}_\e, g^{E_{r,s}}_\e \big) \in Q^X/Q^{X,0}
\end{equation}
be the Bott-Chern form \eqref{eq-def-BCch}
with $0\rightarrow E' \rightarrow E \rightarrow E'' \rightarrow 0$ replaced by \eqref{eq1a3-pf-thm-bcov-adiabatic}
and $\big(g^{E'},g^E,g^{E''}\big)$ replaced by $\big( g^{I^{r+s}_{s-1}}_\e, g^{I^{r+s}_s}_\e, g^{E_{r,s}}_\e \big)$.
Let
\begin{equation}
T_{r,s,\e} = \widetilde{\mathrm{ch}}\big( g^{I^{r+s}_{s-1}}_\e, g^{I^{r+s}_s}_\e, g^{E_{r,s}} \big) \in Q^X/Q^{X,0}
\end{equation}
be the Bott-Chern form \eqref{eq-def-BCch}
with $0\rightarrow E' \rightarrow E \rightarrow E'' \rightarrow 0$ replaced by \eqref{eq1a3-pf-thm-bcov-adiabatic}
and $\big(g^{E'},g^E,g^{E''}\big)$ replaced by $\big( g^{I^{r+s}_{s-1}}_\e, g^{I^{r+s}_s}_\e, g^{E_{r,s}} \big)$.
By Proposition \ref{prop-mult-bc} and \eqref{eqa0-pf-thm-bcov-adiabatic},
as $\e \rightarrow 0$,
\begin{align}
\label{eqa3-pf-thm-bcov-adiabatic}
\begin{split}
& T_{r,s,\e} - \widetilde{T}_{r,s,\e} - \mathrm{ch}\big(E_{r,s},g^{E_{r,s}}\big) r \log \e \\
& = \widetilde{\mathrm{ch}}\big(g^{E_{r,s}},g^{E_{r,s}}_\e\big) - \mathrm{ch}\big(E_{r,s},g^{E_{r,s}}\big) r \log \e \rightarrow 0 \;.
\end{split}
\end{align}
On the other hand,
by Proposition \ref{prop4-adiabatic-bc},
as $\e \rightarrow 0$,
\begin{equation}
\label{eqa4-pf-thm-bcov-adiabatic}
\widetilde{T}_{r,s,\e} \rightarrow 0 \;.
\end{equation}
By \eqref{eqa3-pf-thm-bcov-adiabatic} and \eqref{eqa4-pf-thm-bcov-adiabatic},
as $\e \rightarrow 0$,
\begin{equation}
\label{eqa5-pf-thm-bcov-adiabatic}
T_{r,s,\e} - \mathrm{ch}\big(E_{r,s},g^{E_{r,s}}\big) r \log \e \rightarrow 0 \;.
\end{equation}

Applying Theorem \ref{thm-resol} to the short exact sequence \eqref{eq1a3-pf-thm-bcov-adiabatic},
we get
\begin{equation}
\label{eqa6-pf-thm-bcov-adiabatic}
\log \big\lVert\alpha_{r,s}\big\rVert_\e^2
= \int_X \mathrm{Td}\big(TX,g^{TX}_\e\big) T_{r,s,\e} \;.
\end{equation}
By Proposition \ref{prop3-adiabatic-bc},
as $\e \rightarrow 0$,
\begin{equation}
\label{eqa7-pf-thm-bcov-adiabatic}
\mathrm{Td}\big(TX,g^{TX}_\e\big) \rightarrow
\pi^*\mathrm{Td}\big(TY,g^{TY}\big) \mathrm{Td}\big(TZ,g^{TZ}\big) \;.
\end{equation}
On the other hand,
by the Grothendieck-Riemann-Roch formula \eqref{eq-grr},
\eqref{eq1a2-pf-thm-bcov-adiabatic} and \eqref{eq-vanish-HZ},
we have
\begin{align}
\label{eqa8-pf-thm-bcov-adiabatic}
\begin{split}
& \int_X \pi^*\mathrm{Td}\big(TY,g^{TY}\big) \mathrm{Td}\big(TZ,g^{TZ}\big) \mathrm{ch}\big(E_{r,s},g^{E_{r,s}}\big) \\
& = \int_Y \mathrm{Td}(TY) \mathrm{ch}\big(H^\bullet(Z,E_{r,s})\big) \\
& = \int_Y \mathrm{Td}(TY) \mathrm{ch}\big(\Lambda^r(T^*Y)\big) \mathrm{ch}\big(H^{s,\bullet}(Z)\big)
= (-1)^s \int_Y \mathrm{Td}(TY) \mathrm{ch}\big(\Lambda^r(T^*Y)\big) \;.
\end{split}
\end{align}
By \eqref{eqa5-pf-thm-bcov-adiabatic}-\eqref{eqa8-pf-thm-bcov-adiabatic},
as $\e \rightarrow 0$,
\begin{equation}
\label{eqa9-pf-thm-bcov-adiabatic}
\log \big\lVert\alpha_{r,s}\big\rVert_\e^2
- (-1)^s r \int_Y \mathrm{Td}(TY) \mathrm{ch}\big(\Lambda^r(T^*Y)\big) \log \e \rightarrow 0 \;.
\end{equation}

By Proposition \ref{prop-total-class},
\eqref{eq-sum-bs} and \eqref{eqa9-pf-thm-bcov-adiabatic},
as $\e \rightarrow 0$,
\begin{align}
\label{eqa-pf-thm-bcov-adiabatic}
\begin{split}
& \sum_{r=0}^m \sum_{s=0}^n (-1)^{r+s}(r+s) \log\big\lVert\alpha_{r,s}\big\rVert^2_\e \\
& - \Big( \frac{m(3m+3n+1)}{12}\chi(Y) + \frac{1}{6}(c_1c_{m-1})(Y) \Big) \chi(Z) \log \e \rightarrow 0 \;.
\end{split}
\end{align}

\noindent\textbf{Step 5.}
We estimate $\log\big\lVert\beta_{r,s}\big\rVert^2_\e$.

Let
\begin{equation}
T_{r,s}\in Q^Y
\end{equation}
be the Bismut-K{\"o}hler analytic torsion form (see \textsection \ref{subsect-tf}) associated with
$\big( \pi: X \rightarrow Y, \omega_X, E_{r,s}, g^{E_{r,s}} \big)$.
Applying Theorem \ref{thm-sub} with $E=E_{r,s}$,
as $\e \rightarrow 0$,
\begin{equation}
\label{eqb2-pf-thm-bcov-adiabatic}
\log \big\lVert\beta_{r,s}\big\rVert_\e^2
+ \int_Y \mathrm{Td}'(TY) \int_Z \mathrm{Td}(TZ) \mathrm{ch}(E_{r,s}) \log\e
\rightarrow \int_Y \mathrm{Td}\big(TY,g^{TY}\big) T_{r,s}  \;.
\end{equation}
Similarly to \eqref{eqa8-pf-thm-bcov-adiabatic},
we have
\begin{equation}
\label{eqb3-pf-thm-bcov-adiabatic}
\int_Y \mathrm{Td}'(TY) \int_Z \mathrm{Td}(TZ) \mathrm{ch}(E_{r,s})
= (-1)^s \int_Y \mathrm{Td}'(TY)\mathrm{ch}\big(\Lambda^r(T^*Y)\big) \;.
\end{equation}
Applying Proposition \ref{prop-tf-mult} with $E = E_{0,s}$ and $F = \Lambda^r(T^*Y)$,
we get
\begin{equation}
\label{eqb4-pf-thm-bcov-adiabatic}
T_{r,s} = \mathrm{ch}\big(\Lambda^r(T^*Y),g^{\Lambda^r(T^*Y)}\big)T_{0,s} \hspace{2.5mm} \text{modulo } Q^{Y,0} \;.
\end{equation}
By \eqref{eqb2-pf-thm-bcov-adiabatic}-\eqref{eqb4-pf-thm-bcov-adiabatic},
as $\e \rightarrow 0$,
\begin{align}
\label{eqb5-pf-thm-bcov-adiabatic}
\begin{split}
& \log \big\lVert\beta_{r,s}\big\rVert_\e^2
+ (-1)^s \int_Y \mathrm{Td}'(TY)\mathrm{ch}\big(\Lambda^r(T^*Y)\big) \log\e \\
& \hspace{30mm} \rightarrow \int_Y \mathrm{Td}\big(TY,g^{TY}\big) \mathrm{ch}\big(\Lambda^r(T^*Y),g^{\Lambda^r(T^*Y)}\big)T_{0,s}  \;.
\end{split}
\end{align}
On the other hand,
by Theorem \ref{thm-tf-vanishing},
we have
\begin{equation}
\label{eqb6-pf-thm-bcov-adiabatic}
\sum_{s=0}^n (-1)^s T_{0,s} = 0 \hspace{2.5mm} \text{modulo } Q^{Y,0} \;.
\end{equation}
By Proposition \ref{prop-total-class}, \ref{prop2-total-class},
\eqref{eq-sum-bs}, \eqref{eqb5-pf-thm-bcov-adiabatic} and \eqref{eqb6-pf-thm-bcov-adiabatic},
as $\e\rightarrow 0$,
\begin{align}
\label{eqb-pf-thm-bcov-adiabatic}
\begin{split}
& \sum_{r=0}^m \sum_{s=0}^n (-1)^{r+s}(r+s) \log\big\lVert\beta_{r,s}\big\rVert^2_\e \\
& + \Big( \frac{m(m+n)}{4}\chi(Y) + \frac{1}{12}(c_1c_{m-1})(Y) \Big) \chi(Z) \log \e \\
& \rightarrow \int_Y c_m\big(TY,g^{TY}\big) \sum_{s=0}^n (-1)^s s T_{0,s}
= \int_Y c_m\big(TY,g^{TY}\big) \sum_{s=0}^n (-1)^s s \big\{ T_{0,s} \big\}^{(0,0)} \;,
\end{split}
\end{align}
where $\big\{\cdot\big\}^{(0,0)}$ means the component of degree $(0,0)$.

\noindent\textbf{Step 6.}
We calculate $\log\big\lVert\gamma_{r,s}\big\rVert^2$.

Recall that $H^{s,s}(Z)$ is a trivial line bundle over $Y$.
Recall that $g^{H^{s,s}(Z)}$ was constructed in the paragraph above \eqref{eq3b1-pf-thm-bcov-adiabatic}.
By our assumption \eqref{eq-hyp-isometry},
$g^{H^{s,s}(Z)}$ is a constant metric.
Recall that $\delta_s \in H^{s,s}(Z)$ was constructed in \eqref{eq1c1-pf-thm-bcov-adiabatic}.
Let $\big|\delta_s\big|$ be the norm of $\delta_s$ with respect to $g^{H^{s,s}(Z)}$,
which is a constant function on $Y$.
In the sequel,
we will not distinguish between a constant function and its value.
We denote $\chi_r(Y) = \sum_{q=0}^m (-1)^q \dim H^{r,q}(Y)$.
By Remark \ref{rem-q},
we have
\begin{equation}
\label{eqc2-pf-thm-bcov-adiabatic}
\log\big\lVert\gamma_{r,s}\big\rVert^2 = (-1)^s \chi_r(Y) \log \big|\delta_s\big|^2 \;,
\end{equation}

Let $\epsilon_Z \in \eta(Z)$ be as in \eqref{eq-def-epsilon}.
We have
\begin{equation}
\label{eqc4-pf-thm-bcov-adiabatic}
\epsilon_Z = \pm \bigotimes_{s=0}^n \delta_s  \;.
\end{equation}
Let $\big|\epsilon_Z\big|$ be the norm of $\epsilon_Z$ with respect to the metrics $g^{H^{s,s}(Z)}$.
By Proposition \ref{prop-tau-top} and \eqref{eqc4-pf-thm-bcov-adiabatic},
we have
\begin{equation}
\label{eqc5-pf-thm-bcov-adiabatic}
\sum_{s=0}^n \log \big|\delta_s\big|^2 = \log \big|\epsilon_Z\big|^2 = 0 \;.
\end{equation}

Let $\sigma_Z \in \lambda_\mathrm{dR}(Z)$ be as in \eqref{eq-def-epsilon}.
We have
\begin{equation}
\label{eqc6-pf-thm-bcov-adiabatic}
\sigma_Z = \pm \bigotimes_{s=1}^n \delta_s^{2s}  \;.
\end{equation}
Let $\big|\sigma_Z\big|$ be the norm of $\sigma_Z$ with respect to the metrics $g^{H^{s,s}(Z)}$.
By \eqref{eqc6-pf-thm-bcov-adiabatic},
we have
\begin{equation}
\label{eqc7-pf-thm-bcov-adiabatic}
\sum_{s=0}^n s \log \big|\delta_s\big|^2 = \log \big| \sigma_Z \big| \;.
\end{equation}

By \eqref{eqc2-pf-thm-bcov-adiabatic}, \eqref{eqc5-pf-thm-bcov-adiabatic}, \eqref{eqc7-pf-thm-bcov-adiabatic}
and the identity $\sum_{r=0}^m (-1)^r \chi_r(Y) = \chi(Y)$,
we have
\begin{equation}
\label{eqc-pf-thm-bcov-adiabatic}
\sum_{r=0}^m \sum_{s=0}^n (-1)^{r+s}(r+s) \log\big\lVert\gamma_{r,s}\big\rVert^2
= \chi(Y) \log \big| \sigma_Z \big| \;.
\end{equation}

\noindent\textbf{Step 7.}
We conclude.

By \eqref{eq3s-pf-thm-bcov-adiabatic},
\eqref{eqa-pf-thm-bcov-adiabatic}
\eqref{eqb-pf-thm-bcov-adiabatic} and
\eqref{eqc-pf-thm-bcov-adiabatic},
as $\e \rightarrow 0$,
\begin{align}
\label{eq70-pf-thm-bcov-adiabatic}
\begin{split}
& \tau_\mathrm{BCOV}(X,\omega_\e) - \frac{1}{12} \chi(Z) \Big( m \chi(Y) + (c_1c_{m-1})(Y) \Big) \log \e \\
& \rightarrow \chi(Z) \tau_\mathrm{BCOV}(Y,\omega_Y) + \chi(Y) \log \big| \sigma_Z \big| + \int_Y c_m\big(TY,g^{TY}\big) \sum_{s=0}^n (-1)^s s \big\{T_{0,s}\big\}^{(0,0)} \;.
\end{split}
\end{align}

Let $\theta_s(z)$ be as in \eqref{eq-def-theta}
with $(X,\omega)$ replaced by $(Z,\omega_Z)$
and $(E,g^E)$ replaced by $(\Lambda^s(T^*Z),g^{\Lambda^s(T^*Z)})$.
By Definition \ref{def-q}, \ref{def-bcov-torsion},
we have
\begin{equation}
\label{eq71-pf-thm-bcov-adiabatic}
\tau_\mathrm{BCOV}(Z,\omega_Z) =
\log \big| \sigma_Z \big| + \sum_{s=0}^n (-1)^s s \theta_s'(0) \;.
\end{equation}
By \eqref{eq-hyp-isometry},
all the terms in \eqref{eq71-pf-thm-bcov-adiabatic} are constant functions on $Y$.
By \eqref{eq-tf-deg0},
we have
\begin{equation}
\label{eq72-pf-thm-bcov-adiabatic}
\big\{ T_{0,s} \big\}^{(0,0)} = \theta_s'(0) \;.
\end{equation}
From \eqref{eq70-pf-thm-bcov-adiabatic}-\eqref{eq72-pf-thm-bcov-adiabatic},
we obtain \eqref{eq-thm-bcov-adiabatic}.
This completes the proof.
\end{proof}

\subsection{Behavior under blow-ups}
\label{subsect-bl}

The following lemma is direct consequence of Bott formula \cite{bott} (see also \cite[page 5]{oss}).

\begin{lemme}
\label{lem-vanishing}
Let $L$ be the holomorphic line bundle of degree $1$ over $\CP^n$.
For $k=1,\cdots,n$ and $s =1,\cdots,k$,
we have
\begin{equation}
\label{eq-lem-vanishing}
H^\bullet\big(\CP^n,\Lambda^k(T^*\CP^n) \otimes L^s \big) = 0 \;.
\end{equation}
\end{lemme}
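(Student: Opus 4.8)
The plan is to read off the statement directly from Bott's formula \cite{bott} (see also \cite[page~5]{oss}) for the cohomology of the sheaves of twisted holomorphic forms on projective space. Writing $\Omega^k_{\CP^n} = \Lambda^k(T^*\CP^n)$, Bott's formula asserts that $H^q\big(\CP^n,\Omega^k_{\CP^n}\otimes L^s\big)$ vanishes unless one of the following three situations occurs: (i) $q=0$ and $s>k$; (ii) $q=k$ and $s=0$; (iii) $q=n$ and $s<k-n$. In the first and third cases the dimension is given by an explicit product of binomial coefficients and in the second it equals $1$, but the precise values play no role here.

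Granting this, the proof of \eqref{eq-lem-vanishing} reduces to checking that, under the hypotheses $1\leqslant s\leqslant k\leqslant n$, none of (i), (ii), (iii) can hold. Case (i) would require $s>k$, contradicting $s\leqslant k$. Case (ii) would require $s=0$, contradicting $s\geqslant 1$. Case (iii) would require $s<k-n$; but $k\leqslant n$ gives $k-n\leqslant 0$, so this forces $s\leqslant -1$, again contradicting $s\geqslant 1$. Hence $H^q\big(\CP^n,\Lambda^k(T^*\CP^n)\otimes L^s\big)=0$ for every $q$, which is exactly \eqref{eq-lem-vanishing}.

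I do not anticipate any genuine obstacle: the entire content sits in the quoted form of Bott's formula, and what remains is a one-line numerical verification. The only point deserving a moment's care is to confirm that the strict inequalities appearing in the exceptional cases (i)--(iii) are really violated at the extreme values $s=k$ and $k=n$ of the admissible range, which is indeed so.
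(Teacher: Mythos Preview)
Your proposal is correct and follows exactly the approach indicated in the paper: the paper simply asserts that the lemma is a direct consequence of Bott's formula \cite{bott} (see also \cite[page~5]{oss}) without spelling out the case analysis, while you have made that verification explicit. There is nothing to add.
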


Let $X$ be an $n$-dimensional compact K{\"a}hler manifold.
Let $Y\subseteq X$ be a closed complex submanifold.
Let $f: X' \rightarrow X$ be the blow-up along $Y$.
Let $Y \subseteq U \subseteq X$ be an open neighborhood of $Y$.
Set $U' = f^{-1}(U)$.
Let $\omega$ be a K{\"a}hler form on $X$.
Let $\omega'$ be a K{\"a}hler form on $X'$ such that
\begin{equation}
\label{eq-f-omega}
\omega'\big|_{X'\backslash U'} = f^*\big(\omega\big|_{X\backslash U}\big) \;.
\end{equation}

\begin{thm}
\label{thm-bl-bcovt}
We have
\begin{equation}
\tau_\mathrm{BCOV}(X',\omega') - \tau_\mathrm{BCOV}(X,\omega) =
\alpha\big(U,U',\omega\big|_U,\omega'\big|_{U'}\big) \;,
\end{equation}
where $\alpha\big(U,U',\omega\big|_U,\omega'\big|_{U'}\big)$
is a real number determined by $U$, $U'$, $\omega\big|_U$ and $\omega'\big|_{U'}$.
\end{thm}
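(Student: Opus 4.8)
The plan is to evaluate both sides by the defining formula $\tau_\mathrm{BCOV}(\,\cdot\,,\cdot\,)=\log\lVert\sigma_\cdot\rVert_{\lambda_\mathrm{dR}(\cdot)}$ of Definition \ref{def-bcov-torsion} and to reduce the comparison of the two Quillen metrics to quantities that the formulas of \textsection\ref{subsect-bc} and \textsection\ref{subsect-q} already certify to be local. We may assume $Y$ is connected of codimension $r\geqslant 2$; for $r=1$ the blow-up is an isomorphism and the statement is empty. Write $E=f^{-1}(Y)$, let $i\colon Y\hookrightarrow X$, $j\colon E\hookrightarrow X'$ be the inclusions and $g\colon E=\mathbb{P}(N_{Y/X})\to Y$ the associated $\mathbb{P}^{r-1}$-bundle; since $Y\subseteq U$ and $E\subseteq U'$, the submanifolds $Y$, $E$ together with the induced metrics $\omega\big|_Y$, $\omega'\big|_E$ are part of the data $\big(U,\omega\big|_U\big)$, $\big(U',\omega'\big|_{U'}\big)$. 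The first step is cohomological: the classical blow-up decomposition furnishes, compatibly with Hodge structures and with the integral lattices up to sign,
\begin{equation*}
H^{p,q}(X')=H^{p,q}(X)\oplus\bigoplus_{i=1}^{r-1}H^{p-i,q-i}(Y)\;,\qquad
H^k_\mathrm{Sing}(X',\Z)=H^k_\mathrm{Sing}(X,\Z)\oplus\bigoplus_{i=1}^{r-1}H^{k-2i}_\mathrm{Sing}(Y,\Z)\;,
\end{equation*}
the extra summands being generated by the powers of the class of $E$ pulled back along $g$. Feeding this through \eqref{eq-def-lambda-p}, \eqref{eq-def-eta}, \eqref{eq-def-lambda} and \eqref{eq-def-epsilon} gives a canonical isomorphism $\lambda_\mathrm{dR}(X')\cong\lambda_\mathrm{dR}(X)\otimes\lambda_\mathrm{dR}(Y)^{r-1}\otimes\big(\eta(Y)\otimes\overline{\eta(Y)}\big)^{r(r-1)/2}$ carrying $\sigma_{X'}$ to $\pm\,\sigma_X\otimes\sigma_Y^{r-1}\otimes\big(\epsilon_Y\otimes\overline{\epsilon_Y}\big)^{r(r-1)/2}$. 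Since $Y\subseteq U$, the $\lambda_\mathrm{dR}(Y)$- and $\eta(Y)$-factors contribute only quantities determined by $\big(U,\omega\big|_U\big)$ (these vanishing up to metric-scaling corrections, as $\tau_\mathrm{top}(Y)=0$ by Proposition \ref{prop-tau-top}).

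The main step is to compare the Quillen metric on $\lambda_\mathrm{dR}(X')$ attached to $\omega'$ with the tensor product, over that isomorphism, of the Quillen metrics attached to $\omega$ and $\omega\big|_Y$. For each $p$ one resolves the difference between $\Omega^p_{X'}$ and $f^*\Omega^p_X$ by the standard short exact sequences of the blow-up — filtering $\Omega^\bullet_{X'}$ near $E$ by order of vanishing along $E$ and exploiting $E=\mathbb{P}(N_{Y/X})$ — together with the analogous resolution of $i_*\mathscr{O}_Y\big(\Lambda^\bullet(T^*Y)\big)$ on $X$; Lemma \ref{lem-vanishing} (Bott's formula) is what makes the resulting spectral sequences degenerate and controls the fibre cohomology of $g$. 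Taking long exact sequences and comparing determinant lines, the ratio of the two Quillen metrics becomes a finite sum of: a Theorem \ref{thm-bl} contribution for the blow-up $f$ applied to $f^*\Lambda^p(T^*X)$; Theorem \ref{thm-im} contributions for the immersions $j\colon E\hookrightarrow X'$ and $i\colon Y\hookrightarrow X$; Theorem \ref{thm-resol} contributions (Bott-Chern integrals over $X'$); Theorem \ref{thm-sub} contributions for the submersion $g\colon E\to Y$; and covolume terms for the fibre cohomology. Theorems \ref{thm-bl} and \ref{thm-im} already assert that their contributions are real numbers determined only by the restrictions of the data to a neighbourhood of the blown-up locus, hence by $\big(U,\omega\big|_U\big)$ and $\big(U',\omega'\big|_{U'}\big)$; the Theorem \ref{thm-resol} integrals reduce to $U$ and $U'$ once one invokes the anomaly formula together with hypothesis \eqref{eq-f-omega}, so that the integrands vanish on $X'\backslash U'\cong X\backslash U$ where the two metrics agree through $f$; and the Theorem \ref{thm-sub} and covolume terms live entirely over $E$ and $Y$.

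Assembling the two steps, $\tau_\mathrm{BCOV}(X',\omega')-\tau_\mathrm{BCOV}(X,\omega)$ is a finite sum each of whose terms is determined by $\big(U,U',\omega\big|_U,\omega'\big|_{U'}\big)$ and depends on these data functorially under biholomorphisms of $\big(U,\omega\big|_U\big)$ and $\big(U',\omega'\big|_{U'}\big)$ that respect $f$; this is exactly the claim, with $\alpha\big(U,U',\omega\big|_U,\omega'\big|_{U'}\big)$ the resulting number. I expect the main obstacle to be the bookkeeping of the main step: one must arrange the exact sequences of sheaves on $X'$ so that simultaneously their cohomology matches the summands produced in the first step, the immersion, blow-up and submersion formulas apply with the metrics actually induced by $\omega'$ — which forces careful use of the adiabatic Bott-Chern estimates of \textsection\ref{subsect-bc} — and every resulting term visibly localizes to $U$, $U'$, $Y$ or $E$; verifying this last point for the Bott-Chern and anomaly contributions is where hypothesis \eqref{eq-f-omega} has to be exploited with care.
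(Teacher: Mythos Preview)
Your strategy is essentially that of the paper, but you over-engineer one part. The paper likewise uses the integral blow-up decomposition of $H^\bullet(X')$ and a filtration connecting $f^*\Lambda^p(T^*X)$ to $\Lambda^p(T^*X')$ by quotients supported on $E$: explicitly $f^*\Lambda^p(T^*X)=F^p_p\to\cdots\to F^p_0=\Lambda^p(T^*X')$, with graded pieces $G^p_s=N_E^{-s}\otimes(I^p_p/I^p_s)$ where $I^p_\bullet$ filters $\Lambda^p(T^*X')\big|_E$ by order of vanishing along the vertical of $E\to Y$. Bott's formula (Lemma~\ref{lem-vanishing}) gives $H^\bullet(E,G^p_s)=0$ for $s\geqslant 1$. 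Theorem~\ref{thm-im} is applied to each short exact sequence $0\to\mathscr{O}_{X'}(F^p_{s+1})\to\mathscr{O}_{X'}(F^p_s)\to j_*\mathscr{O}_E(G^p_s)\to 0$, and Theorem~\ref{thm-bl} to the identification $H^\bullet(X',f^*\Lambda^p(T^*X))\cong H^{p,\bullet}(X)$; what remains is the Quillen norm of a canonical section $\sigma_{G^\bullet_0}$ of $\lambda_{\mathrm{dR}}(G^\bullet_\bullet):=\bigotimes_{p,s}\big(\det H^\bullet(E,G^p_s)\big)^{(-1)^pp}\otimes\overline{(\cdots)}$ built from the integral generators on $\CP^{r-1}$ and $Y$.

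The difference is that the paper \emph{stops there}: since $E\subseteq U'$, both $\sigma_{G^\bullet_0}$ and the Quillen metrics on $\det H^\bullet(E,G^p_s)$ (for $g^{TE}$ and $g^{G^p_s}$ induced by $\omega'\big|_E$) are already determined by $(U',\omega'\big|_{U'})$, so no further descent to $Y$ is needed. You propose instead to push this term down to $\lambda_{\mathrm{dR}}(Y)$ via Theorem~\ref{thm-sub} for $g\colon E\to Y$, and also to run immersion formulas for $Y\hookrightarrow X$; neither is necessary, and invoking Theorem~\ref{thm-sub} would drag in an adiabatic limit and analytic torsion forms where the paper needs only the observation that Quillen metrics on $E$ are local to $U'$. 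Likewise, Theorem~\ref{thm-resol} and the adiabatic Bott--Chern estimates of \textsection\ref{subsect-bc} are not used here: the only analytic inputs are Theorems~\ref{thm-im} and~\ref{thm-bl}, plus the lattice bookkeeping you already identified. Your route could be made to work, but at the cost of extra terms you would then have to argue localize or cancel; the paper sidesteps that by never leaving $X'$ and $E$.
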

\begin{proof}
The proof consists of several steps.

\noindent\textbf{Step 0.}
We introduce several notations.

We denote $D = f^{-1}(Y)$.
Let $i: D \hookrightarrow X'$ be the canonical embedding.

For $p=0,\cdots,n$,
there exist holomorphic vector bundles over $X$ linked by holomorphic maps
\begin{equation}
\label{eq0F0-thm-bl-bcovt}
f^*\Lambda^p(T^*X) = F^p_p \rightarrow F^p_{p-1} \rightarrow \cdots \rightarrow F^p_0 = \Lambda^p(T^*X')
\end{equation}
such that for $s=0,\cdots,p-1$,
the map $F^p_{s+1} \rightarrow F^p_s$ induces the following exact sequence of analytic coherent sheaves on $X'$,
\begin{equation}
\label{eq0F-thm-bl-bcovt}
0 \rightarrow \mathscr{O}_{X'}(F^p_{s+1}) \rightarrow \mathscr{O}_{X'}(F^p_s) \rightarrow i_*\mathscr{O}_D\big(F^p_s\big|_D\big) \;.
\end{equation}

Let $N_Y$ be the normal bundle of $Y\hookrightarrow X$.
Let $\pi: D = \mathbb{P}(N_Y) \rightarrow Y$ be the canonical projection.
Let $TD \rightarrow \pi^* TY$ be the derivative of $\pi$.
Set
\begin{equation}
T^VD = \mathrm{Ker}\big(TD \rightarrow \pi^* TY\big) \subseteq TD \subseteq TX'\big|_D \;.
\end{equation}
Set
\begin{align}
\label{eq0I-thm-bl-bcovt}
\begin{split}
I^p_s = \Big\{\alpha\in\Lambda^p(T^*X')\big|_D \; & : \; \alpha (v_1,\cdots,v_p) = 0 \\
& \text{ for any } v_1,\cdots,v_{s+1}\in T^VD,\; v_{s+2},\cdots,v_p\in TX'\big|_D \Big\} \;.
\end{split}
\end{align}
We get a filtration of holomorphic vector bundles over $D$,
\begin{equation}
\label{eq0Ip-thm-bl-bcovt}
\Lambda^p(T^*X')\big|_D = I^p_p \supseteq I^p_{p-1} \supseteq \cdots \supseteq I^p_0 \;.
\end{equation}
Let $N_D$ be the normal line bundle of $D\hookrightarrow X'$.
For $s=0,\cdots,p-1$,
we can show that the image of the map $\mathscr{O}_{X'}(F^p_s) \rightarrow i_*\mathscr{O}_D\big(F^p_s\big|_D\big)$ in \eqref{eq0F-thm-bl-bcovt} 
is given by $i_*\mathscr{O}_D\big(N_D^{-s} \otimes \big(I^p_p/I^{p}_s\big)\big)$.
For convenience,
we denote
\begin{equation}
\label{eq0G-thm-bl-bcovt}
G^p_s = N_D^{-s} \otimes \big(I^p_p/I^{p}_s\big) \;.
\end{equation}
Then we get a short exact sequence
\begin{equation}
\label{eq0Ffilt-thm-bl-bcovt}
0 \rightarrow \mathscr{O}_{X'}(F^p_{s+1}) \rightarrow \mathscr{O}_{X'}(F^p_s) \rightarrow i_*\mathscr{O}_D\big(G^p_s\big) \rightarrow 0 \;.
\end{equation}

Let $r$ be the codimension of $Y\hookrightarrow X$.

\noindent\textbf{Step 1.}
We show that
\begin{equation}
\label{eq1-thm-bl-bcovt}
H^q(D,G^p_0) = \bigoplus_{k=1}^{r-1} H^{k,k}(\CP^{r-1}) \otimes H^{p-k,q-k}(Y) \;,\hspace{5mm}
H^q(D,G^p_s) = 0 \hspace{4mm}\text{for } s \neq 0 \;,
\end{equation}

Set
\begin{align}
\label{eq0J-thm-bl-bcovt}
\begin{split}
J^p_s = \Big\{\alpha\in\Lambda^p(T^*D) \; & : \; \alpha (v_1,\cdots,v_p) = 0 \\
& \text{ for any } v_1,\cdots,v_{s+1}\in T^VD,\; v_{s+2},\cdots,v_p\in TD \Big\} \;.
\end{split}
\end{align}
Let $\phi: \Lambda^p(T^*X')\big|_D \rightarrow \Lambda^p(T^*D)$ be the canonical projection.
By \eqref{eq0I-thm-bl-bcovt} and \eqref{eq0J-thm-bl-bcovt}, 
we have
\begin{equation}
\label{eq1J-thm-bl-bcovt}
J^p_s = \phi(I^p_s) \subseteq \Lambda^p(T^*D) \;.
\end{equation}
By \eqref{eq0Ip-thm-bl-bcovt} and \eqref{eq1J-thm-bl-bcovt},
we have a filtration of holomorphic vector bundles over $D$,
\begin{equation}
\label{eq1Jp-thm-bl-bcovt}
\Lambda^p(T^*D) = J^p_p \supseteq J^p_{p-1} \supseteq \cdots \supseteq J^p_0 \;.
\end{equation}
We also have
\begin{equation}
\label{eq1Jfilt-thm-bl-bcovt}
J^p_k/J^p_{k-1} = \pi^*\big(\Lambda^{p-k}(T^*Y)\big) \otimes \Lambda^k(T^{V,*}D) \;,
\end{equation}
and a short exact sequence of holomorphic vector bundles over $D$,
\begin{equation}
\label{eq1IJ-thm-bl-bcovt}
0 \rightarrow N_D^{-1} \otimes J^{p-1}_k \rightarrow I^p_k \rightarrow J^p_k \rightarrow 0 \;.
\end{equation}
Combining \eqref{eq1Jfilt-thm-bl-bcovt} and \eqref{eq1IJ-thm-bl-bcovt},
we get a short exact sequence,
\begin{align}
\label{eqIfilt-thm-bl-bcovt}
\begin{split}
0 & \rightarrow N_D^{-1}\otimes\pi^*\big(\Lambda^{p-k-1}(T^*Y)\big) \otimes \Lambda^k(T^{V,*}D) \rightarrow I^p_k/I^p_{k-1} \\
& \hspace{45mm} \rightarrow \pi^*\big(\Lambda^{p-k}(T^*Y)\big) \otimes \Lambda^k(T^{V,*}D) \rightarrow 0 \;.
\end{split}
\end{align}
By \eqref{eq0G-thm-bl-bcovt} and \eqref{eqIfilt-thm-bl-bcovt},
$G^p_s$ admits a filtration with factors
\begin{equation}
\label{eqIfactor-thm-bl-bcovt}
\Big( N_D^{-s-\epsilon}\otimes\pi^*\big(\Lambda^{p-k-\epsilon}(T^*Y)\big) \otimes \Lambda^k(T^{V,*}D)
\Big)_{\epsilon = 0,1,\; k = s+1,\cdots,p} \;.
\end{equation}

We remark that $\pi: D \rightarrow Y$ is a $\CP^{r-1}$-bundle
and the restriction of $N_D^{-1}$ to the fiber of $\pi: D \rightarrow Y$ is a holomorphic line bundle of degree $1$.
Applying spectral sequence while using Lemma \ref{lem-vanishing},
we see that
the cohomology of the holomorphic vector bundles in \eqref{eqIfactor-thm-bl-bcovt} vanishes
unless $\epsilon = s = 0$.
Hence we obtain the second identity in \eqref{eq1-thm-bl-bcovt}.
The argument above also shows that
\begin{equation}
\label{eq11-thm-bl-bcovt}
H^q(D,G^p_0) = H^q(D,I^p_p/I^p_0) = H^q(D,J^p_p/J^p_0) \;.
\end{equation}
Using spectral sequence and \eqref{eq1Jfilt-thm-bl-bcovt},
we get
\begin{equation}
\label{eq12-thm-bl-bcovt}
H^q\big(D,J^p_k/J^p_{k-1}\big) = H^{k,k}(\CP^{r-1}) \otimes H^{p-k,q-k}(Y) \;.
\end{equation}
On the other hand,
it is classical that
\begin{equation}
\label{eq13-thm-bl-bcovt}
H^q(D,J^p_p) = H^q\big(D,\Lambda^p(T^*D)\big)
= \bigoplus_{k=0}^{r-1} H^{k,k}(\CP^{r-1}) \otimes H^{p-k,q-k}(Y) \;.
\end{equation}
From \eqref{eq11-thm-bl-bcovt}-\eqref{eq13-thm-bl-bcovt},
we obtain the first identity in \eqref{eq1-thm-bl-bcovt}.

Set
\begin{equation}
\label{eq2lambdaG-thm-bl-bcovt}
\lambda(G^\bullet_0) = \bigotimes_{p=1}^n\Big(\det H^\bullet\big(D,G^p_0\big)\Big)^{(-1)^pp} \;,\hspace{4mm}
\lambda_\mathrm{dR}(G^\bullet_0) = \lambda(G^\bullet_0) \otimes \overline{\lambda(G^\bullet_0)} \;.
\end{equation}

Recall that $\lambda_\mathrm{dR}(X)$ was defined in \eqref{eq-def-lambda}.

\noindent\textbf{Step 2.}
We construct two canonical sections of
\begin{equation}
\big(\lambda_\mathrm{dR}(X)\big)^{-1} \otimes \lambda_\mathrm{dR}(X') \otimes \big(\lambda_\mathrm{dR}(G^\bullet_0)\big)^{-1}
\end{equation}
and show that they coincide up to $\pm 1$.

Let
\begin{equation}
\label{eqmus-thm-bl-bcovt}
\mu_{p,s} \in \Big( \det H^\bullet\big(X',F^p_{s+1}\big) \Big)^{-1}
\otimes \det H^\bullet\big(X',F^p_s\big) \otimes \Big(\det H^\bullet\big(D,G^p_s\big)\Big)^{-1}
\end{equation}
be the canonical section induced by the long exact sequence induced by \eqref{eq0Ffilt-thm-bl-bcovt}.
Indeed,
by \eqref{eq1-thm-bl-bcovt},
we have
\begin{equation}
\mu_{p,s} \in \Big( \det H^\bullet\big(X',F^p_{s+1}\big) \Big)^{-1} \otimes \det H^\bullet\big(X',F^p_s\big)
\hspace{4mm} \text{for } s \neq 0 \;.
\end{equation}
Set
\begin{align}
\label{eqmuprod-thm-bl-bcovt}
\begin{split}
\mu_p & = \bigotimes_{s=0}^{p-1} \mu_{p,s} \\
& \in \Big( \det H^\bullet\big(X',F^p_p\big) \Big)^{-1}
\otimes \det H^\bullet\big(X',F^p_0\big) \otimes \Big(\det H^\bullet\big(D,G^p_0\big)\Big)^{-1} \\
& \hspace{5mm} = \Big( \det H^\bullet\big(X',f^*\Lambda^p(T^*X)\big) \Big)^{-1}
\otimes \det H^{p,\bullet}(X') \otimes \Big(\det H^\bullet\big(D,G^p_0\big)\Big)^{-1} \;.
\end{split}
\end{align}

Using spectral sequence,
we get a canonical identification
\begin{equation}
\label{eqf-thm-bl-bcovt}
H^{p,\bullet}(X) = H^\bullet\big(X',f^*\Lambda^p(T^*X)\big) \;.
\end{equation}
Let
\begin{equation}
\label{eqnu-thm-bl-bcovt}
\nu_p \in \Big( \det H^{p,\bullet}(X) \Big)^{-1} \otimes \det H^\bullet\big(X',f^*\Lambda^p(T^*X)\big)
\end{equation}
be the canonical section induced by \eqref{eqf-thm-bl-bcovt}.

By \eqref{eqmuprod-thm-bl-bcovt} and \eqref{eqnu-thm-bl-bcovt},
we have
\begin{equation}
\label{eqeta-thm-bl-bcovt}
\mu_p \otimes \nu_p \in \Big( \det H^{p,\bullet}(X) \Big)^{-1}
\otimes \det H^{p,\bullet}(X') \otimes \Big(\det H^\bullet\big(D,G^p_0\big)\Big)^{-1} \;.
\end{equation}
By \eqref{eq-def-lambda}, \eqref{eq2lambdaG-thm-bl-bcovt} and \eqref{eqeta-thm-bl-bcovt},
we have
\begin{equation}
\bigotimes_{p=1}^n \big(\mu_p \otimes \nu_p\big)^{(-1)^pp} \in \big(\lambda(X)\big)^{-1} \otimes \lambda(X') \otimes \big(\lambda(G^\bullet_0)\big)^{-1} \;,
\end{equation}
and
\begin{align}
\begin{split}
& \bigotimes_{p=1}^n \big(\mu_p \otimes \nu_p\big)^{(-1)^pp} \otimes \overline{\bigotimes_{p=1}^n \big(\mu_p \otimes \nu_p\big)^{(-1)^pp}} \\
& \hspace{40mm} \in \big(\lambda_\mathrm{dR}(X)\big)^{-1} \otimes \lambda_\mathrm{dR}(X') \otimes \big(\lambda_\mathrm{dR}(G^\bullet_0)\big)^{-1} \;.
\end{split}
\end{align}

We have the Hodge decomposition
\begin{equation}
\label{eq2HY-thm-bl-bcovt}
H^j_\mathrm{dR}(Y) = \bigoplus_{p+q=j} H^{p,q}(Y) \;.
\end{equation}
Let $b_k$ be the $k$-th Betti number of $Y$.
By \eqref{eq1-thm-bl-bcovt}, \eqref{eq2lambdaG-thm-bl-bcovt} and \eqref{eq2HY-thm-bl-bcovt},
we have
\begin{equation}
\lambda_\mathrm{dR}(G^\bullet_0) = \bigotimes_{k=1}^{r-1} \bigotimes_{j=2k}^{2k+2n-2r}
\bigg( \Big(\det H^{2k}_\mathrm{dR}(\CP^{r-1})\Big)^{b_{j-2k}} \otimes \det H^{j-2k}_\mathrm{dR}(Y) \bigg)^{(-1)^jj} \;.
\end{equation}
Let
\begin{equation}
\delta_j\in H^j_\mathrm{Sing}(\CP^{r-1},\Z) \subseteq
H^j_\mathrm{Sing}(\CP^{r-1},\C) = H^j_\mathrm{dR}(\CP^{r-1})
\end{equation}
be a generator of $H^j_\mathrm{Sing}(\CP^{r-1},\Z)$.
Let
\begin{equation}
\tau_{j,1},\cdots,\tau_{j,b_j}\in \mathrm{Im}\Big( H^j_\mathrm{Sing}(Y,\Z) \rightarrow H^j_\mathrm{Sing}(Y,\R) \Big)
\subseteq H^j_\mathrm{dR}(Y)
\end{equation}
be a basis of the lattice.
We denote $\tau_j = \tau_{j,1}\wedge\cdots\wedge\tau_{j,b_j} \in \det H^j_\mathrm{dR}(Y)$.
Set
\begin{equation}
\label{eq2G-thm-bl-bcovt}
\sigma_{G^\bullet_0} = \bigotimes_{k=1}^{r-1} \bigotimes_{j=2k}^{2k+2n-2r}
\Big( \delta_{2k}^{b_{j-2k}} \otimes \tau_{j-2k} \Big)^{(-1)^jj}
\in \lambda_\mathrm{dR}(G^\bullet_0) \;.
\end{equation}
Let $\sigma_X \in \lambda_\mathrm{dR}(X)$ and $\sigma_{X'} \in \lambda_\mathrm{dR}(X')$ be as in \eqref{eq-def-epsilon}.
Obviously,
we have
\begin{equation}
\sigma_X^{-1} \otimes \sigma_{X'} \otimes \sigma_{G^\bullet_0}^{-1}
\in \big(\lambda_\mathrm{dR}(X)\big)^{-1} \otimes \lambda_\mathrm{dR}(X') \otimes \big(\lambda_\mathrm{dR}(G^\bullet_0)\big)^{-1} \;.
\end{equation}

We have a canonical identification
\begin{equation}
\label{eq2hdg-thm-bl-bcovt}
H^j_\mathrm{Sing}(X',\Z) = H^j_\mathrm{Sing}(X,\Z) \oplus
\bigoplus_{k=1}^{r-1}
H^{2k}_\mathrm{Sing}(\CP^{r-1},\Z) \otimes H^{j-2k}_\mathrm{Sing}(Y,\Z) \;,
\end{equation}
which induces an isomorphism of Hodge structures.
Similarly to Step 2 in the proof of Theorem \ref{thm-bcov-adiabatic},
using \eqref{eq2hdg-thm-bl-bcovt},
we can show that
\begin{equation}
\label{eq2-thm-bl-bcovt}
\bigotimes_{p=1}^n \big(\mu_p \otimes \nu_p\big)^{(-1)^pp} \otimes \overline{\bigotimes_{p=1}^n \big(\mu_p \otimes \nu_p\big)^{(-1)^pp}}
= \pm \sigma_X^{-1} \otimes \sigma_{X'} \otimes \sigma_{G^\bullet_0}^{-1} \;.
\end{equation}

\noindent\textbf{Step 3.}
We introduce Quillen metrics.

Let $g^{TX}$ be the metric on $TX$ induced by $\omega$ .
Let $g^{\Lambda^p(T^*X)}$ be the metric on $\Lambda^p(T^*X)$ induced by $g^{TX}$.
Let
\begin{equation}
\label{eq3Q1-thm-bl-bcovt}
\big\lVert\cdot\big\rVert_{\det H^{p,\bullet}(X)}
\end{equation}
be the Quillen metric on $\det H^{p,\bullet}(X) = \det H^\bullet\big(X,\Lambda^p(T^*X)\big)$ associated with $g^{TX}$ and $g^{\Lambda^p(T^*X)}$.

Let $g^{TX'}$ be the metric on $TX'$ induced by $\omega'$.
Let $g^{\Lambda^p(T^*X')}$ be the metric on $\Lambda^p(T^*X')$ induced by $g^{TX'}$.
Let
\begin{equation}
\label{eq3Q2-thm-bl-bcovt}
\big\lVert\cdot\big\rVert_{\det H^{p,\bullet}(X')}
\end{equation}
be the Quillen metric on $\det H^{p,\bullet}(X') = \det H^\bullet\big(X',\Lambda^p(T^*X')\big)$ associated with $g^{TX'}$ and $g^{\Lambda^p(T^*X')}$.

Let
\begin{equation}
\label{eq3Q3-thm-bl-bcovt}
\big\lVert\cdot\big\rVert_{\det H^\bullet(X',f^*\Lambda^p(T^*X))}
\end{equation}
be the Quillen metric on $\det H^\bullet\big(X',f^*\Lambda^p(T^*X)\big)$ associated with $g^{TX'}$ and $f^*g^{\Lambda^p(T^*X)}$.

Let $g^{TD}$ and $g^{N_D}$ be the metrics on $TD$ and $N_D$ induced by $g^{TX'}$.
Let $g^{I^p_s}$ be the metric on $I^p_s$ induced by $g^{\Lambda^p(T^*X')}$ via \eqref{eq0Ip-thm-bl-bcovt}.
Let $g^{G^p_s}$ be the metric on $G^p_s$ induced by $g^{N_D}$ and $g^{I^p_s}$ via \eqref{eq0G-thm-bl-bcovt}.
Let
\begin{equation}
\label{eq3Q4-thm-bl-bcovt}
\big\lVert\cdot\big\rVert_{\det H^\bullet(D,G^p_s)}
\end{equation}
be the Quillen metric on $\det H^\bullet(D,G^p_s)$ associated with $g^{TD}$ and $g^{G^p_s}$.
By the second identity in \eqref{eq1-thm-bl-bcovt},
we have a canonical identification $\det H^\bullet(D,G^p_s) = \C$ for $s\neq 0$.
But the metric \eqref{eq3Q4-thm-bl-bcovt} with $s\neq 0$ is not necessarily the standard metric on $\C$.

We remark that
\begin{equation}
\Lambda^p(T^*X')\big|_{X'\backslash U'}
= F^p_s \big|_{X'\backslash U'}
= f^*\Lambda^p(T^*X)\big|_{X'\backslash U'} \hspace{4mm} \text{for } s=0,\cdots,p \;.
\end{equation}
We equip $F^p_s$ with Hermitian metric $g^{F^p_s}$ such that
\begin{align}
\label{eq3Q0-thm-bl-bcovt}
\begin{split}
& g^{F^p_0} = g^{\Lambda^p(T^*X')} \;,\hspace{4mm}
g^{F^p_p} = f^*g^{\Lambda^p(T^*X)} \;,\\
& g^{F^p_{s+1}}\big|_{X'\backslash U'} = g^{F^p_s}\big|_{X'\backslash U'}  \hspace{4mm} \text{for } s = 0,\cdots,p-1 \;.
\end{split}
\end{align}
Our assumption \eqref{eq-f-omega} implies $g^{\Lambda^p(T^*X')}\big|_{X'\backslash U'} = f^*\big(g^{\Lambda^p(T^*X)}\big|_{X\backslash U}\big)$,
which guarantees the existence of $g^{F^p_s}$ satisfying \eqref{eq3Q0-thm-bl-bcovt}.
Let
\begin{equation}
\label{eq3Q6-thm-bl-bcovt}
\big\lVert\cdot\big\rVert_{\det H^\bullet(X',F^p_s)}
\end{equation}
be the Quillen metric on $\det H^\bullet(X',F^p_s)$
associated with $g^{TX'}$ and $g^{F^p_s}$.
We remark that $H^\bullet(X',F^p_0) = H^{p,\bullet}(X')$ and
\begin{equation}
\big\lVert\cdot\big\rVert_{\det H^\bullet(X',F^p_0)} = \big\lVert\cdot\big\rVert_{\det H^{p,\bullet}(X')} \;.
\end{equation}

Recall that $\mu_{p,s}$ was defined in \eqref{eqmus-thm-bl-bcovt}.
Let $\big\lVert\mu_{p,s}\big\rVert$ be the norm of $\mu_{p,s}$
with respect to the metrics \eqref{eq3Q4-thm-bl-bcovt} and \eqref{eq3Q6-thm-bl-bcovt}.

Recall that $\nu_p$ was defined in \eqref{eqnu-thm-bl-bcovt}.
Let $\big\lVert\nu_p\big\rVert$ be the norm of $\nu_p$
with respect to the Quillen metrics \eqref{eq3Q1-thm-bl-bcovt} and \eqref{eq3Q3-thm-bl-bcovt}.

Recall that $\sigma_{G^\bullet_0}$ was defined in \eqref{eq2G-thm-bl-bcovt}.
By \eqref{eq2lambdaG-thm-bl-bcovt} and the second identity in \eqref{eq1-thm-bl-bcovt},
we may and we will view $\sigma_{G^\bullet_0}$ as section of
\begin{equation}
\lambda_\mathrm{dR}(G^\bullet_\bullet) := \bigotimes_{p=1}^n \bigotimes_{s=0}^{p-1} \Big(\det H^\bullet\big(D,G^p_s\big)\Big)^{(-1)^pp} \otimes
\overline{\bigotimes_{p=1}^n \bigotimes_{s=0}^{p-1} \Big(\det H^\bullet\big(D,G^p_s\big)\Big)^{(-1)^pp}} \;.
\end{equation}
Let $\big\lVert\sigma_{G^\bullet_0}\big\rVert_{\lambda_\mathrm{dR}(G^\bullet_\bullet)}$
be the norm of $\sigma_{G^\bullet_0}\in\lambda_\mathrm{dR}(G^\bullet_\bullet)$ with respect to the metrics \eqref{eq3Q4-thm-bl-bcovt}.

Let $\big\lVert\sigma_X\big\rVert_{\lambda_\mathrm{dR}(X)}$ be the norm of $\sigma_X$ with respect to the metrics \eqref{eq3Q1-thm-bl-bcovt}.
Let $\big\lVert\sigma_{X'}\big\rVert_{\lambda_\mathrm{dR}(X')}$ be the norm of $\sigma_{X'}$ with respect to the metrics \eqref{eq3Q2-thm-bl-bcovt}.
By \eqref{eqmuprod-thm-bl-bcovt} and \eqref{eq2-thm-bl-bcovt},
we have
\begin{align}
\label{eq30-thm-bl-bcovt}
\begin{split}
& \log \big\lVert\sigma_{X'}\big\rVert_{\lambda_\mathrm{dR}(X')}
- \log \big\lVert\sigma_X\big\rVert_{\lambda_\mathrm{dR}(X)}
- \log \big\lVert\sigma_{G^\bullet_0}\big\rVert_{\lambda_\mathrm{dR}(G^\bullet_\bullet)} \\
& = \sum_{p=1}^n (-1)^pp \Big( \log \big\lVert\nu_p\big\rVert^2 + \sum_{s=0}^{p-1} \log \big\lVert\mu_{p,s}\big\rVert^2 \Big)  \;.
\end{split}
\end{align}
By Definition \ref{def-bcov-torsion} and \eqref{eq30-thm-bl-bcovt},
we have
\begin{align}
\label{eq3-thm-bl-bcovt}
\begin{split}
& \tau_\mathrm{BCOV}(X',\omega') - \tau_\mathrm{BCOV}(X,\omega) \\
& = \log \big\lVert\sigma_{G^\bullet_0}\big\rVert_{\lambda_\mathrm{dR}(G^\bullet_\bullet)} +
\sum_{p=1}^n (-1)^pp \Big( \log \big\lVert\nu_p\big\rVert^2 + \sum_{s=0}^{p-1} \log \big\lVert\mu_{p,s}\big\rVert^2 \Big) \;.
\end{split}
\end{align}

\noindent\textbf{Step 4.}
We conclude.

For ease of notations,
we denote
\begin{equation}
\label{eq41-thm-bl-bcovt}
\alpha_{p,s} = \log \big\lVert\mu_{p,s}\big\rVert^2 \;.
\end{equation}
Applying Theorem \ref{thm-im} to the short exact sequence \eqref{eq0Ffilt-thm-bl-bcovt} while using the second line in \eqref{eq3Q0-thm-bl-bcovt},
we see that $\alpha_{p,s}$ is determined by $\big( U', \omega'\big|_{U'}, g^{F^p_s}\big|_{U'}, g^{F^p_{s+1}}\big|_{U'} \big)$.
We denote
\begin{equation}
\label{eq42-thm-bl-bcovt}
\alpha_p = \sum_{s=0}^{p-1} \alpha_{p,s} \;.
\end{equation}
We remark that for $s = 1,\cdots,p-1$,
the contributions of the metric $\big\lVert\cdot\big\rVert_{\det H^\bullet(X',F^p_s)}$ (see \eqref{eq3Q6-thm-bl-bcovt}) to $\alpha_{p,s-1}$ and $\alpha_{p,s}$ cancel with each other.
Thus $\alpha_p$ is independent of $\big(g^{F^p_s}\big)_{s=1,\cdots,p-1}$.
Hence $\alpha_p$ is determined by $\big( U', \omega'\big|_{U'}, g^{F^p_0}\big|_{U'}, g^{F^p_p}\big|_{U'} \big)$.
Now,
applying the first line in \eqref{eq3Q0-thm-bl-bcovt},
we see that $\alpha_p$ is determined by $\big(U, U', \omega\big|_U, \omega'\big|_{U'}\big)$.

For ease of notations,
we denote
\begin{equation}
\label{eq43-thm-bl-bcovt}
\beta_p = \log \big\lVert\nu_p\big\rVert^2 \;,
\end{equation}
Applying Theorem \ref{thm-bl} with $E = \Lambda^p(T^*X)$ while using \eqref{eq-f-omega},
we see that $\beta_p$ is determined by $\big(U, U', \omega\big|_U, \omega'\big|_{U'}\big)$.

By \eqref{eq3-thm-bl-bcovt}-\eqref{eq43-thm-bl-bcovt},
we have
\begin{equation}
\label{eq44-thm-bl-bcovt}
\tau_\mathrm{BCOV}(X',\omega') - \tau_\mathrm{BCOV}(X,\omega)
= \log \big\lVert\sigma_{G^\bullet_0}\big\rVert_{\lambda_\mathrm{dR}(G^\bullet_\bullet)}
+ \sum_{p=1}^n (-1)^pp \big( \alpha_p + \beta_p \big) \;.
\end{equation}
Here
\begin{itemize}
\item[-] the section $\sigma_{G^\bullet_0}\in\lambda_\mathrm{dR}(G^\bullet_\bullet)$ is determined by $D\subseteq U'$ and its normal bundle;
\item[-] the Quillen metric $\big\lVert\cdot\big\rVert_{\lambda_\mathrm{dR}(G^\bullet_\bullet)}$ is determined by $\omega'\big|_{U'}$;
\item[-] the real number $\alpha_p$ is determined by $\big(U, U', \omega\big|_U, \omega'\big|_{U'}\big)$;
\item[-] the real number $\beta_p$ is determined by $\big(U, U', \omega\big|_U, \omega'\big|_{U'}\big)$.
\end{itemize}
In conclusion,
the right hand side of \eqref{eq44-thm-bl-bcovt} is determined by $\big(U, U', \omega\big|_U, \omega'\big|_{U'}\big)$.
This completes the proof.
\end{proof}

Let $\pi: \mathscr{U} \rightarrow \C$ be a holomorphic submersion between complex manifolds.
Let $\mathscr{Y} \subseteq \mathscr{U}$ be a closed complex submanifold.
We assume that $\pi\big|_\mathscr{Y}: \mathscr{Y} \rightarrow \C$ is a holomorphic submersion with compact fiber.
For $z\in \C$,
we denote $U_z = \pi^{-1}(z)$ and $Y_z = U_z \cap \mathscr{Y}$.
Assume that for any $z\in\C$,
$U_z$ can be extended to a compact K{\"a}hler manifold.
More precisely,
there exist a compact K{\"a}hler manifold $X_z$ and a holomorphic embedding $i_z: U_z \hookrightarrow X_z$ whose image is open.
Here $\big\{ X_z \;:\; z\in\C \big\}$ is just a set of complex manifolds parameterized by $\C$.
The topology of $X_z$ may vary as $z$ varies.
We identify $U_z$ with $i_z(U_z)\subseteq X_z$.
Let $f_z: X_z' \rightarrow X_z$ be the blow-up along $Y_z$.
Set $U_z' = f_z^{-1}(U_z) \subseteq X_z'$.
Let
\begin{equation}
\big(\omega_z\in\Omega^{1,1}(X_z)\big)_{z\in \C} \;,\hspace{4mm}
\big(\omega_z'\in\Omega^{1,1}(X_z')\big)_{z\in \C}
\end{equation}
be K{\"a}hler forms.
We assume that $\big(\omega_z\big|_{U_z}\big)_{z\in \C}$ and $\big(\omega_z'\big|_{U_z'}\big)_{z\in \C}$ are smooth families.
We further assume that
\begin{equation}
\omega_z'\big|_{X_z'\backslash U_z'} = f_z^*\big(\omega_z\big|_{X_z\backslash U_z}\big)
\hspace{4mm}\text{for } z\in \C \;.
\end{equation}

\begin{thm}
\label{thm2-bl-bcovt}
The function $z \mapsto \tau_\mathrm{BCOV}(X_z',\omega_z') - \tau_\mathrm{BCOV}(X_z,\omega_z)$ is continuous.
\end{thm}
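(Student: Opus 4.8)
The plan is to run the proof of Theorem \ref{thm-bl-bcovt} with the base point $z\in\C$ as a parameter and to check that each term in the decomposition \eqref{eq44-thm-bl-bcovt} depends continuously on $z$. First I would note that, since $\pi\colon\mathscr{U}\to\C$ and $\pi|_{\mathscr{Y}}\colon\mathscr{Y}\to\C$ are holomorphic submersions, the common value $n:=\dim X_z=\dim U_z$ and the codimension $r$ of $Y_z\subseteq X_z$ are independent of $z$, so the ranges of the indices $p,s$ below do not jump; as continuity is a local property, I fix $z_0\in\C$ and work over a small disc $\Delta\ni z_0$. Running Steps 0--4 of the proof of Theorem \ref{thm-bl-bcovt} for the blow-up $f_z\colon X_z'\to X_z$ along $Y_z$ gives, for $z\in\Delta$,
\begin{equation}
\tau_\mathrm{BCOV}(X_z',\omega_z') - \tau_\mathrm{BCOV}(X_z,\omega_z)
= \log \big\lVert\sigma_{G^\bullet_0,z}\big\rVert_{\lambda_\mathrm{dR}(G^\bullet_\bullet),z}
+ \sum_{p=1}^n (-1)^pp \big( \alpha_{p,z} + \beta_{p,z} \big) \;,
\end{equation}
with $\alpha_{p,z}=\sum_{s=0}^{p-1}\log\big\lVert\mu_{p,s,z}\big\rVert^2$ and $\beta_{p,z}=\log\big\lVert\nu_{p,z}\big\rVert^2$ the quantities built in that proof; the task then reduces to three separate continuity statements.

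The terms $\beta_{p,z}$ and $\alpha_{p,z}$ should be routine. For $\beta_{p,z}$ I would invoke Theorem \ref{thm-bl} with $E=\Lambda^p(T^*X_z)$, which identifies $\beta_{p,z}$ with the real number furnished by that theorem; by Remark \ref{rk-bl} it depends continuously on $\big(U_z,\omega_z|_{U_z},U_z',\omega_z'|_{U_z'}\big)$ (the remaining arguments being determined by these), and these data vary smoothly with $z$ by hypothesis, so $z\mapsto\beta_{p,z}$ is continuous. For $\alpha_{p,z}$ I would apply Theorem \ref{thm-im} to the short exact sequence \eqref{eq0Ffilt-thm-bl-bcovt}: each summand $\log\big\lVert\mu_{p,s,z}\big\rVert^2$ is then controlled, via Remark \ref{rk-im}, continuously by $\big(U_z',\omega_z'|_{U_z'},g^{F^p_s}|_{U_z'},g^{F^p_{s+1}}|_{U_z'}\big)$. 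One chooses the auxiliary metrics $g^{F^p_s}$ so that $\big(g^{F^p_s}|_{U_z'}\big)_z$ is smooth for all $p,s$ — the only constraints \eqref{eq3Q0-thm-bl-bcovt} are at $s=0,p$ and off $U_z'$, where all the data vary smoothly — and, as observed in Step 4 of the proof of Theorem \ref{thm-bl-bcovt}, the sum $\alpha_{p,z}$ is independent of the intermediate metrics, whence $z\mapsto\alpha_{p,z}$ is continuous.

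The hard part will be the term $\log\big\lVert\sigma_{G^\bullet_0,z}\big\rVert_{\lambda_\mathrm{dR}(G^\bullet_\bullet),z}$, because here the compact K{\"a}hler manifold $D_z=\mathbb{P}(N_{Y_z})\subseteq U_z'$ itself varies with $z$, not merely the metric data on it. I would argue that, as the normal bundle $N_{Y_z}$ of $Y_z\hookrightarrow X_z$ forms a smooth family of holomorphic vector bundles over the smooth family $\big(Y_z\big)_z$ of compact K{\"a}hler manifolds (each $Y_z$ a submanifold of the compact K{\"a}hler $X_z$), the $D_z$ form a smooth family of compact K{\"a}hler manifolds with K{\"a}hler forms $\omega_z'|_{D_z}$, and the bundles $G^p_{s,z}$ with their induced Hermitian metrics form smooth families. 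By \eqref{eq1-thm-bl-bcovt} the dimensions $\dim H^q(D_z,G^p_{s,z})$ are expressed through the Hodge numbers of $Y_z$ and of $\CP^{r-1}$, which are constant over $\Delta$, so they do not jump; hence the determinant lines assemble into a smooth complex line bundle $\lambda_\mathrm{dR}(G^\bullet_\bullet)$ over $\Delta$ carrying a smoothly varying Quillen metric. Moreover $\sigma_{G^\bullet_0,z}$, built in \eqref{eq2G-thm-bl-bcovt} from bases of the integral lattices in $H^\bullet_\mathrm{Sing}(Y_z,\Z)$ and $H^\bullet_\mathrm{Sing}(\CP^{r-1},\Z)$, can be chosen locally constant up to sign, since these are local systems over the contractible $\Delta$. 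Thus $\log\big\lVert\sigma_{G^\bullet_0,z}\big\rVert$ is the logarithm of the Quillen norm of a locally constant section of a smooth line bundle, hence continuous — the only genuinely analytic input being the smooth dependence of the Quillen metric on the K{\"a}hler form and the Hermitian bundle in a smooth family of compact K{\"a}hler manifolds, which follows from the smoothness of the fiberwise Hodge theory and of the analytic torsion, or from the anomaly formula \cite[Theorem 1.23]{bgs3}. Feeding the three continuity statements into the displayed identity would then complete the proof.
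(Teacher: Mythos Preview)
Your proposal is correct and follows essentially the same approach as the paper: both run the decomposition \eqref{eq44-thm-bl-bcovt} from the proof of Theorem \ref{thm-bl-bcovt} in family over $z$, and then invoke Remarks \ref{rk-im} and \ref{rk-bl} together with the locality observations in Step~4 of that proof to conclude that each term depends continuously on the data over $U_z$ and $U_z'$. Your treatment of the term $\log\lVert\sigma_{G^\bullet_0,z}\rVert$ is more detailed than the paper's (which simply subsumes it under ``the last paragraph in the proof of Theorem \ref{thm-bl-bcovt}''), but the argument is the same in spirit.
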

\begin{proof}
We proceed in the same way as in the proof of Theorem \ref{thm-bl-bcovt}.
Each object constructed becomes a function of $z\in \C$.
In particular,
the identity \eqref{eq44-thm-bl-bcovt} becomes
\begin{equation}
\label{eq-thm2-bl-bcovt}
\tau_\mathrm{BCOV}(X_z',\omega_z') - \tau_\mathrm{BCOV}(X_z,\omega_z)
= \log \big\lVert\sigma_{G^\bullet_0}\big\rVert_{\lambda_\mathrm{dR}(G^\bullet_\bullet),z}
+ \sum_{p=1}^n (-1)^pp \big( \alpha_{p,z} + \beta_{p,z} \big) \;.
\end{equation}
From Remark \ref{rk-im}, \ref{rk-bl} and the last paragraph in the proof of Theorem \ref{thm-bl-bcovt},
we see that each term on the right hand side of \eqref{eq-thm2-bl-bcovt} is a continuous function of $z$.
This completes the proof.
\end{proof}

\section{BCOV invariant}

\subsection{Several meromorphic sections}
\label{subsect-ms}

Let $X$ be a compact complex manifold.
Let $K_X$ be the canonical line bundle of $X$.
Let $d$ be a non-zero integer.
Let $K_X^d$ be the $d$-th tensor power of $K_X$.
Let $\gamma\in\mathscr{M}(X,K_X^d)$ be an invertible element.
We denote
\begin{equation}
\mathrm{div}(\gamma) = D = \sum_{j=1}^l m_j D_j \;,
\end{equation}
where $m_j\in\Z\backslash\{0\}$,
$D_1,\cdots,D_l \subseteq X$ are mutually distinct and irreducible.
We assume that $D$ is of simple normal crossing support (see Definition \ref{def-snc}).

For $J\subseteq\big\{1,\cdots,l\big\}$,
let $D_J \subseteq X$ be as in \eqref{introeq-def-wJ-DJ}.
For $j\in J\subseteq\big\{1,\cdots,l\big\}$,
let $L_{J,j}$ be the normal line bundle of $D_J \hookrightarrow D_{J\backslash\{j\}}$.
Set
\begin{equation}
\label{eq-def-KJ}
K_J = K_X^d\big|_{D_J} \otimes \bigotimes_{j\in J} L_{J,j}^{-m_j}
= K_{D_J}^d \otimes \bigotimes_{j\in J} L_{J,j}^{-m_j-d} \;,
\end{equation}
which is a holomorphic line bundle over $D_J$.
In particular,
we have $K_\emptyset = K_X^d$.

Recall that $\mathrm{Res}_\cdot(\cdot)$ was defined in Definition \ref{def-res}.
By \eqref{eq-res-commute},
there exist
\begin{equation}
\Big(\gamma_J \in \mathscr{M}(D_J,K_J)\Big)_{J \subseteq\{1,\cdots,l\}}
\end{equation}
such that
\begin{equation}
\label{eq-def-gammaJ}
\gamma_\emptyset = \gamma \;,\hspace{4mm}
\gamma_J = \mathrm{Res}_{D_J}(\gamma_{J\backslash\{j\}})
\hspace{2.5mm} \text{ for } j\in J\subseteq\big\{1,\cdots,l\big\} \;.
\end{equation}
By \eqref{eq-res-div},
we have
\begin{equation}
\label{eq-div-gammaJ}
\mathrm{div}(\gamma_J) = \sum_{j\notin J} m_j D_{J\cup\{j\}} \;.
\end{equation}

\subsection{Construction of BCOV invariant}
\label{subsect-bcov}

We will use the notations in \textsection \ref{subsect-ms}.
We further assume that $X$ is K{\"a}hler and $m_j\neq -d$ for $j=1,\cdots,l$.
Then $(X,\gamma)$ is a $d$-Calabi-Yau pair (see Definition \ref{introdef-cy-pair}).

Let $\omega$ be a K{\"a}hler form on $X$.
Let $\big|\cdot\big|_{K_{D_J},\omega}$ be the metric on $K_{D_J}$ induced by $\omega$.
Let $\big|\cdot\big|_{L_{J,j},\omega}$ be the metric on $L_{J,j}$ induced by $\omega$.
Let $\big|\cdot\big|_{K_J,\omega}$ be the metric on $K_J$
induced by $\big|\cdot\big|_{K_{D_J},\omega}$ and $\big|\cdot\big|_{L_{J,j},\omega}$ via \eqref{eq-def-KJ}.

We will use the notations in \eqref{eq-def-QQ}.
For $J\subseteq\big\{1,\cdots,l\big\}$,
let $|J|$ be the number of elements in $J$,
let $g^{TD_J}_\omega$ be the metric on $TD_J$ induced by $\omega$,
let $c_k\big(TD_J,g^{TD_J}_\omega\big) \in Q^{D_J}$ be $k$-th Chern form of $\big(TD_J,g^{TD_J}_\omega\big)$.
Let $n = \dim X$.
Set
\begin{equation}
\label{eq-def-aJ}
a_J(\gamma,\omega) =
\frac{1}{12} \int_{D_J} c_{n-|J|}\big(TD_J,g^{TD_J}_\omega\big)
\log \big|\gamma_J\big|^{2/d}_{K_J,\omega} \;.
\end{equation}

We consider the short exact sequence of holomorphic vector bundles over $D_J$,
\begin{equation}
\label{eq-ses-TJ}
0 \rightarrow TD_J \rightarrow TD_{J\backslash\{j\}}\big|_{D_J} \rightarrow L_{J,j} \rightarrow 0 \;.
\end{equation}
Let
\begin{equation}
\widetilde{c}\Big(TD_J,TD_{J\backslash\{j\}}\big|_{D_J},g^{TD_{J\backslash\{j\}}}_\omega\big|_{D_J}\Big)
\in Q^{D_J}/Q^{D_J,0}
\end{equation}
be the Bott-Chern form \eqref{eq-def-BCc-ses}
with $0\rightarrow E' \rightarrow E \rightarrow E''$ replaced by \eqref{eq-ses-TJ}
and $g^E$ replaced by $g^{TD_{J\backslash\{j\}}}_\omega\big|_{D_J}$.
Set
\begin{equation}
\label{eq-def-bIk}
b_{J,j}(\omega) =
\frac{1}{12} \int_{D_J}
\widetilde{c}\Big(TD_J,TD_{J\backslash\{j\}}\big|_{D_J},g^{TD_{J\backslash\{j\}}}_\omega\big|_{D_J}\Big) \;.
\end{equation}

Let $w_d^J$ be as in \eqref{introeq-def-wJ-DJ}.
Recall that $\tau_\mathrm{BCOV}(\cdot,\cdot)$ was defined in Definition \ref{def-bcov-torsion}.
For ease of notation,
we denote $\tau_\mathrm{BCOV}(D_J,\omega) = \tau_\mathrm{BCOV}\big(D_J,\omega\big|_{D_J}\big)$.
We define
\begin{equation}
\label{eq-def-tau-gamma-omega}
\tau_d(X,\gamma,\omega) = \sum_{J \subseteq\{1,\cdots,l\}}
w_d^J \bigg( \tau_\mathrm{BCOV}(D_J,\omega) - a_J(\gamma,\omega) - \sum_{j\in J} \frac{m_j+d}{d} b_{J,j}(\omega) \bigg) \;.
\end{equation}

\begin{thm}
\label{thm-ind-omega}
The real number $\tau_d(X,\gamma,\omega)$ is independent of $\omega$.
\end{thm}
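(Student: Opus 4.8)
The plan is to establish metric-independence by a variational argument combined with anomaly formulas. Since the cone of K\"ahler forms on $X$ is convex, hence connected, it suffices to fix a smooth one-parameter family $\big(\omega_u\big)_{u\in[0,1]}$ of K\"ahler forms and to prove that $\frac{d}{du}\tau_d(X,\gamma,\omega_u)=0$ for every such family; equivalently, one computes $\tau_d(X,\gamma,\omega_1)-\tau_d(X,\gamma,\omega_0)$ directly from the anomaly formulas and checks that it vanishes.

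The first step is to determine the anomaly of $\tau_\mathrm{BCOV}(D_J,\omega)$ on each stratum $D_J$; write $n_J=\dim D_J=n-|J|$. Applying the Bismut--Gillet--Soul\'e anomaly formula \cite[Theorem 1.23]{bgs3} to each Quillen metric on $\lambda_p(D_J)=\det H^\bullet\big(D_J,\Lambda^p(T^*D_J)\big)$, then forming the weighted alternating sum $\sum_p(-1)^pp\,(\cdot)$ and adding the complex conjugate as dictated by \eqref{eq-def-lambda}, the various Bott--Chern secondary contributions organize---using the Chern--Weil identities of Proposition \ref{prop-total-class} and Proposition \ref{prop2-total-class}, exactly as in the computation of \cite[Theorem 2.1]{z}---into a single local integral over $D_J$ whose integrand is a universal combination of the Bott--Chern secondary forms of $TD_J$ and of the function $\log\big(\omega_u^{n_J}/\omega_0^{n_J}\big)$, carrying the characteristic factor $\tfrac1{12}$. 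Crucially, the three identities in Proposition \ref{prop-total-class} isolate precisely the $c_{n_J}$, $c_{n_J-1}$ and $c_1c_{n_J-1}$ shapes that also enter \eqref{eq-def-aJ} and \eqref{eq-def-bIk}.

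The second step is to differentiate the correction terms. The variation of $a_J(\gamma,\omega)$ is computed by Chern--Weil---the variation of $c_{n_J}(TD_J,g^{TD_J}_\omega)$ being $\tfrac{\overline{\partial}\partial}{2\pi i}$-exact---together with the variation of the Hermitian metric $|\cdot|_{K_J,\omega}$ on the line bundle $K_J$ of \eqref{eq-def-KJ}; the variation of $b_{J,j}(\omega)$ is read off from the defining property of the Bott--Chern form attached to the exact sequence \eqref{eq-ses-TJ}. The geometric bridge between neighbouring strata is the residue relation $\gamma_J=\mathrm{Res}_{D_J}(\gamma_{J\setminus\{j\}})$ of \eqref{eq-def-gammaJ} together with Definition \ref{def-res}: it identifies $\log|\gamma_J|^{2/d}_{K_J,\omega}$ on $D_J$ with the restriction of $\log|\gamma_{J\setminus\{j\}}|^{2/d}_{K_{J\setminus\{j\}},\omega}$ modulo a smooth term built from the metric on the normal line bundle $L_{J,j}$ raised to the power dictated by \eqref{eq-def-KJ}; this smooth discrepancy, together with the difference between $c_{n_J}(TD_J)$ and $c_{n_{J\setminus\{j\}}}(TD_{J\setminus\{j\}})\big|_{D_J}$, is exactly what $b_{J,j}(\omega)$ records.

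Finally I would assemble $\frac{d}{du}\tau_d(X,\gamma,\omega_u)=\sum_{J}w_d^J\big(\delta\tau_\mathrm{BCOV}(D_J,\omega)-\delta a_J(\gamma,\omega)-\sum_{j\in J}\tfrac{m_j+d}{d}\,\delta b_{J,j}(\omega)\big)$ and verify it vanishes by sorting the contributions according to the stratum on which they live. Differentiating $a_J$ produces, via Poincar\'e--Lelong applied to the $\tfrac{\overline{\partial}\partial}{2\pi i}$-exact variation of $c_{n_J}(TD_J)$ against the singular weight $\log|\gamma_J|$, both a smooth integral over $D_J$ and residue integrals over the deeper strata $D_{J\cup\{j\}}$; the smooth part matches the bulk anomaly $\delta\tau_\mathrm{BCOV}(D_J,\omega)$ on that stratum (the logarithmic singularities being integrable and contributing no extra boundary term), while the residue part, after the reassignment $w_d^{J}\cdot\tfrac{m_j+d}{d}=w_d^{J\setminus\{j\}}\cdot\tfrac{-m_j}{d}$ coming from the multiplicativity $w_d^{J}=w_d^{J\setminus\{j\}}\cdot\tfrac{-m_j}{m_j+d}$ of \eqref{introeq-def-wJ-DJ}, combines with the $\delta b_{J,j}(\omega)$ terms and with the restriction of the lower-stratum terms so as to telescope, stratum by stratum, to zero; the $\tfrac{m_j+d}{d}$ coefficients in \eqref{eq-def-tau-gamma-omega} are precisely what is needed to clear the denominator $m_j+d$ of $w_d^J$. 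I expect the main obstacle to be this last combinatorial matching---arranging the residue/normal-bundle parts of the BCOV anomaly so that the weights $w_d^J$ yield a genuine telescoping sum---together with the analytic point of justifying the Stokes/Poincar\'e--Lelong manipulations on the strata near $|\mathrm{div}\,\gamma_J|$, where $\log|\gamma_J|$ blows up along the analytic subset described in \eqref{eq-div-gammaJ}.
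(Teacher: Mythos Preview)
Your variational strategy and the telescoping via $w_d^{J}=w_d^{J\setminus\{j\}}\cdot\tfrac{-m_j}{m_j+d}$ are exactly the heart of the paper's proof; the ingredients you list (anomaly of $\tau_\mathrm{BCOV}$, Poincar\'e--Lelong for $a_J$, defining property of the Bott--Chern form for $b_{J,j}$) are the same ones the paper uses, and your identification of the combinatorial cancellation is correct.

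The one genuine difference is in packaging. The paper does not differentiate along a real interval $[0,1]$; instead it takes a smooth family $(\omega_s)_{s\in\CP^1}$ and computes $\tfrac{\overline{\partial}\partial}{2\pi i}$ of each term as a $(1,1)$-form on $\CP^1$. This buys two simplifications. First, the variation of $\tau_\mathrm{BCOV}(D_J,\omega)$ is obtained by simply quoting the curvature formula \cite[Theorem 1.6]{z}, which already gives $\tfrac{\overline{\partial}\partial}{2\pi i}\tau_\mathrm{BCOV}(D_J,\omega)=\tfrac{1}{12}\int_{D_J}c_{n_J}c_1$ for a constant family of manifolds; you would instead have to rederive this from the BGS anomaly formula and Propositions \ref{prop-total-class}--\ref{prop2-total-class}, which works but is longer. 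Second, Poincar\'e--Lelong and the Bott--Chern identity \eqref{eq-def-BC} are native $(1,1)$-statements, so the computations of $\tfrac{\overline{\partial}\partial}{2\pi i}a_J$ and $\tfrac{\overline{\partial}\partial}{2\pi i}b_{J,j}$ are one-liners (see \eqref{eq-ap-pf-thm-ind-omega}, \eqref{eq-bp-pf-thm-ind-omega}); the boundary/residue issues you flag as the ``main obstacle'' are absorbed into the standard $\overline{\partial}\partial$-current calculus. Once $\overline{\partial}\partial\tau_d(X,\gamma,\omega)=0$ on $\CP^1$, compactness gives constancy. Your real-parameter approach is equivalent in substance but requires more hands-on analysis at each step.
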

\begin{proof}
Let $\big(\omega_s\big)_{s\in\CP^1}$ be a smooth family of K{\"a}hler forms on $X$ parameterized by $\CP^1$.
It is sufficient to show that
$\tau_d(X,\gamma,\omega_s)$ is independent of $s$.

We will view the terms
involved in \eqref{eq-def-tau-gamma-omega}
as smooth functions on $\CP^1$,
i.e.,
\begin{align}
\label{eq-notation-pf-thm-ind-omega}
\begin{split}
& \tau_d(X,\gamma,\omega):
s \mapsto \tau_d(X,\gamma,\omega_s) \;,\hspace{4mm}
\tau_\mathrm{BCOV}\big(D_J,\omega\big):
s \mapsto \tau_\mathrm{BCOV}\big(D_J,\omega_s\big) \;,\hspace{4mm}\text{etc} \;.
\end{split}
\end{align}
We will view $TD_J$ and $L_{J,j}$ as holomorphic vector bundles over $D_J \times \CP^1$.
Let $g^{TD_J}_\omega$ and $g^{L_{J,j}}_\omega$
be metrics on $TD_J$ and $L_{J,j}$ induced by $\big(\omega_s\big)_{s\in\CP^1}$.
More precisely,
the restrictions
$g^{TD_J}_\omega\big|_{D_J\times\{s\}}$ and $g^{L_{J,j}}_\omega\big|_{D_J\times\{s\}}$
are induced by $\omega_s$.
By \cite[Theorem 1.6]{z},
we have
\begin{equation}
\label{eq-taup-pf-thm-ind-omega}
\frac{\overline{\partial}\partial}{2\pi i} \tau_\mathrm{BCOV}\big(D_J,\omega\big)
= \frac{1}{12} \int_{D_J}
c_{n-|J|}\big(TD_J,g^{TD_J}_\omega\big) c_1\big(TD_J,g^{TD_J}_\omega\big) \;.
\end{equation}
Similarly to \cite[(2.9)]{z},
by the Poicar{\'e}-Lelong formula,
\eqref{eq-def-KJ}, \eqref{eq-div-gammaJ} and \eqref{eq-def-aJ},
we have
\begin{align}
\label{eq-ap-pf-thm-ind-omega}
\begin{split}
\frac{\overline{\partial}\partial}{2\pi i} a_J(\gamma,\omega)
& = \frac{1}{12d} \int_{D_J} c_{n-|J|}\big(TD_J,g^{TD_J}_\omega\big)
\Big( - c_1\big(K_J,\big|\cdot\big|_{K_J,\omega}\big) + \delta_{\mathrm{div}(\gamma_J)} \Big) \\
& = \frac{1}{12} \int_{D_J} c_{n-|J|}\big(TD_J,g^{TD_J}_\omega\big) c_1\big(TD_J,g^{TD_J}_\omega\big) \\
& \hspace{4mm} +\sum_{j\in J} \frac{m_j+d}{12d} \int_{D_J} c_{n-|J|}\big(TD_J,g^{TD_J}_\omega\big) c_1\big(L_{J,j},\big|\cdot\big|_{L_{J,j},\omega}\big) \\
& \hspace{4mm} + \sum_{j\notin J} \frac{m_j}{12d} \int_{D_{J\cup\{j\}}} c_{n-|J|}\big(TD_J,g^{TD_J}_\omega\big) \;.
\end{split}
\end{align}
Similarly to \cite[(2.10)]{z},
by \eqref{eq-def-BC}, \eqref{eq-def-BCc-ses} and \eqref{eq-def-bIk},
we have
\begin{align}
\label{eq-bp-pf-thm-ind-omega}
\begin{split}
\frac{\overline{\partial}\partial}{2\pi i} b_{J,j}(\omega) & =
\frac{1}{12} \int_{D_J} c_{n-|J|+1}\big(TD_{J\backslash\{j\}},g^{TD_{J\backslash\{j\}}}_\omega\big) \\
& \hspace{4mm} - \frac{1}{12} \int_{D_J} c_{n-|J|}\big(TD_J,g^{TD_J}_\omega\big)c_1\big(L_{J,j},g^{L_{J,j}}_\omega\big) \;.
\end{split}
\end{align}
By \eqref{eq-taup-pf-thm-ind-omega}-\eqref{eq-bp-pf-thm-ind-omega},
we have
\begin{align}
\label{eq-p-pf-thm-ind-omega}
\begin{split}
& \frac{\overline{\partial}\partial}{2\pi i}
\bigg( \tau_\mathrm{BCOV}(D_J,\omega) - a_J(\gamma,\omega) - \sum_{k\in J} \frac{m_j+d}{d} b_{J,j}(\omega) \bigg) \\
& = - \sum_{j\in J}  \frac{m_j+d}{12d} \int_{D_J} c_{n-|J|+1}\big(TD_{J\backslash\{j\}},g^{TD_{J\backslash\{j\}}}_\omega\big) \\
& \hspace{4mm} -  \sum_{j\notin J} \frac{m_j}{12d} \int_{D_{J\cup\{j\}}} c_{n-|J|}\big(TD_J,g^{TD_J}_\omega\big) \;.
\end{split}
\end{align}
From \eqref{introeq-def-wJ-DJ}, \eqref{eq-def-tau-gamma-omega} and \eqref{eq-p-pf-thm-ind-omega},
we obtain $\overline{\partial}\partial \tau_d(X,\gamma,\omega) = 0$.
Hence $s\mapsto\tau_d(X,\gamma,\omega_s)$ is constant on $\CP^1$.
This completes the proof.
\end{proof}

\begin{defn}
\label{def-bcov-inv}
The BCOV invariant of $(X,\gamma)$ is defined by
\begin{equation}
\tau_d(X,\gamma) = \tau_d(X,\gamma,\omega) \;.
\end{equation}
By Theorem \ref{thm-ind-omega},
$\tau_d(X,\gamma)$ is well-defined.
\end{defn}

\begin{prop}
\label{intro-prop-tau-r}
For a non-zero integer $r$,
let $\gamma^r\in\mathscr{M}(X,K_X^{rd})$ be the $r$-th tensor power of $\gamma$.
Then $(X,\gamma^r)$ is a $rd$-Calabi-Yau pair and
\begin{equation}
\label{intro-eq-tau-r}
\tau_{rd}(X,\gamma^r) = \tau_d(X,\gamma) \;.
\end{equation}
\end{prop}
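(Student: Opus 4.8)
The plan is to prove that, for any K\"ahler form $\omega$ on $X$, each summand in the definition \eqref{eq-def-tau-gamma-omega} of $\tau_{rd}(X,\gamma^r,\omega)$ equals the corresponding summand of $\tau_d(X,\gamma,\omega)$; then Theorem \ref{thm-ind-omega} and Definition \ref{def-bcov-inv} immediately give \eqref{intro-eq-tau-r}.

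First I would observe that $(X,\gamma^r)$ is indeed an $rd$-Calabi-Yau pair: we have $\mathrm{div}(\gamma^r)=rD=\sum_{j=1}^l rm_jD_j$, so the support $D_1\cup\cdots\cup D_l$, and hence the simple normal crossing condition, are unchanged, while $rm_j\neq-rd$ because $m_j\neq-d$. In particular the strata $D_J$ and their iterated normal line bundles $L_{J,j}$ are literally the same for both pairs. The remaining combinatorial and metric data then match termwise: $w_{rd}^J=\prod_{j\in J}\frac{-rm_j}{rm_j+rd}=\prod_{j\in J}\frac{-m_j}{m_j+d}=w_d^J$, the coefficients $\frac{rm_j+rd}{rd}=\frac{m_j+d}{d}$ coincide, and the quantities $\tau_\mathrm{BCOV}(D_J,\omega)$ and $b_{J,j}(\omega)$ of \eqref{eq-def-bIk} depend only on $\omega$ and on the inclusion $D_J\subseteq D_{J\backslash\{j\}}$, so they are unaffected by passing from $\gamma$ to $\gamma^r$.

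The one genuinely non-formal point is the term $a_J$ of \eqref{eq-def-aJ}, which involves the residue section $\gamma_J$ and the line bundle $K_J$ of \eqref{eq-def-KJ}. For the pair $(X,\gamma^r)$ the corresponding line bundle over $D_J$ is $K_X^{rd}\big|_{D_J}\otimes\bigotimes_{j\in J}L_{J,j}^{-rm_j}=K_J^{\otimes r}$, and the metric it receives from $\omega$ is the $r$-th tensor power of $\big|\cdot\big|_{K_J,\omega}$; so it suffices to show that the residue section attached to $(X,\gamma^r)$ is $\gamma_J^{\otimes r}\in\mathscr{M}(D_J,K_J^{\otimes r})$. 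By \eqref{eq-def-gammaJ} this reduces, by induction on $|J|$, to the single-hypersurface identity $\mathrm{Res}_{D_j}(\gamma^r)=\mathrm{Res}_{D_j}(\gamma)^{\otimes r}$ (negative tensor powers being read as tensor powers of the inverse). Since $\mathrm{Res}_{D_j}(\cdot)$ is independent of the auxiliary connections (Definition \ref{def-res}), I would compute it near a generic point of $D_j$ in a chart with $D_j=\{z=0\}$ and a flat local frame of $K_X^d$: for a holomorphic section vanishing along $D_j$, iterating the covariant derivative and restricting to $D_j$ kills all tangential contributions, giving $\mathrm{Res}_{D_j}(z^{m_j}g)=(dz)^{\otimes m_j}\otimes g\big|_{D_j}$ for $m_j>0$; the same computation yields the multiplicativity $\mathrm{Res}_{D_j}(\gamma_1\gamma_2)=\mathrm{Res}_{D_j}(\gamma_1)\otimes\mathrm{Res}_{D_j}(\gamma_2)$ for holomorphic $\gamma_1,\gamma_2$ vanishing along $D_j$, hence $\mathrm{Res}_{D_j}(\gamma^r)=\mathrm{Res}_{D_j}(\gamma)^{\otimes r}$ when $r,m_j>0$, and the remaining sign patterns follow from the relation $(\gamma^r)^{-1}=(\gamma^{-1})^r$ together with the definition of $\mathrm{Res}_{D_j}$ on sections with poles. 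Granting $\gamma^r_J=\gamma_J^{\otimes r}$, we get $\log\big|\gamma^r_J\big|^{2/(rd)}_{K_J^{\otimes r},\omega}=\frac{2}{rd}\log\big|\gamma_J^{\otimes r}\big|_{K_J^{\otimes r},\omega}=\frac{2}{rd}\cdot r\log\big|\gamma_J\big|_{K_J,\omega}=\frac{2}{d}\log\big|\gamma_J\big|_{K_J,\omega}=\log\big|\gamma_J\big|^{2/d}_{K_J,\omega}$, whence $a_J(\gamma^r,\omega)=a_J(\gamma,\omega)$.

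Putting all these identifications into \eqref{eq-def-tau-gamma-omega} gives $\tau_{rd}(X,\gamma^r,\omega)=\tau_d(X,\gamma,\omega)$ for every $\omega$, which is exactly \eqref{intro-eq-tau-r}. The only real difficulty is the residue identity $\mathrm{Res}_{D_j}(\gamma^r)=\mathrm{Res}_{D_j}(\gamma)^{\otimes r}$ and, within it, handling uniformly the signs of $r$ and $m_j$; everything else is bookkeeping with the definitions.
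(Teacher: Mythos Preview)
Your proof is correct and follows exactly the approach of the paper, which simply asserts that ``once we replace $\gamma$ by $\gamma^r$, each $\gamma_J$ is replaced by $\gamma_J^r$'' and then says the identity $\tau_{rd}(X,\gamma^r,\omega)=\tau_d(X,\gamma,\omega)$ can be directly verified. You have supplied the details the paper omits: the invariance of $w_d^J$, of the coefficients $\frac{m_j+d}{d}$, of $\tau_\mathrm{BCOV}$ and $b_{J,j}$, and, most importantly, the residue multiplicativity $\mathrm{Res}_{D_j}(\gamma^r)=\mathrm{Res}_{D_j}(\gamma)^{\otimes r}$ underpinning $a_J(\gamma^r,\omega)=a_J(\gamma,\omega)$.
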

\begin{proof}
Once we replace $\gamma$ by $\gamma^r$,
each $\gamma_J$ is replaced by $\gamma_J^r$.
We can directly verify that
\begin{equation}
\label{eq1-pf-prop-tau-r}
\tau_{rd}(X,\gamma^r,\omega) = \tau_d(X,\gamma,\omega) \;.
\end{equation}
From Definition \ref{def-bcov-inv} and \eqref{eq1-pf-prop-tau-r},
we obtain \eqref{intro-eq-tau-r}.
This completes the proof.
\end{proof}

Recall that $\chi_d(\cdot,\cdot)$ was defined in Definition \ref{def-chi-pair}.

\begin{prop}
\label{intro-prop-tau-z}
For $z\in\C^*$,
we have
\begin{equation}
\label{intro-eq-tau-z}
\tau_d(X,z\gamma) = \tau_d(X,\gamma) - \frac{\chi_d(X,D)}{12}\log|z|^{2/d} \;.
\end{equation}
\end{prop}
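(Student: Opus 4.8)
The plan is to read off the dependence on $\gamma$ directly from the explicit formula \eqref{eq-def-tau-gamma-omega} for $\tau_d(X,\gamma,\omega)$. Replacing $\gamma$ by $z\gamma$ with $z\in\C^*$ leaves $\mathrm{div}(\gamma)=D$ unchanged, hence also the index set of the components $D_j$, the weights $w_d^J$, and the intersections $D_J$. Consequently the terms $\tau_\mathrm{BCOV}(D_J,\omega)$ and $b_{J,j}(\omega)$ appearing in \eqref{eq-def-tau-gamma-omega}, neither of which involves $\gamma$, are unaffected, so the entire difference $\tau_d(X,z\gamma,\omega)-\tau_d(X,\gamma,\omega)$ is governed by the change in the numbers $a_J(\gamma,\omega)$ of \eqref{eq-def-aJ}.

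Next I would record that the residue operation of Definition \ref{def-res} is homogeneous of degree one, i.e. $\mathrm{Res}_{D_j}(z\gamma)=z\,\mathrm{Res}_{D_j}(\gamma)$: for $m_j>0$ this is immediate from the linearity of the connections $\n^{E^+_k}$ and of restriction to $D_j$, while for $m_j<0$ one combines $(z\gamma)^{-1}=z^{-1}\gamma^{-1}$ with the outer inversion in the defining formula, so the factor $z^{-1}$ is inverted back to $z$. Iterating the recursion \eqref{eq-def-gammaJ} then gives $(z\gamma)_J=z\,\gamma_J$ for every $J\subseteq\{1,\cdots,l\}$, whence $\big|(z\gamma)_J\big|_{K_J,\omega}^{2/d}=|z|^{2/d}\big|\gamma_J\big|_{K_J,\omega}^{2/d}$ and therefore
\begin{equation*}
a_J(z\gamma,\omega)=a_J(\gamma,\omega)+\frac{\log|z|^{2/d}}{12}\int_{D_J}c_{n-|J|}\big(TD_J,g^{TD_J}_\omega\big)\;.
\end{equation*}

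Finally, since $\dim D_J=n-|J|$, the Gauss--Bonnet--Chern theorem identifies $\int_{D_J}c_{n-|J|}\big(TD_J,g^{TD_J}_\omega\big)$ with the topological Euler characteristic $\chi(D_J)$. Substituting into \eqref{eq-def-tau-gamma-omega}, summing over $J$ with the weights $w_d^J$, and using Definition \ref{def-chi-pair} gives
\begin{equation*}
\tau_d(X,z\gamma,\omega)=\tau_d(X,\gamma,\omega)-\frac{\chi_d(X,D)}{12}\log|z|^{2/d}\;,
\end{equation*}
and then \eqref{intro-eq-tau-z} follows from Definition \ref{def-bcov-inv}. Essentially every step is a direct substitution; the only point I would be careful to state is the degree-one homogeneity of the iterated residue in the $m_j<0$ case, which is where a sign could easily be mishandled.
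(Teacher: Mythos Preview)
Your proof is correct and follows essentially the same approach as the paper: both argue that passing from $\gamma$ to $z\gamma$ replaces each $\gamma_J$ by $z\gamma_J$, so only the terms $a_J$ change, and the resulting shift is $\frac{\chi(D_J)}{12}\log|z|^{2/d}$, which after weighting and summing gives $\frac{\chi_d(X,D)}{12}\log|z|^{2/d}$. You are simply more explicit than the paper about the homogeneity of the residue in the $m_j<0$ case and about the Gauss--Bonnet identification of the top Chern form integral with $\chi(D_J)$.
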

\begin{proof}
Once we replace $\gamma$ by $z\gamma$,
each $\gamma_J$ is replaced by $z\gamma_J$.
By \eqref{eq-def-aJ},
we have
\begin{equation}
\label{eq1-pf-prop-tau-z}
a_J(z\gamma,\omega) - a_J(\gamma,\omega) = \frac{\chi(D_J)}{12} \log \big|z\big|^{2/d} \;.
\end{equation}
By Definition \ref{def-chi-pair}, \eqref{eq-def-tau-gamma-omega} and \eqref{eq1-pf-prop-tau-z},
we have
\begin{equation}
\label{eq2-pf-prop-tau-z}
\tau_d(X,z\gamma,\omega) - \tau_d(X,\gamma,\omega) = - \frac{\chi_d(X,D)}{12} \log \big|z\big|^{2/d} \;.
\end{equation}
From Definition \ref{def-bcov-inv} and \eqref{eq2-pf-prop-tau-z},
we obtain \eqref{intro-eq-tau-z}.
This completes the proof.
\end{proof}

\begin{proof}[Proof of Theorem \ref{thm-curvature}]
Since $\pi: \mathscr{X} \rightarrow S$ is locally K{\"a}hler,
for any $s_0\in S$,
there exist an open subset $s_0\in U \subseteq S$ and a K{\"a}hler form $\omega$ on $\pi^{-1}(U)$.
For $s\in U$,
we denote $\omega_s = \omega\big|_{X_s}$.
Similarly to the proof of Theorem \ref{thm-ind-omega},
we view the terms involved in \eqref{eq-def-tau-gamma-omega} as smooth functions on $U$.

Though the fibration $\pi^{-1}(U) \rightarrow U$ is not necessarily trivial,
the identities \eqref{eq-ap-pf-thm-ind-omega} and \eqref{eq-bp-pf-thm-ind-omega} still hold.
On the other hand,
by \cite[Theorem 1.6]{z},
we have
\begin{equation}
\label{eq1-pf-thm-curvature}
\frac{\overline{\partial}\partial}{2\pi i} \tau_\mathrm{BCOV}\big(D_J,\omega\big)
= \omega_{H^\bullet(D_J)} + \frac{1}{12} \int_{D_J} c_{n-|J|}\big(TD_J,g^{TD_J}_\omega\big) c_1\big(TD_J,g^{TD_J}_\omega\big) \;.
\end{equation}
By \eqref{introeq-def-wJ-DJ}, \eqref{eq-def-tau-gamma-omega}, \eqref{eq-ap-pf-thm-ind-omega}, \eqref{eq-bp-pf-thm-ind-omega} and \eqref{eq1-pf-thm-curvature},
we have
\begin{equation}
\label{eq2-pf-thm-curvature}
\frac{\overline{\partial}\partial}{2\pi i} \tau_d(X,\gamma,\omega)\Big|_U
= \sum_{J\subseteq\{1,\cdots,l\}} w_d^J \omega_{H^\bullet(D_J)} \;.
\end{equation}
From Definition \ref{def-bcov-inv} and \eqref{eq2-pf-thm-curvature},
we obtain \eqref{introeq-thm-curvature}.
This completes the proof.
\end{proof}

\subsection{BCOV invariant of projective bundle}

Let $Y$ be a compact K{\"a}hler manifold.
Let $N$ be a holomorphic vector bundle of rank $r$ over $Y$.
Let $\mathbb{1}$ be the trivial line bundle over $Y$.
Set
\begin{equation}
X = \mathbb{P}(N\oplus\mathbb{1}) \;.
\end{equation}
Let $\pi: X \rightarrow Y$ be the canonical projection.

Let $s\in\{0,\cdots,r\}$.
Let $\big(L_k\big)_{k=1,\cdots,s}$ be holomorphic line bundles over $Y$.
We assume that there is a surjection between holomorphic vector bundles
\begin{equation}
\label{eq-Nsurj}
N \rightarrow L_1 \oplus \cdots \oplus L_s \;.
\end{equation}
Let $N^*$ be the dual of $N$.
Taking the dual of \eqref{eq-Nsurj},
we get
\begin{equation}
\label{eq-Ninj}
L_1^{-1} \oplus \cdots \oplus L_s^{-1} \hookrightarrow N^* \;.
\end{equation}
Let $d$ be a non-zero integer.
Let $m_1,\cdots,m_s$ be positive integers.
Let
\begin{equation}
\label{eq-def-gammaY}
\gamma_Y \in \mathscr{M}\big(Y,\big(K_Y\otimes \det N^*\big)^d \otimes L_1^{-m_1} \otimes \cdots \otimes L_s^{-m_s}\big)
\end{equation}
be an invertible element.
We assume that
\begin{itemize}
\item[-] $\mathrm{div}(\gamma_Y)$ is of simple normal crossing support;
\item[-] $\mathrm{div}(\gamma_Y)$ does not possess component of multiplicity $-d$.
\end{itemize}

Denote $m = m_1+\cdots+m_s$.
Let $S^mN^*$ be the $m$-th symmetric tensor power of $N^*$.
By \eqref{eq-Ninj} and \eqref{eq-def-gammaY},
we have
\begin{equation}
\label{eq-gammaY}
\gamma_Y \in \mathscr{M}\big(Y,\big(K_Y\otimes \det N^*\big)^d \otimes S^m N^*\big) \;.
\end{equation}
Let $\mathcal{N}$ be the total space of $N$.
We have
\begin{equation}
\label{eq-KX-pi}
X = \mathcal{N} \cup \mathbb{P}(N) \;,\hspace{4mm}
K_X\big|_\mathcal{N} = \pi^* \big( K_Y\otimes \det N^* \big) \;.
\end{equation}
We may view a section of $S^mN^*$ as a function on $\mathcal{N}$.
By \eqref{eq-gammaY} and \eqref{eq-KX-pi},
$\gamma_Y$ may be viewed as an element of $\mathscr{M}(\mathcal{N},K_X^d)$.
Let
\begin{equation}
\gamma_X\in\mathscr{M}(X,K_X^d)
\end{equation}
be such that $\gamma_X\big|_\mathcal{N} = \gamma_Y$.

For $j = 1,\cdots,s$,
let $N\rightarrow L_j$ be the composition of the map \eqref{eq-Nsurj} and the canonical projection $L_1 \oplus \cdots \oplus L_s \rightarrow L_j$.
Set
\begin{equation}
N_j = \mathrm{Ker}\big(N\rightarrow L_j\big) \;,\hspace{4mm}
X_j = \mathbb{P}(N_j\oplus\mathbb{1}) \subseteq X \;,\hspace{4mm}
X_\infty = \mathbb{P}(N) \subseteq X \;.
\end{equation}
We denote
\begin{equation}
\mathrm{div}(\gamma_Y) = \sum_{j=s+1}^l m_j Y_j \;,
\end{equation}
where $Y_j\subseteq Y$ are mutually distinct and irreducible.
For $j=s+1,\cdots,l$,
set
\begin{equation}
X_j = \pi^{-1}(Y_j) \subseteq X \;.
\end{equation}
Denote
\begin{equation}
\label{eq-def-minf}
m_\infty = - m_1 - \cdots - m_s - rd - d \;.
\end{equation}
We have
\begin{equation}
\label{eq-div-gammaX}
\mathrm{div}(\gamma_X)
= \pi^* \mathrm{div}(\gamma_Y)  + m_\infty X_\infty + \sum_{j=1}^s m_j X_j
= m_\infty X_\infty + \sum_{j=1}^l m_j X_j \;,
\end{equation}
which is of simple normal crossing support.
Hence $(X,\gamma_X)$ is a $d$-Calabi-Yau pair.

For $y\in Y$,
we denote $Z_y = \pi^{-1}(y)$.
Let $K_{Y,y}$ be the fiber of $K_Y$ at $y\in Y$.
We have
\begin{equation}
\label{eq-KXYZ}
K_X\big|_{Z_y} = K_{Z_y} \otimes \pi^* K_{Y,y} \;.
\end{equation}
For $y\in Y \backslash \bigcup_{j=s+1}^l Y_j$,
there exist $\gamma_{Z_y}\in\mathscr{M}(Z_y,K_{Z_y}^d)$ and $\eta_y\in K_{Y,y}^d$ such that
\begin{equation}
\gamma_X\big|_{Z_y} = \gamma_{Z_y} \otimes \pi^* \eta_y \;.
\end{equation}
Then $(Z_y,\gamma_{Z_y})$ is a $d$-Calabi-Yau pair,
which is independent of $y$ up to isomorphism.
We may omit the index $y$ as long as there is no confusion.
We remark that $(Z,\gamma_Z)$ is isomorphic to $(\CP^r,\gamma_{r,m_1,\cdots,m_s})$ constructed in the paragraph containing \eqref{introeq-gammaZ}.

Recall that $\chi_d(\cdot,\cdot)$ was defined in Definition \ref{def-chi-pair}.

\begin{lemme}
\label{lem-vanish-chi}
The following identity holds,
\begin{equation}
\label{eq-lem-vanish-chi}
\chi_d(Z,\gamma_Z) = 0 \;.
\end{equation}
\end{lemme}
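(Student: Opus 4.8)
The plan is to reduce the statement to an explicit bookkeeping identity on $\CP^r$. Since $\chi_d(\cdot,\cdot)$ is built from the topological Euler characteristics of the strata $D_J$ together with the multiplicities, it depends only on the isomorphism class of the pair; as $(Z,\gamma_Z)$ is isomorphic to $(\CP^r,\gamma_{r,m_1,\dots,m_s})$, it suffices to show $\chi_d(\CP^r,\gamma_{r,m_1,\dots,m_s})=0$. First I would identify $\mathrm{div}(\gamma_{r,m_1,\dots,m_s})$. Writing $\CP^r=\C^r\cup\CP^{r-1}$ with homogeneous coordinates $[\zeta_0:\cdots:\zeta_r]$ and $z_i=\zeta_i/\zeta_0$, one checks (equivalently, using $K_{\CP^r}=\mathscr{O}(-r-1)$) that $(dz_1\wedge\cdots\wedge dz_r)^d$ is nowhere vanishing on $\C^r$ and has a pole of order $(r+1)d$ along $H_\infty=\{\zeta_0=0\}$, while $\prod_{j=1}^s z_j^{m_j}$ contributes $\sum_{j=1}^s m_j H_j-mH_\infty$, where $H_j=\{\zeta_j=0\}$ and $m=m_1+\cdots+m_s$. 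Hence $\mathrm{div}(\gamma_{r,m_1,\dots,m_s})=\sum_{j=1}^s m_j H_j+m_\infty H_\infty$ with $m_\infty=-(r+1)d-m$. Its support lies among the $s+1$ coordinate hyperplanes $H_1,\dots,H_s,H_\infty$, which are in general position, so for a subset $J$ of this index set the stratum $D_J$ is a linear subspace $\CP^{r-|J|}$ and $\chi(D_J)=r+1-|J|$ (this equals $0$ exactly in the empty case $|J|=r+1$).

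It remains to evaluate
\[
\chi_d(\CP^r,\gamma_{r,m_1,\dots,m_s})=\sum_{J}\Big(\prod_{j\in J}a_j\Big)\big(r+1-|J|\big),\qquad a_j=\frac{-m_j}{m_j+d},
\]
where $J$ runs over subsets of the $(s+1)$-element index set and the $a_j$ are well defined because $(\CP^r,\gamma_{r,m_1,\dots,m_s})$ is a $d$-Calabi-Yau pair, so no multiplicity equals $-d$. I would split this using the elementary identities $\sum_J\prod_{j\in J}a_j=\prod_j(1+a_j)$ and $\sum_J|J|\prod_{j\in J}a_j=\big(\prod_j(1+a_j)\big)\sum_j a_j/(1+a_j)$, which give
\[
\chi_d(\CP^r,\gamma_{r,m_1,\dots,m_s})=\Big(\prod_j(1+a_j)\Big)\Big((r+1)-\sum_j\frac{a_j}{1+a_j}\Big).
\]
Since $1+a_j=d/(m_j+d)$ and $a_j/(1+a_j)=-m_j/d$, we get $\sum_j a_j/(1+a_j)=-\frac1d\sum_j m_j=-\frac1d(m+m_\infty)=-\frac1d\big(-(r+1)d\big)=r+1$, so the bracket vanishes and the lemma follows.

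There is no serious obstacle here; the content is a generating-function cancellation. The only points requiring care are: the pole-order computation of $(dz_1\wedge\cdots\wedge dz_r)^d$ at infinity; the general-position property of the hyperplanes, which ensures the empty-intersection case is correctly recorded by $\chi(\CP^{-1})=0$; and keeping the index $j$ ranging over \emph{all} $s+1$ components — in particular the one at infinity, whose multiplicity $m_\infty$ supplies the $-(r+1)d$ that makes the two terms cancel. Finally, the argument is insensitive to whether $m_\infty=0$ (i.e.\ whether $\gamma_{r,m_1,\dots,m_s}$ actually has a component at infinity): in that case $a_\infty=0$ and $m=-(r+1)d$, so including or omitting that index changes nothing.
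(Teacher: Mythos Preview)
Your proof is correct and is essentially the same as the paper's: both compute $\sum_{J}w_d^J\,(r+1-|J|)$ over subsets of $\{1,\dots,s,\infty\}$ and show it vanishes using $\sum_j m_j = -(r+1)d$. The only cosmetic difference is that the paper packages the sum as $f'(1)$ for the auxiliary polynomial $f(t)=t^{r-s}\prod_j\big(t-\tfrac{m_j}{m_j+d}\big)$ and evaluates $f'(1)/f(1)$, whereas you factor out $\prod_j(1+a_j)$ directly; the algebra is identical.
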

\begin{proof}
Set
\begin{equation}
\label{eq1-pf-lem-vanish-chi}
f(t) = t^{r-s} \prod_{j \in \{1,\cdots,s,\infty\}} \Big( t - \frac{m_j}{m_j+d} \Big) \;.
\end{equation}
For $J \subseteq \{1,\cdots,s,\infty\}$,
let $w_d^J$ be as in \eqref{introeq-def-wJ-DJ}.
By \eqref{introeq-def-chi-pair}, \eqref{introeq2-def-chi-pair} and the fact that $\chi(\CP^k) = k+1$,
we have
\begin{equation}
\label{eq2-pf-lem-vanish-chi}
\chi_d(Z,\gamma_Z) = \sum_{J \subseteq \{1,\cdots,s,\infty\}} w_d^J (r+1-|J|) = f'(1) \;.
\end{equation}
On the other hand,
we have
\begin{equation}
\label{eq3-pf-lem-vanish-chi}
\frac{f'(1)}{f(1)} = r - s + \sum_{j \in \{1,\cdots,s,\infty\}} \Big( 1 - \frac{m_j}{m_j+d} \Big)^{-1}
= \frac{m_1 + \cdots + m_s + m_\infty}{d} + r + 1 \;.
\end{equation}
From \eqref{eq-def-minf}, \eqref{eq2-pf-lem-vanish-chi} and \eqref{eq3-pf-lem-vanish-chi},
we obtain \eqref{eq-lem-vanish-chi}.
This completes the proof.
\end{proof}

\begin{thm}
\label{thm-tau-bl}
The following identity holds,
\begin{equation}
\label{eq-thm-tau-bl}
\tau_d(X,\gamma_X) = \chi_d(Y,\gamma_Y) \tau_d(Z,\gamma_Z) \;.
\end{equation}
\end{thm}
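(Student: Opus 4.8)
The plan is to compute $\tau_d(X,\gamma_X)$ via an adiabatic limit: contract the fibres of $\pi\colon X\to Y$, apply the adiabatic formula of Theorem \ref{thm-bcov-adiabatic} on the total space and on each of its strata, and then collapse the resulting double sum using the vanishing $\chi_d(Z,\gamma_Z)=0$ of Lemma \ref{lem-vanish-chi}.

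First I would fix, by Lemma \ref{lem2-kahler-bundle} applied to the surjection $N\to L_1\oplus\cdots\oplus L_s$, a K\"ahler form $\omega$ on $X$ which on each fibre $Z_y\simeq\CP^r$ pulls back to the Fubini--Study form $\omega_{\CP^r}$ and sends each $X_k\cap Z_y$ to a coordinate hyperplane $H_k$. For $\e>0$ set $\omega_\e=\omega+\e^{-1}\pi^*\omega_Y$; by Theorem \ref{thm-ind-omega} we have $\tau_d(X,\gamma_X)=\tau_d(X,\gamma_X,\omega_\e)$ for every $\e$. I would index the components of $\operatorname{div}(\gamma_X)$ in \eqref{eq-div-gammaX} as $\{1,\dots,s,\infty\}\sqcup\{s+1,\dots,l\}$, the first block \emph{vertical} (the $X_j$ with $j\le s$, and $X_\infty$) and the second \emph{horizontal} ($X_j=\pi^{-1}(Y_j)$, $j>s$). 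A subset is then $J=J_v\sqcup J_h$, one has $w_d^J=w_d^{J_v}w_d^{J_h}$, and the stratum $X_J$ is a projective bundle $\pi_J\colon X_J\to Y_{J_h}$ whose fibre is the linear subspace $Z_{J_v}\subseteq\CP^r$; moreover $\omega_\e|_{X_J}=\omega|_{X_J}+\e^{-1}\pi_J^*(\omega_Y|_{Y_{J_h}})$ is an adiabatic family over $Y_{J_h}$ with fibrewise Fubini--Study form, so Theorem \ref{thm-bcov-adiabatic} applies to $\pi_J$ (its proof uses only the Hodge numbers of the fibre and the existence of such a form, hence is valid for $\CP^k$-bundles).

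Next I would run the adiabatic analysis term by term in \eqref{eq-def-tau-gamma-omega}. Theorem \ref{thm-bcov-adiabatic} gives, as $\e\to0$, an expansion $\tau_\mathrm{BCOV}(X_J,\omega_\e)=(\text{const})\log\e+\chi(Z_{J_v})\,\tau_\mathrm{BCOV}(Y_{J_h},\omega_Y)+\chi(Y_{J_h})\,\tau_\mathrm{BCOV}(Z_{J_v},\omega_{\CP^r})+o(1)$. For the correction terms $a_J(\gamma_X,\omega_\e)$ and $b_{J,j}(\omega_\e)$ I would use the adiabatic behaviour of the Chern, Todd and Bott--Chern forms from \textsection\ref{subsect-bc} (Propositions \ref{prop-adiabatic-bc}, \ref{prop2-adiabatic-bc}, \ref{prop3-adiabatic-bc}, \ref{prop4-adiabatic-bc}, \ref{prop-mult-bc}): the curvature forms entering \eqref{eq-def-aJ} and \eqref{eq-def-bIk} converge, up to an explicit $\log\e$ contribution coming from the $\e$-scaling of the metrics induced on $K_{D_J}$ and on the normal bundles $L_{J,j}$, and the limiting metric on $K_J$ factors as a fibre metric times a base metric. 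Since $\gamma_{X,J}|_{Z_y}=\gamma_{Z,J_v}\otimes\pi^*\eta_y$, writing $\int_{X_J}=\int_{Y_{J_h}}\int_{Z_{J_v}}$ and applying Gauss--Bonnet to the complementary factor shows that the finite part of each of $a_J$, $b_{J,j}$ has the form $\chi(Z_{J_v})\cdot(\text{base quantity})+\chi(Y_{J_h})\cdot(\text{fibre quantity})$, and --- this is the crux --- the fibre quantities of $-a_J-\sum_{j\in J}\tfrac{m_j+d}{d}b_{J,j}$, together with $\tau_\mathrm{BCOV}(Z_{J_v},\omega_{\CP^r})$, reassemble exactly into $\tau_\mathrm{BCOV}(Z_{J_v},\omega_{\CP^r})-a_{J_v}(\gamma_Z,\omega_{\CP^r})-\sum_{j\in J_v}\tfrac{m_j+d}{d}b_{J_v,j}(\omega_{\CP^r})$, i.e. into the $J_v$-summand $\beta_{J_v}$ of $\tau_d(Z,\gamma_Z,\omega_{\CP^r})$. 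The leftover $\log\e$-divergences are bounded and cancel in the full sum because $\tau_d(X,\gamma_X,\omega_\e)$ is $\e$-independent, so they may simply be discarded. Letting $\e\to0$ in \eqref{eq-def-tau-gamma-omega} thus yields
\[
\tau_d(X,\gamma_X)=\sum_{J_v\subseteq\{1,\dots,s,\infty\}}\ \sum_{J_h\subseteq\{s+1,\dots,l\}} w_d^{J_v}w_d^{J_h}\Big(\chi(Z_{J_v})\,\alpha_{J_h}+\chi(Y_{J_h})\,\beta_{J_v}\Big),
\]
where $\alpha_{J_h}$ depends only on $(Y_{J_h},\gamma_{Y,J_h},\omega_Y)$ and $\beta_{J_v}$ is the $J_v$-summand of $\tau_d(Z,\gamma_Z,\omega_{\CP^r})$. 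Summing the first term over $J_v$ produces the factor $\sum_{J_v}w_d^{J_v}\chi(Z_{J_v})=\chi_d(Z,\gamma_Z)=0$ by Lemma \ref{lem-vanish-chi}, so the entire base contribution vanishes (this vanishing is also what makes $\tau_d(Z,\gamma_Z)$ on the right-hand side well-defined, since $\gamma_Z$ is canonical only up to a scalar and, by Proposition \ref{intro-prop-tau-z}, scaling changes $\tau_d$ by a multiple of $\chi_d(Z,\gamma_Z)$). Summing the second term over $J_v$ gives $\sum_{J_v}w_d^{J_v}\beta_{J_v}=\tau_d(Z,\gamma_Z,\omega_{\CP^r})=\tau_d(Z,\gamma_Z)$, and then $\sum_{J_h}w_d^{J_h}\chi(Y_{J_h})=\chi_d(Y,\gamma_Y)$, giving $\tau_d(X,\gamma_X)=\chi_d(Y,\gamma_Y)\,\tau_d(Z,\gamma_Z)$.

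The main obstacle is the middle step: making the fibre/base decomposition of the adiabatic expansions of $a_J$ and $b_{J,j}$ precise, and checking that the fibre parts reassemble \emph{exactly} --- with the right constants and with the Fubini--Study metric on the fibre --- into the summands of $\tau_d(Z,\gamma_Z,\omega_{\CP^r})$, rather than merely up to a controlled error. This demands careful bookkeeping of the $\e$-powers carried by the induced metrics on $K_{D_J}$ and on the vertical versus horizontal line bundles $L_{J,j}$, of the compatibility of the residue sections $\gamma_{X,J}$ with the fibrewise residue sections $\gamma_{Z,J_v}$ modulo the scalar $\eta_y$, and of the distinct behaviours of the Bott--Chern forms in \eqref{eq-def-bIk} for a vertical index $j\in J_v$ and a horizontal index $j\in J_h$ under the adiabatic limit (which is where Propositions \ref{prop2-adiabatic-bc} and \ref{prop3-adiabatic-bc} enter). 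Once this decomposition is established, the collapse of the sum via Lemma \ref{lem-vanish-chi} is immediate.
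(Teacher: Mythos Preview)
Your proposal is correct and follows essentially the same route as the paper: split the index set into a vertical block $B=\{1,\dots,s,\infty\}$ and a horizontal block $A=\{s+1,\dots,l\}$, run the adiabatic limit $\omega_\e=\omega_X+\e^{-1}\pi^*\omega_Y$, apply Theorem~\ref{thm-bcov-adiabatic} to each stratum $X_{I,J}$, use Propositions~\ref{prop-adiabatic-bc} and~\ref{prop2-adiabatic-bc} for the limits of $a_{I,J}$ and $b_{I,J,j}$ (the latter treated separately for $j\in I$ and $j\in J$), and collapse via $\sum_{J}w_d^J\chi(Z_J)=\chi_d(Z,\gamma_Z)=0$ from Lemma~\ref{lem-vanish-chi}. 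The paper in fact shows that each of the three grouped sums (BCOV torsions, $a$-terms, $b$-terms) converges \emph{individually}, with the $\log\e$ coefficient of each already killed by the vanishing $\sum_J w_d^J\chi(Z_J)=0$, rather than only inferring cancellation from the $\e$-independence of the total; and for the $a$-terms it localizes over small $U\subseteq Y$ (since $\log|\eta_I|$ is only locally defined) and phrases the limit as convergence of pushforward measures $\pi_*\mu_\e\to\nu$ before globalizing---this is the precise version of the bookkeeping you flag as the main obstacle.
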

\begin{proof}
The proof consists of several steps.

\noindent\textbf{Step 0.} We introduce several notations.

We denote $A = \{s+1,\cdots,l\}$ and $B = \{1,\cdots,s,\infty\}$.
For $I\subseteq A$ and $J\subseteq B$,
set
\begin{equation}
\label{eq0X-pf-thm-tau-bl}
Y_I = Y \cap \bigcap_{j\in I} Y_j \;,\hspace{4mm}
X_{I,J} = X \cap \bigcap_{j\in I \cup J} X_j \;,\hspace{4mm}
X_I = X_{I,\emptyset} \;,\hspace{4mm}
X_J = X_{\emptyset,J} \;.
\end{equation}
For $y\in Y$ and $J\subseteq B$,
set
\begin{equation}
\label{eq0Z-pf-thm-tau-bl}
Z_{J,y} = Z_y \cap X_J \;.
\end{equation}
Note that $Z_{J,y}$ is independent of $y$ up to isomorphism,
we may omit the index $y$ as long as there is no confusion.
We remark that $\pi\big|_{X_{I,J}}: X_{I,J} \rightarrow Y_I$ is a fibration with fiber $Z_J$.

Let $\omega_X$ be a K{\"a}hler form on $X$ such that Lemma \ref{lem2-kahler-bundle} holds.
Let $\omega_Y$ be a K{\"a}hler form on $Y$.
For $\e>0$,
set
\begin{equation}
\label{eqomega-pf-thm-tau-bl}
\omega_\e = \omega_X + \frac{1}{\e}\pi^*\omega_Y \;.
\end{equation}

For $I\subseteq A$, $J\subseteq B$ and $j \in (A \cup B) \backslash (I \cup J)$,
let $a_{I,J}(\gamma_X,\omega_\e)$ and $b_{I,J,j}(\omega_\e)$
be as in \eqref{eq-def-aJ} and \eqref{eq-def-bIk}
with $(X,\gamma,\omega)$ replaced by $(X,\gamma_X,\omega_\e)$ and $J$ replaced by $I \cup J$.
Let $w_d^I$ be as in \eqref{introeq-def-wJ-DJ} with $J$ replaced by $I$.
By Definition \ref{def-bcov-inv}, \eqref{introeq-def-wJ-DJ} and \eqref{eq-def-tau-gamma-omega},
we have
\begin{align}
\label{eq0tau-pf-thm-tau-bl}
\begin{split}
\tau_d(X,\gamma_X)
& = \sum_{I\subseteq A} \sum_{J\subseteq B}  w_d^I w_d^J \tau_\mathrm{BCOV}(X_{I,J},\omega_\e)
- \sum_{I\subseteq A} \sum_{J\subseteq B}  w_d^I w_d^J a_{I,J}(\gamma_X,\omega_\e) \\
& \hspace{50mm} - \sum_{I\subseteq A} \sum_{J\subseteq B} \sum_{j\in I \cup J} w_d^I w_d^J \frac{m_j+d}{d} b_{I,J,j}(\omega_\e) \;.
\end{split}
\end{align}

\noindent\textbf{Step 1.} We estimate $\tau_\mathrm{BCOV}(X_{I,J},\omega_\e)$.

For $y\in Y$,
we denote $\omega_{Z_y} = \omega_X\big|_{Z_y}$.
Since $\omega_X$ satisfies Lemma \ref{lem2-kahler-bundle},
for any $J\subseteq B$,
$\big(Z_{J,y},\omega_{Z_y}\big|_{Z_{J,y}}\big)_{y\in Y}$ are mutually isometric.
We may omit the index $y$ as long as there is no confusion.
For ease of notation,
we denote
\begin{equation}
\tau_\mathrm{BCOV}(Y_I,\omega_Y) = \tau_\mathrm{BCOV}\big(Y_I,\omega_Y\big|_{Y_I}\big) \;,\hspace{4mm}
\tau_\mathrm{BCOV}(Z_J,\omega_Z) = \tau_\mathrm{BCOV}\big(Z_J,\omega_Z\big|_{Z_J}\big) \;.
\end{equation}
For $I\subseteq A$ and $J\subseteq B$,
by Theorem \ref{thm-bcov-adiabatic},
as $\e\rightarrow 0$,
\begin{align}
\label{eq11-pf-thm-tau-bl}
\begin{split}
& \tau_\mathrm{BCOV}(X_{I,J},\omega_\e)
- \frac{\chi(Z_J)}{12} \Big( \dim(Y_I)\chi(Y_I) + c_1c_{\dim(Y_I)-1}(Y_I) \Big) \log \e  \\
& \hspace{45mm} \rightarrow \chi(Z_J) \tau_\mathrm{BCOV}(Y_I,\omega_Y) + \chi(Y_I) \tau_\mathrm{BCOV}(Z_J,\omega_Z) \;.
\end{split}
\end{align}
On the other hand,
by Lemma \ref{lem-vanish-chi}, \eqref{introeq-def-chi-pair} and \eqref{introeq2-def-chi-pair},
we have
\begin{equation}
\label{eq12-pf-thm-tau-bl}
\sum_{I\subseteq A} w_d^I \chi(Y_I) = \chi_d(Y,\gamma_Y) \;,\hspace{4mm}
\sum_{J\subseteq B} w_d^J \chi(Z_J) = 0 \;.
\end{equation}
By \eqref{eq11-pf-thm-tau-bl} and \eqref{eq12-pf-thm-tau-bl},
as $\e\rightarrow 0$,
\begin{equation}
\label{eq1-pf-thm-tau-bl}
\sum_{I\subseteq A} \sum_{J\subseteq B} w_d^I w_d^J \tau_\mathrm{BCOV}(X_{I,J},\omega_\e)
\rightarrow \chi_d(Y,\gamma_Y) \sum_{J\subseteq B} w_d^J \tau_\mathrm{BCOV}(Z_J,\omega_Z) \;.
\end{equation}

\noindent\textbf{Step 2.} We estimate $a_{I,J}(\gamma_X,\omega_\e)$.

For $I\subseteq A$ and $J\subseteq B$,
let $K_{I,J}$ be as in \eqref{eq-def-KJ}
with $(X,\gamma)$ replaced by $(X,\gamma_X)$ and $J$ replaced by $I\cup J$.
Then $K_{I,J}$ is a holomorphic line bundle over $X_{I,J}$.
Let
\begin{equation}
\label{eq2gammaIJ-pf-thm-tau-bl}
\gamma_{I,J} \in \mathscr{M}(X_{I,J},K_{I,J})
\end{equation}
be as in \eqref{eq-def-gammaJ} with $(X,\gamma)$ replaced by $(X,\gamma_X)$ and $J$ replaced by $I\cup J$.

Let $U\subseteq Y$ be a small open subset.
Set $\mathcal{U} = \pi^{-1}(U)$.
Recall that $\gamma_Z \in \mathscr{M}(Z,K_Z^d)$ was constructed in the paragraph containing \eqref{eq-KXYZ}.
We fix an identification $\mathcal{U} =  U \times Z$
such that there exists $\eta \in \mathscr{M}(U,K_Y^d)$ satisfying
\begin{equation}
\gamma_X\big|_\mathcal{U} = \mathrm{pr}_1^*\eta \otimes \mathrm{pr}_2^*\gamma_Z \;,
\end{equation}
where $\mathrm{pr}_1: U \times Z \rightarrow U$
and $\mathrm{pr}_2: U \times Z \rightarrow Z$ are canonical projections.

For $I\subseteq A$,
let $K_I$ be as in \eqref{eq-def-KJ}
with $(X,\gamma)$ replaced by $(U,\eta)$.
Then $K_I$ is a holomorphic line bundle over $U \cap Y_I$.
Let
\begin{equation}
\eta_I \in \mathscr{M}(U \cap Y_I,K_I)
\end{equation}
be as in \eqref{eq-def-gammaJ} with $(X,\gamma)$ replaced by $(U,\eta)$.
For $J\subseteq B$,
let $K_J$ be as in \eqref{eq-def-KJ}
with $(X,\gamma)$ replaced by $(Z,\gamma_Z)$.
Then $K_J$ is a holomorphic line bundle over $Z_J$.
Let
\begin{equation}
\gamma_J \in \mathscr{M}(Z_J,K_J)
\end{equation}
be as in \eqref{eq-def-gammaJ} with $(X,\gamma)$ replaced by $(Z,\gamma_Z)$.
By the constructions of $K_{I,J}$ and $\gamma_{I,J}$ in the paragraph containing \eqref{eq2gammaIJ-pf-thm-tau-bl},
we have
\begin{equation}
\label{eq2IJ-pf-thm-tau-bl}
K_{I,J}\big|_{\mathcal{U}\cap X_{I,J}} = \mathrm{pr}_1^* K_I \otimes \mathrm{pr}_2^* K_J \;,\hspace{4mm}
\gamma_{I,J}\big|_{\mathcal{U}\cap X_{I,J}} = \mathrm{pr}_1^*\eta_I \otimes \mathrm{pr}_2^*\gamma_J \;.
\end{equation}

For $I\subseteq A$ and $J\subseteq B$,
let $g^{TX_{I,J}}_\e$ (resp. $g^{TY_I}$, $g^{TZ_J}$)
be the metric on $TX_{I,J}$ (resp. $TY_I$, $TZ_J$) induced by $\omega_\e$ (resp. $\omega_Y$, $\omega_Z$),
let $\big|\cdot\big|_{K_{I,J},\e}$ (resp. $\big|\cdot\big|_{K_I}$, $\big|\cdot\big|_{K_J}$)
be the norm on $K_{I,J}$ (resp. $K_I$, $K_J$) induced by $\omega_\e$ (resp. $\omega_Y$, $\omega_Z$)
in the same way as in the paragraph above \eqref{eq-def-aJ}.
We denote
\begin{align}
\label{eq20-pf-thm-tau-bl}
\begin{split}
a_{I,J}(\mathcal{U},\gamma_X,\omega_\e) & = \frac{1}{12}
\int_{\mathcal{U}\cap X_{I,J}} c\big(TX,g^{TX}_{\e}\big) \log \big|\gamma_{I,J}\big|_{K_{I,J},\e}^{2/d} \;.
\end{split}
\end{align}
Recall that $\omega_\e$ was defined in \eqref{eqomega-pf-thm-tau-bl}.
Since $g^{TX_{I,J}}_\e$ is induced by $\omega_\e$,
by Proposition \ref{prop-adiabatic-bc},
as $\e\rightarrow 0$.
\begin{equation}
\label{eq21-pf-thm-tau-bl}
c\big(TX_{I,J},g^{TX_{I,J}}_\e\big) \rightarrow c\big(TZ_J,g^{TZ_J}\big)\pi^*c\big(TY_I,g^{TY_I}\big) \;.
\end{equation}
Recall that $\eta_I$, $\gamma_J$ and $\gamma_{I,J}$ are linked by \eqref{eq2IJ-pf-thm-tau-bl}.
Since $\big|\cdot\big|_{K_{I,J},\e}$ is induced by $\omega_\e$,
as $\e\rightarrow 0$,
\begin{equation}
\label{eq22-pf-thm-tau-bl}
\log \big|\gamma_{I,J}\big|_{K_{I,J},\e}^2 - \Big(\dim(Y)d + \sum_{j\in I} m_j \Big) \log\e
\rightarrow \log \big|\gamma_J\big|_{K_J}^2 + \log \big|\eta_I\big|_{K_I}^2 \;.
\end{equation}
Let $a_J(\gamma_Z,\omega_Z)$ be as in \eqref{eq-def-aJ}
with $(X,\gamma,\omega)$ replaced by $(Z,\gamma_Z,\omega_Z)$.
More precisely,
\begin{equation}
\label{eq23-pf-thm-tau-bl}
a_J(\gamma_Z,\omega_Z) = \frac{1}{12}
\int_{Z_J} c\big(TZ_J,g^{TZ_J}\big) \log\big|\gamma_Z\big|_{K_J}^{2/d} \;.
\end{equation}
By \eqref{eq20-pf-thm-tau-bl}-\eqref{eq23-pf-thm-tau-bl},
as $\e\rightarrow 0$,
\begin{align}
\label{eq24-pf-thm-tau-bl}
\begin{split}
& a_{I,J}(\mathcal{U},\gamma_X,\omega_\e) - \frac{\chi(Z_J)}{12} \Big(\dim(Y) + \frac{1}{d}\sum_{j\in I} m_j \Big) \log\e \int_{U\cap Y_I} c\big(TY_I,g^{TY_I}\big) \\
& \rightarrow \frac{\chi(Z_J)}{12} \int_{U\cap Y_I} c\big(TY_I,g^{TY_I}\big) \log\big|\eta_I\big|_{K_I}^{2/d}
+ a_J(\gamma_Z,\omega_Z) \int_{U\cap Y_I} c\big(TY_I,g^{TY_I}\big) \;.
\end{split}
\end{align}
By \eqref{eq12-pf-thm-tau-bl} and \eqref{eq24-pf-thm-tau-bl},
as $\e\rightarrow 0$,
\begin{equation}
\label{eq25-pf-thm-tau-bl}
\sum_{I\subseteq A} \sum_{J\subseteq B} w_d^I w_d^J a_{I,J}(\mathcal{U},\gamma_X,\omega_\e)
\rightarrow \sum_{J\subseteq B} w_d^J a_J(\gamma_Z,\omega_Z) \sum_{I\subseteq A} w_d^I \int_{U\cap Y_I}  c\big(TY_I,g^{TY_I}\big) \;.
\end{equation}
The left hand side of \eqref{eq25-pf-thm-tau-bl} yields a measure on $X$,
\begin{equation}
\label{eq26-pf-thm-tau-bl}
\mu_\e : \; \mathcal{U} \mapsto \sum_{I\subseteq A} \sum_{J\subseteq B} w_d^I w_d^J a_{I,J}(\mathcal{U},\gamma_X,\omega_\e) \;,
\end{equation}
The right hand of \eqref{eq25-pf-thm-tau-bl} yields a measure on $Y$,
\begin{equation}
\label{eq27-pf-thm-tau-bl}
\nu : \; U \mapsto \sum_{J\subseteq B} w_d^J a_J(\gamma_Z,\omega_Z) \sum_{I\subseteq A} w_d^I \int_{U\cap Y_I}  c\big(TY_I,g^{TY_I}\big) \;.
\end{equation}
The convergence in \eqref{eq25-pf-thm-tau-bl} is equivalent to the follows:  as $\e \rightarrow 0$,
\begin{equation}
\label{eq28-pf-thm-tau-bl}
\pi_*\mu_\e \rightarrow \nu \;.
\end{equation}
By \eqref{eq12-pf-thm-tau-bl} and \eqref{eq26-pf-thm-tau-bl}-\eqref{eq28-pf-thm-tau-bl},
as $\e\rightarrow 0$,
\begin{equation}
\label{eq2-pf-thm-tau-bl}
\sum_{I\subseteq A} \sum_{J\subseteq B} w_d^I w_d^J a_{I,J}(\gamma_X,\omega_\e) = \mu_\e(X)
\rightarrow \nu(Y) = \chi_d(Y,\gamma_Y) \sum_{J\subseteq B} w_d^J a_J(\gamma_Z,\omega_Z) \;.
\end{equation}

\noindent\textbf{Step 3.} We estimate $b_{I,J,j}(\omega_\e)$.

First we consider the case $j\in I$.
We denote $I'=I\backslash\{j\}$.
By \eqref{eq-def-bIk},
we have
\begin{equation}
\label{eq3Y0-pf-thm-tau-bl}
b_{I,J,j}(\omega_\e) = \frac{1}{12} \int_{X_{I,J}}
\widetilde{c}\Big(TX_{I,J},TX_{I',J}\big|_{X_{I,J}},g^{TX_{I',J}}_\e\big|_{X_{I,J}}\Big) \;.
\end{equation}
By Proposition \ref{prop2-adiabatic-bc},
as $\e\rightarrow 0$,
\begin{align}
\label{eq3Y1-pf-thm-tau-bl}
\begin{split}
& \widetilde{c}\Big(TX_{I,J},TX_{I',J}\big|_{X_{I,J}},g^{TX_{I',J}}_\e\big|_{X_{I,J}}\Big) \\
& \hspace{30mm} \rightarrow c\big(TZ_J,g^{TZ_J}\big) \pi^*\widetilde{c}\Big(TY_I,TY_{I'}\big|_{Y_I},g^{TY_{I'}}\big|_{Y_I}\Big) \;.
\end{split}
\end{align}
By \eqref{eq3Y0-pf-thm-tau-bl} and \eqref{eq3Y1-pf-thm-tau-bl},
as $\e\rightarrow 0$,
\begin{equation}
\label{eq3Y2-pf-thm-tau-bl}
b_{I,J,j}(\omega_\e) \rightarrow \frac{\chi(Z_J) }{12} \int_{Y_I} \widetilde{c}\Big(TY_I,TY_{I'}\big|_{Y_I},g^{TY_{I'}}\big|_{Y_I}\Big)  \;.
\end{equation}
By \eqref{eq12-pf-thm-tau-bl} and \eqref{eq3Y2-pf-thm-tau-bl},
as $\e\rightarrow 0$,
\begin{equation}
\label{eq3Y-pf-thm-tau-bl}
\sum_{I\subseteq A} \sum_{J\subseteq B} \sum_{j\in I} w_d^I w_d^J \frac{m_j+d}{d} b_{I,J,j}(\omega_\e) \rightarrow 0 \;.
\end{equation}

Now we consider the case $j\in J$.
We denote $J'=J\backslash\{j\}$.
By \eqref{eq-def-bIk},
we have
\begin{equation}
\label{eq3Z0-pf-thm-tau-bl}
b_{I,J,j}(\omega_\e) = \frac{1}{12} \int_{X_{I,J}}
\widetilde{c}\Big(TX_{I,J},TX_{I,J'}\big|_{X_{I,J}},g^{TX_{I,J'}}_\e\big|_{X_{I,J}}\Big) \;.
\end{equation}
By Proposition \ref{prop2-adiabatic-bc},
as $\e\rightarrow 0$,
\begin{align}
\label{eq3Z1-pf-thm-tau-bl}
\begin{split}
& \widetilde{c}\Big(TX_{I,J},TX_{I,J'}\big|_{X_{I,J}},g^{TX_{I,J'}}_\e\big|_{X_{I,J}}\Big) \\
& \hspace{30mm} \rightarrow \widetilde{c}\Big(TZ_J,TZ_{J'}\big|_{Z_J},g^{TZ_{J'}}\big|_{Z_J}\Big) \pi^*c\big(TY_I,g^{TY_I}\big) \;.
\end{split}
\end{align}
Let $b_{J,j}(\omega_Z)$ be as in \eqref{eq-def-bIk} with $(X,\gamma,\omega)$ replaced by $(Z,\gamma_Z,\omega_Z)$.
More precisely,
\begin{equation}
\label{eq3Z2a-pf-thm-tau-bl}
b_{J,j}(\omega_Z) = \frac{1}{12} \int_{Z_J} \widetilde{c}\Big(TZ_J,TZ_{J'}\big|_{Z_J},g^{TZ_{J'}}\big|_{Z_J}\Big) \;.
\end{equation}
By \eqref{eq3Z0-pf-thm-tau-bl}-\eqref{eq3Z2a-pf-thm-tau-bl},
as $\e\rightarrow 0$,
\begin{equation}
\label{eq3Z2-pf-thm-tau-bl}
b_{I,J,j}(\omega_\e) \rightarrow \chi(Y_I) b_{J,j}(\omega_Z)  \;.
\end{equation}
By \eqref{eq12-pf-thm-tau-bl} and \eqref{eq3Z2-pf-thm-tau-bl},
as $\e\rightarrow 0$,
\begin{equation}
\label{eq3Z-pf-thm-tau-bl}
\sum_{I\subseteq A} \sum_{J\subseteq B} \sum_{j\in J} w_d^I w_d^J \frac{m_j+d}{d} b_{I,J,j}(\omega_\e)
\rightarrow \chi_d(Y,\gamma_Y) \sum_{J\subseteq B} \sum_{j\in J} w_d^J \frac{m_j+d}{d} b_{J,j}(\omega_Z) \;.
\end{equation}

\noindent\textbf{Step 4.} We conclude.

Taking $\e\rightarrow 0$ on the right hand side of \eqref{eq0tau-pf-thm-tau-bl} and
applying \eqref{eq1-pf-thm-tau-bl}, \eqref{eq2-pf-thm-tau-bl}, \eqref{eq3Y-pf-thm-tau-bl} and \eqref{eq3Z-pf-thm-tau-bl},
we get
\begin{align}
\label{eq41-pf-thm-tau-bl}
\begin{split}
& \tau_d(X,\gamma_X) \\
& = \chi_d(Y,\gamma_Y) \sum_{J\subseteq B} w_d^J
\bigg( \tau_\mathrm{BCOV}(Z_J,\omega_Z) - a_J(\gamma_Z,\omega_Z) - \sum_{j\in J} \frac{m_j+d}{d} b_{J,j}(\omega_Z) \bigg) \;.
\end{split}
\end{align}
On the other hand,
by Definition \ref{def-bcov-inv} and \eqref{eq-def-tau-gamma-omega},
we have
\begin{equation}
\label{eq42-pf-thm-tau-bl}
\tau(Z,\gamma_Z) = \sum_{J\subseteq B} w_d^J
\bigg( \tau_\mathrm{BCOV}(Z_J,\omega_Z) - a_J(\gamma_Z,\omega_Z) - \sum_{j\in J} \frac{m_j+d}{d} b_{J,j}(\omega_Z) \bigg) \;.
\end{equation}
From \eqref{eq41-pf-thm-tau-bl} and \eqref{eq42-pf-thm-tau-bl},
we obtain \eqref{eq-thm-tau-bl}.
This completes the proof.
\end{proof}

\subsection{Proof of Theorem \ref{introthm-tau-bl}}

Now we are ready to prove Theorem \ref{introthm-tau-bl}.

\begin{proof}[Proof of Theorem \ref{introthm-tau-bl}]
The proof consists of several steps.

\noindent\textbf{Step 1.}
Following \cite[\textsection 1.5]{bfm},
we introduce a deformation to the normal cone.

Let $\mathscr{X} \rightarrow X\times\C$ be the blow-up along $Y \times \{0\}$.
Let $\Pi: \mathscr{X} \rightarrow \C$ be the composition of
the canonical projections $\mathscr{X} \rightarrow X\times\C$ and $X\times\C \rightarrow \C$.
For $z\in\C^*$,
we denote
\begin{equation}
X_z = \Pi^{-1}(z) \;.
\end{equation}
Let $\mathbb{1}$ be the trivial line bundle over $Y$.
Recall that $N_Y$ is the normal bundle of $Y \hookrightarrow X$.
Recall that $X'$ is the blow-up of $X$ along $Y$.
The variety $\Pi^{-1}(0)$ consists of two irreducible components:
$\Pi^{-1}(0) = \Sigma_1 \cup \Sigma_2$ with $\Sigma_1 \simeq \mathbb{P}(N_Y\oplus\mathbb{1})$ and $\Sigma_2 \simeq X'$.
We denote
\begin{equation}
X_0 = \Sigma_1 \;.
\end{equation}
For $j=1,\cdots, l$,
let $\mathscr{D}_j\subseteq\mathscr{X}$
be the closure of $D_j\times\C^*\subseteq\mathscr{X}$.
For $z\in\C$,
we denote
\begin{equation}
\label{eq1e1-pf-thm-bl-loc}
D_{j,z} = \mathscr{D}_j \cap X_z \;.
\end{equation}
Let $\mathscr{Y}\subseteq\mathscr{X}$
be the closure of $Y\times\C^*\subseteq\mathscr{X}$.
For $z\in\C$,
we denote
\begin{equation}
Y_z = \mathscr{Y} \cap X_z \;.
\end{equation}

Let $g^{TX}$ be a Hermitian metric on $TX$.
Let $d(\cdot,\cdot): X \times X \rightarrow \R$
be the geodesic distance associated with $g^{TX}$.
For $x\in X$,
we denote
\begin{equation}
d_Y(x) = \inf_{y\in Y} d(x,y) \;.
\end{equation}
For $z\in\C^*$,
set
\begin{equation}
U_z =
\Big\{ x \in X \;:\; d_Y(x)<|z| \Big\} \times \{z\}
\subseteq X_z \;.
\end{equation}
We identify the fiber of $\mathbb{1}$ with $\C$.
For $v\in N_Y$ and $s\in\C$ such that $(v,s)\neq(0,0)$,
we denote by $[v:s]$ the image of $(v,s)$ in $\mathbb{P}(N_Y\oplus\mathbb{1})$.
Let $\big|\cdot\big|$ be the norm on $N_Y$ induced by $g^{TX}$.
Set
\begin{equation}
U_0 =
\Big\{ [v:s] \in \mathbb{P}(N_Y\oplus\mathbb{1}) \;:\; \big|v\big|<|s| \Big\}
\subseteq X_0 \;.
\end{equation}
For $\e>0$ small enough,
we have smooth families
\begin{equation}
\big(U_z\big)_{|z|<\e} \;,\hspace{4mm}
\big(Y_z\big)_{|z|<\e} \;,\hspace{4mm}
\big(U_z\cap D_{j,z}\big)_{|z|<\e} \hspace{4mm}\text{with } j=1,\cdots,l \;.
\end{equation}
We remark that $Y_z \subseteq U_z$ for $z\in\C$.

\begin{figure}[h]
\setlength{\unitlength}{6.5mm}
\centering
\begin{picture}(18,8)
\qbezier(0,1.5)(-1,4)(0,6.5) 
\qbezier(5,1.5)(4,4)(5,6.5)  
\qbezier(8,1.5)(7,4)(8,6.5)  
\put(0,6.5){\line(1,0){8}} 
\put(-0.4,5){\line(1,0){8}} 
\put(0,1.5){\line(1,0){8}} 
\multiput(4.5,5)(0.2,0.03){16}{\line(1,0){0.1}} 
\multiput(4.5,5)(0.2,-0.03){15}{\line(1,0){0.1}} 
\multiput(4.5,5)(-0.2,-0.03){26}{\line(1,0){0.1}} 
\multiput(4.5,5)(-0.2,0.03){24}{\line(1,0){0.1}} 
\put(0.8,6.8){\vector(0,-1){1.8}} \put(0,7){$Y\times\C$} 
\put(5,0.5){\vector(0,1){1}} \put(4.3,0){$z=0$} 
\put(7.85,5.15){\vector(-1,0){1.5}} \put(7.9,4.9){$U_z$} 
\put(-1,0.5){\vector(1,2){1}} \put(-2,0){$X \times \C$} 
\put(5.2,5){\qbezier(0,-0.1)(-0.05,0)(0,0.1)}
\put(5.8,5){\qbezier(0,-0.2)(-0.1,0)(0,0.2)}
\put(6.4,5){\qbezier(0,-0.3)(-0.15,0)(0,0.3)}
\put(3.8,5){\qbezier(0,-0.1)(-0.05,0)(0,0.1)}
\put(3.2,5){\qbezier(0,-0.2)(-0.1,0)(0,0.2)}
\put(2.6,5){\qbezier(0,-0.3)(-0.15,0)(0,0.3)}
\put(2,5){\qbezier(0,-0.4)(-0.2,0)(0,0.4)}
\put(1.3,5){\qbezier(0,-0.5)(-0.25,0)(0,0.5)}
\put(0.6,5){\qbezier(0,-0.6)(-0.3,0)(0,0.6)}
\qbezier(10,1.5)(9,4)(10,6.5) 
\qbezier(18,1.5)(17,4)(18,6.5)  
\qbezier(15,1.5)(14,3)(15,4.25)  
\qbezier(15,3.75)(14,5)(15,6.5)  
\put(10,6.5){\line(1,0){8}} 
\put(9.6,5){\line(1,0){8}} 
\put(10,1.5){\line(1,0){8}} 
\multiput(9.7,5.4)(0.2,0){40}{\line(1,0){0.1}} 
\multiput(9.6,4.6)(0.2,0){40}{\line(1,0){0.1}} 
\put(10.8,6.8){\vector(0,-1){1.8}} \put(9.5,7){$\mathscr{Y}\simeq Y\times\C$} 
\put(14.6,6.8){\vector(0,-1){1}} \put(13.5,7){$\Sigma_1 \simeq \mathbb{P}(N_Y\oplus\mathbb{1})$} 
\put(13.5,3){\vector(1,0){1}} \put(11.2,2.9){$\Sigma_2 \simeq X'$} 
\put(15,0.5){\vector(0,1){1}} \put(14.3,0){$z=0$} 
\put(17.95,5.15){\vector(-1,0){1.5}} \put(18,4.9){$U_z$} 
\put(9,0.5){\vector(1,2){1}} \put(8.1,0.1){$\mathscr{X}$} 
\multiput(15.1,5)(0.7,0){3}{\qbezier(0,-0.4)(-0.17,0)(0,0.4)}
\multiput(14,5)(-0.7,0){6}{\qbezier(0,-0.4)(-0.17,0)(0,0.4)}
\end{picture}
\caption{deformation to the normal cone}
\label{fig-d2nc}
\end{figure}
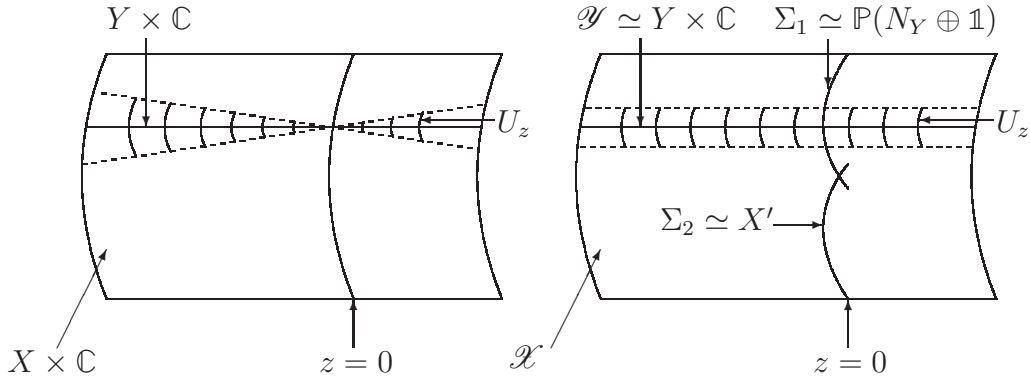

Let $\mathscr{F}: \mathscr{X}' \rightarrow \mathscr{X}$ be the blow-up along $\mathscr{Y}$.
For $z\in\C$,
we denote
\begin{equation}
X_z' = \mathscr{F}^{-1}(X_z) \;.
\end{equation}
Set
\begin{equation}
\label{eq1f1-pf-thm-bl-loc}
f_z = \mathscr{F}\big|_{X_z'} : X_z' \rightarrow X_z \;,
\end{equation}
which is the blow-up along $Y_z$.
For $z\in\C$,
set
\begin{equation}
\label{eq1prim1-pf-thm-bl-loc}
D_{0,z}' = f_z^{-1}(Y_z) \subseteq X_z' \;.
\end{equation}
For $z\in\C$ and $j=1,\cdots,l$,
let $D_{j,z}' \subseteq X_z'$ be the strict transformation of $D_{j,z}\subseteq X_z$.

For $z\in\C$,
set
\begin{equation}
U_z' = f_z^{-1}(U_z) \;.
\end{equation}
For $\e>0$ small enough,
we have smooth families
\begin{equation}
\big(U_z'\big)_{|z|<\e} \;,\hspace{4mm}
\big(U_z'\cap D_{j,z}'\big)_{|z|<\e} \hspace{4mm}\text{with } j=0,\cdots,l \;.
\end{equation}
We remark that $D_{0,z}' \subseteq U_z'$ for $z\in\C$.

\noindent\textbf{Step 2.}
We introduce a family of meromorphic pluricanonical sections.

Denote
\begin{equation}
m = m_1 + \cdots + m_s \;,
\end{equation}
which is the vanishing order of $\gamma$ on $Y$.
Recall that $r$ is the codimension of $Y\hookrightarrow X$.
Recall that $\gamma\in\mathscr{M}(X,K_X^d)$.
For $z\neq 0$,
we identity $X_z$ with $X$ in the obvious way.
For $z\neq 0$,
set
\begin{equation}
\label{eq23-pf-thm-bl-loc}
\gamma_z = z^{-m-rd} \gamma \in \mathscr{M}(X_z,K_{X_z}^d) \;.
\end{equation}
There is a unique $\gamma_0 \in \mathscr{M}(X_0,K_{X_0}^d)$ such that for $\e>0$ small enough,
\begin{equation}
\label{eq24-pf-thm-bl-loc}
\big(\gamma_z\big|_{U_z}\big)_{|z|<\e}
\end{equation}
is a smooth family.
Moreover,
$(X_0,\gamma_0)$ is a $d$-Calabi-Yau pair.

\noindent\textbf{Step 3.}
We introduce a family of K{\"a}hler forms.

Let $\mathscr{U} \subseteq \mathscr{X}$ be such that $\mathscr{U} \cap X_z = U_z$ for any $z\in\C$.
Then $\mathscr{U}$ is an open subset of $\mathscr{X}$.
Set $\mathscr{U}' = \mathscr{F}^{-1}(\mathscr{U}) \subseteq \mathscr{X}'$.
We have $\mathscr{U}' \cap X_z' = U_z'$ for any $z\in\C$.

Let $\omega$ be a K{\"a}hler form on $\mathscr{X}$.
Let $\omega'$ be a K{\"a}hler form on $\mathscr{X}'$ such that
\begin{equation}
\label{eq31-pf-thm-bl-loc}
\omega'\big|_{\mathscr{X}'\backslash\mathscr{U}'} =
\mathscr{F}^* \big( \omega\big|_{\mathscr{X}\backslash\mathscr{U}} \big) \;.
\end{equation}
For $z\in\C$,
set
\begin{equation}
\label{eq32-pf-thm-bl-loc}
\omega_z = \omega\big|_{X_z} \;,\hspace{4mm}
\omega_z' = \omega'\big|_{X_z'} \;.
\end{equation}
By \eqref{eq1f1-pf-thm-bl-loc}, \eqref{eq31-pf-thm-bl-loc} and \eqref{eq32-pf-thm-bl-loc},
we have
\begin{equation}
\label{eq3-pf-thm-bl-loc}
\omega_z'\big|_{X_z'\backslash U_z'} =  f_z^*\big(\omega_z\big|_{X_z\backslash U_z}\big) \hspace{4mm}\text{for } z\in\C \;.
\end{equation}
For $\e>0$ small enough,
we have smooth families
\begin{equation}
\label{eq3a-pf-thm-bl-loc}
\big(\omega_z\big|_{U_z}\big)_{|z|<\e} \;,\hspace{4mm}
\big(\omega_z'\big|_{U_z'}\big)_{|z|<\e} \;.
\end{equation}

\noindent\textbf{Step 4.}
We show that the function $z \mapsto \tau_d(X_z',f_z^*\gamma_z) - \tau_d(X_z,\gamma_z)$ is continuous at $z=0$.

Recall that $f_z: X_z' \rightarrow X_z$ was defined in \eqref{eq1f1-pf-thm-bl-loc}.
Recall that $\gamma_z\in\mathscr{M}(X_z,K_{X_z}^d)$ was defined in the paragraph containing \eqref{eq24-pf-thm-bl-loc}.
Recall that $\big(D_{j,z}\big)_{1\leqslant j\leqslant l}$ were defined in \eqref{eq1e1-pf-thm-bl-loc}.
Recall that $\big(D_{j,z}'\big)_{0\leqslant j\leqslant l}$ were defined in the paragraph containing \eqref{eq1prim1-pf-thm-bl-loc}.
Denote
\begin{equation}
m_0 = m_1 + \cdots + m_s + (r-1)d \;.
\end{equation}
For $z\in\C$,
we have
\begin{equation}
\mathrm{div}(\gamma_z) = \sum_{j=1}^l m_j D_{j,z} \;,\hspace{4mm}
\mathrm{div}(f_z^*\gamma_z) = \sum_{j=0}^l m_j D_{j,z}' \;.
\end{equation}
Here $D_{j,0}$ and $D_{j,0}'$ may be empty for certain $j$.
Let $\big(D_{J,z}\big)_{J\subseteq\{1,\cdots,l\}}$
be as in \eqref{introeq-def-wJ-DJ} with $X$ replaced by $X_z$ and $D_j$ replaced by $D_{j,z}$.
Let $\big(D_{J,z}'\big)_{J\subseteq\{0,\cdots,l\}}$
be as in \eqref{introeq-def-wJ-DJ} with $X$ replaced by $X_z'$ and $D_j$ replaced by $D_{j,z}'$.
By Definition \ref{thm-ind-omega} and \eqref{eq-def-tau-gamma-omega},
we have
\begin{align}
\label{eq4c-pf-thm-bl-loc}
\begin{split}
& \tau_d(X_z',f_z^*\gamma_z) - \tau_d(X_z,\gamma_z) \\
& = \sum_{0\in J\subseteq\{0,\cdots,l\}} w_d^J
\bigg( \tau_\mathrm{BCOV}\big(D_{J,z}',\omega_z'\big)
- a_J(f_z^*\gamma_z,\omega_z') - \sum_{j\in J}\frac{m_j+d}{d} b_J(\omega_z') \bigg) \\
& \hspace{4mm} - \sum_{J\subseteq\{1,\cdots,l\}} w_d^J
\Big( a_J(f_z^*\gamma_z,\omega_z') - a_J(\gamma_z,\omega_z) \Big)  \\
& \hspace{4mm} - \sum_{J\subseteq\{1,\cdots,l\}} \sum_{j\in J} w_d^J
\frac{m_j+d}{d} \Big( b_J(\omega_z') - b_J(\omega_z) \Big) \\
& \hspace{4mm} + \sum_{J\subseteq\{1,\cdots,l\}} w_d^J
\Big( \tau_\mathrm{BCOV}\big(D_{J,z}',\omega_z'\big) - \tau_\mathrm{BCOV}\big(D_{J,z},\omega_z\big) \Big) \;.
\end{split}
\end{align}

For $0\in J\subseteq\{0,\cdots,l\}$,
we have $D_{J,z}' \subseteq U_z'$.
Thus
\begin{equation}
\big(D_{J,z}'\big)_{z\in\C}
\end{equation}
is a smooth family.
Hence the first summation in \eqref{eq4c-pf-thm-bl-loc} is continuous at $z=0$.

For $J\subseteq\{1,\cdots,l\}$,
we denote
\begin{equation}
\label{eq4inex-pf-thm-bl-loc}
D_{J,z} = D_{J,z}^\mathrm{in} \sqcup D_{J,z}^\mathrm{ex}
\end{equation}
such that each irreducible component of $D_{J,z}^\mathrm{in}$ (resp. $D_{J,z}^\mathrm{ex}$) lies in (resp. does not lie in) $Y_z$.
Since $D_{J,z}^\mathrm{in} \subseteq Y_z \subseteq U_z$,
the family
\begin{equation}
\label{eq4Ein-pf-thm-bl-loc}
\big(D_{J,z}^\mathrm{in}\big)_{z\in\C}
\end{equation}
is smooth.
On the other hand,
we have
\begin{equation}
\label{eq4Eex-pf-thm-bl-loc}
D_{J,z}^\mathrm{ex} = f_z\big(D_{J,z}'\big)\;.
\end{equation}
Moreover,
the map $f_z\big|_{D_{J,z}'}: D_{J,z}' \rightarrow D_{J,z}^\mathrm{ex}$ is the blow-up along $D_{J,z}^\mathrm{ex} \cap Y_z$.

Recall that
\begin{equation}
\label{eq4K-pf-thm-bl-loc}
K_J \;,\hspace{4mm}
\gamma_J \;,\hspace{4mm}
g^{TD_J}_\omega \;,\hspace{4mm}
\big|\cdot\big|_{K_J,\omega}
\end{equation}
were constructed in \textsection \ref{subsect-ms} and \textsection \ref{subsect-bcov}
for a $d$-Calabi-Yau pair $(X,\gamma)$ together with a K{\"a}hler form $\omega$ on $X$.
Let
\begin{equation}
K_{J,z} \;,\hspace{4mm}
\gamma_{J,z} \;,\hspace{4mm}
g^{TD_{J,z}}_{\omega_z} \;,\hspace{4mm}
\big|\cdot\big|_{K_{J,z},\omega_z}
\end{equation}
be as in \eqref{eq4K-pf-thm-bl-loc} with $(X,\gamma)$ replaced by $(X_z,\gamma_z)$ and $\omega$ replaced by $\omega_z$.
Let
\begin{equation}
K_{J,z}' \;,\hspace{4mm}
\gamma_{J,z}' \;,\hspace{4mm}
g^{TD_{J,z}'}_{\omega_z'} \;,\hspace{4mm}
\big|\cdot\big|_{K_{J,z}',\omega_z'}
\end{equation}
be as in \eqref{eq4K-pf-thm-bl-loc} with $(X,\gamma)$ replaced by $(X_z',f_z^*\gamma_z)$ and $\omega$ replaced by $\omega_z'$.
By \eqref{eq-def-aJ}, \eqref{eq3-pf-thm-bl-loc}, \eqref{eq4inex-pf-thm-bl-loc} and \eqref{eq4Eex-pf-thm-bl-loc},
for $J\subseteq\{1,\cdots,l\}$,
we have
\begin{align}
\label{eq4s3a-pf-thm-bl-loc}
\begin{split}
& a_J(f_z^*\gamma_z,\omega_z') - a_J(\gamma_z,\omega_z) \\
& = \frac{1}{12} \int_{D_{J,z}' \cap U_z'} c_{n-|J|}\big(TD_{J,z}',g^{TD_{J,z}'}_{\omega_z'}\big)
\log \big|\gamma_{J,z}'\big|^{2/d}_{K_{J,z}',\omega_z'}  \\
& \hspace{4mm} - \frac{1}{12} \int_{D_{J,z}^\mathrm{ex} \cap U_z } c_{n-|J|}\big(TD_{J,z},g^{TD_{J,z}}_{\omega_z}\big)
\log \big|\gamma_{J,z}\big|^{2/d}_{K_{J,z},\omega_z} \\
& \hspace{4mm} - \frac{1}{12} \int_{D_{J,z}^\mathrm{in}} c_{n-|J|}\big(TD_{J,z},g^{TD_{J,z}}_{\omega_z}\big)
\log \big|\gamma_{J,z}\big|^{2/d}_{K_{J,z},\omega_z} \;.
\end{split}
\end{align}
Since each integration in \eqref{eq4s3a-pf-thm-bl-loc} depends continuously on $z$,
the second summation in \eqref{eq4c-pf-thm-bl-loc} is continuous at $z=0$.
The same argument shows that
the third summation in \eqref{eq4c-pf-thm-bl-loc} is continuous at $z=0$.

By \eqref{eq4inex-pf-thm-bl-loc},
we have the obvious identity
\begin{align}
\label{eq4s-pf-thm-bl-loc}
\begin{split}
& \tau_\mathrm{BCOV}\big(D_{J,z}',\omega_z'\big) - \tau_\mathrm{BCOV}\big(D_{J,z},\omega_z\big) \\
& = \tau_\mathrm{BCOV}\big(D_{J,z}',\omega_z'\big) - \tau_\mathrm{BCOV}\big(D_{J,z}^\mathrm{ex},\omega_z\big) - \tau_\mathrm{BCOV}\big(D_{J,z}^\mathrm{in},\omega_z\big) \;.
\end{split}
\end{align}
Since the families in \eqref{eq3a-pf-thm-bl-loc} are smooth,
by Theorem \ref{thm2-bl-bcovt} and \eqref{eq3-pf-thm-bl-loc},
the function $z\mapsto \tau_\mathrm{BCOV}\big(D_{J,z}',\omega_z'\big) - \tau_\mathrm{BCOV}\big(D_{J,z}^\mathrm{ex},\omega_z\big)$ is continuous at $z=0$.
Since the families in \eqref{eq3a-pf-thm-bl-loc} and \eqref{eq4Ein-pf-thm-bl-loc} are smooth,
the function $z\mapsto \tau_\mathrm{BCOV}\big(D_{J,z}^\mathrm{in},\omega_z\big)$ is continuous at $z=0$.
Hence the fourth summation in \eqref{eq4c-pf-thm-bl-loc} is continuous at $z=0$.

\noindent\textbf{Step 5.}
We conclude.

By Step 4,
we have
\begin{equation}
\label{eq51-pf-thm-bl-loc}
\lim_{z\rightarrow 0} \Big( \tau(X_z',f_z^*\gamma_z) - \tau(X_z,\gamma_z) \Big)
= \tau(X_0',f_0^*\gamma_0)-\tau(X_0,\gamma_0) \;.
\end{equation}
On the other hand,
by Proposition \ref{intro-prop-tau-z} and \eqref{eq23-pf-thm-bl-loc},
for $z\neq 0$,
we have
\begin{align}
\label{eq52-pf-thm-bl-loc}
\begin{split}
\tau_d(X_z,\gamma_z) & = \tau_d(X,\gamma) - \frac{\chi_d(X,\gamma)}{12} \log|z|^{-2(m+rd)/d} \;,\\
\tau_d(X_z',f_z^*\gamma_z) & = \tau(X',f^*\gamma) - \frac{\chi_d(X',f^*\gamma)}{12} \log|z|^{-2(m+rd)/d} \;.
\end{split}
\end{align}
Note that $(m+rd)/d>0$,
by \eqref{eq51-pf-thm-bl-loc} and \eqref{eq52-pf-thm-bl-loc},
we have
\begin{align}
\label{eq5a-pf-thm-bl-loc}
\begin{split}
\chi_d(X',f^*\gamma) - \chi_d(X,\gamma) & = 0 \;,\\
\tau_d(X',f^*\gamma) - \tau_d(X,\gamma) & = \tau_d(X_0',f_0^*\gamma_0) - \tau_d(X_0,\gamma_0) \;.
\end{split}
\end{align}

Note that $X_0$ is a $\CP^r$-bundle over $Y_0 \simeq Y$,
by Theorem \ref{thm-tau-bl},
we have
\begin{equation}
\label{eq5b-pf-thm-bl-loc}
\tau_d(X_0,\gamma_0)
= \chi_d(Y,D_Y) \tau_d\big(\CP^r,\gamma_{r,m_1,\cdots,m_s}\big) \;.
\end{equation}
Recall that $E = f^{-1}(Y)$.
Note that $X_0'$ is a $\CP^1$-bundle over $D_{0,0}' \simeq E$,
by Theorem \ref{thm-tau-bl},
we have
\begin{align}
\label{eq5c-pf-thm-bl-loc}
\begin{split}
\tau_d(X_0',f_0^*\gamma_0)
& = \chi_d(E,D_E) \tau_d\big(\CP^1,\gamma_{1,m_0}\big) \;.
\end{split}
\end{align}
From \eqref{eq5a-pf-thm-bl-loc}-\eqref{eq5c-pf-thm-bl-loc},
we obtain \eqref{introeq-thm-tau-bl}.
This completes the proof.
\end{proof}

\bibliographystyle{amsplain}
\bibliography{bcovpluricano}
\end{document}